\def\ps@pprintTitle{%
   \let\@oddhead\@empty
   \let\@evenhead\@empty
   \let\@oddfoot\@empty
   \let\@evenfoot\@oddfoot
}
\newcommand{\BLambda}{{B_\ell}}
\newcommand{\id}{\text{id}}
\newcommand{\ini}{\text{in}}
\def\frk{\mathfrak}               
\def\Phi{{\frk n}}
\def\Phi{{\frk N}}
\def\wb{{\mathbf w}}
\def\A{{\mathcal A}}
\def\opn#1#2{\def#1{\operatorname{#2}}} 
\opn\chara{char} \opn\length\ell \opn\pd{pd} \opn\rk{rk}
\opn\projdim{proj\,dim} \opn\injdim{inj\,dim} \opn\rank{rank}
\opn\depth{depth} \opn\grade{grade} \opn\height{height}
\opn\embdim{emb\,dim} \opn\codim{codim}
\opn\Tr{Tr} \opn\bigrank{big\,rank}
\opn\superheight{superheight}\opn\lcm{lcm}
\opn\trdeg{tr\,deg}
\opn\reg{reg} \opn\lreg{lreg} \opn\ini{in} \opn\lpd{lpd}
\opn\size{size} \opn\sdepth{sdepth}
\opn\link{link}\opn\fdepth{fdepth}\opn\lex{lex}
\opn\LM{LM}
\opn\LC{LC}
\opn\NF{NF}
\opn\Merge{Merge}
\opn\sgn{sgn}
\opn\div{div} \opn\Div{Div} \opn\cl{cl} \opn\Pic{Pic}
\opn\Prin{Prin}
\opn\op{op}
\opn\indeg{indeg} \opn\outdeg{outdeg}
\opn\red{red}
\opn\Spec{Spec} \opn\Supp{Supp} \opn\supp{supp} \opn\Sing{Sing}
\opn\Ass{Ass} \opn\Min{Min}\opn\Mon{Mon} \opn\val{val}
\opn\Ann{Ann} \opn\Rad{Rad} \opn\Soc{Soc}
 \opn\Ker{Ker} \opn\Coker{Coker} \opn\Am{Am}
\opn\Hom{Hom} \opn\Tor{Tor} \opn\Ext{Ext} \opn\End{End}
\opn\Aut{Aut} \opn\id{id}
\opn\nat{nat}
\opn\pff{pf}
\opn\Pf{Pf} \opn\GL{GL} \opn\SL{SL} \opn\mod{mod} \opn\ord{ord}
\opn\Gin{Gin} \opn\Hilb{Hilb}\opn\sort{sort}
\opn\Image{Image}
\opn\vol{Vol}
\opn\aff{aff} \opn\con{conv} \opn\relint{relint} \opn\st{st}
\opn\lk{lk} \opn\cn{cn} \opn\core{core} \opn\vol{vol}
\opn\link{link} \opn\star{star}\opn\lex{lex}\opn\set{set}
\opn\dist{dist}
\opn\gr{gr}
\def\pot#1#2{#1[\kern-0.28ex[#2]\kern-0.28ex]}
\opn\dirlim{\underrightarrow{\lim}}
\opn\inivlim{\underleftarrow{\lim}}
\def\Implies{\ifmmode\Longrightarrow \else
        \unskip${}\Longrightarrow{}$\ignorespaces\fi}
\def\implies{\ifmmode\Rightarrow \else
        \unskip${}\Rightarrow{}$\ignorespaces\fi}
\def\iff{\ifmmode\Longleftrightarrow \else
        \unskip${}\Longleftrightarrow{}$\ignorespaces\fi}
\newtheorem{theorem}{Theorem}[section]
\newtheorem{lemma}[theorem]{Lemma}
\newtheorem{corollary}[theorem]{Corollary}
\newtheorem{proposition}[theorem]{Proposition}
\newtheorem{conjecture}[theorem]{Conjecture}
\newtheorem{question}[theorem]{Question}
\theoremstyle{remark}
\newtheorem{remark}[theorem]{Remark}
\theoremstyle{definition}
\newtheorem{example}[theorem]{Example}
\newtheorem{definition}[theorem]{Definition}
\DeclareMathOperator{\Gr}{Gr}
\DeclareMathOperator{\Flag}{Fl}
\let\kappa=\varkappa
\def\qed{\ifhmode\textqed\fi
      \ifmmode\ifinner\quad\qedsymbol\else\dispqed\fi\fi}
\def\textqed{\unskip\nobreak\penalty50
       \hskip2em\hbox{}\nobreak\hfil\qedsymbol
       \parfillskip=0pt \finalhyphendemerits=0}
\def\dispqed{\rlap{\qquad\qedsymbol}}
\opn\dis{dis}
\def\pnt{{\raise0.5mm\hbox{\large\bf.}}}
\opn\Lex{Lex}
\opn\syz{{\rm syz}}
\opn\spoly{{\rm spoly}}
\opn\LM{{\rm LM}}
\opn\lm{{\rm lm}}
\opn\lcm{{\rm lcm}} \opn\A{\mathcal A}
\numberwithin{equation}{section}
\newcommand{\inwb}{{\rm in}_{{\bf w}_\ell}}
\newcommand{\inw}[1]{{\rm in}_{{\bf w}_{#1}}}
\newcommand{\ul}[1]{\underline{#1}}
\DeclareMathOperator{\trop}{trop}
\DeclareMathOperator{\init}{in}
\begin{document}

\begin{frontmatter}

\title{Standard monomial theory and toric degenerations of \\ Schubert varieties from matching field tableaux}
\author{
 Oliver Clarke and Fatemeh Mohammadi}

\begin{abstract}
We study Gr\"obner degenerations of Schubert varieties inside flag varieties. 
We consider toric degenerations of flag varieties induced by matching fields and semi-standard Young tableaux. We describe an analogue of matching field ideals for Schubert varieties inside the flag variety and give a complete characterization of toric ideals among them. We use a combinatorial approach to standard monomial theory to show that block diagonal matching fields give rise to toric degenerations. Our methods and results use the combinatorics of permutations associated to Schubert varieties, matching fields and their corresponding tableaux. 
\end{abstract}
\begin{keyword}
Toric degenerations \sep 
flag varieties \sep Schubert varieties \sep semi-standard tableaux
\end{keyword}

\end{frontmatter}
{
  \hypersetup{linkcolor=black}
  \tableofcontents
}
\section{Introduction}
In this note we provide a new family of toric degenerations of Schubert varieties inside the full flag variety. 
Computing toric degenerations of a variety is a valuable tool that allows us to study general spaces using results from toric geometry and combinatorics, for instance, see \cite{An13, Cox2}. A toric degeneration of a given variety $X$ is a $1$-parameter family over the affine line $\mathcal{F} \rightarrow \mathbb{A}^1$ such that the fiber over $0$, often called the special fiber, is a toric variety and all other fibers are isomorphic to $X$. Most algebraic invariants of toric varieties have combinatorial counterparts such as polyhedral fans and polytopes. This makes the study of toric varieties particularly fruitful and motivates the search for toric degenerations of varieties. More precisely, a toric degeneration is a flat family and so we can calculate invariants of the original variety by calculating them for the toric fiber. This converts various abstract problems in algebraic geometry into questions about polytopes. For example, calculating the degree of a variety given a toric degeneration can be achieved by computing the volume of the moment polytope of the toric fiber.

\medskip

Toric degenerations have been studied extensively in the literature for flag varieties and their Schubert varieties, see e.g. \cite{FFL16, GL96,serhiyenko2019cluster}. Closely related are toric degenerations for Grassmannians, which have been widely studied, see e.g. \cite{rietsch2017newton, BFFHL, OllieFatemeh}. For all of these varieties, one of the most well-known examples of toric degeneration is the  Gelfand-Tsetlin degeneration which is readily understood through standard monomial theory and semi-standard Young tableaux \cite{FvectorGC, KOGAN}. Natural questions to ask are; what are the other possible toric degenerations of these varieties? And how are they related to each other? For instance, it has been shown that plabic graphs, arising from the cluster algebra structure of the Grassmannian, parametrize certain toric degenerations, see \cite{BFFHL,rietsch2017newton}. 

\medskip

One approach to study toric degenerations of varieties is by way of Gr\"obner degeneration. Given a variety $X$, any weight vector $w$ gives rise to a one-parameter family for $X$ where the ideal of the special fiber is the initial ideal $\init_w(I(X))$. Therefore, we search for weight vectors $w$ such that the initial ideal $\init_w(I(X))$ is toric, i.e.\! a prime binomial ideal. The tropicalization $\trop(X)$, see \cite{M-S}, is the collection of weight vectors for which the initial ideal $\init_w(I(X))$ does not contain any monomials and has the structure of a polyhedral fan. So natural candidates for weight vectors giving rise to toric degenerations are interior points of top-dimensional cones of the tropicalization, see e.g. \cite{kaveh2019khovanskii,KristinFatemeh,bossinger2017computing}.
In the case of $\Gr(2,n)$, it was shown in \cite{speyer2004tropical} that every such point 
gives rise to a toric degeneration of $\Gr(2,n)$.
A combinatorial approach to finding such points in the tropicalization of $\Gr(3,n)$ is taken in \cite{KristinFatemeh} in which the authors, following the work of \cite{sturmfels1993maximal,fink2015stiefel}, study the so-called \textit{coherent matching fields}. More precisely, the authors classify which matching fields give rise to toric degenerations of $\Gr(3,6)$ and provide a family of matching fields called block diagonal matching fields that exhibit, up to isomorphism, all but one of the possible Gr\"obner degenerations of $\Gr(3,6)$.
In \cite{OllieFatemeh2}, it is shown that the weight vectors arising from block diagonal matching fields give rise to toric degenerations of the flag variety. Furthermore, by \cite{sturmfels1996grobner}, whenever a toric degeneration is obtained via a matching field, the Pl\"ucker variables form a SAGBI basis 
for the corresponding Pl\"ucker algebra.

\medskip

We consider a family of toric degenerations of Schubert varieties which are parametrized by matching fields, in the sense of Sturmfels-{Zelevinsky} \cite{sturmfels1993maximal}. It is shown in \cite{OllieFatemeh2} that all so-called block diagonal matching fields give rise to toric degenerations of the full flag variety. The associated toric ideals can be directly read from the matching field and so are called matching field ideals. In this note, we extend the results of \cite{OllieFatemeh2} by considering how these toric degenerations restrict to certain subvarieties of the flag variety, namely its Schubert varieties. For each Schubert variety, indexed by some permutation $w \in S_n$ and a block diagonal matching field $B_\ell$, we define the restricted matching field ideal by setting some variables of the matching field ideal to zero. Our main results are Theorems~A, B and C.
Theorems~B and C give combinatorial conditions on $w$ and $B_\ell$ such that the restricted matching field ideal is monomial-free. Theorem~A shows that a matching field $B_\ell$ gives rise to a toric degeneration of the corresponding Schubert variety subject to the condition that the initial ideal is generated in degree two, which we show for some particular matching fields.  
Our methods use combinatorial properties of the permutations, which parametrize the Schubert varieties, and properties of the generating sets of matching field ideals. Moreover, we use semi-standard Young tableaux to construct monomial bases for each restricted matching field ideals. As a result, we obtain new families of monomial bases for the full flag variety that are compatible with its Schubert varieties. Moreover, we obtain minimal generating sets of the ideals arising from Gr\"obner degenerations of Schubert varieties. 

\medskip

\noindent{\bf Structure of the paper.} 
In \S\ref{sec:prelim} we give definitions and fix our notation throughout the note. In particular, we define the full flag variety and its Schubert varieties by their defining ideals, see \S\ref{sec:flag_def} and \S\ref{sec:schubert_def}, respectively. In \S\ref{sec:matching_field_def} we define matching fields along with a particular family called block diagonal matching fields, see Definition~\ref{def:block_diagonal_matching_field}. 
In \S\ref{sec:init_schubert_intro} we introduce the restricted matching field ideals and matching field tableaux, see Definitions~\ref{def:ideals} and \ref{def:ssyt_matching_field_tableaux} respectively.
In \S\ref{sec:schubert} we state our main results. This includes Theorem~A which relates the monomial-free restricted matching field ideals with the initial ideals of Schubert varieties, Theorems~B and C which characterize the family of binomial, zero and non-binomial ideals and Theorem~\ref{thm:P_ell=T_n+Z_n} which is a non-inductive reformulation of Theorem~C. To explain these results clearly, we give examples and use Figure~\ref{flowchart:inductive_reln} to give a visual representation of these results. In \S\ref{sec:pf_thm_a} we give the proof of Theorem~B, which is broken into three claims. In \S\ref{sec:pf_thm_b} we turn our attention to non-zero binomial ideals and the proof of Theorem~C. In Figure~\ref{flowchart:dependency} we display the dependency relations among the results required for the proof of Theorem~C. We then proceed to prove each of the three parts of Theorem~C in the subsequent subsections:  \S\ref{sec:proofs_diag}, \S\ref{sec:proofs_semi_diag} and \S\ref{sec:proofs_other}, which relate to diagonal, semi-diagonal and the other block diagonal matching fields respectively.
{In \S\ref{sec:standard_monomials} we study monomial bases for the restricted matching field ideals and prove Theorem~A. In \S\ref{sec:mon_bases_gen_set} we prove results about the generating sets of restricted matching field ideals. A detailed proof of Theorem~A is given in \S\ref{sec:mon_bases_main}.}

\medskip

\noindent{\bf Acknowledgement.} We thank  Narasimha Chary and J\"urgen Herzog
for 
many helpful conversations. We are  grateful to the anonymous referees for very helpful comments on earlier versions of this paper.
FM was partially supported by a BOF Starting Grant of Ghent University and EPSRC Early Career Fellowship EP/R023379/1.
OC is supported by EPSRC Doctoral Training Partnership (DTP) award EP/N509619/1.

\section{Preliminaries}\label{sec:prelim}
Throughout we fix a field $\mathbb{K}$ with char$(\mathbb{K})=0$. We are mainly interested in the case when 
$\mathbb{K}=\mathbb{C}$. We let $[n]$ be the set $\{1, \dots, n \}$ and by $S_n$ we denote the symmetric group on $[n]$. A permutation $w \in S_n$,  unless stated otherwise, is written $w = (w_1, \dots, w_n)$ where $w_i = w(i)$ for each $1 \le i \le n$, which is often called single line notation. It will be convenient for us to have the elements of a set be in increasing order so we write $J = \{j_1 < \dots < j_s \}$ for the set with elements $j_1, \dots, j_s$ in increasing order. However, unless otherwise stated, sets are not ordered.

\subsection{Flag varieties}\label{sec:flag_def} 
A full flag is a sequence of vector subspaces of $\mathbb{K}^n$: $$\{0\}= V_0\subset V_1\subset\cdots\subset V_{n-1}\subset V_n=\mathbb{K}^n$$ where ${\rm dim}_{\mathbb{K}}(V_i) = i$. The set of all full flags is called the flag variety denoted by $\Flag_n$, which is naturally embedded  in a product of Grassmannians using the Pl\"ucker variables.
Each point in the flag variety can be represented by an $n\times n$ matrix $X=(x_{i,j})$ whose first $k$ rows span $V_k$. Each $V_k$ corresponds to a point in the Grassmannian $\Gr(k,n)$. The ideal of $\Flag_n$, denoted by $I_n$ is the kernel of the polynomial map
\vspace{-1mm}
\[
\varphi_n:\  \mathbb{K}[P_J:\ \varnothing\neq J\subsetneq \{1,\ldots,n\}]\rightarrow \mathbb{K}[x_{i,j}:\ 1\leq i\leq n-1,\ 1\leq j\leq n]
\]
sending each Pl\"ucker variable
$P_J$ to the determinant of the submatrix of $X$ with row indices $1,\ldots,|J|$ and column indices in $J$.  We refer to \cite[\S14.2]{MS05} for a detailed introduction to Pl\"ucker ideals.

\begin{remark}
{By abuse of notation we use $P_J$ to denote both the variable in the ring $\mathbb{K}[P_J]$ and also for the image of $P_J$ under the map $\varphi_n$. Later we will introduce the notion of weights on variables in both rings $\mathbb{K}[P_J]$ and $\mathbb{K}[x_{i,j}]$. However, the weight on $P_J$ will be induced by the weights on $x_{i,j}$ so this abuse of notation will not cause problems for weights.}
\end{remark}

\subsection{Schubert varieties} \label{sec:schubert_def}
Let SL$(n,\mathbb C)$ be the set of $n\times n$ matrices with determinant $1$,  and let $B$ be its subgroup consisting of upper triangular matrices.  There is a natural transitive action of SL$(n,\mathbb C)$ on the flag variety $\Flag_n$ which identifies $\Flag_n$ with the set of left cosets SL$(n,\mathbb C)/B$, since $B$ is the stabilizer of the standard flag
$0\subset \langle e_1\rangle \subset\cdots \subset \langle e_1, \ldots, e_n\rangle=\mathbb C^n $. Given a permutation $w\in S_n$, we denote by $\sigma_w$ the $n\times n$ permutation matrix with 1's in the positions $(w(i),i)$ for all $i$.
By the Bruhat decomposition, we can write the aforementioned set of cosets as 
${\rm SL}(n,\mathbb C)/B= \coprod_{w\in S_n}B\sigma_wB/B.$
Given a permutation $w$, its Schubert variety is $$X(w)=\overline{B\sigma_wB/B} \subseteq \Flag_n$$ which is the Zariski closure of the corresponding cell in the Bruhat decomposition. 
The ideal of the Schubert variety $X(w)$ is obtained from $I_n$ by setting $P_J$ to zero for each $J \in S_w$
where
$$
S_w=\{J : J\subset[n] \ \text{with} \ J \not\leq \{ w_{1},w_{2},\ldots,w_{|J|} \} \}.
$$
Where $\{a_1 < \dots < a_m \} \le \{b_1 < \dots < b_m \}$ means that $a_i \le b_i$ for each $1 \le i \le m$.

\begin{example}\label{example:s_w_schubert_ideal}
Suppose $n = 4$ and $w = (3,2,1,4) \in S_n$ is a permutation written in single line notation. To calculate $S_w$ we take each subset of $[n]$, for example $\{1,2 \} \subseteq [4]$, and compare it to $\{w_1, w_2 \}$. In this case $\{1,2 \} \le \{2,3\} = \{ w_1, w_2\}$ and so $\{1,2 \} \not \in S_w$. Continuing this process for all other subsets we obtain
\[
S_w = \{4, 14, 24, 34, 124, 134, 234\}.
\]
The ideal of $X(w)$ is obtained from $I_n$ by setting $P_J$ to zero for each $J \in S_w$:
\[
I(X(w))=\langle P_3 P_{12} - P_2 P_{13} + P_1 P_{23} \rangle \subset \mathbb{K}[P_J].
\]
\end{example}
\subsection{Matching fields}\label{sec:matching_field_def}

\begin{definition}\label{def:matching_field}
A \emph{matching field} is a map $\Lambda_n : \{J: \varnothing \neq J \subsetneq [n] \} \rightarrow S_n$.
For ease of notation we write $\Lambda$ for $\Lambda_n$ if there is no ambiguity. Suppose $J = \{j_1 < \dots < j_k \} \subset [n]$,
we think of the permutation $\sigma=\Lambda(J)$ as inducing an ordering on the elements of $J$, 
where the position of $j_s$ is $\sigma(s)$.
\end{definition}
Given a matching field $\Lambda$ and a $k$-subset $J = \{j_1, \dots, j_k \} \subset [n]$ with $j_1 < \dots < j_k$, let $\sigma = \Lambda(J)$. We represent the Pl\"ucker form $P_J$ as a $k \times 1$ tableau whose entry in position $(\sigma(\ell),1)$ is $j_\ell$ for each $1 \le \ell \le k$. Let $X=(x_{i,j})$ be an $n \times n$ matrix of indeterminates. To each subset $J \subset [n]$ as before, we associate the monomial 
$\textbf{x}_{\Lambda(J)}:=x_{\sigma(1) j_{1}}x_{\sigma(2)j_2}\cdots x_{{\sigma(k)j_k}}.$
A {\em matching field ideal} $J_\Lambda$ is defined as the kernel of the monomial map
\begin{eqnarray}\label{eqn:monomialmap_Lambda}
\phi_{\Lambda} \colon\  & \mathbb{K}[P_J]  \rightarrow \mathbb{K}[x_{ij}]  
\quad\text{with}\quad
 P_{J}   \mapsto \text{sgn}(\Lambda(J)) \textbf{x}_{\Lambda(J)},
\end{eqnarray}
where sgn denotes the sign of the permutation $\Lambda(J)$. We define the \emph{algebra associated to $\Lambda$} to be $\mathbb{K}[P_J] / \ker(\phi_{\Lambda})$.

\begin{definition}
A matching field $\Lambda$  is \emph{coherent} if there exists an $n\times n$ matrix $M$ with entries in $\mathbb{R}$ 
such that for every proper non-empty subset $\varnothing \neq J\subsetneq [n]$ 
the initial form of the  Pl\"ucker form  $P_J \in \mathbb{K}[x_{ij}]$,  $\text{in}_M (P_J) $ is $ \text{sgn}(\Lambda(J)) \mathbf{x}_{\Lambda(J)}$. Where $\text{in}_M (P_J)$ is the sum of all terms in $P_J$ with the lowest weight with respect to $M$.
In this case, we say that the matrix $M$ \emph{induces the matching field} $\Lambda$.
\end{definition}

\begin{example}
Let us see an example of a non-coherent matching field. Suppose that $n \ge 4$ and we have a matching field $\Lambda$ such that $\Lambda(\{1,2,3 \}) = id$ and $\Lambda(\{1,2,4 \}) = (1,2)$ is the transposition which swaps $1$ and $2$. Suppose by contradiction that $\Lambda$ is a coherent matching field, then there exists an $n \times n$ matrix $M$ which induces $\Lambda$. Let us consider the submatrix $M'$ of $M$ which consists of the first two rows and first two columns.
\[
M' = \begin{bmatrix}
m_{1,1} & m_{1,2} \\
m_{2,1} & m_{2,2}
\end{bmatrix}.
\]
Since $\Lambda(\{1,2,3 \}) = id$, this implies that $m_{1,1} + m_{2,2} < m_{1,2} + m_{2,1}$. However $\Lambda(\{1,2,4 \}) = (1,2)$ implies that $m_{1,2} + m_{2,1} < m_{1,1} + m_{2,2}$, a contradiction.
\end{example}

\begin{definition}\label{def:matching_field_weight}
Let $\Lambda$ be a coherent matching field induced by the matrix $M$. We define ${\bf w}_{M}$ to be the \emph{weight vector} induced by $M$ on the Pl\"ucker variables. That is, the entry of the vector ${\bf w}_{M}$ corresponding to the variable $P_J \in \mathbb{K}[P_J]$ is the minimum weight of monomials appearing in $\varphi_n(P_J)$ with respect to $M$. The weight of a monomial is the sum of the corresponding terms in the weight matrix $M$. For ease of notation we write ${\bf P}^\alpha$ for the monomial $P_{J_1}^{\alpha_1} \dots P_{J_s}^{\alpha_s}$ where $\alpha = (\alpha_1, \dots, \alpha_s)$. And so the weight of ${\bf P}^\alpha$ is simply $\alpha \cdot {\bf w}_{M} $.
\end{definition}

\begin{definition}\label{def:initial}
Let $\Lambda$ be a coherent matching field induced by $M$. We denote 
\emph{the initial ideal of $I_n$ with respect to $\wb_{M}$} by $\inw{M}(I_n)$. The ideal $\inw{M}(I_n)$ is generated by polynomials $\inw{M}(f)$ for all $f\in I_n$, where 
\[\inw{M}(f)=\sum_{\alpha_j\cdot \wb_{M}=d}{c_{{\bf \alpha}_j}\bf P}^{{\bf \alpha}_j}\quad\text{for}\quad f=\sum_{i=1}^t c_{{\bf \alpha}_i}{\bf P}^{{\bf \alpha}_i}\quad\text{and}\quad d=\min\{\alpha_i\cdot \wb_{M}:\ i=1,\ldots,t\}.\]
\end{definition}

\begin{example}\label{example:block_diag_matching_field_2_2}
Consider the matching field $\Lambda$ induced by the matrix
\[
M = 
\begin{bmatrix}
0  &0  &0  &0  \\
2  &1  &4  &3  \\
8  &6  &4  &2  \\
12 &9  &6  &3  \\
\end{bmatrix}\, .
\]

The single column tableaux arising from the matching field are:
\[
\begin{tabular}{|c|} \hline 1 \\ \hline \multicolumn{1}{c}{\,} \\ \multicolumn{1}{c}{\,} \\ \end{tabular}\, \,
\begin{tabular}{|c|} \hline 2 \\ \hline \multicolumn{1}{c}{\,} \\ \multicolumn{1}{c}{\,} \\ \end{tabular}\, \,
\begin{tabular}{|c|} \hline 3 \\ \hline \multicolumn{1}{c}{\,} \\ \multicolumn{1}{c}{\,} \\ \end{tabular}\, \,
\begin{tabular}{|c|} \hline 4 \\ \hline \multicolumn{1}{c}{\,} \\ \multicolumn{1}{c}{\,} \\ \end{tabular}\, \,
\begin{tabular}{|c|} \hline 1 \\ 2 \\ \hline \multicolumn{1}{c}{\,} \\ \end{tabular}\, \,
\begin{tabular}{|c|} \hline 3 \\ 1 \\ \hline \multicolumn{1}{c}{\,} \\ \end{tabular}\, \,
\begin{tabular}{|c|} \hline 4 \\ 1 \\ \hline \multicolumn{1}{c}{\,} \\ \end{tabular}\, \,
\begin{tabular}{|c|} \hline 3 \\ 2 \\ \hline \multicolumn{1}{c}{\,} \\ \end{tabular}\, \,
\begin{tabular}{|c|} \hline 4 \\ 2 \\ \hline \multicolumn{1}{c}{\,} \\ \end{tabular}\, \,
\begin{tabular}{|c|} \hline 3 \\ 4 \\ \hline \multicolumn{1}{c}{\,} \\ \end{tabular}\, \,
\begin{tabular}{|c|} \hline 1 \\ 2 \\ 3 \\ \hline \end{tabular}\, \,
\begin{tabular}{|c|} \hline 1 \\ 2 \\ 4 \\ \hline \end{tabular}\, \,
\begin{tabular}{|c|} \hline 3 \\ 1 \\ 4 \\ \hline \end{tabular}\, \,
\begin{tabular}{|c|} \hline 3 \\ 2 \\ 4 \\ \hline \end{tabular}\, .
\]
So the Pl\"ucker variables are 
\[P_1, P_2, P_3, P_4, P_{12}, P_{31}, P_{41}, P_{32}, P_{42}, P_{34}, P_{123}, P_{124}, P_{314}, P_{324}
\]
and the corresponding weight vector is 
$\wb_{M} = (0,0,0,0,1,2,2,1,3,3,5,3,4,3)$. Performing the calculation in $\mathtt{Macaulay2}$ \cite{M2}, we obtain the following generating set for $\inw{M}(I_4)$:
\begin{multline*}
    \inw{M}(I_4) = \langle 
    P_{42} P_{314} - P_{41} P_{324}, 
    P_{32} P_{314} - P_{31} P_{324}, 
    P_{32} P_{124} - P_{12} P_{324},
    P_{31} P_{124} - P_{12} P_{314}, \\ 
    P_{41} P_{32}  - P_{31} P_{42}, 
    P_{3}  P_{124} + P_{1}  P_{324}, 
    P_{4}  P_{32} -  P_{3}  P_{42},
    P_{4}  P_{31} -  P_{3}  P_{41},
    P_{4}  P_{12} +  P_{1}  P_{42},
    P_{3}  P_{12} +  P_{1}  P_{32}
\rangle \, .
\end{multline*}
\end{example}

\begin{example}\label{example:diagonal_matching_field}

Let $\Lambda$ be the matching field induced by the matrix
\[
M = 
\begin{bmatrix}
0  &0  &0  &0  \\
4  &3  &2  &1  \\
8  &6  &4  &2  \\
12 &9  &6  &3
\end{bmatrix}
\]
The single column tableaux arising from the matching field are:
\[
\begin{tabular}{|c|} \hline 1 \\ \hline \multicolumn{1}{c}{\,} \\ \multicolumn{1}{c}{\,} \\ \end{tabular}\, \,
\begin{tabular}{|c|} \hline 2 \\ \hline \multicolumn{1}{c}{\,} \\ \multicolumn{1}{c}{\,} \\ \end{tabular}\, \,
\begin{tabular}{|c|} \hline 3 \\ \hline \multicolumn{1}{c}{\,} \\ \multicolumn{1}{c}{\,} \\ \end{tabular}\, \,
\begin{tabular}{|c|} \hline 4 \\ \hline \multicolumn{1}{c}{\,} \\ \multicolumn{1}{c}{\,} \\ \end{tabular}\, \,
\begin{tabular}{|c|} \hline 1 \\ 2 \\ \hline \multicolumn{1}{c}{\,} \\ \end{tabular}\, \,
\begin{tabular}{|c|} \hline 1 \\ 3 \\ \hline \multicolumn{1}{c}{\,} \\ \end{tabular}\, \,
\begin{tabular}{|c|} \hline 1 \\ 4 \\ \hline \multicolumn{1}{c}{\,} \\ \end{tabular}\, \,
\begin{tabular}{|c|} \hline 2 \\ 3 \\ \hline \multicolumn{1}{c}{\,} \\ \end{tabular}\, \,
\begin{tabular}{|c|} \hline 2 \\ 4 \\ \hline \multicolumn{1}{c}{\,} \\ \end{tabular}\, \,
\begin{tabular}{|c|} \hline 3 \\ 4 \\ \hline \multicolumn{1}{c}{\,} \\ \end{tabular}\, \,
\begin{tabular}{|c|} \hline 1 \\ 2 \\ 3 \\ \hline \end{tabular}\, \,
\begin{tabular}{|c|} \hline 1 \\ 2 \\ 4 \\ \hline \end{tabular}\, \,
\begin{tabular}{|c|} \hline 1 \\ 3 \\ 4 \\ \hline \end{tabular}\, \,
\begin{tabular}{|c|} \hline 2 \\ 3 \\ 4 \\ \hline \end{tabular}\, .
\]
So the Pl\"ucker variables are $P_1, P_2, P_3, P_4, P_{12}, P_{13}, P_{14}, P_{23}, P_{24}, P_{34}, P_{123}, P_{124}, P_{134}, P_{234}$ and the corresponding weight vector is $\wb_{M} = (0,0,0,0,1,2,2,1,3,3,5,3,4,3)$. Performing the calculation in $\mathtt{Macaulay2}$ \cite{M2}, we obtain the following generating set for $\inw{M}(I_4)$:
\begin{multline*}
    \inw{M}(I_4) = \langle 
    P_{24} P_{134} - P_{14} P_{234}, 
    P_{23} P_{134} - P_{13} P_{234}, 
    P_{23} P_{124} - P_{12} P_{234},
    P_{13} P_{124} - P_{12} P_{134}, \\ 
    P_{14} P_{23}  - P_{13} P_{24}, 
    P_{2}  P_{134} - P_{1}  P_{234}, 
    P_{3}  P_{24} -  P_{2}  P_{34},
    P_{3}  P_{14} -  P_{1}  P_{34},
    P_{2}  P_{14} -  P_{1}  P_{24},
    P_{2}  P_{13} -  P_{1}  P_{23},
\rangle \, .
\end{multline*}

\smallskip

Note that the entries in each tableau are strictly increasing. We call such matching fields \emph{diagonal} and we denote them by $D_n$, or $D$ when there is no confusion. Their corresponding degenerations are called Gelfand-Tsetlin degenerations in \cite{KOGAN}. 
\end{example}

\begin{definition}\label{def:block_diagonal_matching_field}
Given $n$ and $0\leq\ell\leq n$, we define the \emph{block diagonal matching field}
denoted by $B_\ell=(1\cdots \ell|\ell+1\cdots n)$
as a map from the power set of $[n]=\{1,\ldots,n\}$ to $S_n$ such that
\[
 B_\ell(J)= \left\{
     \begin{array}{@{}l@{\thinspace}l}
      id  &: \text{if $\lvert J|=1$ or $\lvert J \cap \{1,\ldots,\ell\}\rvert \ge 2$}\\
      (12)  &: \text{otherwise} \\
     \end{array}
   \right.
\]
\end{definition}
The matching field $B_\ell$ is induced by the weight matrix:

\begin{center}
\resizebox{0.95 \textwidth}{!}{
$
\displaystyle{
M_\ell=\begin{bmatrix}
    0       & 0     & \cdots    & 0             &0              & 0             & \cdots    & 0  \\
    \ell    &\ell-1 & \cdots    & 1             &n              &n-1            & \cdots    & \ell+1\\
    2n      &2(n-1) & \cdots    &2(n-\ell+1)    &2(n-\ell)      &2(n-\ell-1)    & \cdots    & 2  \\
    \vdots  &\vdots & \ddots    & \vdots        &\vdots         & \vdots        & \ddots    & \vdots   \\
    (n-1)n  &(n-1)^2& \cdots    &(n-1)(n-\ell+1)& (n-1)(n-\ell) &(n-1)(n-\ell-1)& \cdots    & n-1    \\
\end{bmatrix}.
}
$
}
\end{center}

Therefore, all block diagonal matching fields are coherent. We denote ${\bf w}_\ell$ for the weight vector induced by $M_\ell $ on the Pl\"ucker variables. The case $\ell=0$ or $n$ corresponds to the diagonal matching field.  The weight vector ${\bf w}_\ell$ is explicitly given as follows. For each $J = \{j_1 < \dots < j_s \} \subset [n]$ the component of ${\bf w}_\ell$ corresponding to $P_J$ is given by
\[
{\bf w}_\ell(P_J) = \left\{
\begin{tabular}{ll}
    $0$, & if $s = 1$, \\
    $(n+\ell + 1 - j_2) + \sum_{k = 3}^s (k - 1)(n+1-j_k)$, & if $s \ge 1$ and $|J \cap \{1, \ldots, \ell \}| = 0 $, \\
    $(\ell + 1 - j_1) + \sum_{k = 3}^s (k - 1)(n+1-j_k)$, & if $s \ge 1$ and $|J \cap \{1, \ldots, \ell \}| = 1$, \\
    $(\ell+1 - j_2) + \sum_{k = 3}^s (k - 1)(n+1-j_k)$, & if $|J \cap \{1, \ldots, \ell \}| \ge 2$.
\end{tabular}\right.
\]

With the above notation,
we denote $F_{n,\ell}$ for the matching ideal of $B_\ell$ which is 
the kernel of the monomial map \begin{eqnarray}\label{eqn:monomialmap}
\phi_\ell \colon\  \mathbb{K}[P_J]  \rightarrow \mathbb{K}[x_{ij}]
\quad\text{with}\quad
 P_{J}   \mapsto {\rm sgn}(B_\ell(J)) \ini_{{\bf w}_\ell}(P_J).
\end{eqnarray}
The following result highlights the motivation and the importance of the understudied family of degenerations induced by matching fields.

\begin{corollary}[{\cite[Corollary~4.13 and Theorem~3.3]{OllieFatemeh2}}]
\label{cor:block_diag_degen_flag}
Each block diagonal matching field produces a toric degeneration of $\Flag_n$. Equivalently, $\inwb(I_n)$ is toric for all $n$ and $0 \le \ell \le n$, and it equals to $F_{n,\ell}$. Moreover, the ideal $F_{n,\ell}$ is generated by quadratic binomials.
\end{corollary}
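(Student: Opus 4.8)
The statement bundles three claims: that $\inwb(I_n)$ is prime and binomial (``toric''), that it coincides with the explicit matching field ideal $F_{n,\ell}$, and that it is generated in degree two. The first is automatic once the second is proved, because $F_{n,\ell}=\ker(\phi_\ell)$ is the kernel of a monomial map into the domain $\mathbb{K}[x_{ij}]$ and is therefore a toric ideal; and ``$\inwb(I_n)$ toric'' upgrades to ``toric degeneration of $\Flag_n$'' through the usual Gröbner/Rees construction of the flat family over $\mathbb{A}^1$ whose generic fiber is $\Flag_n$ and whose special fiber is $\Proj(\mathbb{K}[P_J]/\inwb(I_n))$. So the real work is to prove $\inwb(I_n)=F_{n,\ell}$ together with quadratic generation. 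One inclusion is soft: coherence of $B_\ell$ gives $\phi_\ell(P_J)=\init_{M_\ell}(\varphi_n(P_J))$ for every $J$, and since $\mathbb{K}[x_{ij}]$ is a domain $\init_{M_\ell}$ is multiplicative, so for $f=\sum_\alpha c_\alpha\mathbf{P}^\alpha\in I_n$ the terms of minimal $M_\ell$-weight in $\varphi_n(f)$ assemble into $\phi_\ell(\init_{\wb_\ell}(f))$; as $\varphi_n(f)=0$ this vanishes, whence $\init_{\wb_\ell}(f)\in\ker(\phi_\ell)=F_{n,\ell}$, and since such elements generate $\inwb(I_n)$ we obtain $\inwb(I_n)\subseteq F_{n,\ell}$.

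For the reverse inclusion I would prove the stronger statement that the Plücker forms $\varphi_n(P_J)$ form a SAGBI basis of the Plücker algebra $\mathbb{K}[P_J]/I_n$ with respect to $M_\ell$. This identifies the initial subalgebra $\init_{M_\ell}(\mathbb{K}[P_J]/I_n)$ with the matching field algebra $\mathbb{K}[P_J]/F_{n,\ell}$; since the initial subalgebra of a multigraded algebra has the same Hilbert function as the algebra itself, $\mathbb{K}[P_J]/F_{n,\ell}$, $\mathbb{K}[P_J]/I_n$ and $\mathbb{K}[P_J]/\inwb(I_n)$ all share the same Hilbert function, and combined with $\inwb(I_n)\subseteq F_{n,\ell}$ this forces equality. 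Concretely the SAGBI property is a straightening law for the matching field tableaux attached to $B_\ell$: one must show (i) distinct standard tableaux have distinct $\phi_\ell$-images, so they are $\mathbb{K}$-linearly independent; (ii) every monomial in the $P_J$ is congruent, modulo the quadratic binomials of $F_{n,\ell}$, to a $\mathbb{K}$-combination of standard tableau monomials; and (iii) the standard tableaux in each multidegree are equinumerous with the ordinary semistandard Young tableaux, whose counts give the Hilbert function of $\mathbb{K}[P_J]/I_n$ by classical standard monomial theory. Item (ii) simultaneously yields the quadratic generation of $F_{n,\ell}$, so this single combinatorial package delivers the whole corollary.

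The step I expect to be the genuine obstacle is exactly this combinatorial core, (ii) and (iii), for the non-diagonal block matching fields $B_\ell$ with $0<\ell<n$. In the diagonal case ($\ell=0$ or $n$) it is Hodge's classical straightening law, but for $B_\ell$ one must verify that reordering the entries lying in the first block $\{1,\dots,\ell\}$ by the transposition $(1\,2)$ neither obstructs the straightening algorithm nor creates unexpected monomial coincidences or sign cancellations among the $\mathbf{x}_{B_\ell(J)}$; this requires a careful analysis of the block-diagonal weight matrix $M_\ell$ and of the explicit formula for $\wb_\ell(P_J)$ recorded above. An alternative partial route --- embedding $\Flag_n$ into a product of Grassmannians and importing the known matching field degenerations of Grassmannians --- ultimately reduces to the same straightening bookkeeping for the flag case, so I would not expect it to bypass the difficulty.
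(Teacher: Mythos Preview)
The paper does not actually prove this statement; Corollary~\ref{cor:block_diag_degen_flag} is simply quoted from \cite{OllieFatemeh2} (specifically their Corollary~4.13 and Theorem~3.3) with no argument given here. So there is no ``paper's own proof'' to compare against.

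That said, your outline is the standard and correct strategy, and it is in fact the one used in the cited reference. The soft inclusion $\inwb(I_n)\subseteq F_{n,\ell}$ via multiplicativity of $\init_{M_\ell}$ on a domain is exactly right. For the reverse inclusion, your SAGBI/Hilbert-function argument is the approach taken in \cite{OllieFatemeh2}: one constructs an explicit bijection between semistandard Young tableaux and a set of ``standard'' matching field tableaux for $B_\ell$ (the map $\Gamma_\ell$ appearing later in \S\ref{sec:mon_bases_main} of the present paper is precisely this bijection, restricted to two-column tableaux), verifies injectivity at the level of $\phi_\ell$-images, and shows that the quadratic binomials of $F_{n,\ell}$ suffice to straighten any tableau. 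Your assessment that the genuine work lies in the combinatorial straightening for $0<\ell<n$ is accurate; the cited paper handles this by a case analysis on which of the top two entries of each column lie in $\{1,\dots,\ell\}$ versus $\{\ell+1,\dots,n\}$, exactly the kind of block-by-block bookkeeping you anticipate.
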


\subsection{Initial ideals of Schubert varieties inside \texorpdfstring{$\Flag_n$}{Fln}
}\label{sec:init_schubert_intro}

{
Here we introduce the family of ideals $F_{n,\ell,w}$ that are closely related to the initial ideals $\inwb(I(X(w)))$ where $I(X(w))$ is the ideal of the corresponding Schubert variety. In general, the initial ideals of Schubert varieties are difficult to calculate, see Remark~\ref{rmk:compute_init_schu}. However the ideals $F_{n,\ell,w}$, which arise from matching fields, have a canonical generating set which we exploit in order to generalize Corollary~\ref{cor:block_diag_degen_flag} to Schubert varieties.
}
{
\begin{definition}[Restricted matching field ideals]\label{def:ideals}
Given a block diagonal matching field $B_\ell$ and a permutation $w$ in $S_n$, we define the ideal 
\begin{eqnarray}\label{eq:F_n,l,w}
F_{n,\ell,w} = (F_{n,\ell} + \langle P_J : J \in S_w \rangle) \cap \mathbb K[P_J : J \subseteq [n], \ J \notin S_w],
\end{eqnarray}
which can be computed in $\mathtt{Macaulay2}$ \cite{M2} as an elimination ideal as follows
\[
F_{n,\ell,w} = {\tt eliminate }({\rm in}_{{\bf w}_\ell}(I_n)+\langle P_J:\, J\in S_w\rangle,\ \{P_J:\, J\in S_w\}).
\]
\end{definition}
We may think of $F_{n,\ell,w}$ as the ideal obtained from $F_{n,\ell} = {\rm in}_{{\bf w}_\ell}(I_n)$ by setting the variables $\{P_J : J \in S_{w} \}$ to be zero. And so we say that the variable $P_J$ vanishes in 
$F_{n,\ell,w}$ if 
$J \in S_w$. If $P_J$ does not vanish we write $P_J \neq 0$. More generally, we say that a polynomial $g \in \mathbb K[P_I]$ vanishes in $F_{n,\ell,w}$ if $g \in \langle P_I : I \in S_w \rangle \subseteq \mathbb K[P_I]$. We will often use the language of vanishing polynomials when determining which terms of generators in $F_{n,\ell}$ vanish in $F_{n,\ell,w}$.
}
\begin{example}
Let us continue Example~\ref{example:s_w_schubert_ideal} where $n = 4$ and $w = (3,2,1,4)$. Consider the matching field from Example~\ref{example:block_diag_matching_field_2_2} which is the block diagonal matching field $B_{2}$. We begin by calculating the initial ideal $\init_{{\bf w}_2}(I_4)$ which is 
\begin{align*}
\langle & 
P_{13}P_{124}-P_{12}P_{134}, \ 
P_{24}P_{134}-P_{14}P_{234}, \ 
P_{23}P_{134}-P_{13}P_{234}, \\
& P_{23}P_{124}-P_{12}P_{234}, \
P_{3}P_{124}+P_{1}P_{234}, \ 
P_{14}P_{23}-P_{13}P_{24}, \\ 
&P_{4}P_{12}+P_{1}P_{24}, \
P_{4}P_{13}-P_{3}P_{14}, \ 
P_{3}P_{12}+P_{1}P_{23}, \ 
P_{4}P_{23}-P_{3}P_{24} \rangle.
\end{align*}
So we can now calculate the ideal $F_{4,2,w}$ which is
$F_{4,2,w} = \langle P_3 P_{12} - P_1 P_{23} \rangle $. Note that this is the same as finding $\init_{{\bf w}_\ell}(I(X(w)))$ where $I(X(w))$ is the ideal of the Schubert variety which we found in Example~\ref{example:s_w_schubert_ideal}. Also note that the resulting ideal $F_{n,\ell,w}$ is binomial. This also follows from Theorem~C and in particular the other matching fields $\BLambda$ and permutations $w$ which give rise to binomial ideas, where $n=4$, can be found in Table~\ref{fig:num4}.
\end{example}

One of our main results is Theorem~A, which shows that if a restricted matching field ideal is monomial-free then it coincides with initial ideals of the corresponding Schubert variety. To prove this result, we show that semi-standard Young tableaux are in bijection with a set of standard monomials for $F_{n,\ell,w}$.

\begin{definition}[semi-standard Young tableaux]\label{def:ssyt_matching_field_tableaux} 
A tableau $T = [I_1 I_2 \dots I_k]$ is an ordered collection of columns where each column is an ordered subset $I_j \subseteq [n]$ for each $j \in [k]$. If the order of the entries in each column coincides with the order induced by a fixed matching field $B$, then we say the tableau is a \textit{matching field tableau} for $B$. Write $I_j = \{i_{1,j}, i_{2,j}, \dots i_{t_j, j} \}$ for each $j \in [k]$.  We say $T$ is a semi-standard Young tableau if the following hold.
\begin{itemize}
    \item The size of the columns weakly decreasing, i.e. if $1 \le i < j \le k$ then $|I_i| \ge |I_j|$.
    \item The entries in each column are increasing, i.e. $I_j = \{i_{1,j} < i_{2,j} < \dots < i_{t_j, j}\}$ for each $j \in [k]$.
    \item The entries in each row are weakly increasing, i.e. $i_{j, 1} \le i_{j, 2} \le \dots i_{j, m_j}$ for each $j \in \{1, \dots, |I_1| \}$ where $m_j = \max\{i \in [k] : |I_i| \ge j\}$.
\end{itemize}
\end{definition}

\begin{example}
Let $n = 4$ and consider the tableaux below.
\[
T_1 = 
\begin{tabular}{cc}
    \hline
    \multicolumn{1}{|c|}{1} & \multicolumn{1}{c|}{2} \\ \hline
    \multicolumn{1}{|c|}{2} & \multicolumn{1}{c|}{3} \\ \hline
    \multicolumn{1}{|c|}{4} &   \\ \cline{1-1}
\end{tabular} \ ,
\quad
T_2 = 
\begin{tabular}{cc}
    \hline
    \multicolumn{1}{|c|}{3} & \multicolumn{1}{c|}{1} \\ \hline
    \multicolumn{1}{|c|}{1} & \multicolumn{1}{c|}{2} \\ \hline
    \multicolumn{1}{|c|}{4} &   \\ \cline{1-1}
\end{tabular} \ .
\]
The tableau $T_1$ is a semi-standard Young tableau. The monomial represented by $T_1$ is the image of $P_{125}P_{24}$ under the diagonal matching field map: $\phi_{B_0}(P_{124}P_{23}) = x_{1,1}x_{1,2}x_{2,2}x_{2,3}x_{3,4}$. The tableau $T_2$ is not a semi-standard Young tableau. However the columns of $T_2$ are ordered by the matching field $B_1$, see Example~\ref{example:block_diag_matching_field_2_2}, and so $T_2$ is called a matching field tableau. The tableau $T_2$ represents the image of $P_{134}P_{12}$ under the block diagonal matching field map: $\phi_{B_1}(P_{134}P_{12}) = x_{1,3}x_{1,1}x_{2,1}x_{2,2}x_{3,4}$.
\end{example}

In order to characterize permutations $w \in S_n$ for which the ideals $F_{n, \ell, w}$ are monomial free, see Theorem~\ref{thm:P_ell=T_n+Z_n}, we require the following definitions about permutations.
\begin{definition}[Permutation avoidance]
We say that two finite sequences $w = (w_1, \dots, w_s)$ and $v = (v_1, \dots, v_s)$ have the same \emph{type} if their respective entries satisfy all the same pairwise comparisons, i.e. $w_i < w_j$ if and only if $v_i < v_j$ for all $i, j \in [s]$. We say that a permutation $w = (w_1, \dots, w_n) \in S_n$ \emph{avoids} another permutation $v \in S_m$ where $m \leq n$ if every subsequence $(w_{i_1}, \dots, w_{i_m})$ of $w$ has a different type to $v$. If $w$ avoids $v$ then we also say that $w$ is $v$-free.
\end{definition}

\begin{example}
The sequences $(4,1,2,3)$ and $(6,1,2,5)$ have the same type but neither has the same type as $(5,3,1,4)$. The permutation $(1,5,2,4,3)$ does not avoid $(1,4,3,2)$ because the subsequence $(1,5,4,3)$ has the same type as $(1,4,3,2)$. However, the permutation $(1,5,2,4,3)$ does avoid $(2,3,1)$.
\end{example}

\begin{definition}
Let $w = (w_1, \dots, w_n) \in S_n$ be a permutation and $m \le n$ be a natural number. The \emph{restriction} of $w$ to $[m]$ is the permutation $w|_m \in S_m$ obtained from $w$ by removing the values $m+1, \dots, n$.
\end{definition}

\begin{example}
Let $w = (1,4,2,3)$ then the restrictions of $w$ are as follows.
\[
w|_4 = (1,4,2,3), \quad w|_3 = (1,4,2), \quad w|_2 = (1,2), \quad w|_1 = (1). 
\]
\end{example}

\vspace{-5mm}
\section{Schubert varieties inside flag varieties}\label{sec:schubert}
This section aims to answer the following question on Schubert varieties; this is a reformulation of {\em Degeneration Problem} posed by Caldero \cite{caldero2002toric} in our setting.

\begin{question}\label{question:flag}
{Characterize toric initial ideals of the Pl\"ucker ideals of Schubert varieties inside flag varieties. In other words, determine the toric ideals of form
$\textrm{in}_{\textbf{w}_\ell}(I(X(w)))$. 
}\end{question}

{
In \S\ref{sec:standard_monomials} 
we study the relationship between the ideals $\inwb(I(X(w)))$ and $F_{n,\ell,w}$ by way of standard monomial theory and prove the following result.
}

\medskip

\noindent\textbf{Theorem A.}
Suppose that $\inwb(I(X(w)))$ is generated in degree two. If $F_{n,\ell,w}$ is monomial-free then $F_{n,\ell,w} = \inwb(I(X(w)))$. Moreover $\textrm{in}_{{\bf w}_\ell}(I(X(w))$ is the kernel of a monomial map, hence it is a toric (prime binomial) ideal.

\medskip

As an immediate corollary of Theorem~A and \cite[Theorem~11.4]{sturmfels1996grobner} we have that:
\begin{corollary}\label{cor:toric_deg}
The block diagonal matching fields give rise to a family of toric degenerations of the Schubert varieties inside the full flag variety. Moreover, the Pl\"ucker variables $P_I$ form a finite Khovanskii basis for the corresponding Pl\"ucker algebras.
\end{corollary}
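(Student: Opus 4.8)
The plan is to deduce the statement directly from Theorem~A, together with the standard theory of one-parameter weight degenerations and the subalgebra-basis criterion \cite[Theorem~11.4]{sturmfels1996grobner}; no new combinatorics is needed, since all the content lies in Theorem~A. Fix $n$, a block diagonal matching field $B_\ell$ and a permutation $w \in S_n$ for which the hypotheses of Theorem~A hold, i.e.\ $\inwb(I(X(w)))$ is generated in degree two and $F_{n,\ell,w}$ is monomial-free. (By Theorems~B and~C these pairs $(w,\ell)$ are explicitly classified, so as $w$ ranges over the admissible permutations the degenerations constructed below form a genuine, combinatorially indexed family.) Setting $R := \mathbb K[P_J : J \notin S_w]$, Theorem~A gives
\[
\inwb\big(I(X(w))\big) \;=\; F_{n,\ell,w},
\]
and states that $F_{n,\ell,w}$ is the kernel of a monomial map out of $R$; in particular $F_{n,\ell,w}$ is a homogeneous prime binomial ideal and $R/F_{n,\ell,w}$ is a positively graded affine semigroup ring.

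First I would construct the degeneration. As the Pl\"ucker relations are homogeneous (of degree two in the $P_J$), the ideal $I(X(w)) \subseteq R$ is homogeneous, so the standard weight-degeneration construction attached to $\wb_\ell$ applies: let $\widetilde I \subseteq R[t]$ be the $t$-homogenization of $I(X(w))$ with respect to $\wb_\ell$ --- replace each $f = \sum_\alpha c_\alpha \mathbf P^\alpha \in I(X(w))$ by $t^{-d_f}\sum_\alpha c_\alpha t^{\,\alpha\cdot\wb_\ell}\mathbf P^\alpha$ with $d_f = \min\{\alpha\cdot\wb_\ell : c_\alpha \ne 0\}$ --- and put $\mathcal F = \Proj_{\mathbb K[t]}\!\big(R[t]/\widetilde I\big) \to \mathbb A^1 = \Spec \mathbb K[t]$; see e.g.\ \cite{M-S}. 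This family is flat over $\mathbb A^1$ because $I(X(w))$ is homogeneous; the fibre over any $t \ne 0$ is isomorphic to $\Proj\big(R/I(X(w))\big) = X(w)$, while the fibre over $t = 0$ is $\Proj\big(R/\inwb(I(X(w)))\big) = \Proj\big(R/F_{n,\ell,w}\big)$. Since $F_{n,\ell,w}$ is a homogeneous prime binomial ideal, this special fibre is an irreducible projective toric variety, so $\mathcal F \to \mathbb A^1$ is a toric degeneration of $X(w)$; this proves the first assertion.

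Next I would treat the Khovanskii-basis claim. By construction $F_{n,\ell,w}$ is the toric ideal of algebraic relations among the $\wb_\ell$-leading monomials $\{\inwb(P_J) : J \notin S_w\}$ of the Pl\"ucker forms generating the Pl\"ucker algebra $A_w$ of $X(w)$. By Theorem~A this ideal of relations coincides with $\inwb(I(X(w)))$, which is precisely the statement that the initial algebra of $A_w$ is generated by the $\inwb(P_J)$ --- the defining property of a SAGBI basis. Hence, by \cite[Theorem~11.4]{sturmfels1996grobner}, the Pl\"ucker variables $\{P_J : J \notin S_w\}$ form a SAGBI basis for $A_w$; as there are only finitely many of them, this is a finite SAGBI --- equivalently, a finite Khovanskii --- basis for the Pl\"ucker algebra of each Schubert variety under consideration, which is the second assertion.

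I do not expect a genuine obstacle here, since the substance is all in Theorem~A; only three bookkeeping points require care: (i) flatness of the deformation, which is automatic for a homogeneous ideal; (ii) that the special fibre is a toric variety in the intended sense, which reduces to $F_{n,\ell,w}$ being homogeneous, prime and binomial, all supplied by Theorem~A; and (iii) matching the precise wording of \cite[Theorem~11.4]{sturmfels1996grobner} with the Khovanskii-basis formulation, for which one only needs that the initial algebra of $A_w$ is generated by the leading terms of the chosen generators. The one conceptual point worth flagging is that $\wb_\ell$ a priori induces only a rank-one quasi-valuation on $A_w$, so its associated graded ring could in principle be non-reduced or fail to be a domain; Theorem~A dispels this by identifying that associated graded with $R/F_{n,\ell,w}$, a polynomial ring modulo a prime binomial ideal.
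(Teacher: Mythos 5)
Your proposal is correct and follows exactly the route the paper intends: the paper states this as an immediate corollary of Theorem~A together with \cite[Theorem~11.4]{sturmfels1996grobner}, and you have simply filled in the standard details (the $t$-homogenization giving the flat family, primality of $F_{n,\ell,w}$ making the special fibre toric, and the SAGBI/Khovanskii equivalence). Your explicit caveat that the degeneration is obtained only for the pairs $(w,\ell)$ satisfying the hypotheses of Theorem~A, as classified by Theorems~B and~C, is a point the paper leaves implicit, but it does not change the argument.
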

\begin{remark}
In the forthcoming paper \cite{Akihiro}, we study the polytopes arising from toric varieties in Corollary~\ref{cor:toric_deg}. In particular, we show that such polytopes are related 
by sequences of combinatorial mutations. 
\end{remark}

\medskip

Our computational results lead us to the following conjecture.
\begin{conjecture}\label{conj:J2_quad_gen}
The ideal $\inwb(I(X(w)))$ is generated in degree two. 
\end{conjecture}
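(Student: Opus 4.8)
\medskip
\noindent\emph{Proof strategy.} The plan is to sandwich $F_{n,\ell,w}$ between $\inwb(I(X(w)))$ and a dimension bound coming from semistandard tableaux, and to finish by a Hilbert--function comparison. Write $R_w=\mathbb{K}[P_J: J\subseteq[n],\ J\notin S_w]$, so that both $F_{n,\ell,w}$ and $\inwb(I(X(w)))$ are homogeneous ideals of $R_w$, and let $\pi\colon\mathbb{K}[P_J]\to R_w$ be the retraction sending $P_J\mapsto 0$ for $J\in S_w$. For a tableau $T=[I_1\cdots I_k]$ put ${\bf P}^T=P_{I_1}\cdots P_{I_k}$ and call $T$ \emph{$w$-compatible} if $I_j\notin S_w$ for all $j$, so that ${\bf P}^T$ is a monomial of $R_w$. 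By the classical standard monomial theory for Schubert varieties, the standard monomials not vanishing on $X(w)$ --- equivalently the $w$-compatible semistandard Young tableaux of Definition~\ref{def:ssyt_matching_field_tableaux} --- descend to a $\mathbb{K}$-basis of $\mathbb{K}[X(w)]=R_w/I(X(w))$; since passing to an initial ideal preserves Hilbert functions, it suffices to establish (i) $F_{n,\ell,w}\subseteq\inwb(I(X(w)))$ and (ii) the monomials ${\bf P}^T$ with $T$ a $w$-compatible semistandard tableau span $R_w/F_{n,\ell,w}$. Indeed, (ii) gives $\dim_{\mathbb{K}}(R_w/F_{n,\ell,w})_d\le\#\{w\text{-compatible semistandard }T\text{ of degree }d\}=\dim_{\mathbb{K}}(R_w/\inwb(I(X(w))))_d$, while (i) gives the reverse inequality, whence the two ideals agree in every degree, and in particular $F_{n,\ell,w}=\inwb(I(X(w)))$.

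\medskip
\noindent For (i) we invoke Corollary~\ref{cor:block_diag_degen_flag}: $F_{n,\ell}=\inwb(I_n)$ is generated by quadratic binomials $g={\bf P}^A{\bf P}^B-\varepsilon\,{\bf P}^C{\bf P}^D$ with $\varepsilon=\pm1$, and since $I_n$ is homogeneous, $(\inwb(I_n))_2=\inwb((I_n)_2)$, so each such $g$ is the ${\bf w}_\ell$-initial form $\inwb(f)$ of a quadratic element $f\in I_n$. As $\langle P_J:J\in S_w\rangle$ is generated by variables, $F_{n,\ell,w}=\pi(F_{n,\ell})$ is generated by the $\pi(g)$. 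The monomial-free hypothesis forces clean behaviour of these generators: if exactly one of ${\bf P}^A{\bf P}^B$, ${\bf P}^C{\bf P}^D$ survived $\pi$ then $\pi(g)$ would be a nonzero monomial in $F_{n,\ell,w}$, which is excluded; hence for every generator either both monomials vanish and $\pi(g)=0$, or neither does and $\pi(g)=g$. In the second case the two terms of $g$ are precisely the minimal-${\bf w}_\ell$-weight terms of $f$, they survive in $\pi(f)$, so $g=\inwb(\pi(f))\in\inwb(I(X(w)))$ since $\pi(f)\in I(X(w))$. This proves (i) and, as a byproduct, records the combinatorial fact that for every quadratic generator ${\bf P}^A{\bf P}^B-\varepsilon\,{\bf P}^C{\bf P}^D$ of $F_{n,\ell}$ one has $\{A,B\}\cap S_w=\varnothing$ if and only if $\{C,D\}\cap S_w=\varnothing$.

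\medskip
\noindent For (ii) we use the matching-field straightening law underlying $F_{n,\ell}$ --- the toric analogue of Pl\"ucker straightening, which follows from $F_{n,\ell}$ being the quadratically generated kernel of the monomial map $\phi_\ell$, cf.\ \cite{OllieFatemeh2,sturmfels1996grobner}: any monomial ${\bf P}^{T'}$ can be transformed, through a sequence of the quadratic moves ${\bf P}^A{\bf P}^B\mapsto\varepsilon\,{\bf P}^C{\bf P}^D$ coming from the generators of $F_{n,\ell}$, into $\pm{\bf P}^T$, where $T$ is the unique semistandard tableau with $\phi_\ell({\bf P}^{T'})=\pm\phi_\ell({\bf P}^T)$. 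If $T'$ is $w$-compatible, the byproduct of step (i) ensures that every such move replaces a pair of columns outside $S_w$ by another pair outside $S_w$; hence the whole straightening path stays among $w$-compatible tableaux, its moves are genuine relations in $F_{n,\ell,w}$, and the resulting $T$ is $w$-compatible. Thus the $w$-compatible semistandard tableaux span $R_w/F_{n,\ell,w}$, which is (ii). The toric conclusion then follows: the retraction $\overline{\phi_\ell}\colon R_w\to\mathbb{K}[x_{ij}]$ of $\phi_\ell$ is a monomial map with $F_{n,\ell,w}\subseteq\ker\overline{\phi_\ell}$ by step (i), and since the monomials $\phi_\ell({\bf P}^T)$ for distinct $w$-compatible semistandard $T$ are pairwise distinct, $\dim_{\mathbb{K}}(R_w/\ker\overline{\phi_\ell})_d\ge\dim_{\mathbb{K}}(R_w/F_{n,\ell,w})_d$; the surjection $R_w/F_{n,\ell,w}\twoheadrightarrow R_w/\ker\overline{\phi_\ell}$ is then an isomorphism, so $\inwb(I(X(w)))=F_{n,\ell,w}=\ker\overline{\phi_\ell}$ is a toric prime binomial ideal.

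\medskip
\noindent The degree-two hypothesis is what keeps this outline elementary: $F_{n,\ell,w}$, being generated by reductions of the quadratic generators of $F_{n,\ell}$, is generated in degree two, so if $\inwb(I(X(w)))$ is as well, the equality $F_{n,\ell,w}=\inwb(I(X(w)))$ reduces to an equality of degree-two components; one then needs only steps (i) and (ii) and the standard-monomial count in degree two, the last being an elementary count of quadratic relations on $X(w)$. We expect the main obstacle to be step (ii): showing that the matching-field quadratic relations genuinely straighten an arbitrary monomial in the non-vanishing Pl\"ucker variables to a $w$-compatible semistandard tableau. This is exactly where the combinatorics of $S_w$ must be reconciled with the matching-field moves, and where the monomial-free hypothesis --- classified in terms of $w$ and $B_\ell$ by Theorems~B and~C --- is indispensable; carrying it out is the substance of the analysis of generating sets of restricted matching field ideals in \S\ref{sec:mon_bases_gen_set}.
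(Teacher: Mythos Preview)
The statement you were asked to prove is labeled a \emph{Conjecture} in the paper, and the paper does not prove it. It only verifies it computationally for $n\le 5$ (Remark~\ref{rmk:compute_init_schu}) and establishes it as a byproduct of Theorem~A in the special case $\ell=0$ with $F_{n,0,w}$ monomial-free. There is therefore no ``paper's own proof'' to compare against; your task was to attack an open problem.

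Your proposal does not prove Conjecture~\ref{conj:J2_quad_gen}. What you have written is essentially a proof strategy for Theorem~A, and indeed in your final paragraph you explicitly \emph{assume} the conjecture (``so if $\inwb(I(X(w)))$ is as well\ldots'') rather than deduce it. More precisely, your entire argument is run under the standing hypothesis that $F_{n,\ell,w}$ is monomial-free: step~(i) requires it to rule out the case where exactly one term of a quadratic generator survives $\pi$, and step~(ii) requires it so that each quadratic move preserves $w$-compatibility. But Conjecture~\ref{conj:J2_quad_gen} is stated for \emph{all} $w$ and $\ell$, with no monomial-free assumption. For the many $(w,\ell)$ with $F_{n,\ell,w}$ not monomial-free (classified by Theorems~B and~C), your approach says nothing, yet the conjecture still asserts quadratic generation of $\inwb(I(X(w)))$ there.

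Even restricted to the monomial-free case, your step~(ii) has a gap for $\ell\neq 0$: you assert that every monomial straightens via quadratic moves to a unique $w$-compatible \emph{semistandard} tableau, but for block diagonal matching fields the semistandard tableaux are not the standard monomials of $F_{n,\ell}$; one must pass through the bijection $\Gamma_\ell$ of \S\ref{sec:mon_bases_main}, and the paper only establishes the needed properties (Lemmas~\ref{lem:SSYT_flag_injective}--\ref{lem:SSYT_flag_surject_w_Q_ell}) in degree two. This is precisely why Theorem~A needs the degree-two hypothesis for $\ell\neq 0$, and why the conjecture remains open.
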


In \S\ref{sec:standard_monomials} we show that this conjecture holds if $\ell = 0$ and $F_{n,\ell,w}$ is monomial-free. We have also verified this conjecture for all ideals $\inwb(I(X(w)))$ where $n \in \{3, 4, 5\}$. If this conjecture holds then the conclusion of Theorem~A holds for all block diagonal matching fields.

\begin{remark}\label{rmk:compute_init_schu}
We use $\mathtt{Macaulay2}$ to calculate the ideals $F_{n,\ell,w}$ and check whether they are toric, i.e. they are non-zero prime binomial ideals. The code is available on Github:
\begin{center}
\texttt{\href{https://github.com/ollieclarke8787/toric_degenerations_schubert_flag}{https://github.com/ollieclarke8787/toric\_degenerations\_schubert\_flag}}
\end{center}
{We verify inclusions of the ideals $F_{n,\ell,w}$ with the ideals $\inwb(I(X(w)))$ where $I(X(w))$ is the ideal obtained from $I_n$ by setting the variables $\{P_J : J \in S_w \}$ to be zero. We perform all calculations for $\Flag_4$ and $\Flag_5$. We also include documentation which allows users to produce similar code for different flag varieties. For $\Flag_6$ our computations did not terminate on a standard desktop computer.
In all cases for which computations terminated, we see that $\inwb(I(X(w)))$ is generated in degree two, verifying Conjecture~\ref{conj:J2_quad_gen} in those cases.
}
\end{remark}

{
To answer Question~\ref{question:flag}, in light of Theorem~A, we provide a complete characterization of ideals of type $F_{n,\ell,w}$ introduced in Definition~\ref{def:ideals} into the categories: zero or non-zero and binomial or non-binomial. An ideal $F_{n,\ell,w}$ is monomial-free, hence toric and equal to $\textrm{in}_{\textbf{w}_\ell}(I(X(w)))$, if and only if $F_{n,\ell,w}$ is either zero or binomial.} In particular, Theorem~B determines which ideals $F_{n,\ell,w}$ are zero and Theorem~C determines which ideals $F_{n,\ell,w}$ are non-zero and binomial.
We illustrate our main results in Figure~\ref{flowchart:inductive_reln} 
by providing a pictorial survey. 

\medskip

\noindent{\bf Notation}. Before stating further results, we fix the following notation.

\begin{itemize}
    \item  From this section and on, $I$ and $J$ will denote subsets of $[n]$ that index variables $P_I$ and $P_J$. This should not be confused with the Pl\"ucker ideal $I_n$. If the ideal does appear, then it will be made clear.
    \item Given a block diagonal matching field $B=(E_1|E_2)$ on $[n-1]$ we denote by $\overline{B}$ the matching field $(E_1 | E_2 \cup n)$ on $[n]$. Similarly, given a block diagonal matching field $B = (E_1|E_2)$ on $[n]$ for $n \ge 2$ with $E_2 \neq \varnothing$, we denote by $\ul{B}$ the block diagonal matching field $(E_1|E_2 \backslash n )$. In which case we say $\ul{B}$ is the restriction of $B$ to $[n-1]$.
    \item   Given a permutation $w = (w_1, \dots, w_{n-1})$ on $[n-1]$ and $t \in \{0,1, \dots, n-1 \}$, we denote by $\overline{w}$ for the permutation $(w_1, \dots, w_t, n, w_{t+1}, \dots, w_{n-1})$ on $[n]$. Similarly, given a permutation $w = (w_1, \dots, w_n)$ on $[n]$ with $w_s = n$, we denote by $\ul{w}$ the permutation $(w_1, \dots, w_{s-1}, w_{s+1}, \dots, w_n)$ on $[n-1]$. Note $\underline w = w|_{n-1}$ is a special example of a restriction.
    \item   If $B$ is the diagonal matching field on $[n-1]$, we can regard this either as the block diagonal matching field $(\varnothing \mid 1, \dots, n-1)$ or $(1, \dots, n-1 \mid \varnothing)$. This gives $\overline{B}$ to be $(\varnothing \mid 1, \dots, n)$, i.e. the diagonal matching field on $[n]$, or $(1, \dots, n-1 \mid n)$, a non-diagonal matching field. Where necessary we distinguish between these, otherwise if left unstated all results apply to both cases.
\end{itemize}

Here, we state our main results on Schubert varieties.

\medskip

\noindent{\bf Theorem~B} (Theorem~\ref{thm:zero}){\bf .} \label{theoremA:corollary:zero_flag}
For each $\ell$,  $F_{n,\ell,w}=0$ if and only if $w\in Z_n$, where
$$Z_{n} = \{ s_{i_1} \dots s_{i_p} \in S_n : \lvert i_k - i_\ell \rvert \ge 2, \text{ for all } k, \ell\}.$$
Here, $s_i=(i,i+1) \in S_n$ is the transposition interchanging $i$ and $i+1$.

\begin{definition}\label{def:zero}
For each block diagonal matching field $B_\ell$, we let
$$T_{n,\ell} = \{w \in S_n : F_{n,\ell,w} \text{ is   binomial} \} \quad\text{and}\quad Z_{n}  = \{w \in S_n : F_{n,n,w} = 0\},$$
along with $N_{n,\ell}  = S_n \backslash (T_{n,\ell} \cup Z_{n} )$ for the set of permutations for which $F_{n,\ell,w}$ is  non-binomial.
Note that $B_n$ is the diagonal matching field denoted by $D$. 
\end{definition}

\begin{definition}
 We say that a permutation $w\in S_n$ has the \textit{descending property} if for $w_t=n$ we have that $n = w_t > w_{t+1} > \dots > w_n$. We denote $S_n^>$ for the set of permutations in $S_n$ with descending property.
\end{definition}

\begin{definition}\label{def:notation}
For each block diagonal matching field $B_\ell$, we let
\begin{itemize}
\item $\ul{Z}_{n}= \{w \in S_n : \ul{w} \in Z_{n-1}\}$,
\item $\ul{T}_{n,\ell}= \{w \in S_n : \ul{w} \in T_{n-1,\ell}\}$,
\item $A_1 =\ul{Z}_{n}\cap\{w\in S_n:\ w_n=n-2\ \text{and}\ \{w_{n-2},w_{n-1}\}=\{n-1, n\}\}$,
\item $A_2 = \ul{T}_{n,\ell}\cap\{w \in S_n: \ul{w} \in S_{n-1}^> \text{ and if } w_s = n-1, w_t = n \text{ then } t \ge s-1\}
   $,
\item $A_3 = \ul{T}_{n,\ell}\cap\{w \in S_n: \ul{w} \in S_{n-1}\backslash S_{n-1}^> \text{ and if } w_s = n-1, w_t = n \text{ then } t \ge s+2 \},$
\item $A'_2 = A_2\backslash\{(n-1, n, n-2, n-3, \dots, 1)\}$,
\item $\tilde{A}_1 = A'_2\cap \ul{T}_{n,n-2}$, where $\ell=n-1$ in $A_2$, 
\item $\tilde{A}_2 = (\ul{T}_{n,n-1}\backslash \ul{T}_{n,n-2})\cap\{w \in S_n:\
     \text{if } w_s = n-1, w_t = n \text{ then } t \ge s+1 \}.$
\end{itemize}
\end{definition}

In the following theorem, we classify all binomial ideals arising from block diagonal matching fields inductively, i.e., in terms of the sets defined above which are themselves written in terms of $T_{n-1,\ell}$ and $Z_{n-1}$. Note that for $n=1$ and $n=2$ we have $Z_n = S_n$ so there are no non-zero ideals of the form $F_{n,\ell,w}$. 
The toric ideals of the form $F_{3,\ell,w}$ appear in Table~\ref{fig:num4} as the binomial ideals.
Note that all the ideals are principal so it is straightforward to determine when these binomial ideals are prime, hence toric.

\begin{figure}
    \centering
    \includegraphics[scale = 0.65]{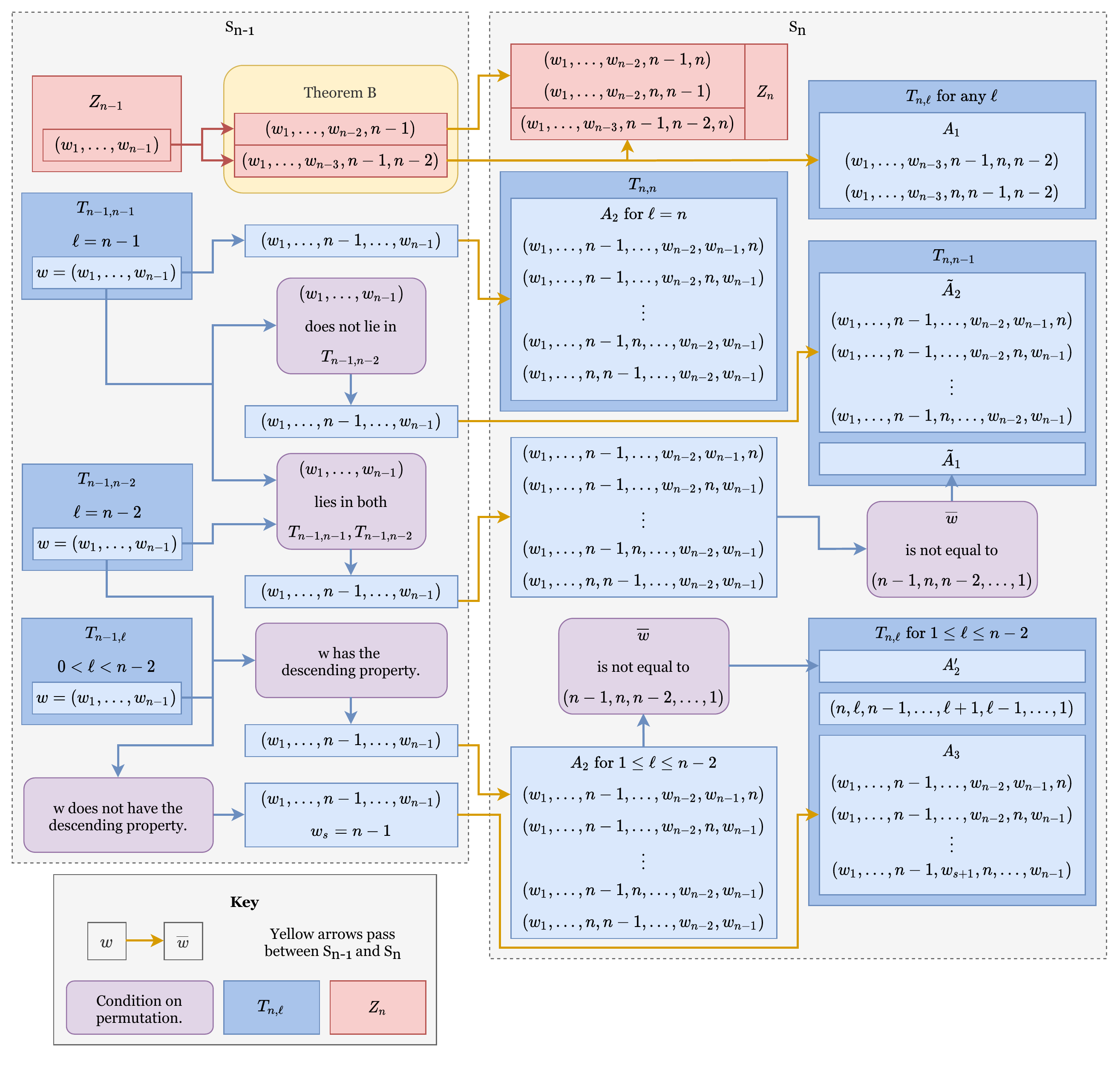}
    \caption{The above diagram depicts Theorem~B and C and shows how each permutation in $T_{n,\ell}$ and $Z_n$ is obtained from $T_{n-1, \ell}$ and $Z_{n-1}$. The starting and ending boxes are shown in darker blue for $T_{n,\ell}$ and red for $Z_n$. In each starting box we fix a permutation $w$. We move to adjacent boxes along arrows until reaching an ending box. Purple boxes are conditions for permutations. A permutation passes through a purple box only if the condition is satisfied. The yellow arrows indicate a transition from $w$ in $S_{n-1}$ to $\overline{w}$ in $S_n$. The boxes before and after a yellow arrow indicate the position in which $n$ is added to $w$ to obtain $\overline{w}$.
    }
    \label{flowchart:inductive_reln}
\end{figure}

\medskip

\noindent{\bf Theorem~C} (Theorems~\ref{Intro:ToricFlag}, \ref{Intro:Semi} and \ref{Intro:l}){\bf .}\label{Intro:ToricFlagBlock1}
Let $n \ge 4$. With the notation above, we have:
\begin{itemize}
      \item[ ] ${\bf C.1.}$ 
      $T_{n,n} = A_1 \cup A_2$, where $\ell = n$,
     \item[ ] ${\bf C.2.}$ 
     $T_{n,n-1} = A_1 \cup \tilde{A}_1 \cup  \tilde{A}_2$, where $\ell = n-1$,
 \item[ ]   ${\bf C.3.}$ 
 $T_{n,\ell} = A_1 \cup A'_2\cup A_3 \cup \{(n, \ell, n-1, n-2, \dots, \ell +1, \ell -1, \dots, 1)\}$ for $1\leq\ell\leq n-2$.
\end{itemize}

\begin{remark}
Note that $\bigcap_{\ell=1}^n T_{n,\ell} \supset A_1$.
For $n=4$, this indicates that the permutations $1342, 1432$ appear in all rows in Table~\ref{fig:num4}. 
\end{remark}

\begin{example}\label{example:n_3_4_calculation}
For $n=3$ and $4$ we have that 
$$Z_3=\{ 123, 132, 213\}\quad {\rm and}\quad Z_4=\{1234, 1243, 1324, 2134, 2143\}.$$
Table~\ref{fig:num4} shows all non-zero ideals $F_{3,\ell,w}$ and all permutations $w$ for which $F_{4,\ell,w}$ is binomial. In each case we have verified that all binomial ideals are in fact prime, hence toric.
In addition, we can calculate the ideals $F_{n,\ell,w}$ for each $3 \le n  \le 6$, $0 \le \ell \le n-1$ and $w \in S_n$. Table~\ref{table:flag_calculation} shows the number of permutations $w \in S_n$, such that $F_{n,\ell,w}$ is  binomial for each given $n$ and $\ell$. For these examples we have also verified that all binomial ideals are prime, and so toric, when $n = 5$.
\end{example}

\begin{table}
\begin{center}
\begin{tabular}{|c|c|c|}
    \hline
  $\ell$             & $w$   & $F_{3,\ell,w}$ \\
    \hline
    \multirow{3}{*}{$0$}        & 231   & $\langle P_2 P_{13} - P_1 P_{23}  \rangle $ \\
                                & 312   & $\langle P_2 P_{13} \rangle $ \\
                                & 321   & $\langle P_2 P_{13} - P_1 P_{23}  \rangle $ \\
    
    \hline
    \multirow{3}{*}{$1$}   & 231   & $\langle P_2 P_{13} \rangle $ \\
                                & 312   & $\langle P_3 P_{21} - P_2 P_{13} \rangle $ \\
                                & 321   & $\langle P_3 P_{21} - P_2 P_{13} \rangle $ \\
                                \hline
    \multirow{3}{*}{$2$}   & 231   & $\langle P_1 P_{32} \rangle $ \\
                                & 312   & $\langle P_3 P_{12} \rangle $ \\
                                & 321   & $\langle P_1 P_{32} - P_3 P_{12} \rangle $ \\
    \hline
\end{tabular}
    \quad
    \begin{tabular}{|c|l|}
        \hline
        $\ell$ & Toric Permutations \\
        \hline
        $0$         & 1342 1432 2314 2341 2431 3214 3241 3421 4321 \\
        \hline
               $1$   & 1342 1432 3124 3142 3214 3241 4132 4321 \\        \hline
 
        $2$   & 1342 1432 3214 3241 4231 4321 \\
        \hline
        $3$   & 1342 1432 2314 2341 3214 3241 4321 \\
        \hline
    \end{tabular}
\end{center}
\caption{The ideals $F_{3,\ell,w}$ where $w \not\in Z_3$ and the list of all $w \not\in Z_4$ for which $F_{4,\ell,w}$ is binomial and prime (toric). 
}\label{fig:num4} 
\end{table}

\begin{table}
    \centering
    \begin{tabular}{|c|cccccc|c|}
        \hline
        Binomial   & \multicolumn{6}{c|}{$\ell$}                   &       \\
        \hline
        $n$     & 0     & 1     & 2     & 3     & 4     & 5     & Total \\
        \hline
        3       & 2     & 1     & 2     &       &       &       & 5     \\
        4       & 9     & 8     & 6     & 7     &       &       & 30    \\
        5       & 34    & 29    & 24    & 26    & 31    &       & 114   \\
        6       & 119   & 99    & 85    & 90    & 104   & 115   & 612   \\
        \hline
    \end{tabular}
    \caption{For each $3 \le n \le 6$ and $0 \le \ell \le n-1$ we calculate the number of permutations $w \in S_n$ for which $F_{n,\ell,w}$ is binomial. For each row of this table where $n \le 5$ we have verified that each binomial ideal is in fact prime, hence toric.}
    \label{table:flag_calculation}
\end{table}

Before giving the proofs of the main results, we state the following corollary which shows that the descending property characterizes \textit{many} of the permutations in $T_{n,\ell}$.

\begin{corollary}\label{cor:non-desc}
For each block diagonal matching field $B_\ell$, there is at most one permutation $w$ for which $F_{n,\ell,w}$ is binomial and $w$ does not have the descending property. More precisely, the only exceptions are for $1 \le \ell \le n-2$ and $w = (n, \ell, n-1, \dots, \ell +1, \ell -1, \dots, 1)$.
\end{corollary}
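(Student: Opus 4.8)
The plan is to read the statement off the inductive description of the sets $T_{n,\ell}$ provided by Theorems~B and~C (so we may take $n\ge4$, the small cases being recorded in Table~\ref{fig:num4}). Set $w^*_{n,\ell}=(n,\ell,n-1,n-2,\dots,\ell+1,\ell-1,\dots,1)$ for $1\le\ell\le n-2$. I would first observe that $w^*_{n,\ell}$ fails the descending property: since $\ell\le n-2$ we have $\ell+1\le n-1$, so the third entry $n-1$ of $w^*_{n,\ell}$ is larger than the second entry $\ell$ while $n$ occupies the first position. Given this, it suffices to prove by induction on $n$ the sharper claim that
\[
T_{n,\ell}\subseteq S_n^>\ \text{for}\ \ell\in\{0,n-1,n\},\qquad T_{n,\ell}\subseteq S_n^>\cup\{w^*_{n,\ell}\}\ \text{for}\ 1\le\ell\le n-2,
\]
since $w^*_{n,\ell}\in T_{n,\ell}$ is part of Theorem~C.3; the base cases $n=3,4$ come from Table~\ref{fig:num4}.

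For the inductive step I would use Theorem~C, which expresses each $T_{n,\ell}$ as a union of some of the sets $A_1,A_2,A'_2,A_3,\tilde{A}_1,\tilde{A}_2$ together with possibly $\{w^*_{n,\ell}\}$, and check that each of $A_1,A_2,A'_2,A_3,\tilde{A}_1,\tilde{A}_2$ is contained in $S_n^>$. The set $A_1$ is immediate, since its elements have last three entries $(n-1,n,n-2)$ or $(n,n-1,n-2)$ and everything after the position of $n$ is strictly decreasing in both cases. For $A_2$, and hence $A'_2\subseteq A_2$ and $\tilde{A}_1\subseteq A'_2$, I would combine the defining condition $\ul{w}\in S_{n-1}^>$ with the position constraint $t\ge s-1$, where $w_s=n-1$ and $w_t=n$: deleting the value $n$ puts $n-1$ at position $s$ of $\ul w$ when $s<t$ and at position $s-1$ when $s>t$, and the constraint $t\ge s-1$ forces either $s<t$ or $s=t+1$. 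In both situations the strictly decreasing tail of $\ul w$ beyond $n-1$ restricts to a strictly decreasing tail of $w$ beyond $n$, using that $n-1$ is the largest value among the entries of $w$ following $n$; hence $w\in S_n^>$. The set $\tilde{A}_2$ is handled identically: the stronger constraint $t\ge s+1$ forces $s<t$, and $\ul w\in T_{n-1,n-1}\subseteq S_{n-1}^>$ by the inductive hypothesis.

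The only step that uses the inductive hypothesis in an essential way is $A_3$, where by definition $\ul w\in T_{n-1,\ell}\setminus S_{n-1}^>$. Induction forces $1\le\ell\le n-3$ and $\ul w=w^*_{n-1,\ell}$, whose entries from the third position on, namely $(n-2,n-3,\dots,\ell+1,\ell-1,\dots,1)$, are strictly decreasing. Since $n-1$ is the first entry of $w^*_{n-1,\ell}$, the constraint $t\ge s+2$ forces $s=1$ and $t\ge3$, so $w$ is obtained by inserting $n$ into $w^*_{n-1,\ell}$ at a position $\ge3$; the entries of $w$ after $n$ then form a suffix of the decreasing sequence above, so $w\in S_n^>$. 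Finally $w^*_{n,\ell}$ lies outside $A_1\cup A'_2\cup A_3$ because those sets are contained in $S_n^>$ while $w^*_{n,\ell}$ is not, so for $1\le\ell\le n-2$ exactly one permutation in $T_{n,\ell}$ fails the descending property, and none does for $\ell\in\{0,n-1,n\}$. I expect the real work to be the index bookkeeping — carefully tracking how the positions of $n$ and $n-1$ in $w$ compare with those in $\ul w$, and confirming that the constraints built into the definitions of $A_2$, $A_3$ and $\tilde{A}_2$ are exactly strong enough to force the descending property; the rest is a direct unwinding of Theorem~C.
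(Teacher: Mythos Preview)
Your argument is correct, but it takes a genuinely different route from the paper. The paper proves the corollary directly from Lemmas~\ref{lem:WDecrease}, \ref{lem:WDec2} and Corollary~\ref{cor:l1...n-2_exceptional} (which follows from Lemma~\ref{lem:WDec3}): these lemmas establish the descending property by exhibiting an explicit monomial in $F_{n,\ell,w}$ whenever some $w_k<w_{k+1}$ occurs after the position of $n$, without ever invoking the full classification of $T_{n,\ell}$. Your proof instead treats Theorem~C as a black box and verifies, by induction on $n$, that each building block $A_1,A_2,A_2',A_3,\tilde A_1,\tilde A_2$ lies in $S_n^>$, using only the position constraints on $n$ and $n-1$ together with the inductive hypothesis on $\ul w$. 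The paper's approach is logically lighter (the cited lemmas are proved independently of, and indeed are inputs to, Theorem~C), while yours has the pedagogical advantage of showing how the descending property is forced by the very constraints defining the sets in Definition~\ref{def:notation}. One minor point: for $A_3$ with $\ell=n-2$ the inductive hypothesis gives $T_{n-1,n-2}\subset S_{n-1}^>$ (the semi-diagonal case for $n-1$), so $A_3$ is empty there; your phrasing ``induction forces $1\le\ell\le n-3$'' implicitly handles this, but it is worth making explicit.
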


\begin{proof}
We take cases on $B_\ell$.

\textbf{Case 1.} Let $\ell \in \{n, n-1\}$. By Lemmas~\ref{lem:WDecrease} and \ref{lem:WDec2} we have $T_{n,\ell} \subset S_n^>$ and so $T_{n,\ell} \backslash S_n^> = \varnothing$.

\textbf{Case 2.} Let $\ell \in \{1, \dots, n-2 \}$. By Corollary~\ref{cor:l1...n-2_exceptional} we have that 
\[T_{n,\ell}\backslash S_n^> = \{(n, \ell, n-1, \dots, \ell+1, \ell-1, \dots, 1) \}.\]
\end{proof}

Using the language of permutation avoidance, we give a simple description of the permutations $w$ for which $F_{n,\ell,w}$ is monomial-free.

\begin{definition}\label{def:perms_P_ell}
Fix $\ell \in \{1, \dots, n-1 \}$. We define $P_\ell \subseteq S_n$ to be the collection of permutations $w \in S_n$ such that the following hold.
\begin{itemize}
    \item If $w$ is not $312$-free then $w_1 > w_2 = \ell$ and $w \backslash \ell$ is $312$-free.
    \item If $w|_m = (m-1, m, m-2, \dots, 1)$ for some $3 \le m \le n$ then $w_1 < w_2 \le \ell$ and $w|_{w_2} = (w_1, w_2, w_2 - 1, \dots, w_1 + 1, w_1 - 1, \dots, 1)$.
\end{itemize}
We define $P_n \subseteq S_n$ to be the collection of $312$-free permutations.
\end{definition}

\begin{theorem}\label{thm:P_ell=T_n+Z_n}
The ideal $F_{n,\ell,w}$ is monomial-free if and only if $w \in P_\ell$.
\end{theorem}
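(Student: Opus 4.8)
The plan is to show that Theorem~\ref{thm:P_ell=T_n+Z_n} is a non-inductive repackaging of Theorems~B and C (together with Theorem~A's equivalence ``monomial-free $\iff$ zero or binomial''). So the proof amounts to verifying the set-theoretic identity
\[
P_\ell = Z_n \cup T_{n,\ell}
\]
for each $\ell \in \{1,\dots,n\}$, where the right-hand side is given by Theorem~B ($Z_n$) and Theorem~C ($T_{n,\ell}$, split into the three cases $\ell = n$, $\ell = n-1$, and $1 \le \ell \le n-2$). First I would record that, by Theorem~A and the discussion preceding Theorem~B, $F_{n,\ell,w}$ is monomial-free exactly when it is zero or binomial, i.e.\ exactly when $w \in Z_n \cup T_{n,\ell}$; hence the theorem is equivalent to the claim $P_\ell = Z_n \cup T_{n,\ell}$.

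The natural route is induction on $n$, mirroring the inductive structure of Theorems~B and C. For the base cases $n = 1, 2, 3$ one checks directly (Table~\ref{fig:num4} handles $n = 3$, and $Z_n = S_n$ for $n \le 2$), and one verifies that the conditions defining $P_\ell$ reduce correctly. For the inductive step I would translate the two defining conditions of $P_\ell$ into statements about $\ul{w} = w|_{n-1}$ and the insertion position of $n$: the condition ``$w$ not $312$-free $\Rightarrow w_1 > w_2 = \ell$ and $w\backslash\ell$ is $312$-free'' should correspond, via the $\overline{\phantom{w}}/\ul{\phantom{w}}$ operations, to membership in the $\ul Z_n$, $\ul T_{n,\ell}$, $A_1$, $A_2$, $A_2'$, $A_3$ pieces appearing in Definition~\ref{def:notation} and Theorem~C; likewise the condition about the ``staircase'' pattern $(m-1,m,m-2,\dots,1)$ should match up with the purple-box conditions in Figure~\ref{flowchart:inductive_reln} that govern where $n$ may be inserted. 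Concretely, I expect: a permutation is $312$-free iff it lies in $Z_n$ plus the ``generic'' part of $T_{n,\ell}$ (the parts $A_1 \cup A_2 \cup A_2' \cup A_3$, or their tildes), while the single exceptional non-$312$-free permutation $(n,\ell,n-1,\dots,\ell+1,\ell-1,\dots,1)$ from C.3 is precisely the one accommodated by the first bullet of $P_\ell$ (with $w_1 = n > w_2 = \ell$ and $w\backslash\ell$ the decreasing-then... staircase, which is $312$-free). The second bullet of $P_\ell$ encodes exactly the constraint that distinguishes, e.g., $A_2$ from $A_3$ and $\tilde A_1$ from $\tilde A_2$ — namely, whether $\ul w$ contains the staircase $(m-1,m,m-2,\dots,1)$ and, if so, where $n-1$ and $n$ sit relative to one another.

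I would organize the argument as a chain of lemmas: (i) for $\ell = n$, show $P_n = \{312\text{-free permutations}\}$ coincides with $Z_n \cup T_{n,n}$ using C.1 and the inductive description of $A_1 \cup A_2$; (ii) for $\ell = n-1$, match $P_{n-1}$ against $Z_n \cup A_1 \cup \tilde A_1 \cup \tilde A_2$ using C.2; (iii) for $1 \le \ell \le n-2$, match $P_\ell$ against $Z_n \cup A_1 \cup A_2' \cup A_3 \cup \{(n,\ell,n-1,\dots,\ell+1,\ell-1,\dots,1)\}$ using C.3 and Corollary~\ref{cor:non-desc} to isolate the unique non-descending exception. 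In each case the verification is ``local'': fix the position $s$ of $n$ in $w$, pass to $\ul w$, apply the inductive hypothesis $P_\ell^{(n-1)} = Z_{n-1} \cup T_{n-1,\ell}$, and check that the purple-box insertion conditions of Figure~\ref{flowchart:inductive_reln} are equivalent to the two bullet conditions of $P_\ell$ restricted appropriately.

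The main obstacle, I expect, is the bookkeeping in case (iii) and at the boundary $\ell = n-1$: the sets $A_2, A_2', A_3, \tilde A_1, \tilde A_2$ are defined by several interlocking inequalities on the positions of $n-1$ and $n$, and the permutation-avoidance conditions in $P_\ell$ must be shown to reproduce these inequalities exactly — in particular the subtle asymmetry ``$t \ge s-1$'' versus ``$t \ge s+2$'' versus ``$t \ge s+1$'' in $A_2$, $A_3$, $\tilde A_2$. Getting the staircase condition $w|_{w_2} = (w_1, w_2, w_2-1,\dots,w_1+1,w_1-1,\dots,1)$ to align with ``$\ul w \in S_{n-1}^>$ together with the position constraint'' will require a careful case analysis on whether $w_2 \le \ell$ and whether the staircase already appears in $\ul w$ or is created by inserting $n$. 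Once these equivalences are pinned down the theorem follows immediately, since $w \in P_\ell \iff w \in Z_n \cup T_{n,\ell} \iff F_{n,\ell,w}$ is zero or binomial $\iff F_{n,\ell,w}$ is monomial-free.
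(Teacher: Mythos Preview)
Your plan is correct and matches the paper's own proof essentially step for step: the paper also reduces the theorem to the identity $P_\ell = Z_n \cup T_{n,\ell}$, proves it by induction on $n$ with base case $n=3$, and splits the inductive step into the same three cases on $\ell$ (diagonal $\ell=n$, semi-diagonal $\ell=n-1$, and $1\le\ell\le n-2$), in each case passing to $\ul w$ and matching the insertion constraints in Definition~\ref{def:notation} against the two bullet conditions defining $P_\ell$. The bookkeeping you flag as the main obstacle---aligning the $t\ge s-1$, $t\ge s+1$, $t\ge s+2$ inequalities with the staircase and $312$-avoidance conditions---is exactly what the paper spends its effort on (via auxiliary Lemmas~\ref{lem:Q_perms_312}--\ref{lem:Q_perm_w1_w2_gap}), so your anticipation of where the work lies is accurate.
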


Note that  Theorem~C gives an inductive description of these permutations. Showing that the sets of permutations defined in Theorem~C and $P_\ell$ coincide is non-trivial and the proof is given in \S\ref{sec:mon_bases_main}.

\section{Zero initial ideals}\label{sec:pf_thm_a}

In this section, we examine the permutations $w$ for which $F_{n,\ell,w}$ is the zero ideal. We show that the statement $F_{n,\ell,w} = 0$ is independent of the choice of $\ell$ and so we need to  only check the permutation $w$ to decide if $F_{n,\ell,w}$ is zero. In particular, this means that Definition~\ref{def:zero} for $Z_n$ is well-defined. The main result of this section is the following.

\begin{theorem}\label{thm:zero}
For each $\ell$,  $F_{n,\ell,w}=0$ if and only if $w\in Z_n$, where
$$Z_{n} = \{ s_{i_1} \dots s_{i_p} \in S_n : \lvert i_k - i_\ell \rvert \ge 2, \text{ for all } k, \ell\}.$$
Here, $s_i=(i,i+1) \in S_n$ is the transposition interchanging $i$ and $i+1$.
\end{theorem}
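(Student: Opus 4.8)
The plan is to characterize permutations $w$ with $F_{n,\ell,w}=0$ by carefully tracking which generators of $F_{n,\ell}$ survive the restriction. Recall that $F_{n,\ell}$ is generated by quadratic binomials (Corollary~\ref{cor:block_diag_degen_flag}), so $F_{n,\ell,w}=0$ if and only if every quadratic binomial generator of $F_{n,\ell}$ has at least one of its two variables sent to zero, i.e.\ at least one of its two index sets lies in $S_w$. Thus the entire statement reduces to a combinatorial condition on $S_w$, which in turn is governed by the inequalities $J\le\{w_1,\dots,w_{|J|}\}$. The first step is therefore to set up a clean description of the generating set of $F_{n,\ell}$ — a binomial $\mathbf{x}_{B_\ell(I)}$-versus-$\mathbf{x}_{B_\ell(J)}$ relation exists precisely when the two matching field monomials agree, i.e.\ the tableaux of $I$ and $J$ are related by a single "swap" in the sense of the standard monomial / SAGBI combinatorics — and to reduce to understanding, for a fixed pair of columns, when both columns can simultaneously avoid $S_w$.

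Second, I would prove the $\ell$-independence asserted in the theorem: whether $F_{n,\ell,w}=0$ does not depend on $\ell$. The natural route is to show that for every $\ell$ there is a "critical" quadratic binomial in $F_{n,\ell}$ whose non-vanishing is equivalent to $w\notin Z_n$, and conversely that if $w\in Z_n$ then all generators vanish for all $\ell$. Concretely, given $w\notin Z_n$ there exist adjacent transpositions in a reduced word with indices differing by exactly one, which translates (via the definition of $S_w$) into a pair of subsets $I,J\notin S_w$ of consecutive sizes whose matching field monomials coincide; this pair furnishes a nonzero binomial in $F_{n,\ell,w}$. For the converse, assuming $w\in Z_n$, I would show every non-vanishing pair of index sets forces a monomial relation that is already trivial, so no binomial survives.

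Third, the heart of the argument is the explicit equivalence "$F_{n,\ell,w}=0 \iff w\in Z_n$". I expect to argue by translating the condition "$J\notin S_w$ for all $J$ appearing in some surviving generator" into a statement about the one-line notation of $w$. The set $Z_n$ consists of products of commuting simple transpositions $s_{i_1}\cdots s_{i_p}$ with $|i_k-i_m|\ge 2$; such $w$ are exactly the permutations that, in one-line notation, are obtained from the identity by swapping disjoint, non-adjacent consecutive pairs. These are precisely the $w$ for which $\{w_1,\dots,w_k\}\le\{$any $k$-subset that can index a Plücker variable in a quadratic generator$\}$ fails for every generator — i.e.\ $S_w$ is large enough to kill every quadratic binomial. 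I would likely induct on $n$, using the $\overline{w}/\underline{w}$ reduction described in the Notation section to pass between $S_{n-1}$ and $S_n$, checking the base cases $n\le 2$ directly (where $Z_n=S_n$ and indeed $F_{n,\ell,w}=0$ trivially since $F_{n,\ell}$ has no generators in those tiny cases) and $n=3$ against Table~\ref{fig:num4}.

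The main obstacle I anticipate is the bookkeeping in the forward direction: showing that $w\notin Z_n$ genuinely produces a \emph{surviving} binomial, not merely a surviving pair of variables. One must exhibit an explicit quadratic binomial $P_IP_J - P_KP_L \in F_{n,\ell}$ (or a $\pm$-sign variant) with all of $I,J,K,L\notin S_w$, and this requires understanding the exchange relations among matching field tableaux well enough to locate such a quadruple from a "bad" pattern in $w$ — i.e.\ from the existence of two non-commuting or adjacent-index simple reflections in a reduced word for $w$. Handling the sign issue (some generators are $P_IP_J+P_KP_L$, which is still nonzero as long as two variables survive) and ensuring the chosen quadruple does not accidentally have one index in $S_w$ is where the care is needed; the $\ell$-independence then follows by checking the construction can be carried out uniformly in $\ell$.
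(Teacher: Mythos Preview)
Your overall inductive strategy matches the paper's, but the first paragraph contains a genuine error that propagates through the plan. You write that $F_{n,\ell,w}=0$ holds ``if and only if every quadratic binomial generator of $F_{n,\ell}$ has at least one of its two variables sent to zero, i.e.\ at least one of its two index sets lies in $S_w$.'' A quadratic binomial $P_IP_J - P_{I'}P_{J'}$ has \emph{four} index sets, and for the generator to die you need \emph{both} monomials to vanish: $(I\in S_w \text{ or } J\in S_w)$ \emph{and} $(I'\in S_w \text{ or } J'\in S_w)$. Conversely, to prove $F_{n,\ell,w}\neq 0$ it suffices to exhibit a relation with \emph{one} term surviving --- you do not need the full quadruple $I,J,I',J'\notin S_w$ that you demand in your final paragraph, though in practice the paper does often find all four.

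The second gap is the reduced-word heuristic. Your plan to detect $w\notin Z_n$ via ``adjacent transpositions in a reduced word with indices differing by exactly one'' is not the negation of membership in $Z_n$ (the condition in $Z_n$ constrains \emph{all} pairs $i_k,i_m$, not just consecutive ones, and a permutation outside $Z_n$ need not have any reduced word with this feature), and in any case the translation from a reduced-word pattern to an explicit surviving Pl\"ucker relation is never explained. The paper sidesteps this entirely: rather than working with reduced words, it uses the elementary recursive description of $Z_n$ --- namely $w\in Z_n$ iff $\underline w\in Z_{n-1}$ and either $w_n=n$ or $(w_{n-1},w_n)=(n,n-1)$ --- and proves three concrete claims (appending $n$ at the end preserves vanishing; appending $n$ before a terminal $n-1$ preserves vanishing; and if $F_{n,\ell,w}=0$ then $w$ must end in one of these two ways). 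The forward direction (their Claim~3) is exactly the obstacle you flag, and it is handled not by Coxeter combinatorics but by writing down explicit relations such as $P_{\alpha,\beta,n-1}P_{\alpha,w_n,\beta,n}-P_{\alpha,w_n,\beta}P_{\alpha,\beta,n-1,n}$ and checking non-vanishing, with a short case split on $\ell$ only in the degenerate situation $|\alpha\cup\beta|=0$. Reformulating your induction around the position of $n$ rather than reduced words would bring your argument in line with the paper's and remove the vague step.
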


\begin{proof}
The result follows from the following claims. 

\medskip

\noindent{\bf Claim 1.} If $F_{n-1,\ell,w}=0$, then 
$F_{n,\ell,\overline{w}}=0$ for
$\overline{w}=(w_1,\ldots,w_{n-1},n)$.

Suppose $F_{n-1,\ell,w} = 0$ where $w = (w_1, \dots, w_{n-1})$. We will take $n \ge 4$ since for $n = 1, 2$ the ideals are trivial and for $n=3$ the direct computation gives the required results, see Table~\ref{fig:num4}.
Suppose that $F_{n,\ell,\overline{w}} \neq 0$ for $\overline{w} = (w_1, \dots, w_{n-1}, n)$. Then there exists a product of variables $P_I P_J$ which appears in $F_{n,\ell,\overline{w}}$ as a monomial or part of a relation $P_I P_J - P_{I'} P_{J'}$. Since $P_I P_J$ does not vanish in $F_{n,\ell,\overline{w}}$, then $\lvert I \rvert, \lvert J \rvert \le n-1$ hence $n \not\in I\cup J$. However, $w, \overline{w}$ are identical on $w_1, \dots, w_{n-1}$ so $P_I P_J$ must also appear in $F_{n, \ell, w}$, a contradiction. So $F_{n,\ell,\overline{w}} = 0$.

\medskip

\noindent{\bf Claim 2.}
If $F_{n-1,\ell,w}=0$ and $w_{n-1}=n-1$, then
$F_{n, \ell, \overline{w}} = 0$ for $\overline{w}=(w_1,\ldots,w_{n-2},n,n-1)$. 

Suppose by contradiction that $F_{n,\ell,\overline{w}} \neq 0$ and so it contains $P_I P_J$ either as a monomial or as part of a relation, $P_I P_J - P_{I'} P_{J'}$ from ${\rm in}_{{\bf w}_\ell}(I_n)$. Now if $n \not\in I \cup J$ then by the same argument as before we have that $P_I P_J$ appears in $F_{n-1,\ell,w}$, a contradiction. So without loss of generality let us assume that $n \in I$ and $\lvert I \rvert, \lvert J \rvert \le n-1$. Since $P_I \neq 0$ in $F_{n,\ell,\overline{w}}$, therefore $I \le (w_1, \dots, w_{\lvert I \rvert})$. Since $n \in I$ we must have $n \in (w_1, \dots, w_{\lvert I \rvert})$. Since $w_{n-1} = n$ we deduce that $\lvert I \rvert = n-1$. Now if we also have that $n \in J$ then $P_{I \setminus n} P_{J \setminus n} \neq 0$ in $F_{n-1,\ell,w}$ and belongs to the non-trivial relation $P_{I \setminus n} P_{J \setminus n} - P_{I' \setminus n} P_{J' \setminus n}$ in ${\rm in}_{{\bf w}_\ell}(I_{n-1})$. Note that this is a true relation among the variables regardless of the block diagonal matching field $B_\ell$ since by assumption $n \ge 4$ and hence $\lvert I \rvert \ge 3$. So $F_{n-1,\ell,w} \neq 0$, a contradiction. So we deduce that $n \not\in J$. Since $n$ is contained in exactly one of the subsets $ I'$,$J'$, we may assume $n \in I'$.

Now consider $P_{I \setminus n} P_{J} - P_{I' \setminus n} P_{J'}$. This is a (possibly trivial) relation with $P_{I \setminus n} P_{J} \neq 0$ in $F_{n-1,\ell,w}$. Again note that this is a true relation among the variables regardless of $B_\ell$ since $n \ge 4$. However $F_{n-1,\ell,w} = 0$. Thus this relation must be trivial, otherwise $P_{I \setminus n} P_{J}$ would be contained in $F_{n-1,\ell,w}$. By assumption the relation $P_I P_J - P_{I'} P_{J'}$ is non-trivial so $I \neq I'$ and $J \neq J'$. Therefore we must have $I \setminus n = J'$ and $J = I' \setminus n$. We deduce that $\lvert I \setminus n \rvert = \lvert J \rvert = n-2$. Since $P_{I \setminus n} P_{J} \neq 0$ we have that $I\setminus n$ and $J \le (w_1, \dots, w_{n-2}) = \{1, \dots, n-2\}$. However, from this we deduce that $I\setminus n = J = \{1, \dots, n-2 \}$ and so $I = I'$, a contradiction. Therefore $F_{n,\ell,\overline{w}} = 0$.
\medskip

\noindent{\bf Claim 3.} If $F_{n,\ell,w}=0$, then either $w=(w_1,\ldots,w_{n-1},n)$ or $w=(w_1,\ldots,w_{n-2},n,n-1)$. 

First we show that $w_{n} \ge n-1$. So suppose by contradiction $w_{n} < n-1$. Then we have either $w = (\alpha, n, \beta, n-1, \gamma, w_{n})$ or $w = (\alpha, n-1, \beta, n, \gamma, w_{n})$ for some ordered subsets $\alpha, \beta, \gamma$ of $[n]$. Now suppose $\lvert \alpha \cup \beta \rvert \ge 1$ in which case consider the relation
$$P_{\alpha,\beta,n-1} P_{\alpha,w_{n},\beta, n} - P_{\alpha,w_{n},\beta} P_{\alpha,\beta,n-1, n}.$$
We must justify that this is indeed a relation for non-diagonal matching field cases. Since $\lvert \alpha \cup \beta \rvert \ge 1$ the above relation has the form $P_M P_{N,n} - P_N P_{M,n}$ where $\lvert M \rvert, \lvert N \rvert \ge 2$. It follows immediately from the definition of $B_\ell$ that $B_\ell(M) = B_\ell(M \cup \{n\})$ and $B_\ell(N) = B_\ell(N \cup \{ n \})$ for any $\ell$. Hence this is a true relation among the variables.

Observe that none of the variables in the above relation vanishes in $F_{n,\ell,w}$ for $w = (\alpha, n, \beta, n-1, \gamma, w_{n})$ and $w = (\alpha, n-1, \beta, n, \gamma, w_{n})$. So $F_{n,\ell,w} \neq 0$, a contradiction.

Next suppose $\alpha \cup \beta = \varnothing$. Then $w$ either has the form $(n-1, n, \gamma, w_n)$ or $(n, n-1, \gamma, w_n)$. Now we take cases on $B_\ell$, the block diagonal matching field. We have that either $\ell = 0$, $1 \le \ell \le n-2$ or $\ell = n-1$.

\textbf{Case 1.} Let $\ell = 0$, the diagonal matching field. Then it is easy to check that the relation $P_{n-1} P_{1, n} - P_{1} P_{n-1, n}$ does not vanish in $F_{n,\ell,w}$.

\smallskip

\textbf{Case 2.} Let $1 \le \ell \le n-2$. Then we have the relation $P_{n-1} P_{n, 1} - P_{n} P_{n-1, 1}$. Note that $P_{n-1} P_{n, 1}$ does not vanish in $F_{n,\ell,w}$.

\smallskip

\textbf{Case 3.} Let $\ell = n-1$. Then consider the relation $P_{1} P_{n, n-1} - P_{n} P_{1, n-1}$. Note that $P_{1} P_{n, n-1}$ does not vanish in $F_{n,\ell,w}$.

Therefore, we have shown that $F_{n,\ell,w} \neq 0$ which is a contradiction. So we conclude that $w_n \ge n-1$.

It remains to show that if $w_n = n-1$ then $w_{n-1} = n$. So suppose by contradiction that $w$ is of the form $w = (\alpha, n, \beta, n-1)$ for some ordered subsets $\alpha, \beta $ of $[n]$, where $\lvert \beta \rvert \ge 1$. Let $b \in \beta$ be an arbitrary element. Now we take two cases on the matching field $B_\ell$.

\textbf{Case 3a.} $B_\ell$ is the diagonal matching field or $\alpha \cup \beta \setminus b \neq \varnothing$.  Consider the following relation
$$P_{\alpha,\beta \setminus b, n-1} P_{\alpha,\beta,n} - P_{\alpha, \beta} P_{\alpha, \beta \setminus b, n-1, n}.$$
We must justify that this is indeed a relation for non-diagonal matching field cases. Since $\lvert \alpha \cup \beta \setminus b\rvert \ge 1$ the above relation has the form $P_M P_{N,n} - P_N P_{M,n}$ where $\lvert M \rvert, \lvert N \rvert \ge 2$. So, as above, this is a true relation among the variables. It is easy to check that $P_M P_{N,n}$ does not vanish in $F_{n,\ell,w}$ and so $F_{n,\ell,w} \neq 0$, a contradiction.

\textbf{Case 3b.} $B_\ell$ is not a diagonal matching field and $\alpha \cup \beta \setminus b = \varnothing$. Then $w = (3,1,2)$. We now refer to Example~\ref{example:n_3_4_calculation} where we observe that $F_{n,\ell,w}$ is non-zero for each $\ell$.
Hence we have a contradiction, so if $w_{n} = n-1$ then $w_{n-1} = n$. Therefore $w$ must be of the desired form.

\medskip

In the above series of claims we have shown that for each $\ell$, $F_{n,\ell,w}=0$ if and only if $F_{n-1, \ell, \underline{w}} = 0$ and either $w = (w_1, \dots, w_{n-2}, n, n-1)$ or $w = (w_1, \dots, w_{n-1}, n)$. We now proceed by induction on $n$. If $n = 1$ then it is clear that $F_{n, \ell, w} = 0$ where $w = (1)$. We observe that the set $Z_n$ satisfies the inductive relation:
\begin{align*}
    Z_{n} = \{w \in S_n : \ul{w} \in Z_{n-1}, \text{  and either } &w = (w_1, \dots, w_{n-1}, n) \\
    \text{ or }  &w = (w_1, \dots, w_{n-2}, n, n-1)\}.
\end{align*}
This is the same inductive relation shown in the claims which completes the proof.
\end{proof}

As an immediate corollary of the above theorem we have:

\begin{corollary}\label{prop:z_n}
$|Z_n|=|Z_{n-2}|+|Z_{n-1}|$ for all $n$.
\end{corollary}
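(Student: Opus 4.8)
The plan is to derive Corollary~\ref{prop:z_n} directly from the inductive description of $Z_n$ established at the end of the proof of Theorem~\ref{thm:zero}, namely that
\[
Z_{n} = \{w \in S_n : \ul{w} \in Z_{n-1}, \text{ and either } w = (w_1, \dots, w_{n-1}, n) \text{ or } w = (w_1, \dots, w_{n-2}, n, n-1)\}.
\]
First I would observe that this description exhibits $Z_n$ as a disjoint union of two pieces, according to whether $w_n = n$ (the value $n$ is in the last position) or $w_n = n-1$ (so that $w_{n-1} = n$, i.e. $n$ is in the second-to-last position). These two cases are genuinely disjoint since $w_n$ cannot equal both $n$ and $n-1$.

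Next I would set up a bijection on each piece. For the first piece, $\{w \in Z_n : w_n = n\}$, the map $w \mapsto \ul{w} = (w_1, \dots, w_{n-1})$ is a bijection onto $Z_{n-1}$: it is well-defined by the displayed relation, it is injective since $w$ is recovered by appending $n$, and it is surjective since for any $v \in Z_{n-1}$ the permutation $(v_1, \dots, v_{n-1}, n)$ lies in $Z_n$. Hence this piece has cardinality $|Z_{n-1}|$. For the second piece, $\{w \in Z_n : w_n = n-1\}$, the displayed relation forces $w_{n-1} = n$, so $w = (w_1, \dots, w_{n-2}, n, n-1)$ and $\ul{w} = (w_1, \dots, w_{n-2}, n-1) \in Z_{n-1}$. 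Here the map $w \mapsto \ul{w}$ is again injective ($w$ is recovered by replacing the final entry $n-1$ of $\ul{w}$ by $n, n-1$), but its image is not all of $Z_{n-1}$: it is exactly $\{v \in Z_{n-1} : v_{n-1} = n-1\}$. By the same reasoning applied one level down, this set is in bijection (via deleting the last entry) with $Z_{n-2}$, so the second piece has cardinality $|Z_{n-2}|$.

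Combining the two pieces gives $|Z_n| = |Z_{n-1}| + |Z_{n-2}|$, which is the claim. I should also check the small cases to make sure the recursion is valid for all $n$ as stated: from Example~\ref{example:n_3_4_calculation} and the surrounding discussion, $Z_1 = S_1$ and $Z_2 = S_2$ have sizes $1$ and $2$, and $|Z_3| = 3 = 1 + 2$, $|Z_4| = 5 = 2 + 3$, so the base of the induction is consistent (one may read the recursion as defining $|Z_0| = 1$).

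I do not anticipate a serious obstacle here — the content is entirely a reorganization of the inductive characterization of $Z_n$ proved above. The only point requiring a little care is the second bijection: recognizing that the ``$n$ in penultimate position'' permutations in $S_n$ correspond not to all of $Z_{n-1}$ but to those elements of $Z_{n-1}$ ending in $n-1$, and then iterating the first-piece argument once more to identify that subset with $Z_{n-2}$. Making the two-step deletion explicit, rather than hand-waving it, is the one place I would be careful to write things out.
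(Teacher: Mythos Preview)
Your proposal is correct and follows essentially the same approach as the paper: both decompose $Z_n$ into the two pieces according to whether $w_n=n$ or $(w_{n-1},w_n)=(n,n-1)$, and count each piece via the inductive description from Theorem~\ref{thm:zero}. The paper is terser on the second piece, simply noting that such $w$ are determined by their first $n-2$ entries and hence contribute $|Z_{n-2}|$; your two-step deletion (first to $\{v\in Z_{n-1}:v_{n-1}=n-1\}$, then to $Z_{n-2}$) makes explicit the justification the paper leaves implicit.
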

\begin{proof}
Using the formulation of $Z_{n}$ in the proof of Theorem~B, we can verify that
$$Z_n = \{w \in Z_n: w_n = n \} \sqcup \{w \in Z_n: w_n = n-1, w_{n-1} = n \}.$$
But if $w_n = n$, then $w$ is determined by its first $n-1$ entries, and so the cardinality of the first set is $\lvert Z_{n-1} \rvert$.
And if $w_n = n-1$ and $w_{n-1} = n$, then $w$ is determined by its first $n-2$ entries. Hence,
the cardinality of the second set is $ \lvert Z_{n-2} \rvert$, as desired.
\end{proof}

\section{Binomial initial ideals}\label{sec:pf_thm_b}

In this section, we present the main ingredients required for the proof of Theorem~C. We will prove results that connect key properties of permutations $w$, matching fields $B_\ell$ and the ideal $F_{n, \ell, w}$. We begin by showing that $A_1 \subset T_{n,\ell}$ for all $n$ and $\ell$. In the following, we divide the results into three parts. Firstly the diagonal case with $\ell = n$, secondly the semi-diagonal case, i.e. $\ell = n-1$, and finally all remaining cases. Figure~\ref{flowchart:dependency} shows the dependencies among the results required for the proof of Theorem~C. The different colours in the diagram indicate the different sections in which the results can be found.

\begin{figure}
    \centering
    \includegraphics[scale = 0.8]{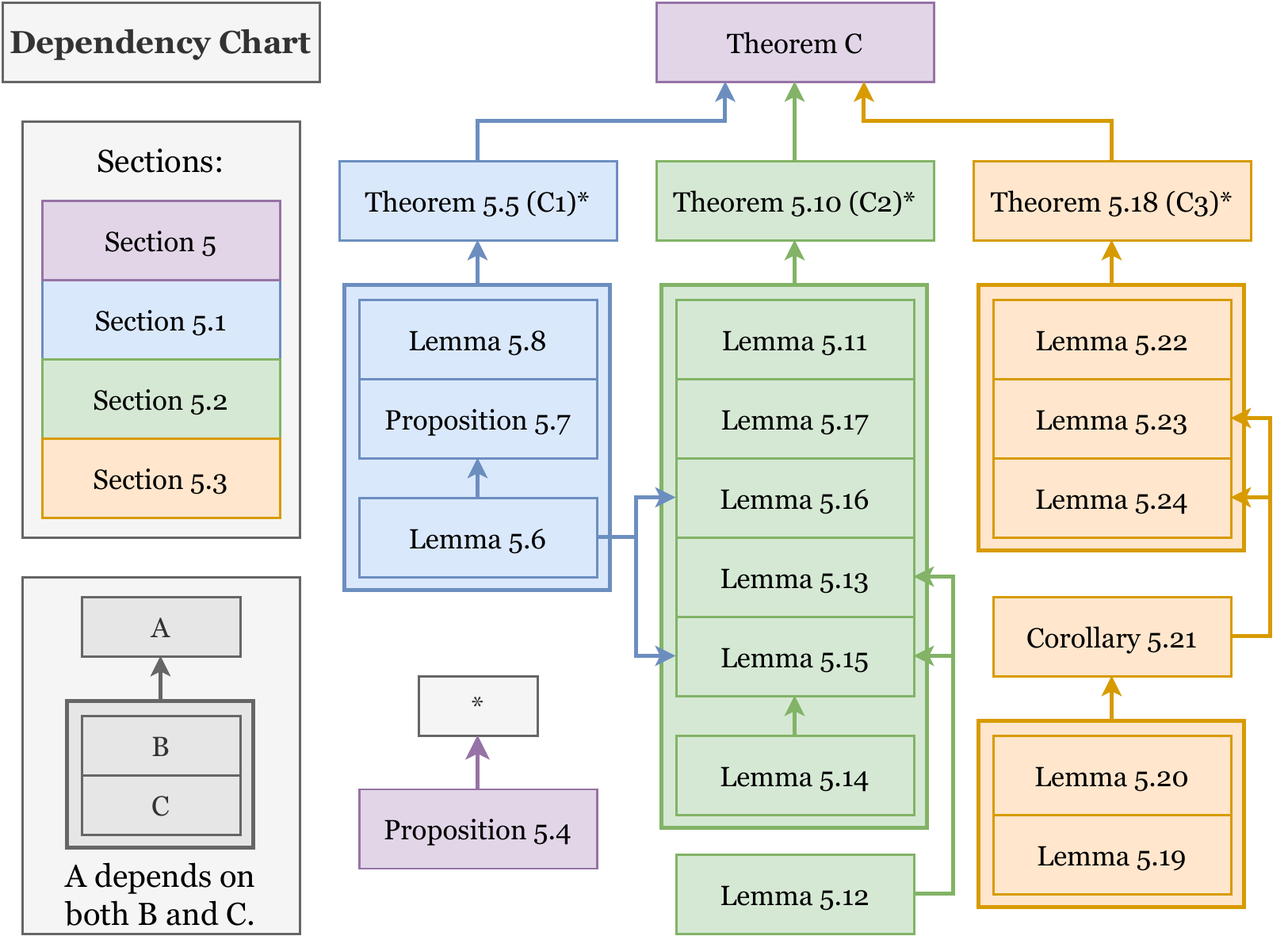}
    \caption{
    The dependency chart above shows the key steps in the proof of Theorem~C which classifies the permutations in $T_{n,\ell}$. We split Theorem~C into three cases: C1, C2 and C3 corresponding to $\ell = n$, $1 \le \ell \le n-2$ and $\ell = n-1$ respectively. Note that each of these cases requires Proposition~\ref{lem:thm_ToricFlag_conv_1} since $A_1 \subset T_{n,\ell}$ for each $\ell$.}
    \label{flowchart:dependency}
\end{figure}

Many results of this section are inductive in nature. In the next example, we explicitly calculate the ideals $F_{3,\ell,w}$ for each matching field $B_\ell$ and each permutation $w \in S_3$. Therefore, we will assume $n > 3$ in the subsequent sections.

\begin{example}\label{example:n=4_toric_principal}
Let $n = 4$. For each $w \in A_1 = \{1342, 1432\}$ and matching field $B_\ell$ we calculate the ideal $F_{4,\ell,w}$. In particular, we note that each such ideal is principal and toric, i.e. binomial and prime. 
\[
\begin{tabular}{|c|c|c|}
    \hline
    $\ell$             & $w$   & $F_{4,\ell,w}$ \\
    \hline
    \multirow{2}{*}{$0$}        & 1342  & $\langle P_{13} P_{124} - P_{12} P_{134} \rangle $ \\
                                & 1432  & $\langle P_{13} P_{124} - P_{12} P_{134} \rangle $ \\
    \hline
    \multirow{2}{*}{$1$}  & 1342  & $\langle P_{31} P_{214} - P_{21} P_{314} \rangle $ \\
                                & 1432  & $\langle P_{31} P_{214} - P_{21} P_{314} \rangle $ \\
    \hline
    \multirow{2}{*}{$2$}  & 1342  & $\langle P_{31} P_{124} - P_{12} P_{314} \rangle $ \\
                                & 1432  & $\langle P_{31} P_{124} - P_{12} P_{314} \rangle $ \\
    \hline
    \multirow{2}{*}{$3$}  & 1342  & $\langle P_{13} P_{124} - P_{12} P_{134} \rangle $ \\
                                & 1432  & $\langle P_{13} P_{124} - P_{12} P_{134} \rangle $ \\
    
    \hline
\end{tabular}
\]
\end{example}

\begin{definition}\label{def:Scw}
 Let $w = (w_1, \dots, w_n)$. Recall that $S_w = \{I: I \not \le w_{(I)} \} $. We denote its complement by $S_w^c = \{I : \varnothing \neq I \subsetneq [n] \} \setminus S_w = \{I : I \le \{w_1, \dots, w_{|I|} \} \}$ and for $1 \le t \le n$ we define its projection as $$S_{(w_1, \dots, w_t)}^c = \{I: \lvert I \rvert \le t, I \in S_w^c \}.$$
\end{definition}
\vspace{-2mm}

Note that $S_w^c$ is the collection of subsets $I \subset [n]$ for which $P_I$ do not vanish in $F_{n,\ell,w}$ for any $\ell$. From Definition~\ref{def:Scw} we obtain the following description of $S_w^c$ for specific cases.

\begin{corollary}\label{lem:Swc_1}
Let $w\in S_n$ with $w_{t}=n$ for $t\in\{1,\ldots,n\}$. Then 
\[
S_{w}^c = S_{(w_1, \dots, w_{t-1})}^c 
\cup \{I : I = \{i_1 <  \dots < i_\ell \},\ \ell \ge  t,\ I \setminus i_\ell \in S_{\ul{w}}^c \}.
\] 
Moreover, for $w\in S_n^>$ we have that 
\[
S_{w}^c = S_{(w_1, \dots, w_{t-1})}^c
\cup \{I : I = \{i_1 < \dots < i_\ell \},\ \ell \ge t,\ (i_1, \dots, i_{t-1}) \in S_{(w_1, \dots, w_{t-1})}^c\}.
\]
\end{corollary}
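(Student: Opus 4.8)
\textbf{Proof plan for Corollary~\ref{lem:Swc_1}.}

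The statement is really a purely combinatorial unraveling of Definition~\ref{def:Scw}, so the plan is to argue directly from the characterization $S_w^c = \{I : I \le \{w_1, \dots, w_{|I|}\}\}$, where $\{a_1 < \cdots < a_m\} \le \{b_1 < \cdots < b_m\}$ means $a_i \le b_i$ for all $i$. Fix $w$ with $w_t = n$. First I would establish the first displayed formula. Let $I = \{i_1 < \cdots < i_\ell\} \in S_w^c$, i.e. $i_j \le w_j'$ for all $j$, where $(w_1', \dots, w_\ell')$ denotes the increasing rearrangement of $(w_1, \dots, w_\ell)$. I split on whether $\ell \le t-1$ or $\ell \ge t$. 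If $\ell \le t-1$, then $n \notin \{w_1, \dots, w_\ell\}$ (since $w_t = n$ and $t > \ell$), so the comparison $I \le \{w_1, \dots, w_\ell\}$ is exactly the condition defining $S_{(w_1, \dots, w_{t-1})}^c$; hence $I$ lies in the first part. If $\ell \ge t$, then $n \in \{w_1, \dots, w_\ell\}$, so $w_\ell' = n$ and the comparison $i_\ell \le w_\ell' = n$ is automatic, while the remaining comparisons $i_j \le w_j'$ for $j \le \ell - 1$ say precisely that $I \setminus i_\ell = \{i_1 < \cdots < i_{\ell-1}\}$ satisfies $I \setminus i_\ell \le \{w_1, \dots, w_{\ell-1}\}$. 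Here I need the key observation that the increasing rearrangement of $(w_1, \dots, w_{\ell-1})$, viewed inside $S_{\ul w}$, agrees with the first $\ell-1$ entries of the increasing rearrangement of $(w_1, \dots, w_\ell)$ once the entry $n$ is deleted: since $\ul w = (w_1, \dots, w_{s-1}, w_{s+1}, \dots, w_n)$ removes the single occurrence of $n$, and $n$ is the top value, the rearranged prefixes match. This gives $I \setminus i_\ell \in S_{\ul w}^c$, with the size constraint $\ell \ge t$ being inherited. The reverse inclusions are the same chain of equivalences read backwards, so both directions come for free once the bookkeeping with the rearrangement is set up carefully.

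For the second displayed formula, under the extra hypothesis $w \in S_n^>$ — meaning $n = w_t > w_{t+1} > \cdots > w_n$ — I would show that for $\ell \ge t$ the condition "$I \setminus i_\ell \in S_{\ul w}^c$" simplifies to "$(i_1, \dots, i_{t-1}) \in S_{(w_1, \dots, w_{t-1})}^c$". The point is that once we know $i_1 \le w_1', \dots, i_{t-1} \le w_{t-1}'$ (the prefix condition), the remaining comparisons $i_t \le \cdots, i_{\ell-1} \le \cdots$ against the rearrangement of $\{w_1, \dots, w_{\ell-1}\} \setminus \{n\}$ are automatically satisfied. This is where the descending property does the work: the values $w_{t+1} > \cdots > w_n$ are exactly $\{1, \dots, n\} \setminus \{w_1, \dots, w_t\}$ arranged in decreasing order, so among the first $\ell - 1$ positions (with $\ell - 1 \ge t - 1$), after deleting $n$, the rearranged tuple contains $\ell - 1 - (t-1) = \ell - t$ of the \emph{largest} of those leftover values; I would check that each such value is large enough to dominate the corresponding $i_j$ automatically, using that $i_j \le n - (\ell - j)$ holds trivially for any $\ell$-subset of $[n]$. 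So the only genuine constraints are the first $t-1$, which is the claim.

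I expect the main obstacle to be the careful handling of the increasing rearrangements and index shifts — making precise that "delete $n$ from the first $\ell$ entries of $w$" interacts cleanly with "take the increasing rearrangement of the first $\ell - 1$ entries", and that passing from $w$ to $\ul w$ in Definition~\ref{def:notation} is compatible with the projections $S_{(w_1, \dots, w_t)}^c$. None of this is deep, but it is the kind of notational bookkeeping where an off-by-one error easily creeps in, especially around the boundary case $\ell = t$ (where $I \setminus i_\ell$ has exactly $t - 1$ elements and the two conditions should literally coincide). The second, descending-property part additionally requires the small inequality argument that the "leftover" decreasing values are automatically dominant in the tail positions; I would isolate that as a one-line sublemma: if $a_{t} > a_{t+1} > \cdots > a_{\ell-1}$ are $\ell - t$ distinct elements of $[n]$ with each $a_j \ge \ell - j + $ (something), then any increasing $\ell$-subset prefix already satisfies $i_j \le a_{\pi(j)}$. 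Once that is in place, the corollary follows by combining it with the first formula.
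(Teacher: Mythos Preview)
The paper does not actually prove this corollary --- it is recorded immediately after Definition~\ref{def:Scw} as a direct consequence, with no argument given. Your plan for the first displayed formula is exactly the intended unraveling and is correct: for $|I|\le t-1$ the value $n$ is absent from $\{w_1,\dots,w_{|I|}\}$, and for $|I|=\ell\ge t$ the top comparison $i_\ell\le n$ is vacuous while the remaining $\ell-1$ comparisons match the condition $I\setminus i_\ell\in S_{\ul w}^c$ verbatim.

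For the second formula your outline has a small but genuine gap. Write $(v_1<\dots<v_{\ell-1})$ for the increasing rearrangement of the first $\ell-1$ entries of $\ul w$ and $(w_1'<\dots<w_{t-1}')$ for that of $(w_1,\dots,w_{t-1})$. Your tail argument is fine: for $t\le j\le\ell-1$ one has $i_j\le n-\ell+j\le v_j$, so those comparisons are automatic. The problem is the head. You then say ``the only genuine constraints are the first $t-1$'', implicitly identifying $i_j\le v_j$ with $i_j\le w_j'$ for $j\le t-1$. But since $\{w_1,\dots,w_{t-1}\}\subseteq\{v_1,\dots,v_{\ell-1}\}$ one only has $v_j\le w_j'$, and the inequality can be strict: e.g.\ $w=(3,1,5,4,2)\in S_5^>$, $t=3$, $\ell=5$ gives $(w_1',w_2')=(1,3)$ but $(v_1,v_2)=(1,2)$. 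So ``$i_2\le 3$'' does not literally give ``$i_2\le 2$''; what saves you is the pigeonhole constraint that $I$ has $\ell$ distinct elements in $[n]$. One clean fix: use the criterion $I\le\{w_1,\dots,w_\ell\}$ iff $|I\cap[k]|\ge|\{w_1,\dots,w_\ell\}\cap[k]|$ for all $k$. Splitting the right side as $|\{w_1,\dots,w_{t-1}\}\cap[k]|+|\{w_t,\dots,w_\ell\}\cap[k]|$, the descending property forces either $[k+1,n]\subseteq\{w_1,\dots,w_\ell\}$ (trivial case) or $|\{w_t,\dots,w_\ell\}\cap[k]|=0$, and in the latter case the inequality reduces to $|I\cap[k]|\ge|\{w_1,\dots,w_{t-1}\}\cap[k]|$, which follows directly from the prefix hypothesis. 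This is the ``small inequality argument'' you anticipated, but it is needed to handle the head, not only the tail.
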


For each block diagonal matching field $B_\ell$ we have:

\begin{proposition}\label{lem:thm_ToricFlag_conv_1}
For each $w\in A_1$ and $0 \le \ell \le n$, $F_{n,\ell,w}$ is a principal toric ideal. In particular, $A_1\subset T_{n,\ell}$.
\end{proposition}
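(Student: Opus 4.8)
\textbf{Proof plan for Proposition~\ref{lem:thm_ToricFlag_conv_1}.}
The plan is to show directly that $F_{n,\ell,w}$ is a \emph{principal} ideal generated by a binomial, and that this binomial generates a prime ideal. Recall $A_1 = \ul{Z}_n \cap \{w : w_n = n-2,\ \{w_{n-2},w_{n-1}\} = \{n-1,n\}\}$, so $w$ has one of the two forms $(\ul w_1,\dots,\ul w_{n-3}, n-1, n, n-2)$ or $(\ul w_1,\dots,\ul w_{n-3}, n, n-1, n-2)$ with $\ul w \in Z_{n-1}$, and in particular (by Theorem~\ref{thm:zero}) $F_{n-1,\ell,\ul w} = 0$. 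First I would unravel what $S_w$ looks like for such $w$: using Corollary~\ref{lem:Swc_1} together with the fact that $\ul w \in Z_{n-1}$ (so the only non-vanishing variables in $F_{n-1,\ell,\ul w}$ are those indexed by sets $I\le\{\ul w_1,\dots,\ul w_{|I|}\}$, which by the $Z_{n-1}$-structure is a very restrictive list), I expect to get a short explicit list of the subsets $I$ with $P_I\neq 0$ in $F_{n,\ell,w}$. The crucial point is that, because $\ul w$ lies in $Z_{n-1}$, most Plücker variables vanish, so only a handful of the quadratic generators of $F_{n,\ell}$ (which by Corollary~\ref{cor:block_diag_degen_flag} is generated by quadratic binomials) survive.

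Next I would go through the quadratic binomial generators $P_IP_J - P_{I'}P_{J'}$ of $F_{n,\ell} = \inwb(I_n)$ one at a time and check which survive in $F_{n,\ell,w}$, i.e. which have all four of $I,J,I',J' \notin S_w$. The claim to establish is that exactly one such generator survives — the one visible in the $n=4$ base case computed in Example~\ref{example:n=4_toric_principal} — and every other generator has at least one variable vanishing, hence contributes nothing (or a monomial that is killed; but we will see no pure monomial survives). This is essentially a finite case analysis driven by the constraint that $w$ ends in $\dots,n-1,n,n-2$ or $\dots,n,n-1,n-2$ with $\ul w\in Z_{n-1}$: the surviving variables must avoid $S_w$, and I would argue that any surviving quadratic relation must involve only the few "large" subsets forced by the tail of $w$, pinning it down uniquely. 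One should also confirm that $F_{n,\ell,w}$ contains no monomial: since we have already produced a surviving binomial and will show it is the only surviving generator, monomial-freeness follows once we check the ideal is not strictly larger than the principal ideal it obviously contains — which is where the elimination description in Definition~\ref{def:ideals} is used, or alternatively an induction reducing the $n$-variable computation to the $n=4$ case of Example~\ref{example:n=4_toric_principal}.

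Then, having identified $F_{n,\ell,w} = \langle b \rangle$ for a single binomial $b = P_IP_J - P_{I'}P_{J'}$, primality is immediate: a binomial $P_IP_J - P_{I'}P_{J'}$ with $\{I,J\}\neq\{I',J'\}$ generates a prime ideal in a polynomial ring precisely when the difference $e_I+e_J-e_{I'}-e_{J'}$ is a primitive lattice vector and the two monomials are coprime — and here $\{I,J\}\cap\{I',J'\} = \varnothing$ in each case (as one checks from the explicit form, e.g. $\{13,124\}$ vs $\{12,134\}$ in the $\ell=0$ case), so $b$ is irreducible and $\langle b\rangle$ is prime. Being principal and binomial, it is therefore toric. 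This gives $w\in T_{n,\ell}$, hence $A_1\subset T_{n,\ell}$.

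\textbf{Main obstacle.} The genuine work is the second step: confirming that for \emph{every} block diagonal matching field $B_\ell$ (including the non-diagonal ones, where the order inside columns is twisted by $(12)$ and the generating binomials of $F_{n,\ell}$ look different from the diagonal Plücker relations) exactly one quadratic generator survives the restriction, and that no monomial is created. The bookkeeping of which $P_I$ vanish — governed by the comparison $I \le \{w_1,\dots,w_{|I|}\}$ with $w$ having the specific tail dictated by $A_1$ and $\ul w\in Z_{n-1}$ — is the delicate part, and I would organize it by first reducing (via an argument like those in Claims~1–3 of Theorem~\ref{thm:zero}, peeling off the value $n$) to a bounded computation, ultimately grounded in Example~\ref{example:n=4_toric_principal}.
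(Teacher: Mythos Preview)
Your proposal is correct and follows essentially the same approach as the paper: use $\ul w\in Z_{n-1}$ (hence $\{w_1,\dots,w_{n-3}\}=\{1,\dots,n-3\}$) to pin down the non-vanishing variables, then show that among the quadratic binomial generators of $F_{n,\ell}$ exactly one survives. The paper's organization of the second step is a bit sharper than your proposed case analysis: it first rules out $|I|,|J|\le n-3$ (such a relation would live in $F_{n-1,\ell,\ul w}=0$) and $|I|\le n-3<|J|$ (then $I\subset J$ and the relation is trivial), so both $I,J$ contain $\{1,\dots,n-3\}$, and stripping this block reduces the surviving relation directly to the unique generator of $\inw{D}(I_3)$ rather than inducting down to $n=4$.
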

\begin{proof}
Assume that $w\in S_n$ has the form
    \[
    w = (w_1, \dots, w_{n-3}, n, n-1, n-2)\ {\rm or }\ 
    w = (w_1, \dots, w_{n-3}, n-1, n, n-2).
    \]
We prove that if $\ul{w}$ is in $Z_{n-1}$, then $F_{n,\ell,w}$ is toric and principal, i.e. generated by a single polynomial. For $n = 3$, we have that $A_1 = \{321\}$ and the result follows from the calculation in Example~\ref{example:n_3_4_calculation}. Similarly for $n=4$, the result follows from Example~\ref{example:n=4_toric_principal}. Now we assume that $n > 4$. 

We first show that $F_{n,\ell,w} \neq 0$.
Let $\alpha = \{ w_1, \dots, w_{n-3}\} = \{1, \dots, n-3 \}$. Note that $\lvert \alpha \rvert \ge 2$. Now consider the following relation in $F_{n,\ell,w}$:
$$P_{\alpha, n-2} P_{\alpha, n-1, n} - P_{\alpha, n-1} P_{\alpha, n-2, n}.$$
Notice that none of these variables vanish in $F_{n,\ell,w}$ for either $w$ above. We must check that this relation does not depend on the block diagonal matching field $B_\ell$. This follows from two basic properties of the matching field $B_\ell$. Firstly, the matching field permutes only entries of $\alpha$ and fixes all others. And secondly, if $\beta \subset [n]$ is disjoint from $\alpha$ then $B_\ell(\alpha) = B_\ell(\alpha \cup \beta)$.

\medskip

Now suppose that we have two variables $P_I$ and $P_J$ which do not vanish in $F_{n,\ell,w}$ and belong to a relation $P_I P_J - P_{I'} P_{J'} \in \inwb(I_n)$. We will show that $|I|, |J| > n-3$ by contradiction. So without loss of generality we assume that $|I| \le |J|$ and $|I| \le n-3$. Additionally, we may assume that $\lvert I \rvert = \lvert I' \rvert, \lvert J \rvert = \lvert J' \rvert$. We proceed by taking cases on $|J|$.

\textbf{Case 1.} Let $\lvert J \rvert \le n-3 $. Since neither $P_I$ nor $P_J$ vanish, we have that $I \le (w_1, \dots, w_{\lvert I \rvert })$ and $J \le (w_1, \dots, w_{\lvert J \rvert })$. Since $w_i < n-2$ for all $1 \le i \le n-3$ we deduce that $P_I P_J - P_{I'} P_{J'}$ appears in $F_{n-1, \ell, \underline{w}}$. However $\underline{w} \in Z_{n-1}$ so $F_{n-1, \ell, \underline{w}} = 0$, a contradiction.

\smallskip

\textbf{Case 2.} Let $\lvert J \rvert > n-3 $. We have that $(w_1, \dots, w_{n-3}) = \{1, \dots, n-3 \}$. Since $I \le (w_1, \dots, w_{\lvert I \rvert})$ clearly we must have $I \subseteq \{1, \dots, n-3 \}$. Similarly, $J \le (w_1, \dots, w_{\lvert J \rvert})$ and so we deduce that $J = \{1, \dots, n-3, j_{n-2}, \dots, j_{\lvert J \rvert} \}$. Hence $I \subset J$ and it is easy to check that $P_I P_J - P_{I'} P_{J'}$ is a trivial relation, a contradiction.

\smallskip

So $\lvert I \rvert, \lvert J \rvert > n-3$. Note that $n \ge 5$. Since $(w_1, \dots, w_{n-3}) = (1, \dots, n-3)$ we have $I = \{1 < \dots < n-3 < i_{n-2} < \dots < i_{\lvert I \rvert} \}$ and similarly $J = \{1 < \dots < n-3 < j_{n-2} < \dots < j_{\lvert J \rvert} \}$. The relation $P_I P_J - P_{I'} P_{J'}$ can be seen to arise from a relation in $\inw{D}(I_3)$ under the diagonal matching field. This relation is obtained by removing $\{1, \dots, n-3 \}$ from $I, J, I', J'$, and then subtracting $n-3$ from each remaining entry. Note that this process does not depend on the matching field $B_\ell$ because $n \ge 5$ and so $B_\ell$ permutes only the entries in $\{1, \dots, n-3 \}$ and fixes all others. However, $\inw{D}(I_3)$ is principal and generated by $P_1 P_{23} - P_2 P_{13}$. So the relation $P_I P_J - P_{I'} P_{J'}$ must be $P_{\alpha,n-2} P_{\alpha, n-1, n} - P_{\alpha, n-1} P_{\alpha, n-2, n}$ where $\alpha = \{1, \dots, n-3 \}$. It is clear that this relation is contained in $F_{n,\ell,w}$ for each $w$ above. Since $P_I P_J$ was arbitrary, it follows that $F_{n,\ell,w} = \langle P_{\alpha,n-2} P_{\alpha, n-1, n} - P_{\alpha, n-1} P_{\alpha, n-2, n} \rangle$ is a principal ideal.
\end{proof}

\subsection{Diagonal matching fields}\label{sec:proofs_diag} 
Recall that the set $T_{n,n}$ is the collection of all permutations $w \in S_n$ such that $F_{n,n,w}$ is a non-zero binomial ideal. The sets of permutations $A_1$ and $A_2$ are defined inductively from $Z_{n-1}$ and $T_{n-1,n-1}$ respectively by `inserting' $n$ into the permutation in allowed places.

Here, we state our main theorem for the diagonal matching fields. 
Note that the classification of $F_{n,\ell,w}$ is simpler for the diagonal case than for the other matching fields
and serves as a good template for the proofs in the following sections. In particular, we will see analogues for Lemmas~\ref{lem:WDecrease}, \ref{lem:nontoric_extn} and Proposition~\ref{lem:thm_ToricFlag_conv_2} for other matching fields in later sections.

\begin{theorem}\label{Intro:ToricFlag}  $T_{n,n} = A_1 \cup A_2.$
\end{theorem}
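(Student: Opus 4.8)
The plan is to establish the double inclusion $T_{n,n} = A_1 \cup A_2$ by proving $A_1 \cup A_2 \subseteq T_{n,n}$ and $T_{n,n} \subseteq A_1 \cup A_2$ separately, using induction on $n$ throughout. The base cases $n = 3, 4$ are handled by the explicit computations in Example~\ref{example:n_3_4_calculation} and Example~\ref{example:n=4_toric_principal}, so I would assume $n > 4$. For the inclusion $A_1 \subseteq T_{n,n}$, this is already done: it is exactly Proposition~\ref{lem:thm_ToricFlag_conv_1}, which shows $F_{n,n,w}$ is principal and toric for $w \in A_1$. So the work splits into (i) showing $A_2 \subseteq T_{n,n}$, and (ii) showing that if $F_{n,n,w}$ is a non-zero binomial ideal then $w \in A_1 \cup A_2$.

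For direction (i), I would take $w \in A_2$, so that $w = \overline{v}$ for some $v \in T_{n-1,n-1}$ with $v = \underline{w} \in S_{n-1}^>$, and the position $s$ of $n-1$ and position $t$ of $n$ in $w$ satisfy $t \ge s-1$. By the inductive hypothesis $F_{n-1,n,v}$ is a non-zero binomial ideal. I would then analyze the generators of $F_{n,n,w}$ using the canonical generating set of $F_{n,n} = \inw{D}(I_n)$ coming from the diagonal matching field: each generator is a quadratic binomial $P_I P_J - P_{I'} P_{J'}$ (Corollary~\ref{cor:block_diag_degen_flag}). I would split the surviving generators of $F_{n,n,w}$ into those with $n \notin I \cup J$, which correspond bijectively to generators of $F_{n-1,n,v}$ via the description of $S_w^c$ in Corollary~\ref{lem:Swc_1} (using the descending-property form), and those with $n \in I \cup J$. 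The key point, and the expected main obstacle, is to show that the second family — generators involving $n$ — together with the first family does not produce any genuinely new binomial beyond a single one, i.e. that $F_{n,n,w}$ remains binomial; this requires a careful case analysis on where $n$ sits relative to $n-1$ in $w$ (precisely the condition $t \ge s-1$), showing that if $t < s-1$ one can write down an explicit non-binomial obstruction (three-term relation or two independent binomials that cannot be reduced), while if $t \ge s-1$ every relation involving $n$ either vanishes, is trivial, or reduces via a "remove $n$" operation to a relation in $\inw{D}(I_{n-1})$. The compatibility of removing $n$ with the diagonal matching field is automatic here since $D$ fixes relative order, which simplifies matters compared to the block-diagonal case.

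For direction (ii), I would argue contrapositively: suppose $F_{n,n,w} \neq 0$, and show that if $w \notin A_1$ then $w$ must lie in $A_2$ (or else $F_{n,n,w}$ is non-binomial). First, using the analogue of Lemma~\ref{lem:WDecrease} (which by Corollary~\ref{cor:non-desc}, Case 1, gives $T_{n,n} \subset S_n^>$), I would reduce to the case that $w$ has the descending property, so $n$ appears at some position $t$ with $w_t > w_{t+1} > \dots > w_n$. Since $F_{n,n,w} \neq 0$ implies $\underline{w} \notin Z_{n-1}$ (by Claim~1 in the proof of Theorem~\ref{thm:zero}), and since a non-binomial obstruction for $\underline{w}$ lifts to one for $w$ by the same "remove/restore $n$" argument, I would get that $F_{n-1,n,\underline{w}}$ is non-zero and binomial, i.e. $\underline{w} \in T_{n-1,n-1}$, so $w \in \underline{T}_{n,n}$. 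It then remains to pin down the allowed positions for inserting $n$: I would show that if the position $t$ of $n$ and the position $s$ of $n-1$ in $w$ violate $t \ge s-1$ (and $w \notin A_1$, i.e. we're not in the configuration $\{w_{n-2}, w_{n-1}\} = \{n-1,n\}$, $w_n = n-2$), then there exist two surviving generators of $F_{n,n,w}$ — one involving $n$ and one not, or two involving $n$ — whose difference or S-pair produces a monomial-free but non-binomial element, contradicting binomiality. Conversely the $A_1$ configuration accounts exactly for the case where $\underline{w} \in Z_{n-1}$ rather than $T_{n-1,n-1}$, matching Proposition~\ref{lem:thm_ToricFlag_conv_1}. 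Assembling the inductive relations $A_1 \cup A_2 \subseteq T_{n,n} \subseteq A_1 \cup A_2$ closes the induction.

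The step I expect to be the genuine obstacle is the precise combinatorial bookkeeping in both directions of "which quadratic generators of $\inw{D}(I_n)$ survive the restriction to $S_w^c$, and when do the surviving ones generate a binomial ideal" — in particular producing the explicit non-binomial witness when the position constraint $t \ge s - 1$ fails, and conversely verifying that the surviving generator set has a single binomial up to the linear span of trivial relations when it holds. This is where the descending-property normal form and the projection formula for $S_w^c$ in Corollary~\ref{lem:Swc_1} do most of the heavy lifting, and I would organize it as a sequence of lemmas (an analogue of Lemmas~\ref{lem:WDecrease}, \ref{lem:nontoric_extn} and Proposition~\ref{lem:thm_ToricFlag_conv_2}, as the text hints) rather than one monolithic argument.
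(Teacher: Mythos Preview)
Your overall architecture matches the paper's proof exactly: the inclusion $A_1 \cup A_2 \subseteq T_{n,n}$ via Proposition~\ref{lem:thm_ToricFlag_conv_1} and (the content of) Proposition~\ref{lem:thm_ToricFlag_conv_2}, and the reverse inclusion via Lemma~\ref{lem:WDecrease} and Lemma~\ref{lem:nontoric_extn}, organized inductively on $n$. You even name these lemmas explicitly at the end.

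There is, however, one concrete error in direction (ii). You assert that ``$F_{n,n,w} \neq 0$ implies $\underline{w} \notin Z_{n-1}$ (by Claim~1 in the proof of Theorem~\ref{thm:zero})''. This is false, and Claim~1 does not say this: Claim~1 only handles the insertion of $n$ at the \emph{last} position. In fact every $w \in A_1$ satisfies $F_{n,n,w} \neq 0$ \emph{and} $\underline{w} \in Z_{n-1}$ (e.g.\ $w = 1342$, $\underline{w} = 132 \in Z_3$). You implicitly acknowledge this later when you say ``the $A_1$ configuration accounts exactly for the case where $\underline{w} \in Z_{n-1}$'', which contradicts your earlier claim. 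The correct step, which the paper carries out, is: assuming $w \in T_{n,n}$ and $\underline{w} \in Z_{n-1}$, use the explicit description of $Z_{n-1}$ from Theorem~B together with the descending property of $w$ (Lemma~\ref{lem:WDecrease}) to force $w$ into one of the two $A_1$ shapes (and to rule out the shapes that land in $Z_n$). No appeal to Claim~1 suffices here.

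Two smaller points. First, the phrase ``does not produce any genuinely new binomial beyond a single one'' suggests you expect $F_{n,n,w}$ to be principal for $w \in A_2$; it is not (take $w = (n,n-1,\dots,1)$, where $F_{n,n,w} = \inw{D}(I_n)$). The goal is only that no \emph{monomial} appears, i.e.\ that for every surviving $P_IP_J$ the partner $P_{I'}P_{J'}$ also survives --- exactly what Proposition~\ref{lem:thm_ToricFlag_conv_2} proves via Corollary~\ref{lem:Swc_1}. Second, constructing an explicit non-binomial witness when $t < s-1$ is more work than needed: once both $w$ and $\underline{w}$ have the descending property (the latter by induction), the inequality $t \ge s-1$ is forced combinatorially, as the paper observes in one line.
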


\begin{proof}
By Propositions~\ref{lem:thm_ToricFlag_conv_1} and \ref{lem:thm_ToricFlag_conv_2} we have that $A_1\cup A_2\subseteq T_{D,n}$. To prove the other direction suppose that $F_{n,n,w}$ is binomial and write the permutation $w = (w_1, \dots, w_t, n, w_{t+1}, \dots, w_{n-1})$ for some $t \in \{0, 1, \dots, n-1 \}$. Now by Lemma~\ref{lem:nontoric_extn} $F_{n-1,n-1,\ul{w}}$ is either zero or binomial.

Firstly, suppose that $F_{n-1,n-1,\ul{w}} = 0$. Theorem~B implies that $\ul{w}$ is of form
$$\ul{w} = (w_1, \dots, w_{n-2}, n-1)\quad \text{or}\quad
\ul{w} = (w_1, \dots, w_{n-3}, n-1, n-2).$$
Since $F_{n,n,w}$ is binomial, by Lemma~\ref{lem:WDecrease} we have $n > w_{t+1} > \dots > w_{n-1}$. So if $\ul{w} = (w_1, \dots, w_{n-2}, n-1)$ then we have that 
\[
w = (w_1, \dots, w_{n-2}, n-1, n)\quad\text{or}\quad w = (w_1, \dots, w_{n-2}, n, n-1).
\]
However, in both cases we have that $F_{n,n,w} = 0$ by Theorem~B, a contradiction. So we must have that $\ul{w} = (w_1, \dots, w_{n-3}, n-1, n-2)$. Now by Lemma~\ref{lem:WDecrease} we have that $w$ is one of the following permutations: 
\begin{itemize}
    \item $w = (w_1, \dots, w_{n-3}, n-1, n-2, n)$,
    \item $w = (w_1, \dots, w_{n-3}, n-1,n , n-2)$,
    \item $w = (w_1, \dots, w_{n-3}, n, n-1, n-2)$.
\end{itemize}
However, if $w = (w_1, \dots, w_{n-3}, n-1, n-2, n)$ then by Theorem~B we have that $F_{n,n,w} = 0 $, a contradiction. The remaining cases are of the desired form.

Secondly, suppose that $F_{n-1,n-1,\ul{w}}$ is binomial and $w_s = n-1$. By Lemma~\ref{lem:WDecrease} we have that $w_s > w_{s+1} > \dots > w_{n-1}$ and $n >{w_{t+1}} > \dots > w_{n-1}$. Thus we must have that $t \ge s-1$, otherwise $n > w_{s-1}$ and $w_{s-1} < w_{s}$ which contradicts Lemma~\ref{lem:WDecrease}.
\end{proof}

\begin{lemma}\label{lem:WDecrease}
If $F_{n,n,w}$ is binomial, then $w \in S_n^>$. 
\end{lemma}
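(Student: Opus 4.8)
## Proof proposal for Lemma~\ref{lem:WDecrease}

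The statement asserts that if $F_{n,n,w}$ is a non-zero binomial ideal (for the diagonal matching field $B_n = D$), then $w$ has the descending property: writing $w_t = n$, we have $n = w_t > w_{t+1} > \dots > w_n$. The natural strategy is the contrapositive: assume $w$ fails the descending property, produce an explicit element of $F_{n,n,w}$ that is a monomial or a trinomial (or forces a non-binomial generator), and conclude that $F_{n,n,w}$ cannot be binomial. Since failure of the descending property means there is an ascent to the right of the position of $n$, i.e. some index $i$ with $t \le i$ and $w_i < w_{i+1}$ (where the slot $i=t$ records $w_t = n > w_{t+1}$ always holds, so really $i > t$), I would isolate the smallest such ascent and build a Plücker relation witnessing non-binomiality.

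The plan is as follows. First I would fix notation: suppose $w_t = n$ and suppose toward a contradiction there exists $i$ with $t < i \le n-1$ and $w_i < w_{i+1}$; choose $i$ minimal with this property, so that $n = w_t > w_{t+1} > \dots > w_i < w_{i+1}$. Write $a = w_i$, $b = w_{i+1}$ with $a < b$. The key is to locate a subset $I$ with $|I|$ around $i$ whose Plücker variable does not vanish in $F_{n,n,w}$ precisely because it sits "at the boundary" $\{w_1,\dots,w_i\}$, and a companion subset $J$ so that the diagonal-matching-field initial relation $P_I P_J - P_{I'}P_{J'}$ in $\inw{D}(I_n)$ (these are the quadrics listed in Example~\ref{example:diagonal_matching_field}, the "Gelfand–Tsetlin" straightening relations) has the property that $P_I P_J \ne 0$ while exactly one of $P_{I'}, P_{J'}$ vanishes in $F_{n,n,w}$. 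Concretely I expect to take $I = \{w_1,\dots,w_i\}$ reordered increasingly — this is the lex-largest non-vanishing subset of size $i$ supported on the "used" values — together with a subset $J$ of size $i+1$ or $i-1$ chosen so that straightening $P_I P_J$ against the van-der-Waerden relations introduces the value $w_{i+1} = b$ into the "$i$-th slot", which then exceeds $w_i = a$ and hence produces a vanishing variable. Then in $F_{n,n,w}$ the relation collapses to a single monomial $P_I P_J$, forcing $F_{n,n,w}$ to contain a monomial; combined with $F_{n,n,w}\ne 0$ this contradicts $F_{n,n,w}$ being binomial (a binomial ideal here means the reduced generators are differences of two monomials, and a prime/toric one is monomial-free — in any case a monomial generator is incompatible with the intended classification). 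I would also need to rule out the degenerate possibility $|J| \le 1$ by noting $i \ge 2$ in the relevant range (for $i = 1$ one is in $S_1$ or $S_2$ where $Z_n = S_n$ and there is nothing to prove), consistent with the running assumption $n > 3$ in this section.

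The main obstacle I anticipate is the bookkeeping of exactly which subsets $I, J$ to choose and verifying both (i) $P_I P_J$ genuinely appears as a term of a generator of $\inw{D}(I_n)$ — i.e. identifying the correct van-der-Waerden/straightening quadric — and (ii) that after the substitution $\{P_K : K \in S_w\} \mapsto 0$ exactly one of the two monomials survives, not both and not neither. The cleanest route is probably to use the comparison $I \le (w_1,\dots,w_{|I|})$ characterization of non-vanishing variables from Definition~\ref{def:Scw}/Corollary~\ref{lem:Swc_1}: since $w$ restricted to positions past $t$ is forced to be decreasing until the ascent at $i$, the set $S_{(w_1,\dots,w_i)}^c$ has a unique maximal element in the $\le$-order, and any straightening that pushes past this maximal element lands in $S_w$. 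I would phrase the argument so that the "offending" relation is $P_I \cdot P_{I \cup \{c\}} - (\text{terms})$ or a two-column exchange $P_I P_J - P_{I'} P_{J'}$ with $I' $ obtained from $I$ by swapping in $b$, and then show $I' \not\le (w_1,\dots,w_i)$ because position $i$ of $I'$ now holds $b = w_{i+1} > w_i$. A subsidiary point to handle carefully is that for $\ell = n$ (diagonal) the relation is matching-field-independent and already explicit, so unlike later sections I do not need to re-justify that the quadric is a true relation — I can cite Example~\ref{example:diagonal_matching_field} and the standard description of $\inw{D}(I_n)$ as generated by the Gelfand–Tsetlin quadrics.
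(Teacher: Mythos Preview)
Your proposal is correct and follows essentially the same route as the paper: argue by contrapositive, locate the minimal ascent $w_k < w_{k+1}$ with $k > t$, and exhibit a diagonal quadratic relation $P_I P_J - P_{I'}P_{J'}$ in $\inw{D}(I_n)$ that collapses to a monomial in $F_{n,n,w}$. The paper's concrete choices are $I' = \{w_1,\dots,w_k\}$ (non-vanishing), $I = \{w_1,\dots,w_{k-1},w_{k+1}\}$ (vanishing), and $J = I'\setminus\{n\}$, $J' = I\setminus\{n\}$ --- exactly your ``$J$ of size $i-1$'' option with the swap introducing $w_{k+1}$.
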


\begin{proof}
Let $w = (w_1,\dots, w_{n})$ with $w_t = n$. Suppose by contradiction that there exists $k > t$ such that $w_k < w_{k+1}$. Without loss of generality, take $k$ to be the minimum such index. We will show that $F_{n,n,w}$ contains a monomial. Let
$$I = \{w_1, \dots, w_{k-1}, w_{k+1}\}\quad\text{and}\quad I' = \{w_1, \dots, w_k \}.$$
By this construction $P_I$ vanishes and $P_{I'}$ does not vanish in $F_{n,n,w}$ because $w_{k+1} > w_k$ and so $I > \{ w_1, \dots, w_k \} = I'$. 
Now we write $I \cup I'$ as an ordered set as $(\alpha, w_k, \beta, w_{k+1}, \gamma, n)$ for some ordered subsets $\alpha, \beta, \gamma$ of $[n]$, so $I = (\alpha, \beta, w_{k+1}, \gamma, n)$ and $I' = (\alpha, w_k, \beta, \gamma, n)$. Now define $J = (\alpha, w_k, \beta, \gamma)$ and $J' = (\alpha, \beta, w_{k+1}, \gamma)$.
By construction, we have $P_I P_J - P_{I'} P_{J'}$ is a relation in $\inwb(I_n)$ where $P_{I}$ vanishes and $P_{I'}$ does not vanish in $F_{n,n,w}$. However $P_{J'}$ does not vanish because
\[\{w_1, \dots, w_{k-1}\} = (\alpha, \beta, \gamma,n) \ge (\alpha,\beta,w_{k+1},\gamma) = J'.\]
And so, we have shown that the monomial $P_{I'}P_{J'}$ appears in the binomial ideal $F_{n,n,w}$, which is a contradiction.
\end{proof}

\begin{proposition}\label{lem:thm_ToricFlag_conv_2}
Suppose that $F_{n-1,n-1,\ul{w}}$ is binomial, where $\ul{w} = (w_1, \dots, w_{n-1})$ and $w_s = n-1$. Then $F_{n,n,w}$ is  binomial for $w = (w_1, \dots, w_{t}, n, w_{t+1}, \dots, w_{n-1})$, where $t \ge s-1$.
\end{proposition}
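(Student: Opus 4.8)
The plan is to show that inserting $n$ into the permutation $\ul{w}$ at any position $t \ge s-1$ (where $w_s = n-1$) neither destroys the non-vanishing of a binomial relation already present in $F_{n-1,n-1,\ul{w}}$, nor introduces any monomial. First I would set up the comparison between $S_{\ul{w}}^c$ and $S_w^c$ using Corollary~\ref{lem:Swc_1}: since $w \in S_n^>$ would follow from $F_{n,n,w}$ being binomial a posteriori, but here we are trying to prove $F_{n,n,w}$ is binomial, I cannot invoke that; instead I should use the hypothesis $t \ge s-1$ together with $\ul w$'s shape (which by Lemma~\ref{lem:WDecrease} satisfies $\ul w \in S_{n-1}^>$, so $n-1 = w_s > w_{s+1} > \dots > w_{n-1}$) to describe $S_w^c$ explicitly. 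The key structural observation is that a subset $I \subseteq [n]$ with $n \notin I$ has $P_I$ non-vanishing in $F_{n,n,w}$ if and only if $I \le (w_1,\dots,w_{|I|})$, and because $n$ sits at position $t \ge s-1$ in $w$ and the entries after position $s$ are forced to decrease, the $n$-free subsets non-vanishing for $w$ are exactly those non-vanishing for $\ul w$ (insertion of $n$ only makes the thresholds $(w_1, \dots, w_{|I|})$ larger for $|I|$ beyond the insertion point, never smaller). Then subsets $I$ with $n \in I$ have $P_I$ non-vanishing iff $|I| \ge t$ and $I \setminus n \in S_{\ul w}^c$, by Corollary~\ref{lem:Swc_1}.

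Next I would argue existence of a binomial relation: take the non-trivial binomial $P_M P_N - P_{M'} P_{N'} \in F_{n-1,n-1,\ul w}$ guaranteed by the hypothesis. Since the diagonal matching field $D$ satisfies $D(M) = D(M \cup \{n\})$ for any $M$ (as $n$ is the largest element and diagonal ordering is just the identity), this relation lifts verbatim to $\inwb(I_n)$, and all four variables remain non-vanishing in $F_{n,n,w}$ because their index sets do not contain $n$ and lie below the appropriate thresholds for $w$ as well. Hence $F_{n,n,w} \neq 0$.

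The harder direction — and what I expect to be the main obstacle — is showing $F_{n,n,w}$ contains no monomial. I would argue by contradiction: suppose a monomial $P_{I'} P_{J'}$ lies in $F_{n,n,w}$, arising from a relation $P_I P_J - P_{I'} P_{J'} \in \inwb(I_n)$ with (say) $P_I$ vanishing and $P_{I'}, P_{J'}$ non-vanishing in $F_{n,n,w}$. Split into cases on whether $n \in I \cup J$. If $n \notin I \cup J \cup I' \cup J'$, the whole relation descends to $\inwb(I_{n-1})$, giving a monomial in $F_{n-1,n-1,\ul w}$ — contradicting that it is binomial. If $n$ appears, use that $n$ lies in exactly one of $I, J$ and exactly one of $I', J'$ (by degree-two and the structure of Plücker relations), remove $n$ from those sets to produce a (possibly trivial) relation in $\inwb(I_{n-1})$; the condition $t \ge s-1$ is exactly what is needed to guarantee that removing $n$ keeps the resulting $n$-free sets non-vanishing for $\ul w$, so that the descended relation would exhibit a monomial in $F_{n-1,n-1,\ul w}$ unless it is trivial — and triviality together with the non-triviality of the original relation forces an index-set coincidence that contradicts $P_I$ vanishing while $P_{I'}$ does not. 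This mirrors the bookkeeping in the proof of Theorem~\ref{thm:zero} (Claims 2 and 3) and in Proposition~\ref{lem:thm_ToricFlag_conv_1}, so I would model the case analysis on those, with the role of "$\ul w \in Z_{n-1}$" there replaced by "$F_{n-1,n-1,\ul w}$ is binomial" here. The delicate point to get right is precisely which variable in the lifted/descended relation picks up or loses the entry $n$, and verifying in each subcase that the $t \ge s-1$ hypothesis prevents a previously non-vanishing variable from vanishing after the operation.
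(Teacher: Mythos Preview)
Your plan is workable but takes a different route from the paper, and it has one genuine gap.

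\medskip

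\textbf{The paper's approach.} The paper does not argue by contradiction or case on whether $n$ lies in the index sets. Instead it proves directly that for every relation $P_IP_J - P_{I'}P_{J'} \in \inwb(I_n)$ with $P_IP_J$ non-vanishing in $F_{n,n,w}$, the other term $P_{I'}P_{J'}$ is also non-vanishing. The case split is on the sizes $|I|,|J|$ relative to $t$. Cases~1 and~2 descend to $\ul w$ via Corollary~\ref{lem:Swc_1} (removing the \emph{largest} element of the long column, not specifically $n$). The heart is Case~3 ($|I|,|J|>t$): because we are in the \emph{diagonal} matching field, any binomial relation permutes entries within rows, i.e.\ $\{i_e',j_e'\}=\{i_e,j_e\}$ for each $e$. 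Combined with the second formula in Corollary~\ref{lem:Swc_1} (applicable because $w\in S_n^>$, which follows from $\ul w\in S_{n-1}^>$ and $t\ge s-1$), this row-wise structure immediately transfers the inequalities $(i_1,\dots,i_t)\le(w_1,\dots,w_t)$ to $I'$ and $J'$. No descent to $\ul w$ is needed in this case.

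\medskip

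\textbf{Your approach and its gap.} Your contradiction/$n$-membership scheme mirrors the bookkeeping in Theorem~\ref{thm:zero}, which is reasonable, but the claim ``$n$-free subsets are non-vanishing for $w$ exactly when they are non-vanishing for $\ul w$'' is only justified in one direction by your parenthetical (thresholds get larger under insertion of $n$). Your contradiction needs the \emph{other} direction as well: to conclude that the descended relation produces a monomial in $F_{n-1,n-1,\ul w}$, you need that $P_{I'},P_{J'}$ non-vanishing for $w$ implies non-vanishing for $\ul w$. That implication is true, but it requires $w\in S_n^>$ (not just ``thresholds larger''), and you should derive $w\in S_n^>$ explicitly from $\ul w\in S_{n-1}^>$ and $t\ge s-1$ before using it. Second, your ``remove $n$'' step must track which of $I',J'$ contains $n$; making this rigorous essentially forces you to use the same row-wise structure of diagonal relations that the paper exploits directly in its Case~3. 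Once you invoke that, the size-based decomposition becomes more natural than the $n$-membership one.
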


\begin{proof}
Suppose that $P_I P_J - P_{I'} P_{J'} \in \inwb(I_n)$ and $P_I P_J \neq 0$ in $F_{n,n,w}$. We show that $P_{I'} P_{J'} \neq 0$ by taking cases on $\lvert I \rvert$ and $\lvert J \rvert$.  {Without loss of generality we assume $|I| \le |J|$ so we have that either $|I|,|J| \le t$ or $|I| \le t$ and $|J| > t$ or $|I|, |J| > t$.}

\smallskip

\textbf{Case 1.} Let $\lvert I \rvert, \lvert J \rvert \le t $. Since $\ul{w}, w$ are identical from $w_1$ to $w_t$, we deduce that $P_I P_J \neq 0$ in $F_{n-1,n-1,\ul{w}}$. Since $F_{n-1,n-1,\ul{w}}$ is binomial we have that $P_{I'} P_{J'} \neq 0$ in $F_{n-1,n-1,w}$ and so it is non-zero in $F_{n,n,w}$.

\smallskip

\textbf{Case 2.} Let $\lvert I \rvert \le t, \lvert J \rvert > t $. Since $P_I P_J \neq 0$ in $F_{n,n,w}$, by Corollary~\ref{lem:Swc_1} we have $P_I \neq 0$ and $P_{J \setminus j_{\lvert J \rvert} } \neq 0$ in $F_{n-1,n-1,\ul{w}}$ where $J = \{j_1 < \dots < j_{\lvert J \rvert} \}$. Now, $P_I P_{J \setminus j_{\lvert J \rvert} } - P_{I'} P_{J' \setminus j_{\lvert J \rvert} }$ is a valid (possibly trivial) relation among the variables in $F_{n-1,n-1,\ul{w}}$. Since this ideal is   binomial we have that $P_{I'} \neq 0$ and $P_{J' \setminus j_{\lvert J \rvert} } \neq 0$. By Corollary~\ref{lem:Swc_1}, $P_{I'} \neq 0$ and $P_{J'} \neq 0$ in $F_{n,n,w}$. 

\smallskip

\textbf{Case 3.} Let $\lvert I \rvert, \lvert J \rvert > t$. Write $I = \{i_1 < \dots < i_p \}$ and $J = \{j_1 < \dots < j_q \}$. By Lemma~\ref{lem:WDecrease} we have that $n > w_{t+1} > \dots > w_{n-1}$ and so we may apply Corollary~\ref{lem:Swc_1} as follows.
$P_I \neq 0$ in $F_{n,n,w}$ if and only if $(i_1, \dots, i_{t}) \le (w_1, \dots, w_{t})$. Similarly, $P_J \neq 0$ in $F_{n,n,w}$ if and only if $(j_1, \dots, j_{t}) \le (w_1, \dots, w_{t})$. Next let us write $I' = \{i_1' < \dots < i_p' \}$ and $J' = \{j_1' < \dots < j_q' \}$. Now suppose without loss of generality that $p \le q$. Since we are working with the diagonal matching field, for each $1 \le e \le p$ we have that $i_e', j_e' \in \{i_e, j_e \}$. Hence $(i_1', \dots, i_t') \le (w_1, \dots, w_{t})$ and $(j_1', \dots, j_{t}') \le (w_1, \dots, w_{t})$. By Corollary~\ref{lem:Swc_1}, we have $P_{I'} \neq 0$ and $P_{J'} \neq 0$ in $F_{n,n,w}$. Hence $F_{n,n,w}$ is   binomial.
\end{proof}

\begin{lemma}\label{lem:nontoric_extn}
If $F_{n-1,n-1,\ul{w}}$ is  non-binomial, then $F_{n,n,w}$ is  non-binomial.
\end{lemma}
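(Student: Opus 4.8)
The plan is to prove the contrapositive in a form parallel to Lemma~\ref{lem:WDecrease} and Proposition~\ref{lem:thm_ToricFlag_conv_2}: starting from a monomial or a non-binomial relation (a binomial whose counterpart is a monomial) witnessing that $F_{n-1,n-1,\ul w}$ is non-binomial, I would lift it to a witness that $F_{n,n,w}$ is non-binomial. Write $\ul w = (w_1,\dots,w_{n-1})$ and $w = (w_1,\dots,w_t,n,w_{t+1},\dots,w_{n-1})$. Since $F_{n-1,n-1,\ul w}$ is non-binomial, there is a variable product $P_IP_J$, with $I,J\subseteq[n-1]$ and $P_I,P_J\neq 0$ in $F_{n-1,n-1,\ul w}$, that either appears as a monomial generator of $F_{n-1,n-1,\ul w}$ or appears in a relation $P_IP_J - P_{I'}P_{J'}\in \inw{D}(I_{n-1})$ in which the partner $P_{I'}P_{J'}$ vanishes in $F_{n-1,n-1,\ul w}$. (By the canonical quadratic generation of $F_{n-1,n-1}$ from Corollary~\ref{cor:block_diag_degen_flag}, together with the elimination description of $F_{n-1,\ell,w}$, non-binomiality is always witnessed in degree two.)

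The key step is then: such a product, viewed as an element of $F_{n,n,w}$ via the inclusion $\mathbb K[P_K : K\subseteq[n-1]]\hookrightarrow \mathbb K[P_K : K\subseteq[n]]$, is still a witness of non-binomiality. First I would check that $P_IP_J$ still does not vanish in $F_{n,n,w}$: since $I,J\subseteq[n-1]$ and $\ul w, w$ agree on the values $w_1,\dots,w_t$ (equivalently $\ul w = w|_{n-1}$), the condition $I\le(w_1,\dots,w_{|I|})$ is unchanged, using Corollary~\ref{lem:Swc_1} to relate $S^c_w$ and $S^c_{\ul w}$ for the indices not involving $n$. Next, if the witness was a monomial generator of $F_{n-1,n-1,\ul w}$, I would argue it remains in $F_{n,n,w}$: a monomial $P_IP_J \in F_{n-1,n-1,\ul w}$ arises by eliminating the vanishing variables from $\inw{D}(I_{n-1}) + \langle P_K : K\in S_{\ul w}\rangle$, and since $\inw{D}(I_{n-1})\subseteq \inw{D}(I_n)$ (restriction of the diagonal degeneration) and $S_{\ul w}$-vanishing is compatible, the same monomial lies in $F_{n,n,w}$ after the corresponding elimination. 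If instead the witness was a non-binomial relation $P_IP_J - P_{I'}P_{J'}$ with $P_{I'}P_{J'}$ vanishing downstairs, then the same binomial lies in $\inw{D}(I_n)$, and since $I',J'\subseteq[n-1]$ the vanishing of $P_{I'}P_{J'}$ in $F_{n,n,w}$ is again governed only by the shared entries $w_1,\dots,w_t$, so it still vanishes; hence reducing the relation modulo the vanishing variables shows $P_IP_J$ (or a monomial obtained from it) lies in $F_{n,n,w}$.

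The main obstacle I anticipate is bookkeeping the elimination/reduction carefully: $F_{n,n,w}$ is defined as an elimination ideal, so "lifting a witness" must be justified not just set-theoretically but at the level of which generators of $\inw{D}(I_n)$ survive the elimination of $\{P_K: K\in S_w\}$. The cleanest route is probably to avoid eliminations entirely and instead reason with the explicit quadratic generating set of $F_{n,n} = \inw{D}(I_n)$ (the diagonal/Gelfand–Tsetlin Plücker relations), observing that every generator has all four monomials supported on subsets of size $\le n-1$ or uniformly involving $n$, so the "set $P_K=0$ for $K\in S_w$" operation interacts with the relations in a controlled, $n$-insensitive way on the part of the ideal relevant to the witness. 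A secondary point to handle is the trivial edge cases $n=4$ (and the base of the induction), where one can appeal directly to the computations in Example~\ref{example:n_3_4_calculation} and Table~\ref{fig:num4}; for $n\ge 5$ the diagonal matching field permutes nothing, so no compatibility subtleties with $B_\ell$ arise and the argument is purely about the shared prefix $w_1,\dots,w_t$ of $\ul w$ and $w$.
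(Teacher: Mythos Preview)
Your approach has a real gap. You assert that for $I',J'\subseteq[n-1]$, ``the vanishing of $P_{I'}P_{J'}$ in $F_{n,n,w}$ is again governed only by the shared entries $w_1,\dots,w_t$, so it still vanishes.'' This is false whenever $|I'|>t$ (or $|J'|>t$): in that range the comparison in $S_w$ uses an initial segment of $w$ that contains the inserted value $n$, so it is genuinely different from the comparison in $S_{\ul w}$. Concretely, take $n=5$, $\ul w=(3,1,2,4)$ and $w=(5,3,1,2,4)$ (so $t=0$). The relation $P_2P_{13}-P_1P_{23}\in\inw{D}(I_4)$ gives the monomial $P_2P_{13}$ in $F_{4,4,\ul w}$, since $\{2,3\}\not\le\{1,3\}$. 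But in $F_{5,5,w}$ we have $\{2,3\}\le\{3,5\}=\{w_1,w_2\}$, so $P_{23}$ no longer vanishes and the same relation produces no monomial. (Non-vanishing does transfer upward, by Corollary~\ref{lem:Swc_1}; it is the \emph{vanishing} side that breaks.)

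The missing idea, which the paper supplies, is to \emph{adjoin $n$} to the subsets whose size exceeds $t$ before lifting. One splits into the three cases $|I|,|J|\le t$; $|I|\le t<|J|$; $t<|I|,|J|$, and in the latter two replaces $J,J'$ (resp.\ all four sets) by $J\cup\{n\}$ etc.\ to obtain a relation in $\inw{D}(I_n)$. Then Corollary~\ref{lem:Swc_1} shows that $P_{K\cup\{n\}}$ vanishes in $F_{n,n,w}$ if and only if $P_K$ vanishes in $F_{n-1,n-1,\ul w}$, which is exactly what is needed. In the example above this produces $P_{25}P_{135}-P_{15}P_{235}$, and indeed $\{2,3,5\}\not\le\{1,3,5\}$, so $P_{235}$ vanishes in $F_{5,5,w}$ and the monomial witness survives.
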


\begin{proof}
Suppose $F_{n-1,n-1,\ul{w}}$ is  non-binomial. Then there exists a monomial $P_I P_J \in F_{n-1,n-1,\ul{w}}$. Suppose this monomial arises from the relation $P_I P_J - P_{I'} P_{J'} \in \inwb(I_{n-1})$ such that $P_{I'} P_{J'} = 0$ in $F_{n-1,n-1,\ul{w}}$. Without loss of generality assume that $\lvert I \rvert = \lvert I' \rvert \le \lvert J \rvert = \lvert J' \rvert$. Write $\ul{w} = (w_1, \dots, w_{n-1})$ and $w = (w_1, \dots, w_{t}, n, w_{t+1}, \dots, w_{n-1})$ for some $t \in \{0,1,\dots, n-1 \}$. We take cases based on $t, \lvert I \rvert$ and $\lvert J \rvert$. In particular, we must either have $|J| \le t$ or $|I| \le t < |J|$ or $t < |I|$.

\smallskip

\textbf{Case 1.} Let $\lvert J \rvert \le t$. In this case we have that $P_I P_J$ is a monomial in $F_{n,n,w}$ because $(w_1, \dots, w_t)$ determines whether the variables in the above relation vanish in $F_{n,n,w}$ and $\ul{w}, w$ are identical on $(w_1, \dots, w_t)$.

\smallskip

\textbf{Case 2.} Let $\lvert I \rvert \le t < \lvert J \rvert$. Consider the relation $P_I P_{J \cup \{ n \} } - P_{I'} P_{J' \cup \{ n \}}$ in $\inwb(I_{n})$. By Corollary~\ref{lem:Swc_1} we have that $P_I P_J \neq 0$ and $P_{I'} P_{J'} = 0$ in $F_{n-1,n-1, \ul{w}}$ if and only if $P_I P_{J \cup \{ n \}} \neq 0$ and $P_{I'} P_{J' \cup \{ n \}} = 0$ in $F_{n,n,w}$.

\smallskip

\textbf{Case 3.} Let $t < \lvert I \rvert$. Consider the relation $P_{I \cup \{ n \}} P_{J \cup \{ n \}} - P_{I' \cup \{ n \}} P_{J' \cup \{ n \}} \in \inwb(I_n)$. Applying Corollary~\ref{lem:Swc_1} we have that $P_I P_J \neq 0$ and $P_{I'} P_{J'} = 0$ in $F_{n-1,n-1, \ul{w}}$ if and only if $P_{I \cup \{ n \}} P_{J \cup \{ n \}} \neq 0$ and $P_{I' \cup \{ n \}} P_{J' \cup \{ n \}} = 0$ in $F_{n,n,w}$.

In each case we have shown that $F_{n,n,w}$ is  non-binomial, as desired.
\end{proof}

\begin{remark}
Note that the converse to Lemma~\ref{lem:nontoric_extn} is false. For example if $w = (4,2,3,1)$ then $F_{4,4,w}$ is  non-binomial, however $F_{3,3,\ul{w}}$ is   binomial for $\ul{w} = (2,3,1)$. 
\end{remark}

\subsection{Semi-diagonal matching fields}\label{sec:proofs_semi_diag}

Below, we state and prove the main result for $\ell=n-1$, which decomposes the collection of permutations $T_{n,n-1}$ into three parts: $A_1$, $\tilde{A}_1$ and $\tilde{A}_2$, see Definition~\ref{def:notation}. The proof is split up into five steps. Each step is written with the claim at the beginning, followed by the proof of that claim. Steps a, b and c are very similar to the diagonal case, since we have seen that $A_1 \subset T_{n,n}$. Steps d and e are particular to the semi-diagonal case and show how the subsets $\tilde{A}_1$ and $\tilde{A}_2$ arise in the decomposition of $T_{n,n-1}$.

\begin{theorem}\label{Intro:Semi}
$T_{n,n-1} = A_1 \cup \tilde{A}_1 \cup  \tilde{A}_2$.
\end{theorem}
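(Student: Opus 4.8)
\textbf{Proof proposal for Theorem~\ref{Intro:Semi}.}

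The plan is to mirror the structure of the proof of Theorem~\ref{Intro:ToricFlag} (the diagonal case) but with extra care because the matching field $B_{n-1}$ treats the value $n$ differently from the diagonal field: the column order induced on a set $J$ depends on whether $|J \cap \{1,\dots,n-1\}| \ge 2$. As in the diagonal case, the containment $A_1 \cup \tilde A_1 \cup \tilde A_2 \subseteq T_{n,n-1}$ should be established first: $A_1 \subseteq T_{n,n-1}$ is immediate from Proposition~\ref{lem:thm_ToricFlag_conv_1}, and for $\tilde A_1$ and $\tilde A_2$ I would prove a ``forward'' extension proposition analogous to Proposition~\ref{lem:thm_ToricFlag_conv_2}, showing that if $F_{n-1,n-1,\ul w}$ is binomial then for $w$ obtained by inserting $n$ in an allowed slot (the slot conditions being exactly those packaged into $\tilde A_1$ and $\tilde A_2$) the ideal $F_{n,n-1,w}$ stays binomial. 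This is where the semi-diagonal subtlety enters: inserting $n$ can change which column order $B_{n-1}$ assigns to the larger sets appearing in a quadratic relation, so the case analysis on $|I|,|J|$ versus the insertion position $t$ must track how $n$ sits relative to the block $\{1,\dots,\ell\}=\{1,\dots,n-1\}$. The sets $\tilde A_1 = A_2' \cap \ul T_{n,n-2}$ and $\tilde A_2 = (\ul T_{n,n-1}\setminus\ul T_{n,n-2})\cap\{\dots\}$ are built precisely so that the appropriate restricted ideal ($F_{n-1,n-2,\ul w}$ or $F_{n-1,n-1,\ul w}$) controls the behaviour, so Step~d and Step~e amount to: when $\ul w$ is binomial for $B_{n-2}$ but not only for $B_{n-1}$, the relations needed are already witnessed downstairs, whereas the genuinely new relations forced by the $n$-th row appear only under the extra positional constraint on $w_s = n-1$, $w_t = n$.

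For the reverse inclusion $T_{n,n-1} \subseteq A_1 \cup \tilde A_1 \cup \tilde A_2$, I would follow the template of the diagonal proof. First establish the semi-diagonal analogue of Lemma~\ref{lem:WDecrease}: if $F_{n,n-1,w}$ is binomial then $w \in S_n^>$ (descending property), by exhibiting, whenever some $w_k < w_{k+1}$ with $k$ past the position of $n$, a quadratic relation in $\inwb(I_n)$ one of whose variables vanishes while the other three do not — producing a monomial. The construction of $I, I', J, J'$ should be essentially the one in Lemma~\ref{lem:WDecrease}; I need to double-check that the relation $P_I P_J - P_{I'}P_{J'}$ is genuinely a relation for $B_{n-1}$ (not just the diagonal field), using the same ``$B_\ell$ permutes only entries in $\{1,\dots,\ell\}$ and $B_\ell(M)=B_\ell(M\cup\beta)$ when $\beta$ is disjoint from the moved block'' observations already used in Proposition~\ref{lem:thm_ToricFlag_conv_1}. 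Next, the semi-diagonal analogue of Lemma~\ref{lem:nontoric_extn}: if $F_{n-1,n-1,\ul w}$ is non-binomial then $F_{n,n-1,w}$ is non-binomial (lift a witnessing monomial relation by adjoining $n$ in the three cases on $|I|,|J|$ vs.\ $t$, via Corollary~\ref{lem:Swc_1}). Combining these, any $w$ with $F_{n,n-1,w}$ binomial has $\ul w$ with $F_{n-1,n-1,\ul w}$ zero or binomial and $w\in S_n^>$; then a finite positional analysis of where $n$ can be inserted — splitting on whether $\ul w \in Z_{n-1}$ (giving $A_1$, after discarding the positions that make $F_{n,n-1,w}=0$ by Theorem~\ref{thm:zero}) or $\ul w \in T_{n-1,n-1}$ — pins $w$ down to $A_1 \cup \tilde A_1 \cup \tilde A_2$. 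The split of the $T_{n-1,n-1}$ case into $\tilde A_1$ versus $\tilde A_2$ is dictated by whether $\ul w$ is additionally in $T_{n-1,n-2}$, which must be extracted from a finer look at which quadratic relation becomes monomial when $n$ is placed too close to $n-1$.

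I expect the main obstacle to be Step~d/Step~e: correctly identifying, in the semi-diagonal setting, exactly which insertion positions of $n$ relative to $w_s = n-1$ preserve the binomial property, and matching this to the bookkeeping in the definitions of $\tilde A_1$ and $\tilde A_2$. Unlike the diagonal case, here the distinction between $B_{n-1}$ and $B_{n-2}$ on the restriction $\ul w$ is load-bearing — a relation that is ``trivial'' for one may be non-trivial for the other — so the delicate point is to show that the only obstruction to binomiality that is not already seen at level $n-1$ is the one quantified by the positional condition $t \ge s+1$ (resp.\ the interaction with $\ul T_{n,n-2}$), and that no further exceptional permutation (beyond those absorbed into $A_1$) sneaks in. I would handle this by a careful enumeration of the possible shapes of the quadratic generators of $\inwb(I_n)$ that can have exactly one vanishing variable, using Corollary~\ref{lem:Swc_1} to reduce each to a statement about $\ul w$, and then checking the small-$n$ base cases against Table~\ref{fig:num4} and the $\mathtt{Macaulay2}$ computations referenced in Remark~\ref{rmk:compute_init_schu}.
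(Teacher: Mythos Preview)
Your proposal is correct in overall structure and matches the paper's proof closely: both establish the forward inclusion via Proposition~\ref{lem:thm_ToricFlag_conv_1} for $A_1$ and separate lemmas for $\tilde A_1,\tilde A_2$; both prove a semi-diagonal descending-property lemma (the paper's Lemma~\ref{lem:WDec2}) and a non-binomial lifting lemma (Lemma~\ref{lem:toric_thm2_non_toric_extn}); and both split the reverse inclusion on $\ul w \in Z_{n-1}$ versus $\ul w \in T_{n-1,n-1}$, with the latter further split by membership in $T_{n-1,n-2}$.

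The one tactical difference worth flagging is in the forward inclusion for $\tilde A_1$ and $\tilde A_2$. You propose a direct extension analogous to Proposition~\ref{lem:thm_ToricFlag_conv_2}, tracking how $n$ interacts with the block. The paper instead leverages the already-proven diagonal case: since $\tilde A_1,\tilde A_2 \subseteq A_2$, Theorem~C.1 gives $w \in T_{n,n}$ for free, and then Lemma~\ref{lem:toric_thm2_case_reduce} reduces any putative monomial in $F_{n,n-1,w}$ to the single shape $I=\{i,n\}$ with $B_{n-1}(I)\neq id$. This localizes the entire analysis to size-two subsets containing $n$, making Lemmas~\ref{lem:toric_thm2_a2} and~\ref{lem:toric_thm2_a3} short. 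Your direct route would work but requires redoing the diagonal bookkeeping. For Step~e (why $t=s-1$ fails when $\ul w \in T_{n-1,n-1}\setminus T_{n-1,n-2}$), the paper isolates this as Lemma~\ref{lem:diag_extn_except}, which takes a witnessing monomial in $F_{n-1,n-2,\ul w}$ and promotes it to one in $F_{n,n-1,w}$ --- exactly the ``finer look'' you anticipate. The exclusion of $(n-1,n,n-2,\dots,1)$ from $A_2'$ is handled separately by Lemma~\ref{lem:a2_exception}.
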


\begin{proof}
We will use Lemmas~\ref{lem:WDec2}, \ref{lem:toric_thm2_case_reduce}, \ref{lem:toric_thm2_a2} and \ref{lem:toric_thm2_a3}.
We will  break down the proof into the following steps.

\smallskip

\noindent{\bf Step a.} $A_1\cup \tilde{A}_1\cup \tilde{A}_2 \subset T_{n,n-1}$ and so $RHS \subseteq LHS$.

First 
by Proposition~\ref{lem:thm_ToricFlag_conv_1}, $F_{n,\ell,w}$ is   binomial for each $w \in A_1$, so $A_1 \subset T_{n, n-1}$. Next $\tilde{A}_1 \subset T_{n, n-1}$ and $\tilde{A}_2 \subset T_{n, n-1}$ by Lemma~\ref{lem:toric_thm2_a2} and Lemma~\ref{lem:toric_thm2_a3}, respectively. So we have shown $A_1 \cup \tilde{A}_1 \cup \tilde{A}_2 \subseteq T_{n, n-1}$.

\medskip

\noindent{\bf Step  b.} For any $w \in T_{n,n-1}$, $\ul{w} \in Z_{n-1} \cup T_{ n-1,n-1}$. 

Now take $w \in T_{n, n-1}$. By Lemma~\ref{lem:toric_thm2_non_toric_extn}, $\ul{w} \in T_{n-1, n-1} \cup Z_{n-1}$. By Lemma~\ref{lem:WDec2} we have that $w$ has the descending property. We denote $w_t = n$ and $w_s = n-1$.
\medskip

\noindent{\bf Step  c.} If $\ul{w} \in Z_{n-1} $ then $w \in A_1$.

First suppose $\ul{w} \in Z_{n-1}$. By Theorem~B, either $\ul{w} = (w_1, \dots, w_{n-2}, n-1)$ or $\ul{w} = (w_1, \dots, w_{n-3}, n-1, n-2)$. If $\ul{w}= (w_1, \dots, w_{n-2}, n-1)$ then $w = (w_1, \dots, w_{n-2}, n, n-1)$ or $w = (w_1, \dots, w_{n-2}, n-1, n)$ since $w$ has the descending property. However $F_{n,n-1,w} = 0$ by Theorem~B, a contradiction. So $\ul{w} = (w_1, \dots, w_{n-3}, n-1, n-2)$. If $w = (w_1, \dots, w_{n-3}, n-1, n-2, n)$ then $F_{n,n-1,w} = 0$, so $w = (w_1, \dots, w_{n-3}, n-1, n, n-2)$ or $w = (w_1, \dots, w_{n-3}, n, n-1, n-2)$. Therefore $w \in A_1$.
\medskip

\noindent{\bf Step  d.} If $\ul{w} \in T_{n-1, n-1} \cap T_{n-1,n-2}$ then $w \in \tilde{A}_1$.

Next suppose $\ul{w} \in T_{n-1, n-1} \cap T_{n-1,n-2}$. Since $w$ has the descending property, we have $t \ge s-1$. By Lemma~\ref{lem:a2_exception} we have $w \neq (n-1, n, n-2, \dots, 1)$. Therefore $w \in \tilde{A}_1$.

\medskip

\noindent{\bf Step  e.} If $\ul{w} \in T_{n-1, n-1} \cap N_{n-1,n-2}$ then $w \in \tilde{A}_2$.

Finally suppose $\ul{w} \in T_{n-1, n-1} \backslash T_{n-1,n-2} = T_{n-1, n-1} \cap N_{n-1,n-2}$. Since $w$ has the descending property, $t \ge s-1$. We show that $t \ge s+1$ by contradiction. Note that we cannot have $t = s$. Suppose $t = s-1$. Then by Lemma~\ref{lem:diag_extn_except} we have $w \in N_{n,\ell} $, a contradiction. Therefore $w \in \tilde{A}_2$.
\end{proof}
\begin{lemma} \label{lem:WDec2} 
If $w \in T_{n, n-1}$ then $w \in S_n^>$.
\end{lemma}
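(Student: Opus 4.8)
\textbf{Proof proposal for Lemma~\ref{lem:WDec2}.}

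The plan is to mirror the argument of Lemma~\ref{lem:WDecrease}, which establishes the analogous statement $T_{n,n}\subseteq S_n^>$ for the diagonal matching field, adapting the construction of the relevant Pl\"ucker relation to the semi-diagonal matching field $B_{n-1}$. Suppose $w=(w_1,\dots,w_n)$ with $w_t=n$, and assume for contradiction that $w\notin S_n^>$, i.e.\ there exists $k>t$ with $w_k<w_{k+1}$. As in Lemma~\ref{lem:WDecrease}, choose $k$ minimal with this property and set
\[
I=\{w_1,\dots,w_{k-1},w_{k+1}\}, \qquad I'=\{w_1,\dots,w_k\}.
\]
Then $P_{I'}\neq 0$ in $F_{n,n-1,w}$ because $I'=\{w_1,\dots,w_k\}$, while $P_I$ vanishes since $w_{k+1}>w_k$ forces $I\not\le\{w_1,\dots,w_k\}$. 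I would then write $I\cup I'$ as an ordered set $(\alpha,w_k,\beta,w_{k+1},\gamma,n)$, so that $I=(\alpha,\beta,w_{k+1},\gamma,n)$ and $I'=(\alpha,w_k,\beta,\gamma,n)$, and define $J=(\alpha,w_k,\beta,\gamma)$ and $J'=(\alpha,\beta,w_{k+1},\gamma)$. The key point is that $P_IP_J-P_{I'}P_{J'}$ lies in $\inwb(I_n)$ with $\ell=n-1$: one checks the quadratic relation is compatible with the matching field $B_{n-1}$, using the fact that both $I$, $I'$ and both $J$, $J'$ contain $n$ (so $n$ is the largest element, at the bottom of each column, unaffected by the permutation $B_{n-1}$ applies), and that $B_{n-1}$ agrees on a set and that set union $\{n\}$ since $n\notin\{1,\dots,n-1\}$. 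Finally $P_{J'}\neq 0$ because $\{w_1,\dots,w_{k-1}\}=(\alpha,\beta,\gamma,n)\ge(\alpha,\beta,w_{k+1},\gamma)=J'$. Hence the monomial $P_{I'}P_{J'}$ appears in the binomial ideal $F_{n,n-1,w}$, contradicting that it is binomial (monomial-free).

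The main obstacle — and the only genuine difference from Lemma~\ref{lem:WDecrease} — is verifying that the chosen quadratic is actually an initial relation for the \emph{semi-diagonal} matching field, rather than just for the diagonal one. Under $B_{n-1}=(1\cdots n-1\mid n)$, the order induced on a subset is the increasing order unless the subset meets $\{1,\dots,n-1\}$ in exactly one element, in which case the two smallest entries are swapped. Since all four subsets $I,I',J,J'$ in the relation contain $n$, and $n>w_i$ for every other entry appearing, $n$ always sits at the bottom of its column and plays no role in $B_{n-1}$; the relation among the remaining entries is then governed by the diagonal behaviour of $B_{n-1}$ on subsets of $\{1,\dots,n-1\}$ containing at least two elements there — but one must check the relation still straddles the correct initial degrees. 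I expect this to follow from the same compatibility observations already used repeatedly in \S\ref{sec:pf_thm_a} (namely $B_\ell(M)=B_\ell(M\cup\{n\})$), together with the explicit form of $\wb_{n-1}$ recorded after Definition~\ref{def:block_diagonal_matching_field}; concretely, one verifies that $\wb_{n-1}(P_I)+\wb_{n-1}(P_J)=\wb_{n-1}(P_{I'})+\wb_{n-1}(P_{J'})$ so that the binomial is indeed homogeneous of the correct weight and hence lies in $\inwb(I_n)$. If a clean direct check is awkward, an alternative is to invoke Corollary~\ref{cor:block_diag_degen_flag}, which states $\inwb(I_n)=F_{n,\ell}$ is generated by quadratic binomials read directly off the matching field, and to identify $P_IP_J-P_{I'}P_{J'}$ (up to sign) as one of these generators or a consequence of them.

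Once this verification is in place, the contradiction is immediate and the lemma follows. I would keep the write-up short by explicitly pointing to Lemma~\ref{lem:WDecrease} and only spelling out the step where the matching field $B_{n-1}$ enters, since the combinatorial bookkeeping with $\alpha,\beta,\gamma$ is identical.
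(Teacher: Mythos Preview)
Your approach is essentially the same as the paper's, but there is a genuine gap. First, a slip: $J=(\alpha,w_k,\beta,\gamma)$ and $J'=(\alpha,\beta,w_{k+1},\gamma)$ do \emph{not} contain $n$; only $I,I'$ do. More importantly, your verification that $P_IP_J-P_{I'}P_{J'}\in\inw{n-1}(I_n)$ breaks down precisely when $|\alpha\cup\beta\cup\gamma|=0$, i.e.\ $k=2$ (equivalently $t=1$, so $w_1=n$). In that case $I=\{w_3,n\}$, $I'=\{w_2,n\}$, $J=\{w_2\}$, $J'=\{w_3\}$, and under $B_{n-1}$ one computes $\phi_{n-1}(P_IP_J)=-x_{1,n}x_{1,w_2}x_{2,w_3}$ while $\phi_{n-1}(P_{I'}P_{J'})=-x_{1,n}x_{1,w_3}x_{2,w_2}$; these differ, so your binomial is not a relation in $F_{n,n-1}=\inw{n-1}(I_n)$. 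The argument ``$B_\ell(M)=B_\ell(M\cup\{n\})$'' from \S\ref{sec:pf_thm_a} is only used there when $|M|\ge2$, and here $|J|=|J'|=1$.

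The paper handles this by splitting into two cases. When $|\alpha\cup\beta\cup\gamma|\ge1$ (so $k\ge3$), all four sets satisfy $B_{n-1}(L)=id$ and your relation works verbatim. When $|\alpha\cup\beta\cup\gamma|=0$, the paper instead uses the relation $P_nP_{w_2w_3}-P_{w_2}P_{nw_3}$, which \emph{is} valid for $B_{n-1}$ (since $B_{n-1}(\{w_2,w_3\})=id$ and $B_{n-1}(\{w_3,n\})=(12)$), and observes that $P_n$ vanishes because $w_1=n$ is impossible for $\{n\}\le\{w_1\}$\dots\ actually $P_n$ vanishes since $n\not\le w_1$ would require $w_1=n$; here $w_1=n$ so $P_n$ does not vanish---rather, the paper notes $P_n$ does vanish because\dots\ let me restate: in this edge case $w=(n,w_2,w_3,\dots)$ with $w_2<w_3$, and the paper's relation has $P_n$ vanishing (wait: $\{n\}\le\{w_1\}=\{n\}$, so $P_n$ does \emph{not} vanish). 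Rechecking the paper: it asserts $P_n$ vanishes, producing the monomial $P_{w_2}P_{nw_3}$. But $\{n\}\le\{n\}$, so this seems off---unless the point is that $P_{\{w_2,w_3\}}$ vanishes or not. Actually the paper's Case~2 argument is: $P_{w_2}P_{nw_3}$ does not vanish (both factors survive), while $P_n$ does not vanish either but $P_{w_2w_3}$\dots\ In any event, you need to supply a separate relation for $k=2$; the one you wrote down does not lie in $\inw{n-1}(I_n)$.
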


\begin{proof}
Let $w = (w_1, \dots, w_n)$ with $w_t = n$. Suppose by contradiction that there exists $k > t$ such that $w_k < w_{k+1}$. Let $I = \{ w_1, \dots, w_{k}\}$ and $I' = \{w_1, \dots, w_{k-1}, w_{k+1} \}$. Note that $P_I$ does not vanish in $F_{n,n-1,w}$ but $P_{I'}$ does vanish. Let us write in ascending order $I \cup I' = \{\alpha, w_k, \beta, w_{k+1}, \gamma, n\}$ for some subsets $\alpha, \beta, \gamma$ of $[n]$. Note that $I = \{ \alpha, w_k, \beta, \gamma, n \}$ and $I' = \{\alpha, \beta, w_{k+1}, \gamma, n \}$. Now we take cases on $\lvert \alpha \cup \beta \cup \gamma \rvert$.

\textbf{Case 1.} Let $\lvert \alpha \cup \beta \cup \gamma \rvert \ge 1$. Let $J = \{\alpha, \beta, w_{k+1}, \gamma \}$ and $J' = \{\alpha, w_{k}, \beta, \gamma \}$. Then it is easy to check that $P_I P_J - P_{I'} P_{J'}$ 
is a valid relation in in$_{\wb_{n-1}}(I_n)$. This is because for each $L \in \{I, J, I', J'\}$, we have $B_{n-1}(L) = id$. However, $P_I P_J$ does not vanish in $F_{n,n-1,w}$, so $F_{n,n-1,w}$ contains the monomial $P_I P_J$ since $P_{I'}$ vanishes in $F_{n,n-1,w}$. Therefore $F_{n,n-1,w}$ is  non-binomial, a contradiction. 

\textbf{Case 2.} Let $\lvert \alpha \cup \beta \cup \gamma \rvert = 0$. So $w = (n, w_2, w_3, \dots, w_n)$ with $k = 2$. Consider the relation $P_{n} P_{w_2 w_3} - P_{w_2} P_{n w_3}.$ in in$_{\wb_{n-1}}(I_n)$.
This relation holds because $B_{n-1}(\{w_2,w_3\}) = id$ and  $B_{n-1}(\{w_3,n\}) = (12)$ is a transposition. However $P_n$ vanishes in $F_{n,n-1,w}$, so $F_{n,n-1,w}$ contains the monomial $P_{w_2} P_{n w_3}$ and so is  non-binomial, a contradiction.
\end{proof}

\begin{lemma}\label{lem:toric_thm2_case_reduce}
Let $w \in S_n$ and $B_{n-1} = (1 \dots n-1 | n)$ be a block diagonal matching field. Suppose that $P_{I}P_{J}$ is a monomial in $F_{n,n-1,w}$ arising from the relation $P_I P_J - P_{I'} P_{J'}$ in $\inw{n-1}(I_n)$. If $B_{n-1}(I) = B_{n-1}(J) = id$ then $B_{n-1}(I') = B_{n-1}(J') = id$.
\end{lemma}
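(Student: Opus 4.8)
The plan is to translate the containment $P_IP_J-P_{I'}P_{J'}\in\inw{n-1}(I_n)$ into an equality of monomials under the toric map and then track the row in which the distinguished index $n$ is placed. By Corollary~\ref{cor:block_diag_degen_flag} we have $\inw{n-1}(I_n)=F_{n,n-1}=\ker(\phi_{n-1})$, so the hypothesis is equivalent to the monomial identity
\[
\phi_{n-1}(P_I)\,\phi_{n-1}(P_J)\;=\;\pm\,\phi_{n-1}(P_{I'})\,\phi_{n-1}(P_{J'})
\]
in $\mathbb{K}[x_{ij}]$. First I would record, directly from Definition~\ref{def:block_diagonal_matching_field}, the shape of $\phi_{n-1}(P_K)$ for $K=\{k_1<\dots<k_r\}$: one has $B_{n-1}(K)=\id$ \emph{unless} $K$ is a two-element set containing $n$, in which case $B_{n-1}(K)=(12)$. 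Hence $\phi_{n-1}(P_K)=x_{1,k_1}\cdots x_{r,k_r}$ when $B_{n-1}(K)=\id$, while $\phi_{n-1}(P_{\{k,n\}})=-\,x_{2,k}\,x_{1,n}$. Two bookkeeping invariants then fall out of the monomial identity: comparing column indices gives $I\uplus J=I'\uplus J'$, and since $\phi_{n-1}(P_K)$ occupies exactly rows $1,\dots,|K|$, comparing row multisets forces $\{|I|,|J|\}=\{|I'|,|J'|\}$; in particular $n$ occurs with the same multiplicity $\mu\in\{0,1,2\}$ on both sides.

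The core of the argument is the dichotomy for the location of the column index $n$: if $n\in K$ and $B_{n-1}(K)=\id$ then, as $n=\max K$, the factor of $\phi_{n-1}(P_K)$ carrying column $n$ is $x_{|K|,n}$; if instead $K=\{k,n\}$ that factor is $x_{1,n}$. I would argue by contradiction: assume without loss of generality $B_{n-1}(J')=(12)$, so $J'=\{j',n\}$ and the right-hand monomial contains a factor $x_{1,n}$ produced by a \emph{two-element} set. The case $\mu=0$ is vacuous (then $n\notin I'\cup J'$). For $\mu\in\{1,2\}$ I would use $I\uplus J=I'\uplus J'$ together with $\{|I|,|J|\}=\{|I'|,|J'|\}$ to locate $n$ on the left; since $B_{n-1}(I)=B_{n-1}(J)=\id$, any member of $\{I,J\}$ containing $n$ is not a two-element set containing $n$, so it has size $1$ or size $\ge 3$, placing $n$ in row $1$ or in a row $\ge 3$ there. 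When $\mu=2$ both $I$ and $J$ contain $n$, hence both have size $\ge 3$, and neither $x_{|I|,n}$ nor $x_{|J|,n}$ equals $x_{1,n}$, which is already the contradiction; when $\mu=1$ the size constraint pins down the sizes and the same row comparison applies once we know the set of $\{I,J\}$ carrying $n$ has size $\ge 2$.

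The genuine obstacle is the behaviour at the first row: a singleton $\{n\}$ also places column $n$ in row $1$, so in principle the needed $x_{1,n}$ on the left could be supplied by $P_{\{n\}}$ paired with a two-element set — and relations of the shape $P_{\{n\}}P_{\{a,b\}}\pm P_{\{a\}}P_{\{b,n\}}$ genuinely live in $\inw{n-1}(I_n)$. Ruling this configuration out in our situation is where the remaining hypotheses enter: the monomial $P_IP_J$ being a monomial of $F_{n,n-1,w}$ forces $I,J\notin S_w$, and combined with Lemma~\ref{lem:WDec2} (which constrains $w$ to have the descending property in the setting where the lemma is applied in the proof of Theorem~\ref{Intro:Semi}) one checks that $I=\{n\}$ with $J$ of size two leads to $j'$ exceeding $w_2$, which is impossible. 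Once this degenerate case is excluded, the row-tracking case analysis above yields $B_{n-1}(I')=B_{n-1}(J')=\id$. I expect the write-up to be short: the only computations are the two explicit forms of $\phi_{n-1}(P_K)$ and elementary multiset comparisons, with the first-row ambiguity being the one point that requires genuine care.
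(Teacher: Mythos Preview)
Your row-tracking argument and the paper's are essentially the same: both reduce to the degenerate configuration where one of $I,J$ is the singleton $\{n\}$ and the other is a two-element subset $\{a,b\}\subset[n-1]$, forcing $\{I',J'\}=\{\{a\},\{b,n\}\}$, and both then try to extract a contradiction from the non-vanishing of $P_IP_J$ together with the vanishing of $P_{I'}P_{J'}$.

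The gap is in how you handle this degenerate case. You invoke Lemma~\ref{lem:WDec2} to obtain the descending property for $w$, but that lemma requires $w\in T_{n,n-1}$, i.e.\ that $F_{n,n-1,w}$ be binomial --- which is precisely what is \emph{contradicted} by the hypothesis that $P_IP_J$ is a monomial in $F_{n,n-1,w}$. So Lemma~\ref{lem:WDec2} is unavailable, and the hypotheses as stated give you nothing about $w$ beyond $w_1=n$. In fact the lemma as stated is false: for $n=4$, $w=(4,1,3,2)$, $I=\{1,2\}$, $J=\{4\}$, $I'=\{2,4\}$, $J'=\{1\}$, one has $P_{12}P_4+P_{24}P_1\in\inw{3}(I_4)$, the term $P_{12}P_4$ survives while $P_{24}$ vanishes (since $\{2,4\}\not\le\{1,4\}$), yet $B_3(\{2,4\})=(12)\ne\id$. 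The paper's own proof stumbles at the same point: from ``$P_{a,i}$ does not vanish'' with $w_1=n$ one only gets $a\le w_2$, not the claimed $i\le w_2$. The statement becomes correct under the extra hypothesis $w\in S_n^>$ (then $w_1=n$ forces $w=(n,n-1,\dots,1)$ and nothing vanishes, so no monomial can arise), and that hypothesis is in fact available at the two places the lemma is invoked, Lemmas~\ref{lem:diag_extn_except} and~\ref{lem:toric_thm2_a2}; your parenthetical is effectively noticing this, but it does not establish the lemma as written.
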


\begin{proof}
We show the result by contradiction. Suppose without loss of generality that $B_{n-1}(I') \neq id$. So $I' = \{i, n \}$ for some $1 \le i \le n-1$. We have either $n \in I$ or $n \in J$. Without loss of generality suppose $n \in J$. After ordering $I'$ according to the matching field $B_{n-1}$ we see that $n$ is the first element. So $n$ is the first element of $J$. However $B_{n-1}(J) = id$ so we deduce that $J = \{ n \}$. Since $\lvert J \rvert \neq \lvert I' \rvert$, we have $\lvert I \rvert = \lvert I' \rvert$ and $\lvert J \rvert = \lvert J' \rvert$. Write $I = \{a, i \}$ for some $a < i$. Then the relation is given by 
\[P_{a,i} P_n - P_{n,i} P_a.\]
We have that $P_n$ does not vanish in $F_{n,n-1,w}$ so $w = (n, w_2, \dots w_n)$. Since $P_{a,i}$ does not vanish we have $i \le w_2 $. On the other hand $P_a$ does not vanish but $P_{n,i} P_a$ vanishes so $P_{n,i}$ must vanish in $F_{n,n-1,w}$. Therefore $\{n,i \} \not \le \{n, w_2 \}$ so $i > w_2$, a contradiction.
\end{proof}

\begin{lemma} \label{lem:diag_extn_except}
Let $\ul{w} \in T_{n-1, n-1} \cap N_{n-1,n-2}$ with $w_s = n-1$ and $w_t = n$. If $t = s-1$ then $w \in N_{n,n-1}$.
\end{lemma}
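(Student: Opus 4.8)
\textbf{Proof proposal for Lemma~\ref{lem:diag_extn_except}.}
The plan is to exhibit an explicit monomial in $F_{n,n-1,w}$ when $t = s-1$, which forces $F_{n,n-1,w}$ to be non-binomial, hence $w \in N_{n,n-1}$ (note $w\notin Z_n$ since $\ul w\notin Z_{n-1}$, by Claim~1 in the proof of Theorem~\ref{thm:zero}). Since $\ul{w} \in T_{n-1,n-1}$, by Lemma~\ref{lem:WDecrease} it has the descending property in $S_{n-1}$, and together with $t=s-1$ this pins down the local shape of $w$ near positions $s-1, s, s+1, \dots$: we have $w = (w_1, \dots, w_{s-1}, n, n-1, w_{s+1}, \dots)$ with $n > n-1 > w_{s+1} > \dots > w_n$ and $(w_1,\dots,w_{s-1})$ a set of values all $\le n-2$. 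The hypothesis $\ul{w} \in N_{n-1,n-2}$ provides a witnessing monomial: there is a relation $P_A P_B - P_{A'} P_{B'} \in \inw{n-2}(I_{n-1})$ with $P_A P_B \neq 0$ and $P_{A'} P_{B'} = 0$ in $F_{n-1,n-2,\ul{w}}$.

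First I would record precisely how $B_{n-1}$ on $[n]$ compares to $B_{n-2}$ on $[n-1]$ when we adjoin $n$: for a subset $L$ of $[n-1]$, ordering $L$ by $B_{n-2}$ versus ordering $L \cup \{n\}$ by $B_{n-1}$ — since $n-1$ plays the role of the ``last block element'' in the former and $n$ in the latter, and both lie in the second block — the induced orderings agree on the elements of $L$ when $n \notin L$, and one tracks exactly where $n$ gets inserted otherwise. The key point, analogous to the compatibility statements used in the proofs of Proposition~\ref{lem:thm_ToricFlag_conv_2} and Lemma~\ref{lem:nontoric_extn}, is that a relation valid for $\inw{n-2}(I_{n-1})$ lifts to a relation valid for $\inw{n-1}(I_n)$ after adjoining $n$ to the appropriate subsets (adjoining to all four if all have size $> s-1$, or to the larger pair if only those exceed $s-1$, as in the three cases of Lemma~\ref{lem:nontoric_extn}). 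Then I would use Corollary~\ref{lem:Swc_1} (applied with the value $n$ in position $t=s-1$) to translate non-vanishing/vanishing in $F_{n-1,n-2,\ul w}$ into non-vanishing/vanishing in $F_{n,n-1,w}$: $P_A \neq 0$ in $F_{n-1,n-2,\ul w}$ iff the lifted subset is $\neq 0$ in $F_{n,n-1,w}$, and likewise $P_{A'}P_{B'}=0$ lifts to $=0$. This yields the monomial $P_{A}P_{B}$ (suitably lifted) inside $F_{n,n-1,w}$.

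The step I expect to be the main obstacle is the bookkeeping for \emph{why} the value $n$ sits in position $s-1$ rather than $s$ or elsewhere, and checking that this does not disturb the lifting of the relation — i.e. verifying that the $B_{n-2}$-ordering of $A, B, A', B'$ is compatible with the $B_{n-1}$-ordering of their lifts given the precise insertion position. Concretely, the subtlety is that when $n$ is inserted \emph{before} the copy of $n-1$ (because $t = s-1 < s$), the column tableaux of the lifted Plücker variables can reorder in a way that a naive lift would miss; I would need to check that for each of $A, B, A', B'$, adjoining $n$ and reordering by $B_{n-1}$ produces exactly the tableau predicted by treating it as the $B_{n-2}$-tableau with $n$ appended in the second block (this uses $n-1 \ge 4$, ensuring the relations are genuine and independent of the block structure fine print, exactly as invoked repeatedly in \S\ref{sec:pf_thm_a} and \S\ref{sec:proofs_diag}). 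Once that compatibility is nailed down, the monomial $P_AP_B$ lifts into $F_{n,n-1,w}$ and the proof closes. A final remark: if $\ul w$ additionally has a degenerate small shape (as in the $w=(3,1,2)$-type edge cases handled via Example~\ref{example:n_3_4_calculation} elsewhere), one checks those directly, but since we assume $n>3$ throughout this subsection this does not arise.
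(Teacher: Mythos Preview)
Your lifting-by-adjoining-$n$ approach has a genuine gap, and it is exactly at the place you flagged as ``bookkeeping''. The point is that the witnessing monomial for $\ul w\in N_{n-1,n-2}$ \emph{cannot} be one where $B_{n-2}$ acts trivially on all four subsets: if $B_{n-2}(I)=B_{n-2}(J)=B_{n-2}(I')=B_{n-2}(J')=\mathrm{id}$ then the same relation lies in $\inw{D}(I_{n-1})$ and would give a monomial in $F_{n-1,n-1,\ul w}$, contradicting $\ul w\in T_{n-1,n-1}$. So (after Lemma~\ref{lem:toric_thm2_case_reduce}) one of the subsets, say $I$, must be a $2$-subset of the form $\{i,n-1\}$ with $B_{n-2}(I)=(12)$. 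Now your lift $\tilde I=I\cup\{n\}=\{i,n-1,n\}$ has $|\tilde I\cap\{1,\dots,n-1\}|=2$, hence $B_{n-1}(\tilde I)=\mathrm{id}$: the transposition is lost. A direct check (e.g.\ in the case $|J|=1$) shows that $P_{\tilde I}P_J-P_{\tilde{I'}}P_{J'}$ is \emph{not} a relation in $\inw{n-1}(I_n)$ at all, because $\phi_{n-1}(P_{\tilde I})=x_{1,i}x_{2,n-1}x_{3,n}$ while $\phi_{n-2}(P_I)=\pm x_{1,n-1}x_{2,i}$, and these are not related by appending a factor $x_{3,n}$. So the compatibility you hoped to ``nail down'' simply fails; this is not a bookkeeping issue but a structural obstruction.

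The paper's proof uses the hypothesis $\ul w\in T_{n-1,n-1}$ exactly to force $I=\{i,n-1\}$ as above, then deduces from non-vanishing of $P_I$ that $\ul w_2=n-1$ (so with $t=s-1$ one gets $w=(w_1,n,n-1,\dots)$), and finally \emph{replaces} $n-1$ by $n$ rather than adjoining it: the new subset $\{i,n\}$ still has $B_{n-1}$ acting by $(12)$, so the analogous relation $P_{n,i}P_J-P_{I'}P_{n}$ (respectively $P_{n,i}P_J-P_{n,j_2}P_{J''}$ when $|J|\ge 2$) is valid in $\inw{n-1}(I_n)$, and one checks that $P_n$ (respectively $P_{n,j_2}$) vanishes while the other term does not. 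Your sketch also never used $\ul w\in T_{n-1,n-1}$ beyond the descending property, which is a sign that a key ingredient is missing.
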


\begin{proof}
Let $P_I P_J$ be a monomial appearing in $F_{n-1,n-2,\ul{w}}$ which arises from the relation $P_{I} P_{J} - P_{I'} P_{J'}$ in $\inw{n-2}(I_{n-1})$. 

Note that by definition of $B_{n-2} = (1, \dots, n-2\ | \ n-1)$, the only subsets $L \subseteq [n-1]$ for which $B_{n-2}(L) \neq id$ are those with $\lvert L \rvert = 2$ and $n-1 \in L$. If $B_{n-2}(I) = B_{n-2}(J) = B_{n-2}(I') = B_{n-2}(J') = id$ then $P_I P_J - P_{I'} P_{J'}$ is a relation in $\inw{D}(I_{n-1})$. This relation gives rise to a monomial in $F_{D,\ul{w}, n-1}$ but by assumption $w \in T_{n-1, n-1}$, a contradiction. So by Lemma~\ref{lem:toric_thm2_case_reduce} we may assume without loss of generality that $B_{n-2}(I) \neq id$. We write $I = \{i, n-1\}$ for some $1 \le i \le n-2$. We take cases on $\lvert J \rvert$.

\smallskip

\textbf{Case 1.} Let $\lvert J \rvert = 1$. Let us write $J = \{ j \}$. The relation is given by
$$P_{n-1, i} P_{j} - P_{j,i} P_{n-1}.$$
It follows that $j < i$. Now consider $\ul{w} = (w_1, w_2, \dots, w_{n-1})$. Since $P_I P_J$ does not vanish in $F_{n-1,n-2,\ul{w}}$ we have $j \le w_1$ and $\{i, n-1 \} \le \{ w_1, w_2 \}$. Therefore either $w_1 = n-1$ or $w_2 = n-1$. Since $j \le n-1$ we have $\{j,i \} \le \{w_1, w_2 \}$. By assumption $P_{j,i} P_{n-1}$ vanishes so $P_{n-1} $ vanishes in $F_{n-1,n-2,\ul{w}}$. 
We deduce that $w_1 \neq n-1$ and so $w_2 = n-1$. So by our assumption $w = (w_1, n, n-1, \dots )$. Now consider
$$P_{n, i} P_j - P_{j,i} P_n.$$
This is a relation in $\inw{n-1}(I_n)$. Note that $P_{n, i} P_j$ does not vanish but $P_{j,i} P_n$ does vanish in $F_{n,n-1,w}$. So we have shown that $w \in N_{n,n-1} $.

\smallskip

\textbf{Case 2.} Let $\lvert J \rvert \ge 2$. First we show that $B_{n-2}(J) = id$. Suppose by contradiction that $B_{n-2}(J) \neq id$. 
By definition of $B_{n-2} = (1, \dots, n-2 \ | \ n-1)$, we have $B_{n-2} (J)\neq id$ implies that $\lvert J \cap \{1,\ldots,n-2\} \rvert = 1$,
and so $\lvert J \rvert = 2$ 
and $n-1 \in J$. Hence, $J = \{j, n-1\}$ for some $1 \le j \le n-2 $. The relation is given by  $P_{n-1, i} P_{n-1, j} - P_{n-1, j} P_{n-1, i}$ which is trivial, a contradiction. So $B_{n-2}(J) = id$. Let us write $J = \{j_1 < j_2 < \dots < j_{\lvert J \rvert} \}$. The relation is given by
$$P_{n-1, i} P_{j_1, j_2, j_3, \dots, j_{\lvert J \rvert}} - 
P_{n-1, j_2} P_{j_1, i  , j_3, \dots, j_{\lvert J \rvert}}.$$

Next we show that $i < j_2$ by contradiction. Note that $i \neq j_2$ otherwise the above relation is trivial. Suppose that $i > j_2$. Since $P_{n-1, i}$ does not vanish, $P_{n-1, j_2}$ does not vanish in $F_{n-1,n-2,\ul{w}}$. So $P_{j_1, i, j_3, \dots, j_{\lvert J \rvert}}$ vanishes in $F_{n-1,n-2,\ul{w}}$. By Lemma~\ref{lem:WDecrease}, $\ul{w}$ has the descending property since $\ul{w} \in T_{n-1,n-1}$.
So if $w_1 = n-1$ then $\ul{w} = (n-1, n-2, \dots, 2,1)$ and $F_{\ul{B'},\ul{w},n-1}$ is   binomial, a contradiction. Since $P_{n-1, i}$ does not vanish in $F_{\ul{B'},\ul{w},n-1}$ and $w_1 \neq n-1$, it follows that $w_2 = n-1$.
Now by applying Corollary~\ref{lem:Swc_1} to $\underline{w}$ we have that $P_{j_1, i, j_3, \dots, j_{\lvert J \rvert}}$ vanishes in $F_{n-1,n-2,\underline{w}}$ if and only if $\{j_1, i \} \not \le \{w_1, w_2 \}$. On the other hand, $\{j_1, i \} \not \le \{w_1, w_2 \}$ implies that $I = \{n-1, i \} \not \le \{ w_1, w_2\}$, a contradiction.

So we have $i < j_2$. We deduce that $P_{j_1, i  , j_3, \dots, j_{\lvert J \rvert}}$ does not vanish in $F_{n-1,n-2,\ul{w}}$ so $P_{n-1, j_2}$ does vanish and $j_2 > w_1$. Since $P_{n-1, i}$ does not vanish, $w_2 = n-1$. Now $w = (w_1, n, n-1, \dots)$ has the descending property because $\ul{w}$ does. Consider the relation
$$P_{n, i} P_{j_1, j_2, j_3, \dots, j_{\lvert J \rvert}} - 
P_{n, j_2} P_{j_1, i  , j_3, \dots, j_{\lvert J \rvert}}.$$
This is a valid relation in $\inw{n-1}(I_n)$. By Corollary~\ref{lem:Swc_1} we see that $P_{n, i} P_{j_1, j_2, j_3, \dots, j_{\lvert J \rvert}}$ does not vanish in $F_{n,n-1,w}$ but $P_{n,j_2}$ does vanish in $F_{n,n-1,w}$. So $F_{n,n-1,w}$ is  non-binomial and $w \in N_{n,n-1} $.
\end{proof}

\begin{lemma}\label{lem:a2_exception}
Fix $\ell \in \{1, \dots, n-1 \}$. Then $F_{n,\ell,w}$ is  non-binomial for $w = (n-1, n, n-2, \dots, 1)$.
\end{lemma}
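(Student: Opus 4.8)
The plan is to produce an explicit monomial lying in $F_{n,\ell,w}$; once $F_{n,\ell,w}$ is seen to contain a monomial it is not monomial-free, hence non-binomial (this is exactly how ``non-binomial'' is used in the proofs of Lemmas~\ref{lem:WDecrease} and \ref{lem:nontoric_extn}). Throughout take $n\ge 3$. First I would record the shape of $S_w$ for $w=(n-1,n,n-2,\dots,1)$: for every $k\ge 2$ the set $\{w_1,\dots,w_k\}$ equals the top $k$ values $\{n+1-k,\dots,n\}$, so its $i$-th smallest element is $n-k+i$, which dominates the $i$-th smallest element of any $k$-subset $I\subseteq[n]$; thus $I\le\{w_1,\dots,w_k\}$ automatically whenever $|I|\ge 2$. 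Among singletons, only $\{n\}\not\le\{w_1\}=\{n-1\}$. Hence $S_w=\{\{n\}\}$, i.e.\ the unique variable vanishing in $F_{n,\ell,w}$ is $P_n$. Consequently it suffices to exhibit a quadratic binomial in $F_{n,\ell}=\inwb(I_n)$ exactly one of whose two monomials is divisible by $P_n$: setting $P_n=0$ then leaves a single monomial inside $F_{n,\ell,w}$.

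The binomial will be the $\wb_\ell$-initial form of the three-term Pl\"ucker relation
\[
g \;=\; P_1P_{n-1,n}-P_{n-1}P_{1,n}+P_nP_{1,n-1}\;\in\;I_n,
\]
which lies in $I_n$ for every $n\ge 3$ since, after applying $\varphi_n$, it becomes an identity among the $2\times 2$ minors of $X$ sharing the first row. Using the explicit description of $\wb_\ell$ recorded just after Definition~\ref{def:block_diagonal_matching_field}, one computes the $\wb_\ell$-weights of the three monomials of $g$: for $1\le\ell\le n-2$ they are $\ell+1,\ \ell,\ \ell$, so $\inwb(g)=-P_{n-1}P_{1,n}+P_nP_{1,n-1}$; for $\ell=n-1$ they are $1,\ n-1,\ 1$, so $\inwb(g)=P_1P_{n-1,n}+P_nP_{1,n-1}$. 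In both ranges $\inwb(g)$ is an honest binomial in which precisely one monomial, namely $P_nP_{1,n-1}$, is divisible by $P_n$.

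To finish I would pass to $F_{n,\ell,w}$. The variables $P_1,P_{n-1}$ (singletons $\le n-1$) and $P_{1,n},P_{1,n-1},P_{n-1,n}$ (two-element subsets, hence not in $S_w$) do not vanish, while $P_n$ does. Since $\inwb(g)\in F_{n,\ell}+\langle P_J:J\in S_w\rangle$ and is congruent modulo $\langle P_J:J\in S_w\rangle$ to the single monomial $P_{n-1}P_{1,n}$ (respectively $P_1P_{n-1,n}$), that monomial lies in $\mathbb K[P_J:J\notin S_w]$ and in $F_{n,\ell}+\langle P_J:J\in S_w\rangle$, hence in $F_{n,\ell,w}$ by Definition~\ref{def:ideals}. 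It is a nonzero monomial, so $F_{n,\ell,w}$ is not monomial-free and therefore $w\notin T_{n,\ell}\cup Z_n$, i.e.\ $F_{n,\ell,w}$ is non-binomial.

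I do not anticipate a genuine obstacle; the only point requiring care is the weight bookkeeping for $\wb_\ell$ in the two ranges $1\le\ell\le n-2$ and $\ell=n-1$, together with the observation that in each case exactly one monomial of $\inwb(g)$ carries the variable $P_n$. As a consistency check, for $n=3$ (so $w=231$) the two computed initial forms reproduce exactly the entries $\langle P_2P_{13}\rangle$ and $\langle P_1P_{32}\rangle$ of Table~\ref{fig:num4}.
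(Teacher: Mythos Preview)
Your proposal is correct and is essentially the same argument as the paper's: the paper exhibits, for $1\le\ell\le n-2$, the relation $P_{n-1}P_{n,1}-P_nP_{n-1,1}\in\inwb(I_n)$, and for $\ell=n-1$ the relation $P_1P_{n,n-1}-P_nP_{1,n-1}\in\inwb(I_n)$, each of which becomes a monomial after killing $P_n$. These are precisely your two initial forms $\inwb(g)$ (up to sign and the matching-field ordering of indices); the only difference is that you derive them explicitly from the three-term Pl\"ucker relation via the weight formula, whereas the paper simply asserts them as elements of $\inwb(I_n)$.
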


\begin{proof}
If $\ell \neq n-1$, then consider the following relation in $\inwb(I_n)$:
$$P_{n-1} P_{n,1} - P_{n} P_{n-1, 1}.$$
The monomial $P_{n-1} P_{n,1}$ does not vanish in $F_{n,\ell,w}$ whereas $P_n$ vanishes in $F_{n,\ell,w}$. So $F_{n,\ell,w}$ contains the monomial $P_{n-1}P_{n,1}$, hence $F_{n,\ell,w}$ is  non-binomial.

If $\ell = n-1$, then we consider the relation
$$P_{1} P_{n, n-1} - P_{n}P_{1,n-1}$$
in $\inwb(I_n)$. The term $P_{1} P_{n, n-1}$ does not vanish in $F_{n,\ell,w}$ whereas $P_{n}$ does vanish in $F_{n,\ell,w}$. So $F_{n,\ell,w}$ contains the monomial $P_{1}P_{n,n-1}$ and hence is  non-binomial.
\end{proof}

\begin{lemma}\label{lem:toric_thm2_a2}
We have that $\tilde{A}_1 \subset T_{n, n-1}$.
\end{lemma}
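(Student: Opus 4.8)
The plan is to adapt the proof of the diagonal analogue, Proposition~\ref{lem:thm_ToricFlag_conv_2}, accounting for the one way the semi-diagonal matching field $B_{n-1}=(1\cdots n-1\mid n)$ differs from a diagonal one: it reverses the two entries of a two-element set containing $n$, and acts as the identity on every other subset. Fix $w\in\tilde A_1$ and write $w_t=n$, $w_s=n-1$. Since $\ul w\in T_{n-1,n-1}$, Lemma~\ref{lem:WDecrease} gives $\ul w\in S_{n-1}^>$, and combining this with the inequality $t\ge s-1$ built into $A_2$ one checks directly that $w$ itself has the descending property, $w\in S_n^>$. Suppose, for contradiction, that $F_{n,n-1,w}$ is non-binomial. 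By Corollary~\ref{cor:block_diag_degen_flag} we have $\inw{n-1}(I_n)=F_{n,n-1}$, so there is a binomial $P_IP_J-P_{I'}P_{J'}\in\inw{n-1}(I_n)$ with $P_IP_J\neq 0$ and $P_{I'}P_{J'}=0$ in $F_{n,n-1,w}$; we may take $|I|=|I'|\le|J|=|J'|$. Because $\phi_{n-1}(P_IP_J)=\pm\phi_{n-1}(P_{I'}P_{J'})$, the multisets of column indices agree, $I\cup J=I'\cup J'$; in particular $n\in I\cup J$ iff $n\in I'\cup J'$, and a subset among $I,J$ that contains $n$ has size exactly $t$ (as $n\in L$ and $P_L\neq 0$ force $n\le w_{|L|}$, hence $|L|=t$).

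I would first dispose of the case in which $n$ does not lie in a two-element set with $B_{n-1}$ twisting: either $n\notin I\cup J$, so all four subsets lie in $[n-1]$ where $B_{n-1}$ coincides with the diagonal matching field; or $n\in I\cup J$ but $B_{n-1}(I)=B_{n-1}(J)=\id$, in which case Lemma~\ref{lem:toric_thm2_case_reduce} forces $B_{n-1}(I')=B_{n-1}(J')=\id$ as well, so $n$ occupies the last slot of every subset containing it and can be deleted. In either situation one obtains a relation in $\inw{D}(I_{n-1})$ and, using $w\in S_n^>$, Corollary~\ref{lem:Swc_1}, and the fact that $w$ agrees with $\ul w$ in its first $t-1$ entries, one sees that this relation exhibits a monomial of $F_{n-1,n-1,\ul w}$, contradicting $\ul w\in T_{n-1,n-1}$. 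This portion runs essentially verbatim as in the diagonal case (Steps~b, c above).

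The crux is the remaining case, where some twisting two-element set $\{a,n\}$ occurs; such a set must lie among $I,J$ (otherwise $n$ would not appear on the left), is non-vanishing, and has size $2$, so $t\le 2$. If $t=1$ then $w_1=n$ and the descending property together with $\ul w\in S_{n-1}^>$ force $w=(n,n-1,\dots,1)$, for which $F_{n,n-1,w}=F_{n,n-1}$ is toric — a contradiction. Hence $t=2$ and $w=(w_1,n,w_3,\dots,w_n)$; the constraint $t\ge s-1$ gives $s\le 3$, and $s\neq 2$ since $w_2=n$. If $s=1$ then $w_1=n-1$ and $\ul w\in S_{n-1}^>$ pin $w$ down to $(n-1,n,n-2,\dots,1)$, which is excluded from $\tilde A_1$ and is non-binomial by Lemma~\ref{lem:a2_exception}. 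So $s=3$, $w_3=n-1$, and $\ul w=(w_1,n-1,w_4,\dots,w_n)$ with $\ul w_1=w_1\le n-2$ and $\ul w_2=n-1$. The key observation is that if $w_1\ge 2$, then the binomial $P_1P_{2,n-1}+P_{n-1}P_{1,2}\in\inw{n-2}(I_{n-1})$ has $P_1P_{2,n-1}\neq 0$ and $P_{n-1}=0$ in $F_{n-1,n-2,\ul w}$ (because $1\le \ul w_1$, $\{2,n-1\}\le\{\ul w_1,\ul w_2\}$, and $n-1>w_1$), so $F_{n-1,n-2,\ul w}$ is non-binomial — contradicting $\ul w\in T_{n-1,n-2}$, which is exactly the condition $\tilde A_1\subseteq\ul T_{n,n-2}$ of Definition~\ref{def:notation}. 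Thus $w_1=1$, and for $w=(1,n,n-1,w_4,\dots,w_n)$ a direct inspection (using $w\in S_n^>$ and Corollary~\ref{lem:Swc_1} to enumerate the non-vanishing variables of size $\le 2$, all of which have $1$ as least element) shows that every binomial involving a twisting two-element set is trivial or already has $P_IP_J=0$, completing the contradiction.

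The main obstacle I anticipate is this last case: one must make the combinatorics of the twisting-two-element-set relations of $\inw{n-1}(I_n)$ fully explicit — which such relations occur, how the order imposed by $B_{n-1}$ interacts with the vanishing conditions determined by $w$ through Corollary~\ref{lem:Swc_1}, and why passing to the $n\leftrightarrow n-1$ picture transports a monomial of $F_{n,n-1,w}$ to a monomial of $F_{n-1,n-2,\ul w}$. It is precisely and only here that the hypothesis $\ul w\in T_{n-1,n-2}$ (rather than merely $\ul w\in T_{n-1,n-1}$) is used, and the removal of $(n-1,n,n-2,\dots,1)$ in forming $A_2'$ from $A_2$, justified by Lemma~\ref{lem:a2_exception}, is what prevents this reduction from misfiring.
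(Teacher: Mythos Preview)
Your argument is essentially correct, but it diverges from the paper's proof in two places and contains one small slip.

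\textbf{The slip.} The parenthetical ``$n\in L$ and $P_L\neq 0$ force $n\le w_{|L|}$, hence $|L|=t$'' only gives $|L|\ge t$, not equality. This does not hurt you: the twisting set has size $2$, so you still get $t\le 2$, which is all you use.

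\textbf{The all-identity case.} The paper does not reduce to $F_{n-1,n-1,\ul w}$ at all. Instead it observes that $\tilde A_1\subseteq A_2$, so by Theorem~C.1 (already proved) one has $w\in T_{n,n}$ outright; then if $B_{n-1}$ acts as the identity on $I,J,I',J'$, the relation is simultaneously a relation in $\inw{D}(I_n)$ and produces a monomial of $F_{n,n,w}$, contradicting $w\in T_{n,n}$. Your route---deleting $n$ and invoking a Proposition~\ref{lem:thm_ToricFlag_conv_2}-style case split---works, but it is exactly a re-proof of the relevant piece of Theorem~C.1, so the paper's one-line appeal is cleaner.

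\textbf{The twisting case.} Here you genuinely do something different. The paper keeps the specific offending relation: once $w=(w_1,n,n-1,\dots)$ with $w_1\le n-2$, it replaces $n$ by $n-1$ in the relation $P_{n,i}P_J-P_{n,j_2}P_{J'}$ (or $P_{n,i}P_j-P_nP_{j,i}$ when $|J|=1$) to manufacture a monomial of $F_{n-1,n-2,\ul w}$, contradicting $\ul w\in T_{n-1,n-2}$. This handles all $w_1$ uniformly, because when $w_1=1$ the constraints $i,j_1\le w_1$ force $i=j_1=1$ and the purported relation degenerates (the right-hand variable would have a repeated index), so no twisting monomial exists in the first place. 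Your approach instead ignores the specific relation: for $w_1\ge 2$ you exhibit the fixed binomial $P_1P_{2,n-1}+P_{n-1}P_{1,2}\in\inw{n-2}(I_{n-1})$, which indeed becomes a monomial of $F_{n-1,n-2,\ul w}$ and rules out $\ul w\in T_{n-1,n-2}$ whenever $\ul w=(w_1,n-1,\dots)$ with $2\le w_1\le n-2$. This is slick---it shows the hypothesis $\ul w\in T_{n-1,n-2}$ already forces $w_1=1$ in the $t=2$ situation---but it leaves the residual case $w_1=1$ to a separate ``direct inspection''. That inspection is valid (the only non-vanishing twisting pair is $\{1,n\}$, and all non-vanishing $P_J$ have $1$ as least element, so any candidate binomial partner would force a repeated index), but you should spell it out rather than leave it as an assertion.

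In short: both proofs are correct. The paper's buys uniformity by transporting the actual relation; yours buys a structural insight (that $\ul w\in T_{n-1,n-2}$ with $\ul w_2=n-1$ forces $\ul w_1=1$) at the cost of a small leftover case.
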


\begin{proof}
Let $w \in \tilde{A}_1$. Note that $\tilde{A}_1 \subseteq A_2 = \ul{T}_{n,n-1} \cap \{w \in S_n : \ul{w} \in S_{n-1}^> \text{ and if } w_s = n-1, w_t = n \text{ then } t \ge s - 1 \}$, so by Theorem~C part C1, we have that $w \in T_{n,n}$. Hence, by Lemma~\ref{lem:WDecrease}, we conclude that $w \in S^>_n$.
By contradiction suppose that $F_{n,n-1,w}$ is  non-binomial. So there exists a monomial $P_I P_J \in F_{n,n-1,w}$ arising from a relation $P_I P_J - P_{I'} P_{J'}$ in $\inwb(I_n)$. We assume without loss of generality that $\lvert I \rvert = \lvert I' \rvert$ and $\lvert J \rvert = \lvert J' \rvert$. 
 If $B_{n-1}(I) = B_{n-1}(J) = id$ then by Lemma~\ref{lem:toric_thm2_case_reduce} we have $B_{n-1}(I') = B_{n-1}(J') = id$ so $P_I P_J$ would be a monomial in $F_{n,n,w}$, a contradiction. So without loss of generality suppose that $B_{n-1}(I) \neq id$. Write $I = \{i, n\}$ for some $1 \le i \le n-1$. If $B_{n-1}(J) \neq id$ then the relation would be trivial since we would have $J = \{j, n\}$ for some $j$.  Therefore $B_{n-1}(J) = id$.

We have that $P_{n,i}$ does not vanish in $F_{n,n-1,w}$ so $\{n,i \} \le \{w_1, w_2 \}$ and so either $w_1 = n$ or $w_2 = n$. If $w_1 = n$ then $w = (n, n-1, \dots, 1)$ since $w \in S_n^>$. In this case, for all $L \subseteq [n]$, $L \in S_w^c$ so in particular $P_{I'} P_{J'}$ does not vanish in $F_{n,n-1,w}$, a contradiction. So $w_2 = n$. Note that $w_1 \neq n-1$ otherwise $w = (n-1, n, n-2, \dots, 1)$ contradicting our assumption. So we have deduced that $w = (w_1, n, n-1, \dots)$ where $w_1 \le n-2$. Now we take cases on $\lvert J \rvert$.

If $\lvert J \rvert = 1$ write $J = \{ j \}$, then the relation is given by 
$$P_{n,i} P_j - P_n P_{j,i}.$$
We show that $i = n-1$ by contradiction. Suppose that $i \neq n-1$ so $i < n-1$. Consider the relation $P_{n-1, i} P_{j} - P_{n-1} P_{j,i}$ in $\inw{n-2}(I_{n-1})$. Since $w = (w_1, n, n-1, \dots)$ we have that $\ul{w} = (w_1, n-1, \dots)$. It is easy to check that $P_{n-1, i} P_{j}$ does not vanish but $P_{n-1}$ does vanish in $F_{n-1,n-2,\ul{w}}$. This contradicts the assumption that $\ul{w} \in T_{n-1,n-2}$.

So $i = n-1$ and $I = \{n, n-1 \}$. Since $P_I$ does not vanish in $F_{n,n-1,w}$ we have $\{n-1, n\} \le \{w_1, w_2 \}$. Therefore $w_1 = n-1$ and $w_2 = n$. Since $w \in S_n^>$ we deduce that $w = (n-1, n, n-2, \dots, 1)$, a contradiction.

If $\lvert J \rvert \ge 2$ then write $J = \{j_1 < j_2 < \dots < j_{\lvert J \rvert} \}$. The relation is given by
$$P_{n,i} P_{j_1, j_2, j_3, \dots, j_{\lvert J \rvert}} =
P_{n,j_2} P_{j_1, i  , j_3, \dots, j_{\lvert J \rvert}}.$$
If $P_{n, j_2}$ does not vanish in $F_{n,n-1,w}$ then $P_{j_1, i, j_3, \dots, j_{\lvert J \rvert}}$ must vanish. By Corollary~\ref{lem:Swc_1}, $j_1 > w_1$. But $w_1 < j_1 < i \le w_1$, a contradiction. So $P_{n, j_2}$ vanishes in $F_{n,n-1,w}$. Since $P_I$ does not vanish and $w \neq (n-1, n, n-2, \dots, 1)$ we have that $i < n-1$.

Now we claim that $j_2 < n-1$. If $j_2 = n$ then $\lvert J \rvert = 2$ and so $B_n-1(J) \neq id$, a contradiction. If $j_2 = n-1$ then the relation is given by $P_{n,i} P_{j_1, n-1} - P_{n, n-1} P_{j_1, i}$. Since $i < n-1$ we have $j_1 < n-2$. Now consider the relation $P_{n-1, i} P_{j_1, n-2} - P_{n-1, n-2} P_{j_1, i}$ in $\inw{n-2}(I_{n-1})$. Note that $\ul{w} = (w_1, n-1, \dots)$ so $P_{n-1, i} P_{j_1, n-2}$ does not vanish in $F_{n-1,n-2,\ul{w}}$. Since $\ul{w} \in T_{n-1,n-2}$ it follows that $P_{n-1, n-2} P_{j_1, i}$ does not vanish. In particular $P_{n-1, n-2}$ does not vanish. Therefore $w_1 = n-2$ so $\ul{w} = (n-2, n-1, n-3, \dots, 1)$, however this contradicts Lemma~\ref{lem:a2_exception}. And so $j_2 < n-1$. Now we consider the relation $P_{n-1, i} P_{j_1, j_2} - P_{n-1, j_2} P_{j_1, i}$ in $\inw{n-2}(I_{n-1})$. Clearly $P_{n-1, i} P_{j_1, j_2} $ does not vanish in $F_{n-1,n-2,\ul{w}}$. Since $P_{n, j_2}$ vanishes in $F_{n,n-1,w}$ it follows that $P_{n-1, j_2}$ vanishes in $F_{n-1,n-2,\ul{w}}$. And so we have shown $F_{n-1,n-2,\ul{w}}$ is  non-binomial, a contradiction.
\end{proof}

\begin{lemma}\label{lem:toric_thm2_a3}
We have that $\tilde{A}_2 \subset T_{n, n-1}$.
\end{lemma}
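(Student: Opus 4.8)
The plan is to follow the pattern already used for Lemma~\ref{lem:toric_thm2_a2}: fix $w\in\tilde A_2$, record the structural constraints that $\tilde A_2$ imposes, and then show by contradiction that $F_{n,n-1,w}$ contains no monomial. First I would unpack the definition: $w\in\tilde A_2$ means $\ul w\in T_{n-1,n-1}$, $\ul w\notin T_{n-1,n-2}$, and, writing $w_b=n-1$ and $w_a=n$, that $a\ge b+1$ (so $n-1$ precedes $n$ in $w$). Since $\ul w\in T_{n-1,n-1}$ the ideal $F_{n-1,n-1,\ul w}$ is binomial, and as $a\ge b+1$ the position hypothesis of Proposition~\ref{lem:thm_ToricFlag_conv_2} is met (with $n$ inserted into $\ul w$ in position $a$); hence $F_{n,n,w}$ is binomial, and Lemma~\ref{lem:WDecrease} gives $w\in S_n^>$. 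I would also note that $\ul w\in T_{n-1,n-1}$ forces $\ul w\notin Z_{n-1}$ by Theorem~\ref{thm:zero}, hence $w\notin Z_n$, so that once $F_{n,n-1,w}$ is shown to be binomial it is automatically non-zero and $w\in T_{n,n-1}$.

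Next, suppose for contradiction that $F_{n,n-1,w}$ is non-binomial, so there is a monomial $P_IP_J$ in $F_{n,n-1,w}$ arising from a quadratic relation $P_IP_J-P_{I'}P_{J'}\in\inwb(I_n)$, with $|I|=|I'|$ and $|J|=|J'|$. If $B_{n-1}(I)=B_{n-1}(J)=\id$, then Lemma~\ref{lem:toric_thm2_case_reduce} forces $B_{n-1}(I')=B_{n-1}(J')=\id$ as well; since the monomial maps $\phi_{n-1}$ and $\phi_D$ agree on every Pl\"ucker variable whose matching-field order is the identity, the binomial $P_IP_J-P_{I'}P_{J'}$ lies in $F_{n,n}=\ker\phi_D$, so $P_IP_J$ is a monomial in $F_{n,n,w}$, contradicting that $F_{n,n,w}$ is binomial. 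Hence we may assume $B_{n-1}(I)\ne\id$, so $I=\{i,n\}$ with $1\le i\le n-1$; if $B_{n-1}(J)\ne\id$ too then $J=\{j,n\}$ and the relation is trivial (two $2$-subsets both containing $n$ cannot be the two terms of a non-trivial quadratic Pl\"ucker relation), so in fact $B_{n-1}(J)=\id$.

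The crux is then that $P_I=P_{\{i,n\}}$ does not vanish in $F_{n,n-1,w}$, so $\{i,n\}\le\{w_1,w_2\}$, which forces $n\in\{w_1,w_2\}$, i.e.\ $a\in\{1,2\}$; combined with $a\ge b+1\ge 2$ this gives $a=2$ and $b=1$, so $w_1=n-1$, $w_2=n$, and since $w\in S_n^>$ the permutation is completely determined: $w=(n-1,n,n-2,n-3,\dots,1)$. But then $\ul w=(n-1,n-2,\dots,1)$ is the longest element of $S_{n-1}$, for which $S_{\ul w}=\varnothing$ and hence $F_{n-1,n-2,\ul w}=F_{n-1,n-2}$; the latter is toric by Corollary~\ref{cor:block_diag_degen_flag}, so $\ul w\in T_{n-1,n-2}$, contradicting $w\in\tilde A_2$. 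This rules out the second case as well, so $F_{n,n-1,w}$ is binomial and, being non-zero, $w\in T_{n,n-1}$; as $w\in\tilde A_2$ was arbitrary, $\tilde A_2\subseteq T_{n,n-1}$.

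I expect the only genuinely delicate point to be the first case, where the monomial has to be transported from $F_{n,n-1,w}$ down to $F_{n,n,w}$: this needs Lemma~\ref{lem:toric_thm2_case_reduce} to rule out $B_{n-1}$ being non-trivial on $I'$ or $J'$, together with the compatibility of $\phi_{n-1}$ and $\phi_D$ on identity-ordered variables, exactly the mechanism already exploited in the proof of Lemma~\ref{lem:toric_thm2_a2}. Everything after the initial reductions is forced very rigidly by the strong position constraint $a\ge b+1$ built into $\tilde A_2$, which collapses the non-diagonal case to the single excluded permutation $(n-1,n,n-2,\dots,1)$ and makes this proof markedly shorter than that of Lemma~\ref{lem:toric_thm2_a2}.
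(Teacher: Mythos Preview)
Your proof is correct and follows essentially the same strategy as the paper: first establish $w\in T_{n,n}$ (you via Proposition~\ref{lem:thm_ToricFlag_conv_2}, the paper via Theorem~C.1), then transport any purported monomial in $F_{n,n-1,w}$ to one in $F_{n,n,w}$ once all four matching-field orders are the identity, and finally rule out the non-identity case by deducing $w=(n-1,n,n-2,\dots,1)$ and contradicting $\ul w\notin T_{n-1,n-2}$. The only difference is organizational: the paper splits into three cases by the sizes $|I|,|J|$ relative to $t$ (the position of $n$) and in each case argues that $B_{n-1}$ must act trivially, whereas you cut directly on whether $B_{n-1}(I),B_{n-1}(J)$ are the identity and invoke Lemma~\ref{lem:toric_thm2_case_reduce}, which is slightly more streamlined.
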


\begin{proof}
Take $w \in \tilde{A}_2$. Since $\tilde{A}_2 \subset \ul{T}_{n,n-1} \cap \{w \in S_n : \text{ if } w_s = n-1, w_t = n \text{ then } t \ge s + 1 \}$ it follows that $\ul{w} \in S_n^>$. So by Theorem~C part C1 we have $w \in T_{n,n}$. Suppose $P_I P_J - P_{I'} P_{J'}$ is a relation in $\inw{n-1}(I_n)$ with $P_I P_J$ non-vanishing in $F_{n,n-1,w}$. We show that $P_{I'}P_{J'}$ is non-vanishing by taking cases on $\lvert I \rvert$ and $\lvert J \rvert$. We assume without loss of generality that $|J| \le |I|$ and so we must have that either $|I|, |J| <t$ or $|I| \ge t$ and $|J| < t$ or $|I|,|J| \ge t$.

\textbf{Case 1.} Let $\lvert I \rvert, \lvert J \rvert < t$.
Since $I \le \{w_1, \dots, w_{\lvert I \rvert} \}$, $J \le \{w_1, \dots, w_{\lvert J \rvert} \}$ and $w_t = n$, we deduce that $n \not \in I$ and $n \not \in J$. Therefore $B_{n-1}(I) = B_{n-1}(J) = id$ so $B_{n-1}(I') = B_{n-1}(J') = id$. Since $w \in T_{n,n}$ we have that $P_{I'} P_{J'}$ does not vanish in $F_{n,n,w}$ and so does not vanish in $F_{n,n-1,w}$.

\textbf{Case 2.} Let $\lvert I \rvert \ge t$, $\lvert J \rvert < t$. Note that $B_{n-1}(J) = id$. We show that $B_{n-1}(I) = id$ by contradiction. Suppose $B_{n-1}(I) \neq id$ then $I = \{i, n \}$ for some $1 \le i \le n-1$. Since $\lvert J \rvert < \lvert I \vert$ then $J = \{j \}$ for some $1 \le j \le n$. So the relation is given by
$$P_{n,i}P_j - P_n P_{j,i}.$$
Since $P_{n,i} \neq 0$ in the ideal $\inw{n-1}(I_n)$, we have that $\{i,n\} \le \{w_1, w_2 \}$. Hence, $n \in \{w_1, w_2 \}$. 
Note that $w \in T_{n,n}$ has the descending property by Lemma~\ref{lem:WDecrease}. This together with the assumption that $t \ge s+1$ imply that  $w = (n-1, n, n-2, \dots, 1)$. However $\ul{w} = (n-1, n-2, \dots, 1)$ and so $F_{n-1,n-2,\ul{w}} = \inw{n-2}(I_{n-1})$ is   binomial, a contradiction. 

So $B_{n-1}(I) = id$. Then similarly to Case 1, we deduce that $P_{I'}P_{J'}$ does not vanish in $F_{n,n-1,w}$.

\textbf{Case 3.} Let $\lvert I \rvert, \lvert J \rvert \ge t$. We show that $B_{n-1}(I) = B_{n-1}(J) = id$ by contradiction. Suppose $B_{n-1}(I) \neq id $ so $I = \{ i, n\}$ for some $1 \le i \le n-1$. Since $P_I$ does not vanish in $F_{n,n-1,w}$, we have that $n \in \{w_1, w_2 \}$. Since $t \ge s+1$ and $w$ has the descending property, we deduce that $w = (n-1, n, n-2, \dots, 1)$. So $\ul{w} = (n-1,n-2 \dots, 1) \in T_{n-1,n-2}$, a contradiction.

So we have $B_{n-1}(I) = B_{n-1}(J) = id$, hence $P_{I'}P_{J'}$ does not vanish in $F_{n,{n-1},w}$. 
\end{proof}

\begin{lemma}\label{lem:toric_thm2_non_toric_extn}
Let $w \in S_n$. If $\ul{w} \in N_{n-1,n-1}$ then $w \in N_{n,n-1} $.
\end{lemma}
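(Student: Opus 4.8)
Lemma~\ref{lem:toric_thm2_non_toric_extn}: Let $w \in S_n$. If $\ul{w} \in N_{n-1,n-1}$ then $w \in N_{n,n-1}$.

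\medskip

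The plan is to mimic the proof of Lemma~\ref{lem:nontoric_extn}, which is the analogous statement for the diagonal matching field, adapting it to the semi-diagonal matching field $B_{n-1} = (1 \cdots n-1 \mid n)$. Since $\ul{w} \in N_{n-1,n-1}$, the ideal $F_{n-1,n-1,\ul{w}} = \inw{D}(I_{n-1})|_{\text{restricted}}$ contains a monomial $P_I P_J$ arising from a relation $P_I P_J - P_{I'} P_{J'} \in \inw{D}(I_{n-1})$ with $P_{I'}P_{J'} = 0$ in $F_{n-1,n-1,\ul{w}}$; without loss of generality $\lvert I \rvert = \lvert I' \rvert \le \lvert J \rvert = \lvert J' \rvert$. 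Write $\ul{w} = (w_1, \dots, w_{n-1})$ and $w = (w_1, \dots, w_t, n, w_{t+1}, \dots, w_{n-1})$ for the appropriate $t \in \{0, \dots, n-1\}$. The key point is that the diagonal matching field $D$ on $[n-1]$ agrees with $B_{n-1}$ on every subset of $[n-1]$ (since $B_{n-1}(L) = id$ unless $n \in L$), so the relation $P_I P_J - P_{I'}P_{J'}$, together with its variant obtained by adjoining $n$ to one or more of the subsets, remains a valid relation in $\inw{n-1}(I_n)$ — adjoining $n$ does not change $B_{n-1}$ on the subsets $I, J, I', J'$ since those are subsets of $[n-1]$, and $B_{n-1}$ acts by ordering $n$ last (or first, as a single-element-excess) in precisely the controlled way already used in the proof of Lemma~\ref{lem:nontoric_extn}.

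The proof then splits into the same three cases by comparing the sizes $\lvert I \rvert, \lvert J \rvert$ with $t$. \textbf{Case 1} ($\lvert J \rvert \le t$): the variables in the relation do not involve $n$ in any way that $w$ sees differently from $\ul{w}$, since $w$ and $\ul{w}$ agree on their first $t$ entries, so $P_I P_J$ is directly a monomial in $F_{n,n-1,w}$. \textbf{Case 2} ($\lvert I \rvert \le t < \lvert J \rvert$): consider $P_I P_{J \cup \{n\}} - P_{I'} P_{J' \cup \{n\}}$ in $\inw{n-1}(I_n)$, which is valid because $B_{n-1}$ orders $J \cup \{n\}$ by placing $n$ last after the increasing arrangement of $J \subseteq [n-1]$, and likewise for $J'$; by Corollary~\ref{lem:Swc_1} applied to $w$, non-vanishing of $P_I P_J$ and vanishing of $P_{I'}P_{J'}$ in $F_{n-1,n-1,\ul{w}}$ translate to non-vanishing of $P_I P_{J \cup \{n\}}$ and vanishing of $P_{I'} P_{J' \cup \{n\}}$ in $F_{n,n-1,w}$. \textbf{Case 3} ($t < \lvert I \rvert$): consider $P_{I \cup \{n\}} P_{J \cup \{n\}} - P_{I' \cup \{n\}} P_{J' \cup \{n\}}$ in $\inw{n-1}(I_n)$, again valid since adjoining $n$ commutes with the $B_{n-1}$-ordering on subsets of $[n-1]$, and apply Corollary~\ref{lem:Swc_1} as before. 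In each case $F_{n,n-1,w}$ contains a monomial, so $F_{n,n-1,w}$ is non-binomial (it is certainly non-zero, being non-binomial), i.e. $w \in N_{n,n-1}$.

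\medskip

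The one point requiring genuine care — and the main obstacle — is Case 2 and the verification that adjoining $n$ to $J$ (respectively $J'$) genuinely yields a relation in $\inw{n-1}(I_n)$ with the same sign and structure: one must check that $B_{n-1}(J \cup \{n\})$ restricted to $J$ coincides with $B_{n-1}(J)$ (both are $id$ on subsets of $[n-1]$ of size $\ne 2$ or containing $\ge 2$ small elements, and the single exceptional two-element case $\{j, n-1\}$ cannot arise with $n$ adjoined since $\lvert J \cup \{n\}\rvert \ge 2$ and the relevant two-element subsets all live inside $[n-1]$) and that the Plücker relation structure is preserved. This is the semi-diagonal analogue of the "this is a true relation regardless of $B_\ell$" remarks that recur throughout \S\ref{sec:pf_thm_a} and \S\ref{sec:pf_thm_b}, and it can be settled by the same elementary observation: $B_{n-1}$ permutes only subsets containing $n$ or of a form not arising here, and a Plücker relation among subsets of $[n-1]$ lifts verbatim to a Plücker relation after appending $n$ to a balanced collection of the subsets involved. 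Once this bookkeeping is in place, the three cases are routine and the lemma follows.
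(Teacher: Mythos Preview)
Your overall strategy matches the paper's, and Cases 1 and 2 are fine. The gap is in Case 3 when $|I| = 1$, i.e.\ when $t = 0$ and $w = (n, w_1, \dots, w_{n-1})$.

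In that situation you adjoin $n$ to $I = \{i\}$ and claim that ``adjoining $n$ commutes with the $B_{n-1}$-ordering on subsets of $[n-1]$''. This is exactly where it fails: $\tilde I = \{i,n\}$ is a two-element set with $|\tilde I \cap [n-1]| = 1$, so by Definition~\ref{def:block_diagonal_matching_field} we have $B_{n-1}(\tilde I) = (12)$, not $id$. Concretely, with $I = \{i\}$, $I' = \{j_1\}$, $J = \{j_1 < j_2 < \dots\}$ and $J' = \{i, j_2, \dots\}$ (so $j_1 < i < j_2$), one computes
\[
\phi_{n-1}(P_{\tilde I}P_{\tilde J}) = -x_{1,n}x_{2,i}\cdot x_{1,j_1}x_{2,j_2}\cdots,
\qquad
\phi_{n-1}(P_{\tilde I'}P_{\tilde J'}) = -x_{1,n}x_{2,j_1}\cdot x_{1,i}x_{2,j_2}\cdots,
\]
and these differ in the factors $x_{2,i}x_{1,j_1}$ versus $x_{2,j_1}x_{1,i}$. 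So $P_{\tilde I}P_{\tilde J} - P_{\tilde I'}P_{\tilde J'}$ is \emph{not} in $\ker(\phi_{n-1}) = \inw{n-1}(I_n)$, and your Case 3 produces no monomial. Your caveat paragraph flags Case 2 as the delicate point, but Case 2 is actually safe (there $|J| \ge 2$ forces $B_{n-1}(\tilde J) = id$); the genuine obstruction is this subcase of Case 3.

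The paper fixes this by \emph{not} adjoining $n$ to the singleton. Instead it uses the relation $P_I P_{\tilde J} - P_{I'} P_{\tilde J'}$ in $\inw{n-1}(I_n)$ (valid since $|J|, |J'| \ge 2$), and then argues directly: since $P_J$ does not vanish we have $j_1 \le \min\{w_1,\dots,w_{|J|}\} \le w_1$, so $P_{I'} = P_{j_1}$ does not vanish in $F_{n-1,n-1,\ul w}$; hence $P_{J'}$ must vanish, and Corollary~\ref{lem:Swc_1} gives that $P_{\tilde J'}$ vanishes in $F_{n,n-1,w}$ while $P_I P_{\tilde J}$ does not. Inserting this argument for the subcase $|I| = 1$ completes your proof.
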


\begin{proof}
We write $w = (w_1, \dots, w_n)$ and $w_t = n$ for some $t \in \{1, \dots, n\}$. Suppose $P_{I}P_{J}$ is a monomial in $F_{n,n-1,\ul{w}}$ arising from a relation $P_{I}P_{J} - P_{I'} P_{J'}$ in $\inw{n-1}(I_{n-1})$. Without loss of generality, $\lvert I \rvert = \lvert I' \rvert \ge \lvert J \rvert = \lvert J' \rvert$. We take cases on $\lvert I \rvert, \lvert J \rvert$ and $t$. In particular we must have that either $|I|, |J| <t$ or $|I| \ge t$ and $|J| < t$ or $|I|,|J| \ge t$.

\textbf{Case 1.} Let $\lvert I \rvert, \lvert J \rvert < t$. We have that $P_{I}P_{J} - P_{I'} P_{J'}$ is a relation in $\inw{n-1}(I_n)$. Since $w$ and $\ul{w}$ agree on $w_1, \dots, w_{t-1}$ we have that $P_I P_J$ is a monomial in $F_{n,n-1,w}$ and so $w \in N_{n,n-1} $.

\textbf{Case 2.} Let $\lvert I \rvert \ge t$, $\lvert J \rvert <t$. Note that we have $\lvert I \rvert \ge 2$ so we let $\tilde{I} = I \cup \{ n\}$ and $\tilde{I'} = I' \cup \{n \}$. By Corollary~\ref{lem:Swc_1}, $P_{\tilde{I}}$ does not vanish in $F_{n,n-1,w}$. Since $\lvert I \rvert \ge 2$, $B_{n-1}(\tilde{I}) = B_{n-1}(\tilde{I'}) = id$. Hence we have the following is a relation in $\inw{n-1}(I_n)$:
$$P_{\tilde{I}}P_{J} - P_{\tilde{I'}} P_{J'}.$$
By Corollary~\ref{lem:Swc_1}, $P_{\tilde{I'}} P_{J'}$ vanishes in $F_{n,n-1,w}$. So $w \in N_{n,n-1} $.

\textbf{Case 3.} Let $\lvert I \rvert, \lvert J \rvert \ge t$. We write $\tilde{L} = L \cup \{ n\}$ for each $L \in \{I,J,I',J' \}$. Suppose $\lvert I \rvert, \lvert J \rvert \ge 2$ then we have $B_{n-1}(\tilde{I}) = B_{n-1}(\tilde{J}) = B_{n-1}({\tilde{I'}}) = B_{n-1}(\tilde{J'}) = id$ and so we have the following relation in $\inw{n-1}(I_n)$:
$$P_{\tilde{I}}P_{\tilde{J}} - P_{\tilde{I'}} P_{\tilde{J'}}.$$
By Corollary~\ref{lem:Swc_1} we have that $P_{\tilde{I}}P_{\tilde{J}}$ does not vanish and $P_{\tilde{I'}} P_{\tilde{J'}}$ does vanish in $F_{n,{n-1},w}$. So $w \in N_{n,{n-1}} $.

Now suppose $\lvert I \rvert = 1$ and so $t = 1$. Since the relation $P_{I}P_{J} - P_{I'} P_{J'}$ is non-trivial, $\lvert J \rvert \ge 2$ and so $B_{n-1}(\tilde{J}) = B_{n-1}(\tilde{J'}) = id$. We write $I = \{i \}$ and $J = \{j_1, j_2, \dots, j_{\lvert J \rvert} \}$. The relation is given by:
$$P_{i}P_{j_1, j_2, \dots, j_{\lvert J \rvert}} = 
P_{j_1}P_{i, j_2, \dots, j_{\lvert J \rvert}}.$$
Since $P_{J}$ does not vanish in $F_{n-1,n-1,\ul{w}}$ we have that $j_1 \le \min\{w_1, \dots, w_{\lvert J \rvert} \} \le w_1$ and so $P_{j_1}$ does not vanish. We deduce that $P_{i, j_2, \dots, j_{\lvert J \rvert}}$ vanishes in $F_{n-1,n-1,\ul{w}}$. Consider the relation $P_{I}P_{\tilde{J}} - P_{I'} P_{\tilde{J'}}$ in $\inw{n-1}(I_n)$ given by:
$$P_{i}P_{j_1, j_2, \dots, j_{\lvert J \rvert}, n} = 
P_{j_1}P_{i, j_2, \dots, j_{\lvert J \rvert}, n}.$$
By Corollary~\ref{lem:Swc_1}, $P_{\tilde{J}}$ does not vanish in $F_{n,{n-1},w}$ and $P_{\tilde{J'}}$ does vanish. And so $w \in N_{n,{n-1}} $.
\end{proof}

\subsection{Non-diagonal and non-semi-diagonal matching fields}\label{sec:proofs_other}

Throughout this section, unless otherwise stated, we assume that $\ell \in \{1, \dots, n-2 \}$. 
We recall the definitions of the sets $A'_2$ and $A_3$ from Definition~\ref{def:notation}. Below, we state and prove the main result of this section. Similarly to the semi-diagonal case, the main result of this section decomposes $T_{n,\ell}$ into three main parts: $A_1, A_2'$ and $A_3$ along with the exceptional permutation $(n,\ell,n-1,n-2, \dots, \ell + 1, \ell - 1, \dots, 1)$. The proof is similar to the semi-diagonal case, in fact steps a, b and c follow the same structure. Steps d and e carefully use the structure of the matching field to show how the sets $A_2', A_3$ and the exceptional permutation arises in the decomposition of $T_{n,\ell}$.
\begin{theorem}\label{Intro:l} $T_{n,\ell} = A_1 \cup A'_2\cup A_3 \cup \{(n, \ell, n-1, n-2, \dots, \ell+1, \ell-1, \dots, 1)\}$ for $1\leq\ell\leq n-2$.
\end{theorem}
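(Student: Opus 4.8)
The plan is to run the same five-step argument as in the proof of Theorem~\ref{Intro:Semi}, using the inductive description of $T_{n,\ell}$ in terms of $T_{n-1,\ell}$ and $Z_{n-1}$ obtained by inserting $n$, and treating the unique non-descending permutation $w^\ast := (n,\ell,n-1,n-2,\dots,\ell+1,\ell-1,\dots,1)$ separately. Throughout I would decide which variables $P_I$ vanish in $F_{n,\ell,w}$ by Corollary~\ref{lem:Swc_1}, and reduce any quadratic relation $P_IP_J - P_{I'}P_{J'}\in\inwb(I_n)$ that produces a monomial of $F_{n,\ell,w}$ to the case that $B_\ell$ acts nontrivially on one of $I,J$ via the obvious generalization of Lemma~\ref{lem:toric_thm2_case_reduce}; as in \S\ref{sec:proofs_diag} the two structural properties of $B_\ell$ (it permutes only entries $\le\ell$, and $B_\ell(L)=B_\ell(L\cup\beta)$ for $\beta$ disjoint from the leading block) will do most of the work of checking that a candidate binomial is a genuine relation independent of $\ell$. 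The induction is not closed within this section: for $\ell=n-2$ the restriction $\ul B_\ell = B_{n-1,\ell}$ is the semi-diagonal matching field on $[n-1]$, so the inductive lemmas must be phrased to interface with \S\ref{sec:proofs_semi_diag}, as displayed in Figure~\ref{flowchart:dependency}.

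First I would establish the inclusion $\supseteq$. Here $A_1\subseteq T_{n,\ell}$ is Proposition~\ref{lem:thm_ToricFlag_conv_1}; for $A'_2$ and $A_3$ I would prove analogues of Lemmas~\ref{lem:toric_thm2_a2} and~\ref{lem:toric_thm2_a3}, showing that for $w$ with $\ul w\in T_{n-1,\ell}$ and with $n,n-1$ placed subject to the position constraint in the definition of $A_2$, resp.\ $A_3$, every relation $P_IP_J - P_{I'}P_{J'}$ with $P_IP_J\neq0$ in $F_{n,\ell,w}$ also has $P_{I'}P_{J'}\neq0$; after the reduction above, Corollary~\ref{lem:Swc_1} applied to $\ul w$ reduces each of the usual three cases on $|I|,|J|$ to the known fact that $F_{n-1,\ell,\ul w}$ is binomial, only the one or two coordinates separating $w$ from $\ul w$ needing attention. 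Membership $w^\ast\in T_{n,\ell}$ is checked directly by exhibiting the single binomial generator of $F_{n,\ell,w^\ast}$, exactly as in the last paragraph of the proof of Proposition~\ref{lem:thm_ToricFlag_conv_1}.

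For the inclusion $\subseteq$, take $w\in T_{n,\ell}$. By Corollary~\ref{cor:l1...n-2_exceptional} (the $B_\ell$-analogue of Lemma~\ref{lem:WDec2}) either $w=w^\ast$ and we are done, or $w\in S_n^>$; assume the latter. The analogue of Lemma~\ref{lem:toric_thm2_non_toric_extn}, namely $\ul w\in N_{n-1,\ell}\Rightarrow w\in N_{n,\ell}$, gives $\ul w\in Z_{n-1}\cup T_{n-1,\ell}$. If $\ul w\in Z_{n-1}$ then Theorem~B restricts $\ul w$ to two shapes, and combining the descending property of $w$ with Theorem~B to discard the subcases in which $F_{n,\ell,w}=0$ forces $w\in A_1$. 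If $\ul w\in T_{n-1,\ell}$, I would split on whether $\ul w\in S_{n-1}^>$; write $w_s=n-1$ and $w_t=n$. When $\ul w$ is descending, the descending property of $w$ makes $t\ge s-1$ automatic, so $w$ meets the defining conditions of $A_2$; since $(n-1,n,n-2,\dots,1)\notin T_{n,\ell}$ by Lemma~\ref{lem:a2_exception}, that permutation is excluded and $w\in A'_2$. When $\ul w\in T_{n-1,\ell}\setminus S_{n-1}^>$ then, by Corollary~\ref{cor:l1...n-2_exceptional} one level down, $\ul w$ is the exceptional permutation of $S_{n-1}$ for $B_\ell$ (so $\ell\le n-3$), its value $n-1$ occupies position $1$, and the descending property of $w$ forces $n$ to be inserted past the ascent of $\ul w$, whence $t\ge s+2$ and $w\in A_3$.

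The main obstacle will be the two inclusion lemmas for $A'_2$ and $A_3$ in the regime $1\le\ell\le n-2$: unlike the diagonal and semi-diagonal cases, $B_\ell$ reorders the first $|I\cap\{1,\dots,\ell\}|$ entries of every subset $I$, so the subsets $I',J'$ produced by a relation need not have the same shape as $I,J$, and one must follow, through Corollary~\ref{lem:Swc_1} applied to $\ul w$, how non-vanishing propagates when $n$ is deleted from a subset or a top entry is changed; the bookkeeping of the at-most-two exceptional coordinates is where the argument is most technical. A secondary point is to confirm that $w^\ast$ is genuinely the only element of $T_{n,\ell}$ outside $A_1\cup A'_2\cup A_3$: for a descending $w$ with $n-1$ preceding $n$ the value $n$ must sit immediately after $n-1$, so such a $w$ always satisfies $t\ge s-1$ and therefore lies in $A_2$, leaving $w^\ast$ — where $n$ precedes $n-1$ with the block value $\ell$ in between — as the unique exception.
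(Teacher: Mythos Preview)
Your five-step skeleton is exactly the paper's, and the inductive bookkeeping (including the observation that for $\ell=n-2$ the restriction is semi-diagonal and $A_3=\varnothing$) is correct. Two points, however, need correction.

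First, the treatment of the exceptional permutation $w^\ast=(n,\ell,n-1,\dots,\ell+1,\ell-1,\dots,1)$ is wrong: the ideal $F_{n,\ell,w^\ast}$ is \emph{not} principal. Already for $n=4$, $\ell=1$, $w^\ast=(4,1,3,2)$ there are ten nonvanishing Pl\"ucker variables and several independent binomials survive. The analogy with Proposition~\ref{lem:thm_ToricFlag_conv_1} fails because there the first $n-3$ entries of $w$ are $\{1,\dots,n-3\}$, which forces every nonvanishing $P_I$ with $|I|\ge n-2$ to contain $\{1,\dots,n-3\}$; here $w^\ast_1=n$ imposes no such containment. The paper instead proves $w^\ast\in T_{n,\ell}$ (Lemma~\ref{lem:toric_thm1_exceptional}) by a direct case analysis on $B_\ell(I),B_\ell(J)$, using only the observation that a set $L$ with $|L|\ge2$ satisfies $P_L\neq0$ in $F_{n,\ell,w^\ast}$ iff $L\cap\{1,\dots,\ell\}\neq\varnothing$, and then checking that this property propagates from $I,J$ to $I',J'$.

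Second, the proposed ``obvious generalization of Lemma~\ref{lem:toric_thm2_case_reduce}'' is a red herring for this section. In the semi-diagonal proof that lemma is useful because one already knows $w\in T_{n,n}$ (via $A_2\subseteq T_{n,n}$), so a relation with $B_\ell$ trivial on $I,J,I',J'$ is automatically harmless. Here that shortcut is unavailable: permutations in $A_3$ satisfy $\ul w\notin S_{n-1}^>$, hence $\ul w\notin T_{n-1,n-1}$, hence $w\notin T_{n,n}$, so reducing to the diagonal case proves nothing. The paper's Lemmas~\ref{lem:toric_thm1_a2} and~\ref{lem:toric_thm1_a3} do not use any such reduction; they argue directly by the three cases on $|I|,|J|$ versus $t$, removing the largest element (using $B_\ell(L)=B_\ell(L\cup\{n\})$ for $|L|\ge2$) to pass to $F_{n-1,\ell,\ul w}$, with a short separate analysis when $|I|=2$, $t=2$, $|J|=1$ where that removal changes $B_\ell$. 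Your sentence ``Corollary~\ref{lem:Swc_1} applied to $\ul w$ reduces each of the usual three cases \dots'' is the right idea---just drop the case-reduce step and carry it out directly.
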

\begin{proof}
Before stating the proof we first note that for every block diagonal matching field $B = (1, \dots, \ell \mid \ell+1, \dots, n)$ for $1 \le \ell \le n-2$ and every subset $I = \{i_1 < i_2 < \dots < i_{\lvert I \rvert} \}$ of $[n]$ we have the following cases: 

\begin{itemize}
\item If $B_\ell(I) = id$, then either $i_1, i_2 \in \{\ell+1, \dots, n \}$, or $i_1, i_2 \in \{1, \dots, \ell \}$ or $\lvert I \rvert = 1$,

\item If $B_\ell(I) \neq id$ then $i_1 \in \{1, \dots, \ell \}$ and $i_2 \in \{\ell+1, \dots, n \}$.
\end{itemize}

We will now break down the proof into the following steps.

\medskip
\noindent{\bf Step a.} $A_1\cup A_2'\cup A_3 \subset T_{n,\ell}$ and so $RHS \subseteq LHS$. 

First we show $A_1 \subset T_{n,\ell}$. Suppose $\ul{w} = (w_1, \dots, w_{n-3}, n-1, n-2) \in Z_{n-1}$. Then for $w = (w_1, \dots, w_{n-3}, n, n-1, n-2)$ and $w = (w_1, \dots, w_{n-3}, n-1, n, n-2)$, $F_{n,\ell,w}$ is   binomial by Proposition~\ref{lem:thm_ToricFlag_conv_1} so $A_1 \subset T_{n,\ell}$. Next $A'_2 \subset T_{n,\ell}$ and $A_3 \subset T_{n,\ell}$ by Lemma~\ref{lem:toric_thm1_a2} and Lemma~\ref{lem:toric_thm1_a3} respectively. By Lemma~\ref{lem:toric_thm1_exceptional} we have $(n, \ell, n-1, \dots, \ell+1, \ell-1, \dots, 1) \in T_{n,\ell}$. So we have shown $A_1 \cup A_2' \cup A_3 \cup \{(n, \ell, n-1, \dots, \ell+1, \ell-1, \dots, 1) \} \subseteq T_{n,\ell}$.

\medskip
\noindent{\bf Step b.} For any $w \in T_{n,\ell}$ with $w \neq (n, \ell, n-1, \dots, \ell+1, \ell-1, \dots, 1)$ we have that $\ul{w} \in Z_{n-1} \cup T_{n-1,\ell}$.

Now take $w \in T_{n,\ell}$ with $w \neq (n, \ell, n-1, \dots, \ell+1, \ell-1, \dots, 1)$. By Lemma~\ref{lem:toric_thm1_non_toric_extn}, $\ul{w} \in T_{n-1,\ell} \cup Z_{n-1}$. By Lemma~\ref{lem:WDec3} we have that $w$ has the descending property. We denote $w_t = n$ and $w_s = n-1$.

\medskip
\noindent{\bf Step c.} If $\ul{w} \in Z_{n-1} $ then $w \in A_1$.

First suppose $\ul{w} \in Z_{n-1}$. By Theorem~B, either $\ul{w} = (w_1, \dots, w_{n-2}, n-1)$ or $\ul{w} = (w_1, \dots, w_{n-3}, n-1, n-2)$. If $\ul{w} = (w_1, \dots, w_{n-2}, n-1)$ then $w = (w_1, \dots, w_{n-2}, n, n-1)$ or $w = (w_1, \dots, w_{n-2}, n-1, n)$ since $w$ has the descending property. However $F_{n,\ell,w} = 0$ by Theorem~B, a contradiction. So $\ul{w} = (w_1, \dots, w_{n-3}, n-1, n-2)$. If $w = (w_1, \dots, w_{n-3}, n-1, n-2, n)$ then $F_{n,\ell,w} = 0$, so $w = (w_1, \dots, w_{n-3}, n-1, n, n-2)$ or $w = (w_1, \dots, w_{n-3}, n, n-1, n-2)$. Therefore $w \in A_1$.

\medskip
\noindent{\bf Step d.} If $\ul{w} \in T_{n-1,\ell}$ and has descending property then $w \in A_2'$.

Next suppose $\underline{w} \in T_{n-1,\ell} $ and $\underline{w}$ has the descending property. Since $w$ has the descending property we must have $t \ge s-1$ and so $w \in A_2'$.

\medskip
\noindent{\bf Step e.} If $\ul{w} \in T_{n-1,\ell}$ and does not have descending property then $w \in A_3$.

If $\underline{w} \in T_{n-1,\ell} $ and $\underline{w}$ does not have the descending property then by Corollary~\ref{cor:l1...n-2_exceptional}, $\underline{w} = (n-1, \ell, n-2, \dots, 1)$. Since $w$ has the descending property, we must have $t \ge s+2$. And so we have shown $w \in A_3'$.
\end{proof}
\begin{lemma} \label{lem:toric_thm1_exceptional}
We have $w = (n, \ell, n-1, n-2, \dots, 1) \in T_{n,\ell}$.
\end{lemma}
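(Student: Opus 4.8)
\textit{Plan.} I will show that $F_{n,\ell,w}$ is non-zero and monomial-free for $w = (n,\ell,n-1,n-2,\dots,\ell+1,\ell-1,\dots,1)$; this is exactly what $w\in T_{n,\ell}$ asks. Throughout I use the characterisation: a variable $P_I$ vanishes in $F_{n,\ell,w}$ iff $|I|\ge 2$ and $I\subseteq\{\ell+1,\dots,n\}$.

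\emph{Step 1: the vanishing criterion.} First I would compute $S_w^c$ (Definition~\ref{def:Scw}) for this $w$. For $|I|=k\le n-\ell$ the sorted set $\{w_1,\dots,w_k\}$ equals $\{\ell\}\cup\{n-k+2,n-k+3,\dots,n\}$, and since the $k-1$ largest entries of any $k$-subset of $[n]$ already dominate $n-k+2,\dots,n$, the condition $I\le\{w_1,\dots,w_k\}$ reduces to $\min I\le \ell$; for $k\ge n-\ell+1$ every $k$-subset of $[n]$ lies in $S_w^c$. Hence $I\in S_w$ exactly when $|I|\ge 2$ and $I\subseteq\{\ell+1,\dots,n\}$, which is the fact used below.

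\emph{Step 2: non-vanishing.} Consider $g = P_{\{n\}}P_{\{1,\ell+1\}} - P_{\{\ell+1\}}P_{\{1,n\}}$. Since $|\{1,\ell+1\}\cap\{1,\dots,\ell\}| = |\{1,n\}\cap\{1,\dots,\ell\}| = 1$, the matching field swaps the first two rows on both two-element sets, and a direct check gives $\phi_\ell(P_{\{n\}}P_{\{1,\ell+1\}}) = \phi_\ell(P_{\{\ell+1\}}P_{\{1,n\}}) = -x_{1,n}x_{1,\ell+1}x_{2,1}$, so $g\in\ker\phi_\ell = \inwb(I_n)$ by Corollary~\ref{cor:block_diag_degen_flag}. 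As $\ell\le n-2$ forces $\ell+1\ne n$, the binomial $g$ is non-trivial, and by Step 1 none of its four variables vanishes in $F_{n,\ell,w}$ (each index set is a singleton or contains $1\le\ell$). Hence $0\ne g\in F_{n,\ell,w}$.

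\emph{Step 3: monomial-freeness, and the main obstacle.} Because $\inwb(I_n)=F_{n,\ell}$ is generated by quadratic binomials (Corollary~\ref{cor:block_diag_degen_flag}) and $F_{n,\ell,w}$ is the image of $F_{n,\ell}$ under $P_J\mapsto 0$ for $J\in S_w$, it suffices to show no quadratic generator $P_IP_J - P_{I'}P_{J'}$ of $F_{n,\ell}$ has exactly one of its two monomials vanishing in $F_{n,\ell,w}$: then $F_{n,\ell,w}$ is generated by those generators none of whose variables vanish, all of which lie in $\ker\phi_\ell$, hence in the monomial-free toric ideal $\ker\bigl(\phi_\ell|_{\mathbb K[P_J:\,J\notin S_w]}\bigr)$. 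So suppose $P_IP_J - P_{I'}P_{J'}\in\inwb(I_n)$ with $P_{I'}P_{J'}$ vanishing; by Step 1 some $C\in\{I',J'\}$ has $|C|\ge 2$ and $C\subseteq\{\ell+1,\dots,n\}$. Recall $\phi_\ell(P_L) = \pm\prod_{a=1}^{|L|}x_{a,c_a(L)}$ where $c_a(L)$ is the $a$-th smallest element of $L$ for $a\ge 3$ and, for $|L|\ge 2$, the row-$2$ entry is $c_2(L)=\min L$ when $B_\ell(L)=(12)$ (i.e. $|L\cap\{1,\dots,\ell\}|\le 1$) and $c_2(L)$ equals the second-smallest element of $L$ when $B_\ell(L)=\id$ (i.e. $|L\cap\{1,\dots,\ell\}|\ge 2$). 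Comparing, in $\phi_\ell(P_IP_J)=\pm\phi_\ell(P_{I'}P_{J'})$, the variables with first index $2$ gives an equality of multisets of column indices; the one coming from $\{I',J'\}$ contains $c_2(C)=\min C\ge\ell+1$. If $P_IP_J$ did not vanish, then each $L\in\{I,J\}$ with $|L|\ge 2$ satisfies $L\not\subseteq\{\ell+1,\dots,n\}$, so $\min L\le\ell$, and then both cases above give $c_2(L)\le\ell$; thus the same multiset computed from $\{I,J\}$ consists of integers $\le\ell$, contradicting the presence of $c_2(C)\ge\ell+1$ (the degenerate case $|I|=1$ is identical: the multiset is $\{c_2(J)\}$, $P_{J'}$ is the vanishing set, and $c_2(J)\le\ell<\ell+1\le c_2(J')$). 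So both monomials of every quadratic generator vanish together, $F_{n,\ell,w}$ is monomial-free, and with Step 2 this yields $w\in T_{n,\ell}$. I expect the only delicate points to be this reduction to a single quadratic generator and the bookkeeping of which of $B_\ell(I),B_\ell(J),B_\ell(I'),B_\ell(J')$ equal $\id$; once the uniform criterion of Step 1 is in hand, the decisive computation is the short comparison of second-row column indices.
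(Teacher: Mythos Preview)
Your proof is correct and takes a genuinely different route from the paper's. The paper argues by direct case analysis on the pair $(B_\ell(I),B_\ell(J))\in\{\id,(12)\}^2$: in each case it tracks which of $i_1,i_2,j_1,j_2$ lie in $\{1,\dots,\ell\}$ and verifies that $I'\cap[\ell]\neq\varnothing$ and $J'\cap[\ell]\neq\varnothing$, using along the way the observation (your Step~1) that $P_L$ is non-vanishing iff $|L|=1$ or $L\cap[\ell]\neq\varnothing$. Your argument replaces this case split by a single invariant: for any $L$ with $|L|\ge 2$, the row-$2$ column index $c_2(L)$ satisfies $c_2(L)\le\ell$ iff $L\cap[\ell]\neq\varnothing$. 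Since the multiset of row-$2$ column indices is preserved under the equality $\phi_\ell(P_IP_J)=\pm\phi_\ell(P_{I'}P_{J'})$, vanishing on one side forces vanishing on the other. This is shorter and more conceptual; the paper's case analysis, on the other hand, makes the structure of the relations in $\inwb(I_n)$ more explicit and reuses patterns that appear throughout \S\ref{sec:proofs_other}. One small point of polish: in the degenerate case $|I|=1$ you implicitly assume $|I'|=1$ (so that $J'$ is the vanishing column); strictly one should say ``after relabelling so that $|I'|=|I|$'', but the argument is unaffected.
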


\begin{proof}
Suppose $P_I P_J - P_{I'} P_{J'}$ is a relation in $\inwb(I_n)$ and $P_{I} P_{J}$ does not vanish in $F_{n,\ell,w}$. We show that $P_{I'} P_{J'}$ does not vanish either and hence $F_{n,\ell,w}$ contains no monomials. Write $I = \{i_1 < \dots < i_{\lvert I \rvert} \}$ and $J = \{j_1 < \dots < j_{\lvert J \rvert} \}$ and assume without loss of generality that $\lvert I \rvert = \lvert I' \rvert$ and $\lvert J \rvert = \lvert J' \rvert$. We take cases on $B_\ell(I)$ and $B_\ell(J)$. In particular, we may assume that either $B_\ell(I) = B_\ell(J) = id$ or $B_\ell(I) \neq id$.

\textbf{Case 1.} Let $B_\ell(I) = B_\ell(J) = id$. If $\lvert I \rvert = 1$ then $\lvert J \rvert \ge 2$ otherwise the relation would be trivial. So the relation is given by:
$$P_{i_1} P_{j_1, j_2, \dots, j_{\lvert J \rvert}} - P_{j_1} P_{i_1, j_2, \dots, j_{\lvert J \rvert}}.$$
Since $B_\ell(J) = id$ then either $j_1, j_2 \in \{1, \dots, \ell \}$ or $j_1, j_2 \in \{\ell+1, \dots, n \}$. 
However, if $j_1, j_2 \in \{ \ell +1, \dots, n\}$ then it follows that $P_J$ vanishes in $F_{n,\ell,w}$, a contradiction. Suppose $j_1, j_2 \in \{1, \dots, \ell \}$. The fact that $J' \not \in S_w$ follows from the following two observations: 
\begin{itemize}
\item[(i)]  if $L \subseteq [n],\ \lvert L \rvert \ge 2$ then $L \in S_w \iff L \in S_v$, where $v = (\ell, n, n-1, \dots, 2, 1)$,
\item[(ii)]  Corollary~\ref{lem:Swc_1} can be applied to $S_v$ because $v$ has the descending property.
\end{itemize}
Since $j_2 \in \{1, \dots, \ell \}$ we have that $P_{i_1, j_2, \dots, j_{\lvert J \rvert}}$ does not vanish in $F_{n,\ell,w}$ by Corollary~\ref{lem:Swc_1}. It is clear that $P_{j_1}$ does not vanish and so we have $P_{j_1} P_{i_1, j_2, \dots, j_{\lvert J \rvert}} - P_{I'} P_{J'}$ does not vanish in $F_{n,\ell,w}$.

If $\lvert I \rvert, \lvert J \rvert \ge 2$ then $i_1, j_1 \le \ell$ because $I \le \{n, \ell, n-1, \dots \}$ and $J \le \{n, \ell, n-1, \dots \}$. Since $B_\ell(I) = B_\ell(J) = id$  we must have $i_1, i_2, j_1, j_2 \in \{1, \dots,\ell \}$ and so $I' \cap \{1, \dots,\ell \} \neq \varnothing$ and $J' \cap \{1, \dots,\ell \} \neq \varnothing$. It is easy to show that $I' \le \{n,\ell, n-1, \dots \}$ and $J' \le \{n,\ell, n-1 \dots \}$. Therefore $P_{I'} P_{J'}$ does not vanish in $F_{n,\ell,w}$.

\textbf{Case 2.} Let $B_\ell(I) \neq id$. We have $i_1 \in \{ 1, \dots,\ell\}$ and $i_2 \in \{\ell+1, \dots, n \}$. If $\lvert J \rvert = 1$ then the relation is
$$P_{i_2, i_1, i_3, \dots, i_{\lvert I \rvert}} P_{j_1} - P_{j_1, i_1, i_3, \dots, i_{\lvert I \rvert}} P_{i_2}.$$
Note that we have $i_1 \in I'$ and so $P_{I'}$ does not vanish in $F_{n,\ell,w}$. Therefore $P_{I'} P_{J'}$ does not vanish in $F_{n,\ell,w}$.

If $\lvert I \rvert, \lvert J \rvert \ge 2$, suppose $B_\ell(J) \neq id$. Since $i_1, j_1 \in \{1, \dots,\ell \}$ appear at the same index in $I$ and $J$ respectively, it is easy to check that $I' \cap \{1, \dots,\ell \} \neq \varnothing$ and $J' \cap \{1, \dots,\ell \} \neq \varnothing$. And so $P_{I'} P_{J'}$ does not vanish in $F_{n,\ell,w}$. On the other hand if $B_\ell(J) = id$, since $J \le \{n,\ell, n-1, \dots \}$, then $j_1 \in \{1, \dots,\ell \}$. So $j_2 \in \{1, \dots,\ell \}$ as well, because $B_\ell(J) = id$. We have $i_1, j_1, j_2 \in \{1, \dots,\ell \}$, it follows that $I' \cap \{1, \dots,\ell \} \neq \varnothing$ and $J' \cap \{1, \dots,\ell \} \neq \varnothing$. So $P_{I'}P_{J'}$ does not vanish in $F_{n,\ell,w}$.
\end{proof}

\begin{lemma} \label{lem:WDec3}
If $w \in T_{n,\ell}$ and $w \neq (n,\ell, n-1, \dots,\ell+1,\ell-1, \dots, 1)$ then $w \in S_n^>$.
\end{lemma}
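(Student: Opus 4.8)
I would prove this by contradiction, in the style of Lemmas~\ref{lem:WDecrease} and \ref{lem:WDec2}: assume $F_{n,\ell,w}$ is binomial but $w\notin S_n^>$, and show $w$ is forced to be the exceptional permutation $(n,\ell,n-1,\dots,\ell+1,\ell-1,\dots,1)$. The mechanism throughout is to exhibit a relation of $F_{n,\ell}=\inwb(I_n)$ one of whose monomials vanishes in $F_{n,\ell,w}$ while the other does not, which places a monomial into $F_{n,\ell,w}$ and contradicts $w\in T_{n,\ell}$. Write $w_t=n$; since $w\notin S_n^>$, pick $k>t$ minimal with $w_k<w_{k+1}$.

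\emph{Generic case $k\ge 3$.} Following Lemma~\ref{lem:WDecrease}, set $I'=\{w_1,\dots,w_k\}$ and $I=\{w_1,\dots,w_{k-1},w_{k+1}\}$, write $I\cup I'=(\alpha,w_k,\beta,w_{k+1},\gamma,n)$ (note $n\in I\cap I'$ since $t<k$), so $I=(\alpha,\beta,w_{k+1},\gamma,n)$ and $I'=(\alpha,w_k,\beta,\gamma,n)$, and put $J=(\alpha,w_k,\beta,\gamma)$ and $J'=(\alpha,\beta,w_{k+1},\gamma)$. Then $P_IP_J-P_{I'}P_{J'}$ is the relation $P_{J'\cup n}\,P_J-P_{J\cup n}\,P_{J'}$; since $|J|=|J'|=k-1\ge 2$ and $n>\ell$ we have $B_\ell(J\cup n)=B_\ell(J)$ and $B_\ell(J'\cup n)=B_\ell(J')$, so this binomial lies in $F_{n,\ell}$ independently of $\ell$. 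Now $I\ge I'$ with $I\ne I'$, so $I\not\le\{w_1,\dots,w_k\}$ and $P_I$ vanishes in $F_{n,\ell,w}$, whereas $I'\le\{w_1,\dots,w_k\}$ and $J'\le\{w_1,\dots,w_{k-1}\}$ (because $w_{k+1}<n$ and every entry of $\gamma$ exceeds $w_{k+1}$), so $P_{I'}$ and $P_{J'}$ do not vanish. Hence $P_{I'}P_{J'}$ is a monomial in $F_{n,\ell,w}$, a contradiction.

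\emph{Boundary case $t=1,\ k=2$, and pinning down $w$.} Now $w=(n,w_2,w_3,\dots,w_n)$ with $w_2<w_3<n$. Use the level $1$–level $2$ Pl\"ucker relation on $\{n\}$ and $\{w_2,w_3\}$,
\[
P_nP_{w_2w_3}-P_{w_2}P_{w_3n}+P_{w_3}P_{w_2n}=0 ,
\]
whose $\wb_\ell$-initial form is the binomial spanned by the two lowest-weight terms (at least two must tie, as $F_{n,\ell}$ is prime and monomial-free). Since $\wb_\ell(P_n)=0$, one reads off the surviving terms from $\wb_\ell(P_{w_2w_3})$, $\wb_\ell(P_{w_3n})$, $\wb_\ell(P_{w_2n})$ via the explicit formula for $\wb_\ell$; a short comparison in the cases ``$w_2>\ell$'' and ``$w_2\le\ell$ and $w_3\le\ell$'' shows $P_{w_3n}$ survives, paired with a monomial all of whose factors are non-vanishing, while $P_{w_3n}$ itself vanishes in $F_{n,\ell,w}$ because $\{w_3,n\}\not\le\{n,w_2\}$ — a contradiction. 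Hence $w_2\le\ell<w_3$. Applying the analogous relation to $\{\ell\}$ and $\{w_2,n\}$ forces $w_2=\ell$: if $w_2<\ell$ its initial form is $\pm(P_{w_2}P_{\ell n}-P_nP_{w_2\ell})$, and $P_{\ell n}$ vanishes while $P_nP_{w_2\ell}$ does not. So $w=(n,\ell,w_3,\dots,w_n)$. To identify the tail, I would pass to $\ul w=(\ell,w_3,\dots,w_n)\in S_{n-1}$: from $w\in T_{n,\ell}$ and the non-extension Lemma~\ref{lem:toric_thm1_non_toric_extn} one gets $\ul w\in T_{n-1,\ell}\cup Z_{n-1}$; Theorem~B excludes $\ul w\in Z_{n-1}$ once $1\le\ell\le n-2$ and $\ul w_1=\ell$; and Corollary~\ref{cor:l1...n-2_exceptional} applied to $\ul w$, noting that $\ul w$ must have the descending property in $S_{n-1}$ (otherwise $\ul w=(n-1,\ell,n-2,\dots,1)$, contradicting $\ul w_1=\ell<n-1$), pins down $\ul w=(\ell,n-1,n-2,\dots,\ell+1,\ell-1,\dots,1)$, i.e. $w=(n,\ell,n-1,\dots,\ell+1,\ell-1,\dots,1)$, as claimed.

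The main obstacle is this last step: eliminating every remaining tail once $w=(n,\ell,w_3,\dots,w_n)$, where the generic relation argument no longer bites because both monomials of the natural relations on permutations of that shape are non-vanishing. One must either organise the induction so that the supporting results (the non-extension lemma and the exceptional-permutation corollary) are genuinely available at this point in the logical order, or replace that appeal with a direct and somewhat intricate relation-based case analysis exploiting the block structure $(1,\dots,\ell\mid\ell+1,\dots,n)$ of $B_\ell$. Everything else is a routine adaptation of the diagonal and semi-diagonal cases, together with the bookkeeping needed to track which of the three Pl\"ucker monomials survives in each $\wb_\ell$-initial form.
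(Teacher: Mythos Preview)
Your overall strategy is sound and closely parallels the paper's, but you manufacture an obstacle in the final step that is not there. Once you have established (in the boundary case $t=1,\ k=2$) that $w_1=n$ and $w_2=\ell$, you do \emph{not} need any induction, Lemma~\ref{lem:toric_thm1_non_toric_extn}, or Corollary~\ref{cor:l1...n-2_exceptional} to identify the tail. Your own ``generic case $k\ge3$'' argument, which nowhere uses the minimality of $k$, applies verbatim to any $k'\ge3$ with $w_{k'}<w_{k'+1}$; hence no such $k'$ exists, i.e.\ $w_3>w_4>\dots>w_n$. Since $\{w_3,\dots,w_n\}=[n]\setminus\{n,\ell\}$, this forces $w=(n,\ell,n-1,\dots,\ell+1,\ell-1,\dots,1)$, contradicting the hypothesis. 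This is precisely how the paper closes the argument (the ``without loss of generality $w_3>\dots>w_n$'' reduction), so your proposed appeal to the non-extension lemma is both unnecessary and, as you yourself flag, circular in the paper's logical order.

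Where your route genuinely differs from the paper is in the boundary case $k=2$. The paper handles it by a direct case split on how many of the entries lie in $\{1,\dots,\ell\}$ and on $B_\ell(I)$, producing in each subcase an explicit binomial of $F_{n,\ell}$ with one vanishing term. You instead analyse the three-term Pl\"ucker relation $P_{w_2}P_{w_3n}-P_{w_3}P_{w_2n}+P_nP_{w_2w_3}$ and read off its $\wb_\ell$-initial form in each configuration of $w_2,w_3$ relative to $\ell$; this is a perfectly valid alternative and arguably more conceptual. Your second relation (on $\{w_2,\ell,n\}$) pinning $w_2=\ell$ is exactly the paper's final relation $P_nP_{w_2,\ell}-P_{w_2}P_{n,\ell}$, reached by a different route. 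So apart from the spurious last step, your proof is correct and slightly streamlines the $k\ge3$ part relative to the paper's finer case analysis on $p=|(\alpha\cup\beta\cup\gamma)\cap\{1,\dots,\ell\}|$.
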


\begin{proof}
Let $w = (w_1, \dots, w_n) \in T_{n,\ell} \backslash \{(n,\ell, n-1, \dots, \ell+1, \ell-1, \dots, 1) \}$ and $w_t = n$. Suppose by contradiction there exists $k > t$ such that $w_k < w_{k+1}$. Let $I = \{w_1, \dots, w_k\}$ and $I' = \{w_1, \dots, w_{k-1}, w_{k+1} \}$. Since $w_k < w_{k+1}$ we have that $P_{I'}$ vanishes in $F_{n,\ell,w}$ whereas $P_I$ does not. Let us write in ascending order $I \cup I' = \{\alpha, w_k, \beta, w_{k+1}, \gamma, n \}$ for some ordered subsets $\alpha, \beta$ and $\gamma$ of $[n]$. Note that $I = \{\alpha, w_k, \beta, \gamma, n \}$ and $I' = \{\alpha, \beta, w_{k+1}, \gamma, n \}$. Let $J = \{\alpha, \beta, w_{k+1}, \gamma \}$ and $J' = \{\alpha, w_k, \beta, \gamma \}$ and note that both $P_J$ and $P_{J'}$ do not vanish in $F_{n,\ell,w}$. We take cases on $p = \lvert (\alpha \cup \beta \cup \gamma ) \cap \{1, \dots, k \} \rvert$. In particular, we either have that $p \ge 2$ or $p = 1$ or $p = 0$. Where necessary we will need to take cases on the permutation $B_\ell(I)$ which is either the identity or the transposition $(1,2)$.

\textbf{Case 1.} Let $p \ge 2$. We have $B_\ell(I) = B_\ell(J) = B_\ell(I') = B_\ell(J') = id$ hence $P_I P_J - P_{I'} P_{J'}$ is a relation in $\inwb(I_n)$. And so $P_I P_J$ is a monomial in $F_{n,\ell,w}$ hence $w \not \in T_{n,\ell}$, a contradiction.

\textbf{Case 2.} Let $p = 1$. We now consider cases for $B_\ell(I)$. 

\textbf{Case 2a.} Let $B_\ell(I) = id$. It follows that $w_k \in \{ 1, \dots,\ell\}$. If $w_{k+1} \in \{1, \dots\ell \}$ then $B_\ell(I) = B_\ell(J) = B_\ell(I') = B_\ell(J') = id$ and so $P_I P_J - P_{I'} P_{J'}$ is a relation in $\inwb(I_n)$. Therefore $F_{n,\ell,w}$ contains the monomial $P_I P_J$, a contradiction.

If $w_{k+1} \in \{\ell+1, \dots, n \}$ then $B_\ell(I') \neq id$. We see that $P_I P_J - P_{I'} P_{J'}$ is a valid relation in $F_{n,\ell,w}$ as follows. Let $M = \{\alpha, w_k, \beta, \gamma \}$ and $N = \{\alpha, \beta, w_{k+1}, \gamma \}$. The relation can be written as
$$P_{M \cup \{n \}} P_{N} - P_{N \cup \{n \}} P_{M}.$$
Since $\lvert M \rvert = \lvert N \rvert \ge 2$, it follows that $B_\ell(M) = B_\ell(M \cup \{n \})$ and $B_\ell(N) = B_\ell(N \cup \{n \})$ and so this is a relation in $\inwb(I_n)$. Hence $P_I P_J$ is a monomial in $F_{n,\ell,w}$, a contradiction.

\textbf{Case 2b.} Let $B_\ell(I) \neq id$. We have $w_k \in \{\ell+1, \dots, n \}$ and so $w_{k+1} \in \{\ell+1, \dots, n \}$ hence $B_\ell(I') \neq id$. It follows that $B_\ell(J) \neq id$ and $B_\ell(J') \neq id$. We deduce that $P_I P_J - P_{I'} P_{J'}$ is a relation in $\inwb(I_n)$ and so $P_I P_J$ is a monomial in $F_{n,\ell,w}$, a contradiction.

\textbf{Case 3.} Let $p = 0$. We consider cases for $B_\ell(I)$.

\textbf{Case 3a.} Let $B_\ell(I) = id$. So $w_k \in \{\ell+1, \dots, n \}$ and so $w_{k+1} \in \{\ell+1, \dots, n \}$ hence $B_\ell(I') = id$. It follows that $B_\ell(J) = B_\ell(J') = id$ and so $P_I P_J - P_{I'} P_{J'}$ is a valid relation in $\inwb(I_n)$. Therefore $F_{n,\ell,w}$ contains the monomial $P_I P_J$, a contradiction. 

\textbf{Case 3b.} Let $B_\ell(I) \neq id$. So $w_k \in \{\ell+1, \dots,  n\}$. We take cases on $w_{k+1}$.

\textbf{Case 3b.i.} Let $w_{k+1} \in \{1, \dots,\ell \}$. So $B_\ell(I') \neq id$. If $\lvert \alpha \cup \beta \cup \gamma \rvert > 0$ then we have that $B_\ell(J) \neq id$ and $B_\ell(J') \neq id$ and so $P_I P_J - P_{I'} P_{J'}$ is a relation in $\inwb(I_n)$. Therefore $P_I P_J$ is a monomial in $F_{n,\ell,w}$, a contradiction. 

If $\lvert \alpha \cup \beta \cup \gamma \rvert = 0$ then we have $w = (n, w_2, w_3, \dots, w_n )$ with $k = 2$. Consider the relation in $\inwb(I_n)$
$P_n P_{w_2 w_3} - P_{w_2} P_{n w_3}.$
This is indeed a valid relation which gives rise to a monomial $P_n P_{w_2 w_3}$ in $F_{n,\ell,w}$, a contradiction.

\textbf{Case 3b.ii.} Let $w_{k+1} \in \{\ell+1, \dots, n \}$. So $B_\ell(I') = id$. If $\lvert \alpha \cup \beta \cup \gamma \rvert > 0$ then it is easy to check that $P_I P_J - P_{I'} P_{J'}$ is a relation in $\inwb(I_n)$ where $B_\ell(J) = id$ and $B_\ell(J') \neq id$. So $P_I P_J$ is a monomial in $F_{n,\ell,w}$, a contradiction.

If $\lvert \alpha \cup \beta \cup \gamma \rvert = 0$ then $w = (n, w_2, w_3, \dots, w_n)$ with $k = 2$. Now without loss of generality we may assume that $w_3 > w_4 > \dots > w_n$ otherwise we may use one of the previous cases. So $w = (n, w_2, n-1, n-2, \dots, w_2+1, w_2-1, \dots, 1)$. Also by assumption we have $w_2 \neq\ell$ so $w_2 \le\ell-1$. Consider the relation
$P_n P_{w_2,\ell} - P_{w_2} P_{n,\ell}.$
Clearly this is a relation in $\inwb(I_n)$. The monomial $P_n P_{w_2,\ell}$ does not vanish in $F_{n,\ell,w}$ but $P_{n,\ell}$ does vanish and so $F_{n,\ell,w}$ is  non-binomial, a contradiction.
\end{proof}

As an immediate corollary of Lemma~\ref{lem:toric_thm1_exceptional} and Lemma~\ref{lem:WDec3} we have that:
\begin{corollary}\label{cor:l1...n-2_exceptional}
If $w \in T_{n,\ell}\backslash S_n^>$, then $w = (n,\ell, n-1, \dots,\ell+1,\ell-1, \dots, 1)$.
\end{corollary}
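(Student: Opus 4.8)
The plan is to obtain the statement directly from Lemma~\ref{lem:WDec3} by contraposition. Lemma~\ref{lem:WDec3} asserts that every $w \in T_{n,\ell}$ which is \emph{not} the permutation $(n,\ell,n-1,\dots,\ell+1,\ell-1,\dots,1)$ lies in $S_n^>$. Contraposing (and keeping the hypothesis $w \in T_{n,\ell}$), a permutation $w \in T_{n,\ell}$ with $w \notin S_n^>$ must coincide with $(n,\ell,n-1,\dots,\ell+1,\ell-1,\dots,1)$. This is precisely the assertion of Corollary~\ref{cor:l1...n-2_exceptional}, so once Lemma~\ref{lem:WDec3} is in hand there is nothing further to prove.

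Lemma~\ref{lem:toric_thm1_exceptional} plays a complementary role: it certifies that the single candidate permutation $(n,\ell,n-1,\dots,\ell+1,\ell-1,\dots,1)$ genuinely belongs to $T_{n,\ell}$, and one observes that it fails the descending property (reading it from the left, the value $n$ appears first but is immediately followed by $\ell$, which is strictly smaller than the later value $n-1$), so it lies in $T_{n,\ell}\setminus S_n^>$. Thus $T_{n,\ell}\setminus S_n^>$ is not merely contained in $\{(n,\ell,n-1,\dots,\ell+1,\ell-1,\dots,1)\}$ but equals it. For the one-directional inclusion actually claimed in the corollary, Lemma~\ref{lem:WDec3} by itself suffices; Lemma~\ref{lem:toric_thm1_exceptional} is what one would cite if restating the corollary as an equality.

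Since the deduction at the level of the corollary is immediate, there is no real obstacle here — all the difficulty is absorbed into the two lemmas it quotes. The genuinely delicate input is Lemma~\ref{lem:WDec3}: assuming $w$ fails the descending property at some index $k>t$ with $w_t=n$, one constructs an explicit binomial relation $P_I P_J - P_{I'}P_{J'}$ in $\inwb(I_n)$ in which $P_{I'}$ vanishes in $F_{n,\ell,w}$ while $P_I P_J$ does not, thereby forcing a monomial into $F_{n,\ell,w}$; the care lies in producing such a relation for the block diagonal matching field $B_\ell$ in every configuration, which requires splitting on whether $B_\ell(I)$ is the identity or the transposition $(12)$ and on the cardinality of $\alpha\cup\beta\cup\gamma$, the only configuration escaping a monomial being $w=(n,\ell,n-1,\dots,\ell+1,\ell-1,\dots,1)$. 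Granting that lemma, the corollary is a two-line contrapositive argument as above.
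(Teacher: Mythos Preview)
Your proposal is correct and matches the paper's approach: the paper also states the corollary as an immediate consequence of Lemma~\ref{lem:toric_thm1_exceptional} and Lemma~\ref{lem:WDec3}, with the contrapositive of Lemma~\ref{lem:WDec3} giving the stated implication and Lemma~\ref{lem:toric_thm1_exceptional} witnessing nonemptiness.
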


\begin{lemma}\label{lem:toric_thm1_a2}
We have $A_2' \subset T_{n,\ell}$.
\end{lemma}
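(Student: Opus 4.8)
Fix $\ell\in\{1,\dots,n-2\}$ and let $w\in A'_2$; write $w_t=n$ and $w_s=n-1$. The strategy follows the pattern of Proposition~\ref{lem:thm_ToricFlag_conv_2} and Lemma~\ref{lem:toric_thm2_a2}: show that $F_{n,\ell,w}$ is nonzero and contains no monomial. Nonzero-ness can be obtained as in Proposition~\ref{lem:thm_ToricFlag_conv_1}, or by taking a non-vanishing quadratic binomial in $F_{n-1,\ell,\ul w}$ (which exists since $\ul w\in T_{n-1,\ell}$) and, after possibly adjoining $n$ to one of its index sets, checking that it remains a non-vanishing relation in $F_{n,\ell,w}$. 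So the crux is: if $P_IP_J-P_{I'}P_{J'}$ is one of the quadratic binomial generators of $\inwb(I_n)=F_{n,\ell}$ (Corollary~\ref{cor:block_diag_degen_flag}) and $P_IP_J$ does not vanish in $F_{n,\ell,w}$, then $P_{I'}P_{J'}$ does not vanish either. I would first record a short combinatorial fact: every $w\in A'_2$ has the descending property, because $\ul w\in S_{n-1}^>$ and the constraint $t\ge s-1$ forces $n$ to be inserted into $\ul w$ either after $\ul w$'s copy of $n-1$ or immediately in front of it, so the tail of $w$ beyond $n$ is a suffix of the already-decreasing tail of $\ul w$. This makes Corollary~\ref{lem:Swc_1}, which describes $S_w^c$ for permutations with the descending property, available for both $w$ and $\ul w$.

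For the main claim I would run a case analysis on whether $n\in I\cup J$ and on the block-diagonal orderings $B_\ell(I),B_\ell(J),B_\ell(I'),B_\ell(J')$. Recall that for $1\le\ell\le n-2$ each such ordering is either $\id$ or the transposition $(12)$, with $B_\ell(L)\neq\id$ precisely when $\min L\le\ell<(\text{second smallest entry of }L)$. If $n\notin I\cup J$, the same binomial is a relation in $\inwb(I_{n-1})$, and comparing $|I|,|J|$ with $t$ and using Corollary~\ref{lem:Swc_1} to transfer non-vanishing between $F_{n,\ell,w}$ and $F_{n-1,\ell,\ul w}$ reduces matters to the hypothesis $\ul w\in T_{n-1,\ell}$. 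If $n\in I\cup J$, say $n\in J$, then $n$ lies in exactly one of $I',J'$. When all four orderings are $\id$, the binomial is simultaneously a diagonal Pl\"ucker relation, so $I'$ and $J'$ are interleavings of the entries of $I$ and $J$; as in Case~1 of Lemma~\ref{lem:toric_thm1_exceptional}, the property that the smallest entries of $I$ and of $J$ lie in $\{1,\dots,\ell\}$ in the required slots is inherited by $I'$ and $J'$, and combined with Corollary~\ref{lem:Swc_1} and the descending property this transfers non-vanishing. When one of the orderings is the transposition $(12)$, non-vanishing of the two-element factor forces $n\in\{w_1,w_2\}$; since $w\neq(n-1,n,n-2,\dots,1)$ and $w$ has the descending property, this pins $w$ down to the form $(w_1,n,n-1,\dots)$ with $w_1\le n-2$, hence $\ul w=(w_1,n-1,\dots)$, and one can then either reduce the relevant sub-relation to the hypothesis $\ul w\in T_{n-1,\ell}$ or reach a contradiction via Lemma~\ref{lem:a2_exception} (which rules out $\ul w=(n-2,n-1,n-3,\dots,1)$). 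The small base cases $n\le4$ are covered by Examples~\ref{example:n_3_4_calculation} and \ref{example:n=4_toric_principal} ($A'_2=\varnothing$ when $n=3$).

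The step I expect to be the main obstacle is exactly this last group of sub-cases in which one of $B_\ell(I),B_\ell(J),B_\ell(I'),B_\ell(J')$ is the transposition $(12)$: adjoining or deleting $n$ can change the matching-field ordering of a set, so the translation of ``$P_L$ does not vanish'' between $F_{n,\ell,w}$ and $F_{n-1,\ell,\ul w}$ via Corollary~\ref{lem:Swc_1} has to be done delicately, using the block structure $(1,\dots,\ell\mid\ell+1,\dots,n)$ and the descending property of $\ul w$ simultaneously, and separating the degenerate shapes $|I|=1$ or $|J|=1$ (relations of the form $P_{i_1}P_{j_1\dots}-P_{j_1}P_{i_1j_2\dots}$) from the case $|I|,|J|\ge2$. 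A secondary point is to verify that the single permutation removed from $A_2$ to form $A'_2$ is exactly the obstruction encountered here, mirroring the role played by $(n-1,n,n-2,\dots,1)$ in the semi-diagonal argument; this is where Corollary~\ref{cor:l1...n-2_exceptional} and Lemma~\ref{lem:a2_exception} come into play.
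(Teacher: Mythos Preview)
Your outline captures the main ingredients --- the descending property of $w$, Corollary~\ref{lem:Swc_1}, reduction to $\ul w\in T_{n-1,\ell}$, and the excluded permutation $(n-1,n,n-2,\dots,1)$ --- but your case organization differs from the paper's and contains a concrete error.

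The paper splits not by whether $n\in I\cup J$, but by comparing $|I|,|J|$ with $t$. In the mixed case $|I|\ge t>|J|$, the reduction deletes the \emph{largest} element $i_{|I|}$ from $I$ and from $I'$ (not $n$): since $B_\ell$ permutes only positions $1,2$, this preserves both orderings whenever $|I|\ge3$, so the only delicate sub-case is $|I|=2$, $|J|=1$, $t=2$, and that is where $w\ne(n-1,n,n-2,\dots,1)$ is used. In the case $|I|,|J|\ge t$ there is no reduction to $\ul w$ at all; the paper argues directly from the descending property of $w$ and the row-wise multiset structure of binomials in $F_{n,\ell}$, namely $\{i_k',j_k'\}=\{i_k,j_k\}$ for $k\ge3$ and $\{i_1',i_2',j_1',j_2'\}=\{i_1,i_2,j_1,j_2\}$.

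The step in your outline that fails is the claim that when some ordering is the transposition $(12)$, the offending factor is a two-element set containing $n$, whence $n\in\{w_1,w_2\}$. That implication was particular to the semi-diagonal case $\ell=n-1$, where $B_{n-1}(L)\ne\id$ forces $L=\{i,n\}$. For $1\le\ell\le n-2$, however, $B_\ell(L)=(12)$ only means $|L|\ge2$ and $|L\cap[\ell]|\le1$; for instance $L=\{1,\ell+1,\ell+2\}$ with $\ell+2<n$ satisfies $B_\ell(L)=(12)$ yet has size $3$ and avoids $n$, so no conclusion about $w_1,w_2$ follows. This branch of your argument therefore does not go through as written, and repairing it essentially amounts to adopting the paper's size-based case split together with its ``delete the largest element'' trick.
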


\begin{proof}
Let $w \in A_2'$. Let $P_I P_J - P_{I'} P_{J'}$ be a relation in $\inwb(I_n)$ where $P_I P_J$ does not vanish in $F_{n,\ell,w}$. We show that $P_{I'} P_{J'}$ does not vanish in $F_{n,\ell,w}$ by taking cases on $\lvert I \rvert, \lvert J \rvert$ and $t$. We may assume that $|I| \ge |J|$ and so we must either have $|I|,|J| <t$ or $|I| \ge t$ and $|J| < t$ or $|I|,|J| \ge t$.

\smallskip

\textbf{Case 1.} Let $\lvert I \rvert, \lvert J \rvert < t$. Since $w$ and $\underline{w}$ agree on $w_1, \dots, w_{t-1}$, we deduce that $P_{I}P_{J} - P_{I'}P_{J'}$ is a relation in $F_{n-1,\ell,\underline{w}}$. Since $F_{n-1,\ell,\underline{w}}$ is   binomial, we conclude that $P_{I'}P_{J'}$ does not vanish in $F_{n,\ell,w}$.

\smallskip

\textbf{Case 2.} Let $\lvert I \rvert \ge t, \lvert J \rvert < t$. Write $I = \{i_1, \dots, i_{\lvert I \rvert} \}$. If $B_\ell(I) =B_\ell(I \backslash \{i_{\lvert I \rvert} \})$ and $B_\ell(I') =B_\ell(I' \backslash \{ i_{\lvert I \rvert} \})$, then we have
the following relation in $\inw\ell(I_{n-1})$:
$$P_{I \backslash \{ i_{\lvert I \rvert} \}} P_{J} - P_{I' \backslash \{i_{\lvert I \rvert} \}} P_{J'}.$$
Clearly $P_{I \backslash \{ i_{\lvert I \rvert} \}} P_{J}$ does not vanish in $F_{n-1,\ell,\underline{w}}$. Since $F_{n-1,\ell,\underline{w}}$ is   binomial we have that $P_{I' \backslash \{i_{\lvert I \rvert} \}} P_{J'}$ does not vanish in $F_{n-1,\ell,\underline{w}}$. So by Corollary~\ref{lem:Swc_1}, $P_{I'} P_{J'}$ does not vanish in $F_{n,\ell,w}$.

If $B_\ell(I) \neq B_\ell(I \backslash \{i_{\lvert I \rvert} \})$ or  $B_\ell(I') \neq B_\ell(I' \backslash \{ i_{\lvert I \rvert} \})$, then $\lvert I \rvert = 2$, $t = 2$ and $\lvert J \rvert = 1$. We write $J = \{j \}$. Since $\underline{w}$ has the descending property and $w_2 = n$ we deduce $w = (w_1, n, n-1, \dots, w_1 +1, w_1 -1, \dots, 1)$. First suppose $B_\ell(I) \neq B_\ell(I \backslash \{i_{\lvert I \rvert} \})$, so $B_\ell(I) \neq id$. The relation is given by
$P_{i_2, i_1} P_j - P_{j, i_1} P_{i_2}.$
Note that $j \le w_1$ so $P_{j, i_1}$ does not vanish in $F_{n,\ell,w}$. We show that $P_{i_2}$ does not vanish by contradiction. Suppose $i_2 > w_1$. We have that $w_1 < n-1$ because $w \neq (n-1, n, n-2, \dots, 1)$. Note that $i_1, j \le w_1$ since $P_I P_J$ does not vanish in $F_{n,\ell,w}$. Consider the following relation in $\inw\ell(I_{n-1})$:
$P_{n-1, i_1} P_{j} - P_{j, i_1} P_{n-1}.$
Clearly $P_{n-1, i_1} P_j$ does not vanish in $F_{n-1,\ell,\underline{w}}$ however $P_{n-1}$ does vanish and so $F_{n-1,\ell,\underline{w}}$ is  non-binomial, a contradiction.

Secondly suppose $B_\ell(I) = B_\ell(I \backslash \{i_{\lvert I \rvert} \}) = id $. Then the relation is given by
$P_{i_1, i_2} P_{j} - P_{j, i_2} P_{i_1}.$
Since $P_{i_1, i_2} P_{j}$ does not vanish in $F_{n,\ell,w}$ we have $i_1 \le w_1$ and $j \le w_1$. And so $P_{j, i_2} P_{i_1}$ does not vanish.

\smallskip

\textbf{Case 3.} Let $\lvert I \rvert, \lvert J \rvert \ge t$. We write $I = \{ i_1 < \dots < i_{\lvert I \rvert}\}, J= \{j_1 < \dots < j_{\lvert J \rvert} \}, I' = \{i_1' < \dots < i_{\lvert I \rvert}'\}$ and $J' = \{j_1' < \dots < j_{\lvert J \rvert}' \}$. Suppose $t \ge 2$. By assumption $\underline{w}$ has the descending property and $t \ge s-1$, hence $w$ has the descending property. So by Corollary~\ref{lem:Swc_1} we have $\{i_1, \dots, i_t \} \le \{w_1, \dots, w_t \}$ and $\{j_1, \dots, j_t \} \le \{w_1, \dots, w_t \}$. 
For each $\ell \ge 3$, $\{i'_\ell, j'_\ell \} = \{i_\ell, j_\ell \}$ because $B_\ell (L) $ does not permute any index $\ell \ge 3$ which means that $B_\ell (L) (\ell) = \ell$ for any $L \subseteq [n]$ and $\ell \ge 3$. So $\{i_1', i_2', j_1', j_2' \} = \{i_1, i_2, j_1, j_2 \}$.
It follows that $\{i_1', \dots, i_t' \} \le \{w_1, \dots, w_t \}$ and $\{j_1', \dots, j_t' \} \le \{w_1, \dots, w_t \}$. So by Corollary~\ref{lem:Swc_1}, $P_{I'} P_{J'}$ does not vanish in $F_{n,\ell,w}$.

Suppose $t = 1$. Since $\underline{w}$ has the descending property we have $w = (n, n-1, \dots, 1)$. Clearly $P_{I'} P_{J'}$ does not vanish in $F_{n,\ell,w}$ since $F_{n,\ell,w} = \inwb(I_n)$ and no variable vanishes.
\end{proof}

\begin{lemma}\label{lem:toric_thm1_a3}
We have $A_3 \subset T_{n,\ell}$.
\end{lemma}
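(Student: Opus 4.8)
The plan is to mimic the proof of Lemma~\ref{lem:toric_thm1_a2}. Fix $w\in A_3$. By definition $\ul w\in T_{n-1,\ell}$, so the inductive hypothesis gives that $F_{n-1,\ell,\ul w}$ is binomial; moreover, since $\ul w\in T_{n-1,\ell}\setminus S_{n-1}^>$, Corollary~\ref{cor:l1...n-2_exceptional} applied at level $n-1$ forces $\ul w=(n-1,\ell,n-2,\dots,\ell+1,\ell-1,\dots,1)$. Hence in $w$ the value $n-1$ sits in position $s=1$ and, writing $w_t=n$, the defining inequality $t\ge s+2$ of $A_3$ becomes $t\ge 3$; also $w$ has the descending property from position $t$ onwards, $w_1=n-1$ and $w_2=\ell$. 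The goal is to show that whenever $P_IP_J-P_{I'}P_{J'}$ is a relation in $\inwb(I_n)$ with $P_IP_J$ not vanishing in $F_{n,\ell,w}$, the monomial $P_{I'}P_{J'}$ also does not vanish; together with a direct check that $F_{n,\ell,w}\neq 0$ (obtained by lifting a non-vanishing binomial of $F_{n-1,\ell,\ul w}$, padding the relevant subsets with $n$ and invoking Corollary~\ref{lem:Swc_1}), this yields $w\in T_{n,\ell}$.

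I would split into the same three cases used in Lemma~\ref{lem:toric_thm1_a2}, assuming without loss of generality $|I|\ge|J|$: namely $|I|,|J|<t$; $|I|\ge t>|J|$; and $|I|,|J|\ge t$. When $|I|,|J|<t$, neither subset can contain $n=w_t$, so $P_IP_J-P_{I'}P_{J'}$ is already a relation in $\inw{\ell}(I_{n-1})$; since $w$ and $\ul w$ agree on their first $t-1$ entries, $P_IP_J$ is non-vanishing in $F_{n-1,\ell,\ul w}$, and binomiality at level $n-1$ gives $P_{I'}P_{J'}\neq 0$. When $|I|,|J|\ge t$, I would use that $B_\ell$ fixes every index $\ge 3$ (recorded in the preamble to the proof of Theorem~\ref{Intro:l}), so $\{i'_k,j'_k\}=\{i_k,j_k\}$ for $k\ge 3$ and therefore $\{i'_1,i'_2,j'_1,j'_2\}=\{i_1,i_2,j_1,j_2\}$; combining this with Corollary~\ref{lem:Swc_1} and the descending property of $w$ shows that the comparisons $\{i'_1,\dots,i'_t\}\le(w_1,\dots,w_t)$ and $\{j'_1,\dots,j'_t\}\le(w_1,\dots,w_t)$ hold, so $P_{I'}$ and $P_{J'}$ do not vanish (the subcases $t=1,2$ are vacuous here since $t\ge 3$). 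The mixed case $|I|\ge t>|J|$ is handled by deleting the top element of $I$, which --- because $|I|\ge t\ge 3$ --- does not alter the $B_\ell$-ordering of either $I$ or $I'$, producing a (possibly trivial) relation at level $n-1$ whose leading monomial is non-vanishing in $F_{n-1,\ell,\ul w}$; binomiality there, pushed back up by Corollary~\ref{lem:Swc_1}, finishes the argument.

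The main obstacle will be the mixed case: unlike in Lemma~\ref{lem:toric_thm1_a2}, one must make sure that when the deleted top element of $I$ is $n$, it reappears in $I'$ rather than in $J'$ (otherwise $P_{J'}$ would vanish, since $|J'|<t$ and $n=w_t$). This has to be read off from the explicit quadratic binomial generators of $F_{n,\ell}=\inwb(I_n)$ supplied by Corollary~\ref{cor:block_diag_degen_flag}, using that in such a straightening-type relation the larger of the two subsets on the right always receives the larger entries; the hypotheses $w_1=n-1$, $w_2=\ell$ and $t\ge 3$ should then pin down exactly which Pl\"ucker variables vanish and keep the bookkeeping manageable.
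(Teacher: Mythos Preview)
Your proposal is correct and follows exactly the paper's route: identify $\ul w=(n-1,\ell,n-2,\dots,1)$ via Corollary~\ref{cor:l1...n-2_exceptional}, note $t\ge 3$, and run the same three cases as in Lemma~\ref{lem:toric_thm1_a2}.

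The ``main obstacle'' you flag in the mixed case is not actually an obstacle, and the paper does not need the explicit list of generators to handle it. The point is that any quadratic relation $P_IP_J-P_{I'}P_{J'}$ in $F_{n,\ell}=\ker\phi_\ell$ comes from two tableaux $[IJ]$ and $[I'J']$ that are \emph{row-wise equal as multisets} (this is exactly what equality of the monomials $\phi_\ell(P_I)\phi_\ell(P_J)=\phi_\ell(P_{I'})\phi_\ell(P_{J'})$ says). When $|I|=|I'|>|J|=|J'|$, the bottom row of each tableau has a single entry, so these entries must agree; since $|I|\ge t\ge 3$ and $B_\ell$ only permutes the first two rows, that bottom entry is $i_{|I|}$ on the left and $i'_{|I'|}$ on the right, giving $i_{|I|}=i'_{|I'|}$ automatically. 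In particular if $i_{|I|}=n$ then $n\in I'$ and $n\notin J'$, with no further bookkeeping required. The paper simply writes down the reduced relation $P_{I\setminus\{i_{|I|}\}}P_J-P_{I'\setminus\{i_{|I|}\}}P_{J'}$ at level $n-1$ using this observation implicitly.
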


\begin{proof}
Suppose $w \in S_n$ with $\underline{w} \in T_{n-1,\ell} $ and $\underline{w}$ does not have the descending property. Let $w_s = n-1$ and $w_t = n$ and suppose $t \ge s+2$. By Corollary~\ref{cor:l1...n-2_exceptional}, $\underline{w} = (n-1,\ell, n-2, \dots,\ell+1,\ell-1, \dots, 1)$. We deduce that $w$ has the descending property.

Let $P_I P_J - P_{I'} P_{J'}$ be a relation in $\inwb(I_n)$ and suppose $P_I P_J$ does not vanish in $F_{n,\ell,w}$. We show that $P_{I'} P_{J'}$ does not vanish by taking cases on $\lvert I \rvert, \lvert J \rvert$ and $t$. We may assume that $|I| \ge |J|$ and so we must have that either $|I|,|J| <t$ or $|I| \ge t$ and $|J| < t$ or $|I|,|J| \ge t$.

\textbf{Case 1.} Let $\lvert I \rvert, \lvert J \rvert < t$. Since $w$ and $\underline{w}$ agree on $w_1, \dots, w_{t-1}$, $P_I P_J - P_{I'} P_{J'}$ is a relation in $F_{n-1,\ell, \underline{w}}$ which is   binomial. So $P_{I'} P_{J'}$ does not vanish in $F_{n,\ell,w}$.

\textbf{Case 2.} Let $\lvert I \rvert \ge t, \lvert J \rvert < t$. We write $I = \{i_1, \dots, i_{\lvert I \rvert} \}$. We show that we cannot have $B_\ell(I) \neq B_\ell(I \backslash \{i_{\lvert I \rvert} \})$ or $B_\ell(I') \neq B_\ell(I' \backslash \{i_{\lvert I \rvert} \})$. Otherwise we would have $\lvert I \rvert = \lvert I' \rvert = 2$ and $t = 2$. But by assumption $t \ge s+2 \ge 3$, a contradiction. So we have $B_\ell(I) = B_\ell(I \backslash \{i_{\lvert I \rvert} \})$ and $B_\ell(I') = B_\ell(I' \backslash \{i_{\lvert I \rvert} \})$. We have the following relation in $F_{n-1,\ell, \underline{w}}$:
$$P_{I \backslash \{i_{\lvert I \rvert} \}} P_{J} - P_{I' \backslash \{i_{\lvert I \rvert} \}} P_{J'}.$$
Since $F_{n-1,\ell, \underline{w}}$ is   binomial we deduce that $P_{I' \backslash \{i_{\lvert I \rvert} \}} P_{J'}$ does not vanish in $F_{n-1,\ell, \underline{w}}$. So by Corollary~\ref{lem:Swc_1}, $P_{I'} P_{J'}$ does not vanish in $F_{n,\ell,w}$.

\textbf{Case 3.} Let $\lvert I \rvert, \lvert J \rvert > t$. Write $I = \{ i_1 < \dots < i_{\lvert I \rvert}\}, J= \{j_1 < \dots < j_{\lvert J \rvert} \}, I' = \{i_1' < \dots < i_{\lvert I \rvert}'\}$ and $J' = \{j_1' < \dots < j_{\lvert J \rvert}' \}$. By assumption, $t \ge 3$ and $w$ has the descending property. So by Corollary~\ref{lem:Swc_1}, $\{i_1, \dots, i_t \} \le \{w_1, \dots, w_t \}$ and $\{j_1, \dots, j_t \} \le \{w_1, \dots, w_t \}$. For each $\ell \ge 3$, $\{i_\ell', j_\ell'\} = \{i_\ell, j_\ell \}$ and $\{i_1', i_2', j_1', j_2' \} = \{i_1, i_2, j_1, j_2 \}$. It follows that $\{i_1', \dots, i_t' \} \le \{w_1, \dots, w_t \}$ and $\{j_1', \dots, j_t' \} \le \{w_1, \dots, w_t \}$. So by Corollary~\ref{lem:Swc_1}, $P_{I'} P_{J'}$ does not vanish in $F_{n,\ell,w}$.
\end{proof}

\begin{lemma} \label{lem:toric_thm1_non_toric_extn}
Let $w \in S_n$ with $ \underline{w} \in N_{n-1,\ell}$ and  $w \neq (n,\ell, n-1, \dots, \ell +1, \ell-1, \dots, 1)$. Then $w \in N_{n,n-1} $.
\end{lemma}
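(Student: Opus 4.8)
The plan is to follow the lifting strategy behind the diagonal and semi‑diagonal analogues, Lemma~\ref{lem:nontoric_extn} and Lemma~\ref{lem:toric_thm2_non_toric_extn}: starting from a quadratic witness to non‑binomiality of $F_{n-1,\ell,\ul w}$, produce a quadratic witness for $F_{n,\ell,w}$. Concretely, since $\ul w\in N_{n-1,\ell}$ there is a relation $P_I P_J - P_{I'}P_{J'}$ in $\inw{\ell}(I_{n-1})$ with $P_I P_J$ non‑vanishing and $P_{I'}P_{J'}$ vanishing in $F_{n-1,\ell,\ul w}$; non‑triviality lets us assume $|I|=|I'|\ge|J|=|J'|$ and $|I|\ge 2$. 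Write $w=(w_1,\dots,w_t,n,w_{t+1},\dots,w_{n-1})$, so $\ul w=(w_1,\dots,w_{n-1})$ and $t$ is the position of $n$. Throughout I would transfer (non‑)vanishing of Pl\"ucker variables across the insertion of $n$ using Corollary~\ref{lem:Swc_1}, together with the elementary fact that $B_\ell(L)=B_\ell(L\cup\{n\})$ whenever $|L|\ge 2$ (since $n$ is the largest index).

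The cases $t\ge 2$, and the case $t=1$ with $|J|\ge 2$, should all go through by the standard lift. If $|I|,|J|\le t$, then $w$ and $\ul w$ agree on their first $t$ entries, so the same relation exhibits $P_I P_J$ as a monomial of $F_{n,\ell,w}$. If $|I|>t\ge|J|$, replace $I,I'$ by $I\cup\{n\},\,I'\cup\{n\}$: since $|I|\ge 2$ this is still a relation of $\inw{\ell}(I_n)$, and Corollary~\ref{lem:Swc_1} shows $P_{I\cup\{n\}}P_J$ is non‑vanishing while $P_{I'\cup\{n\}}P_{J'}$ vanishes (the vanishing of $P_{I'}P_{J'}$ must here be caused by $I'\in S_{\ul w}$, because $|J'|\le t$ is governed by $w_1,\dots,w_t$). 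If $|I|,|J|>t$, replace all four subsets by their unions with $\{n\}$; all have size $\ge 2$, so the argument is identical and the vanishing of $P_{I'}P_{J'}$ transfers whether it is due to $I'$ or to $J'$. In each of these subcases $F_{n,\ell,w}$ is non‑binomial.

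The delicate case — the one I expect to be the main obstacle — is $t=1$ with $|J|=1$, so $w_1=n$ and the witness has straightening form $P_{i_1,\dots,i_p}P_j-P_{j,i_2,\dots,i_p}P_{i_1}$. The trouble is twofold: when $w_1=n$ every singleton variable survives in $F_{n,\ell,w}$, so if the vanishing of $P_{I'}P_{J'}$ were caused by the singleton $J'=\{i_1\}$ rather than by $I'$, adjoining $n$ to $I,I'$ would not make $P_{i_1}$ vanish; and adjoining $n$ to a singleton $\{j\}$ with $j\le\ell$ changes its matching field permutation, so that route is obstructed as well. Here I would argue as in the final paragraph of the proof of Lemma~\ref{lem:toric_thm2_non_toric_extn}: first, $w\ne(n,n-1,\dots,1)$, since otherwise $S_{\ul w}=\varnothing$ and $F_{n-1,\ell,\ul w}=F_{n-1,\ell}$ is binomial by Corollary~\ref{cor:block_diag_degen_flag}, contradicting $\ul w\in N_{n-1,\ell}$. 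For the remaining $w$ with $w_1=n$, non‑vanishing of $P_I$ forces $\min I\le w_2$, so $P_{i_1}$ is non‑vanishing and hence the vanishing of $P_{I'}P_{J'}$ is in fact caused by $I'=\{j,i_2,\dots,i_p\}\in S_{\ul w}$; adjoining $n$ to $I$ and to $I'$ (both of size $p\ge 2$) then lands in $\inw{\ell}(I_n)$ and, via Corollary~\ref{lem:Swc_1}, produces the monomial $P_{i_1,\dots,i_p,n}P_j$ in $F_{n,\ell,w}$. The point requiring genuine care is to verify that this, together with its $|J|\ge 2$ companion, covers every $w$ with $w_1=n$ except $w=(n,\ell,n-1,\dots,\ell+1,\ell-1,\dots,1)$ — precisely the permutation excluded from the statement, which indeed lies in $T_{n,\ell}$ by Lemma~\ref{lem:toric_thm1_exceptional}. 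I would cross‑check this exceptional case against Corollary~\ref{cor:l1...n-2_exceptional} and the explicit relations used in the proof of Lemma~\ref{lem:toric_thm1_exceptional}.
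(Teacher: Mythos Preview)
Your lifting strategy in the three main cases ($|I|,|J|$ both below $t$; mixed; both at least $t$ with each size $\ge 2$) matches the paper's proof exactly. The divergence is in the residual subcase $w_1=n$ with $|J|=1$.

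There your direct lift writes the witness as $P_{i_1,\dots,i_p}P_j-P_{j,i_2,\dots,i_p}P_{i_1}$ and concludes that $J'=\{i_1\}$ is non-vanishing, forcing the vanishing onto $I'$. But this shape presumes $B_\ell(I)=\id$. When $B_\ell(I)=(12)$, i.e.\ $i_1\le\ell<i_2$, the first-row entry of the $I$-column is $i_2$, so the row-swap relation is $P_{i_2,i_1,i_3,\dots}P_j-P_{j,i_1,i_3,\dots}P_{i_2}$ with $J'=\{i_2\}$ and $I'=\{i_1,j,i_3,\dots,i_p\}$. Non-vanishing of $P_I$ in $F_{n-1,\ell,\ul w}$ only gives $i_1\le\ul w_1$, not $i_2\le\ul w_1$, so the vanishing of $P_{I'}P_{J'}$ can genuinely come from $J'$. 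Once $w_1=n$ every singleton survives in $F_{n,\ell,w}$, and adjoining $n$ to $I,I'$ no longer produces a monomial; nor is there an obvious alternative lift, since for instance $P_n P_{i_1,i_2}-P_{i_2}P_{i_1,n}$ has both sides non-vanishing.

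The paper does not attempt a lift here. It observes that $w\neq(n,n-1,\dots,1)$ (else $\ul w=(n-1,\dots,1)$ and $F_{n-1,\ell,\ul w}=\inw{\ell}(I_{n-1})$ is binomial by Corollary~\ref{cor:block_diag_degen_flag}), hence $w$ fails the descending property; since by hypothesis $w$ is not the exceptional permutation, Corollary~\ref{cor:l1...n-2_exceptional} immediately gives $w\notin T_{n,\ell}$, and as $w_1=n$ also rules out $w\in Z_n$, we conclude $w\in N_{n,\ell}$. So Corollary~\ref{cor:l1...n-2_exceptional} is the argument in this subcase, not merely the cross-check you propose.
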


\begin{proof}
Write $w = (w_1, \dots, w_n)$ with $w_t = n$. Suppose $P_I P_J - P_{I'} P_{J'}$ is a relation in $\inw\ell(I_{n-1})$ giving rise to the monomial $P_I P_J \in F_{n-1,\ell, \underline{w}}$. Without loss of generality we assume $\lvert I \rvert = \lvert I' \rvert$ and $\lvert J \rvert = \lvert J' \rvert$. For each $L \in \{I, I', J, J' \}$, let $\tilde{L} = L \cup \{n\}$. We take cases on $\lvert I \rvert, \lvert J \rvert$ and $t$. We may assume that $|I| \ge |J|$ and so we must have that either $|I|,|J| <t$ or $|I| \ge t$ and $|J| < t$ or $|I|,|J| \ge t$.

\textbf{Case 1.} Let $\lvert I \rvert, \lvert J \rvert < t$. Since $w$ and $\underline{w}$ agree on $w_1, \dots, w_{t-1}$ we have that $P_{I} P_{J}$ is a monomial in $F_{n,\ell,w}$ via the same relation and so $w \in N_{n,\ell} $.

\textbf{Case 2.} Let $\lvert I \rvert \ge t, \lvert J \rvert < t$. We have $\lvert I \rvert \ge 2$. By Corollary~\ref{lem:Swc_1}, $P_{\tilde{I}}$ does not vanish and $P_{\tilde{I'}} P_{J'}$ does vanish in $F_{n,\ell,w}$. Since $\lvert I \rvert \ge 2$, $B_\ell(I) = B_\ell(\tilde{I})$ and ${B}_\ell(I') = B_\ell(\tilde{I'})$. So we have the following relation in $\inwb(I_n)$:
$P_{\tilde{I}} P_J - P_{\tilde{I'}} P_{J'}.$
Therefore $F_{n,\ell,w}$ contains the monomial $P_{\tilde{I}} P_{J}$ and so $w \in N_{B,w}$.

\textbf{Case 3.} Let $\lvert I \rvert, \lvert J \rvert \ge t$. If $\lvert I \rvert, \lvert J \rvert \ge 2$ then $B_\ell(I) = B_\ell(\tilde{I}), B_\ell(I') = B_\ell(\tilde{I'}), B_\ell(J) = B_\ell(\tilde{J})$ and $B_\ell(J') = B_\ell(\tilde{J'})$. And so we have the following relation in $\inwb(I_n)$:
$P_{\tilde{I}} P_{\tilde{J}} - P_{\tilde{I'}} P_{\tilde{J'}}.$
By Corollary~\ref{lem:Swc_1}, $P_{\tilde{I}} P_{\tilde{J}}$ does not vanish and $P_{\tilde{I'}} P_{\tilde{J'}}$ does vanish in $F_{n,\ell,w}$. So $F_{n,\ell,w}$ is  non-binomial and $w \in N_{n,\ell} $.

If $\lvert I \rvert = 1$ or $\lvert J \rvert = 1$ then $t = 1$. So $w = (n, w_2, \dots, w_n)$. Assume by contradiction that $w$ has the descending property. Then $w = (n, n-1, \dots, 1)$ so $\underline{w} = (n-1, \dots, 1)$. Clearly $\underline{w} \in T_{n-1,\ell} $ as no variable vanishes, a contradiction. So $w$ does not have the descending property. By assumption $w \neq (n,\ell, n-1, \dots,\ell+1,\ell-1, \dots, 1)$ so by Corollary~\ref{cor:l1...n-2_exceptional}, $F_{n,\ell,w}$ is  non-binomial. And so we have shown $w \in N_{n,\ell} $. 
\end{proof}

\begin{remark}
Finally, we would like to remark that the results of this section can be generalized to Richardson varieties \cite{Ollie4}. Moreover, for Grassmannian varieties, there are other combinatorial constructions leading to toric degenerations \cite{rietsch2017newton,bossinger2020families}. Although most of these degenerations can be realized as Gr\"obner degenerations, this is not true in general; See e.g.
\cite{kateri2015family} for a family of toric degnerations that cannot be identified as a Gr\"obner degeneration.
\end{remark}

\section{Standard monomial theory for Schubert varieties
}\label{sec:standard_monomials}
In this section we study monomial bases for the ideals $F_{n,\ell,w}$ and $\init_{\bf w_\ell}(I(X(w)))$. We show for the diagonal matching field, $\ell = 0$, that if $F_{n,\ell,w}$ is monomial-free then $\init_{\bf w_\ell}(I(X(w)))$ and $F_{n,\ell,w}$ coincide and, moreover, these ideals are toric. We also show that for the other block diagonal matching fields, $\ell \in \{1, \dots, n-1 \}$, the same results hold if the initial ideal $\init_{\bf w_\ell}(I(X(w)))$ is generated in degree two.

\medskip
We begin by defining the monomial map whose kernel will coincide with $\init_{\bf w_\ell}(I(X(w)))$ when $F_{n,\ell,w}$ is monomial-free.

\begin{definition}[Restricted monomial map]\label{def:restricted_monomial_map}
Fix natural numbers $n,\ell$ and let $w$ be a permutation in $S_n$. Let $R = \mathbb K[P_I : I \subseteq [n], |I| \in \{1, \dots, n-1\}, I \notin S_w]$ and $S = \mathbb K[x_{i,j} : i \in \{1, \dots, n-1 \}, j \in \{1, \dots, n \}]$ be polynomial rings. We define the map $\phi_{\ell,w}: R \rightarrow S$ to be the restriction of the monomial map $\phi_\ell$ defined in \eqref{eqn:monomialmap} to the ring $R$.
\end{definition}

\noindent\textbf{Notation.} Fix $n, \ell$ natural numbers and $w$ a permutation. We use the following shorthand notation for ideals of $R$ throughout this section.
\begin{itemize}
  \item $J_1 := F_{n,\ell,w}$, the restricted matching field ideal defined in \eqref{eq:F_n,l,w}.
  \item $J_2 := {\rm in}_{{\bf w}_\ell}(I(X(w)))$, the initial ideal of the ideal of the Schubert variety.
  \item $J_3 := \ker(\phi_{\ell,w})$, the kernel of the restricted monomial map. 
\end{itemize}

\subsection{Generating sets}\label{sec:mon_bases_gen_set}
By studying the generating sets of $J_1$ and $J_3$, we will show that they coincide if and only if $J_1$ is monomial-free and $J_1 \subseteq J_2$. To prove the remaining containments, we will consider monomial bases in the subsequent subsection. Recall that the matching field ideal $F_{n,\ell}$ is quadratically generated and is the kernel of a monomial map, see Corollary~\ref{cor:block_diag_degen_flag}. Starting with a quadratic generating set for $F_{n,\ell}$, we explicitly construct a generating set for $F_{n,\ell,w}$.

\begin{definition}
Let $G \subset \mathbb{K}[x_1, \dots, x_n]$ be a collection of homogeneous quadratic polynomials and $S \subseteq \{x_1, \dots, x_n \}$ be a collection of variables. We identify $S$ with its characteristic vector, i.e. $S_i = 1$ if $x_i \in S$ otherwise $S_i = 0$. For each $g \in G$ we write $g = \sum_{\alpha} c_\alpha x^\alpha$ and define
\[
\hat g = \sum_{S \cdot\alpha = 0} c_\alpha x^\alpha.
\]
We define $G_S = \{\hat g : g \in G \}$ to be the collection of all such polynomials.
\end{definition}

By definition, we have $F_{n,\ell,w} = (F_{n,\ell} + \langle S \rangle) \cap R$ where $R$ is the ring given in Definition~\ref{def:restricted_monomial_map} and $S = \{P_I : I \in S_w \}$ is the set of variables that vanish in $F_{n,\ell,w}$. We show that if $G$ is a quadratic generating set for $F_{n,\ell}$ then $G_S$ is a quadratic generating set for $F_{n,\ell,w}$.

\begin{lemma}\label{lem:elim_ideal_gen_set}
Let $G \subseteq \mathbb{K}[x_1, \dots, x_n]$ be a set of quadratic polynomials and $S \subseteq \{x_1, \dots, x_n \}$ a subset of variables. Then $\langle G_S \rangle = \langle G \cup S \rangle \cap \mathbb{K}[\{x_1, \dots, x_n\} \backslash S]$.
\end{lemma}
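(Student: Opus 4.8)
The statement is a purely ring-theoretic fact about quadratic ideals and variable-elimination, so the plan is to prove the two inclusions $\langle G_S\rangle \subseteq \langle G\cup S\rangle \cap \mathbb K[\{x_1,\dots,x_n\}\setminus S]$ and the reverse separately. For the first inclusion, fix $g\in G$, write $g = \sum_\alpha c_\alpha x^\alpha$ and split the sum as $g = \hat g + (g-\hat g)$, where $\hat g$ collects exactly the monomials $x^\alpha$ with $S\cdot\alpha = 0$, i.e.\ those supported away from $S$. Then $\hat g$ manifestly lies in $\mathbb K[\{x_1,\dots,x_n\}\setminus S]$, and $g-\hat g$ is a $\mathbb K$-linear combination of monomials each divisible by some variable in $S$, hence $g-\hat g\in\langle S\rangle$. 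Therefore $\hat g = g - (g-\hat g)\in\langle G\cup S\rangle$, and since it also lies in $\mathbb K[\{x_1,\dots,x_n\}\setminus S]$, each generator $\hat g$ of $\langle G_S\rangle$ lies in the right-hand side; this gives the inclusion.

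For the reverse inclusion I would take an arbitrary element $f\in\langle G\cup S\rangle\cap\mathbb K[\{x_1,\dots,x_n\}\setminus S]$ and write $f = \sum_{g\in G} h_g g + \sum_{x_i\in S} q_i x_i$ for some polynomials $h_g, q_i$. The idea is to apply the $\mathbb K$-algebra retraction $\pi\colon \mathbb K[x_1,\dots,x_n]\to\mathbb K[\{x_1,\dots,x_n\}\setminus S]$ that sends every variable in $S$ to $0$ and fixes the others. Since $f$ involves no variable of $S$, we have $\pi(f) = f$; applying $\pi$ to the expression above kills the term $\sum q_i x_i$ and yields $f = \sum_{g\in G}\pi(h_g)\,\pi(g)$. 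Now the key observation is that $\pi(g) = \hat g$ for every $g\in G$: indeed $\pi$ sends a monomial $x^\alpha$ to itself if $S\cdot\alpha = 0$ and to $0$ otherwise, which is precisely the definition of $\hat g$. Hence $f = \sum_{g\in G}\pi(h_g)\,\hat g\in\langle G_S\rangle$, completing the proof.

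The argument is essentially formal, so there is no serious obstacle; the only point requiring a little care is the bookkeeping that $\pi$ is a ring homomorphism with $\pi|_{\mathbb K[\{x_1,\dots,x_n\}\setminus S]} = \mathrm{id}$ and that $\pi(g)=\hat g$ — this is what makes the retraction argument work cleanly and avoids any appeal to Gröbner-basis or elimination machinery. I would note in passing that the hypothesis that the elements of $G$ are quadratic (indeed homogeneous) is not actually needed for this lemma; it is stated this way only because $G$ will be a quadratic generating set of $F_{n,\ell}$ in the application, so that $G_S$ is then automatically a quadratic generating set of $F_{n,\ell,w} = (F_{n,\ell}+\langle S\rangle)\cap R$ by taking $G$ to be a quadratic generating set of $F_{n,\ell}$ and $S = \{P_I : I\in S_w\}$.
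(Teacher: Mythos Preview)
Your proof is correct and follows essentially the same approach as the paper: both prove the nontrivial inclusion by writing $f=\sum h_g g+\sum q_i x_i$ and then killing the $S$-variables, with your algebra retraction $\pi$ being a cleaner packaging of the paper's explicit splitting $f=\sum \hat h_g\,\hat g+(\text{terms divisible by some }x_i\in S)$. Your observation that the quadratic hypothesis is unnecessary is also correct and worth keeping.
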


\begin{proof}
To show that $G_S \cup S$ and $G \cup S$ generate the same ideal, for each $g \in G$ we write $g = \sum_\alpha c_\alpha x^\alpha$, for some $c_\alpha \in \mathbb{K}$. We have that
\[
g - \hat g = \sum_{S\cdot \alpha \ge 1} c_\alpha x^\alpha.
\]
Each term appearing in the above sum is divisible by some variable in $S$, hence $\hat g \in \langle G \cup S \rangle$ and $g \in \langle G_S \cup S \rangle$.
For any polynomial $ f \in \langle G \cup S \rangle \cap \mathbb{K}[\{x_1, \dots, x_n\} \backslash S]$ we have that
$
f = \sum_{g \in g} c_g h_g g + \sum_{x_i \in S} c_i h_i x_i
$
for some $c_g, c_i \in \mathbb{K}$  and $h_g, h_i \in \mathbb{K}[x_1, \dots, x_n]$. For each $h_g$ we define $\hat h_g$ similarly to $\hat g$ and rewrite this polynomial as
\[
f = \sum_{g \in G} c_g \hat h_g \hat g + \left(\sum_{g \in G} c_g (h_g g - \hat h_g \hat g) + \sum_{x_i \in S} c_i p_i x_i
\right).
\]
All monomials appearing in $\sum_{g \in G} c_g \hat h_g \hat g$ are not divisible by any monomials that lie in $S$. However each monomial appearing in the expressions
$\sum_{g \in G} c_g (h_g g - \hat h_g \hat g)$ and $\sum_{x_i \in S} c_i p_i x_i$ is divisible by some $x_i \in S$. Since $f \in \mathbb{K}[\{x_1, \dots, x_n\} \backslash S]$ it follows that the large bracketed expression above is zero and so $f = \sum_{g \in G} c_g \hat h_g \hat g \in \langle G_S \rangle$.
\end{proof}

\noindent Using the generating sets of $F_{n,\ell,w}$ constructed above, we now consider the ideals $J_1, J_2$ and~$J_3$.

\begin{lemma}\label{lem:J_1=J_3_binomial}
The ideals $J_1$ and $J_3$ coincide if and only if $J_1$ is monomial-free.
\end{lemma}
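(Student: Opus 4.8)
The key observation is that $J_1$ and $J_3$ are both ideals in the polynomial ring $R = \mathbb{K}[P_I : I \notin S_w]$, and $J_3 = \ker(\phi_{\ell,w})$ is by construction the kernel of a monomial map, hence a toric (in particular prime binomial, and in particular monomial-free) ideal. So if $J_1 = J_3$, then $J_1$ is automatically monomial-free, giving one direction for free. The substance is the converse: assuming $J_1$ is monomial-free, I want to prove $J_1 = J_3$. The plan is to establish $J_1 \subseteq J_3$ and $J_3 \subseteq J_1$ separately.

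For $J_1 \subseteq J_3$: the ideal $J_1 = F_{n,\ell,w}$ is, by Lemma~\ref{lem:elim_ideal_gen_set} applied with $G$ a quadratic generating set of $F_{n,\ell}$ and $S = \{P_I : I \in S_w\}$, generated by the polynomials $\hat g$ for $g \in G$. Each generator $g$ of $F_{n,\ell}$ lies in $\ker(\phi_\ell)$, so $\phi_\ell(g) = 0$. Writing $g = \sum_\alpha c_\alpha P^\alpha$, the monomials of $\hat g$ are exactly those $P^\alpha$ with no variable from $S$ in their support, i.e. those that survive under the restriction to $R$. Since $\phi_\ell$ sends each $P_I$ with $I \in S_w$ to a monomial and $\phi_{\ell,w}$ is literally the restriction of $\phi_\ell$ to $R$, I claim $\phi_{\ell,w}(\hat g) = 0$: one way to see this cleanly is to note that $\phi_\ell(g - \hat g)$ is a sum of images of monomials each divisible by some $P_I \in S$, and these do not interfere — more precisely, the monomials $\mathbf{x}_{\Lambda(\cdot)}$ are all distinct as $J$ ranges over subsets, so the terms of $\phi_\ell(\hat g)$ and of $\phi_\ell(g - \hat g)$ involve disjoint sets of $x$-monomials; since $\phi_\ell(g) = \phi_\ell(\hat g) + \phi_\ell(g - \hat g) = 0$, each piece must vanish on its own, so $\phi_\ell(\hat g) = \phi_{\ell,w}(\hat g) = 0$. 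Hence $\hat g \in J_3$ for every $g \in G$, and so $J_1 = \langle \hat g : g \in G \rangle \subseteq J_3$. This direction does not even use monomial-freeness.

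For $J_3 \subseteq J_1$: this is where monomial-freeness enters, and I expect it to be the main obstacle. The point is that $J_3$, being the kernel of a monomial map, is exactly the saturation/prime completion of $J_1$ with respect to the relations visible from the matching field; one wants to argue that, since $J_1$ is monomial-free, it is already prime (equal to a toric ideal), and since $J_1 \subseteq J_3$ with both having the same "combinatorial content", they are equal. The cleanest route is a dimension/Hilbert-function comparison: $J_1$ and $J_3$ live in $R$, $J_1 \subseteq J_3$, and both quotients $R/J_1$ and $R/J_3$ should have the same Hilbert function — the Hilbert function of $R/J_1$ because $F_{n,\ell} = \mathrm{in}_{\mathbf{w}_\ell}(I_n)$ is an initial ideal (so passing to it preserves Hilbert functions, and then setting a subset of variables to zero is an elimination that controls the Hilbert function), and the Hilbert function of $R/J_3$ because it equals the Hilbert function of the image of $\phi_{\ell,w}$, which is the same semigroup algebra computation. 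However, matching Hilbert functions directly is delicate here, so instead I anticipate the actual argument is deferred: the lemma as stated in the paper is proved partly here (the easy inclusion and the "only if" direction) and the reverse inclusion $J_3 \subseteq J_1$ is established using the standard monomial basis constructed in the subsequent subsection — namely, one exhibits a set of standard monomials (indexed by semi-standard Young tableaux compatible with $w$) that is simultaneously a spanning set for $R/J_1$ and linearly independent in $R/J_3$ (because their images under $\phi_{\ell,w}$ are distinct monomials in $S$), forcing $R/J_1$ and $R/J_3$ to have equal dimension in each degree and hence $J_1 = J_3$. So the proof I would write here records the "only if" direction and the inclusion $J_1 \subseteq J_3$ in full, and defers the monomial-freeness-dependent inclusion $J_3 \subseteq J_1$ to the monomial-basis results of the next subsection, citing forward to them. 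The hard part is genuinely that last inclusion, and it cannot be done without the combinatorics of the tableau basis.
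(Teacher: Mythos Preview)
Your argument contains a genuine gap, and in fact you have the two inclusions reversed in difficulty.

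\textbf{The inclusion $J_1 \subseteq J_3$.} Your claim that ``this direction does not even use monomial-freeness'' is false, and the justification you give breaks down. You assert that the $x$-monomials appearing in $\phi_\ell(\hat g)$ and in $\phi_\ell(g - \hat g)$ are disjoint because ``the monomials $\mathbf{x}_{\Lambda(\cdot)}$ are all distinct as $J$ ranges over subsets''. That is true for single Pl\"ucker variables, but the generators $g$ have degree two, and the whole point of the matching field ideal $F_{n,\ell}$ is that \emph{products} $\phi_\ell(P_I P_J)$ and $\phi_\ell(P_{I'}P_{J'})$ coincide. Concretely, take a binomial generator $g = P_I P_J - P_{I'} P_{J'}$ of $F_{n,\ell}$ with $I' \in S_w$ but $I, J, J' \notin S_w$. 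Then $\hat g = P_I P_J$ is a monomial, and $\phi_\ell(\hat g) = \phi_\ell(P_{I'}P_{J'}) \neq 0$, so $\hat g \notin J_3$. Thus $J_1 \not\subseteq J_3$ in general; the inclusion really requires that $J_1$ be monomial-free, which forces every $\hat g \in G_S$ to still be a binomial in $R$ and hence to lie in $\ker(\phi_{\ell,w})$.

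\textbf{The inclusion $J_3 \subseteq J_1$.} This is the direction you call ``the hard part'' and propose to defer to the standard monomial theory of \S\ref{sec:mon_bases_main}. In fact it is immediate and holds without any hypothesis: if $f \in J_3 = \ker(\phi_{\ell,w}) \subseteq R$, then because $\phi_{\ell,w}$ is literally the restriction of $\phi_\ell$ to $R$ we have $\phi_\ell(f) = 0$, i.e.\ $f \in F_{n,\ell}$. Since $f$ also lies in $R$, we get $f \in (F_{n,\ell} + \langle P_I : I \in S_w\rangle) \cap R = J_1$. No tableau combinatorics are needed here; the standard monomial machinery is used later to compare $J_1$ with $J_2 = \inwb(I(X(w)))$, not with $J_3$.
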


\begin{proof}
Note that $J_3$ is the kernel of a monomial map that does not send any variables to zero. Therefore $J_3$ does not contain any monomials. If $J_1$ contains a monomial, then $J_1\neq J_3$.

For the converse, suppose $J_1$ does not contain any monomials. Let $G$ be a quadratic generating set for $F_{n,\ell}$ and let $S = \{ P_I : I \in S_w\}$ be the collection of variables that vanish in $F_{n,\ell,w}$. By definition $J_1 = \langle G \cup S \rangle \cap \mathbb K[P_I : I \notin S_w]$. So by Lemma~\ref{lem:elim_ideal_gen_set} we have $J_1$ is generated by $G_S$. Since $J_1$ is monomial-free, we have that $G_S$ does not contain any monomials. By Corollary~\ref{cor:block_diag_degen_flag}, the ideal $F_{n,\ell}$ is the kernel of the monomial map $\phi_\ell$ and by definition $J_3$ is the kernel of the restriction $\phi_{\ell,w}$. Since all binomials $m_1 - m_2 \in G_S$ lie in $F_{n,\ell}$ and contain only the non-vanishing Pl\"ucker variables $P_J$ for $J \notin S_w$, therefore $m_1 - m_2 \in J_3$. And so we have $J_1 \subseteq J_3$. Also, for any polynomial $f \in J_3$ we have that $f \in F_{n,\ell}$. Since $f$ contains only the non-vanishing Pl\"ucker variables, therefore $f \in J_1$.
\end{proof}

\begin{lemma}\label{lem:J_1_subset_J_2}
$J_1 \subseteq J_2$.
\end{lemma}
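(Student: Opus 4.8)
The goal is to show $J_1 \subseteq J_2$, where $J_1 = F_{n,\ell,w}$ and $J_2 = \mathrm{in}_{{\bf w}_\ell}(I(X(w)))$. The natural route is to compare generators of $J_1$ against elements of $J_2$. By Lemma~\ref{lem:elim_ideal_gen_set} (applied with $G$ a quadratic generating set of $F_{n,\ell}$ and $S = \{P_I : I \in S_w\}$), the ideal $J_1$ is generated by the set $G_S$, whose elements are obtained from the quadratic binomials $g = m_1 - m_2 \in G \subseteq F_{n,\ell} = \mathrm{in}_{{\bf w}_\ell}(I_n)$ by deleting every term divisible by a vanishing variable. So it suffices to show that each $\hat g \in G_S$ lies in $J_2$.

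First I would recall that $I(X(w))$ is obtained from $I_n$ by setting $P_I = 0$ for $I \in S_w$, and that ${\bf w}_\ell$ restricts to a weight vector on the variables $\{P_I : I \notin S_w\}$. For the generator $g = m_1 - m_2 \in \mathrm{in}_{{\bf w}_\ell}(I_n)$, there is a polynomial $f \in I_n$ with $\mathrm{in}_{{\bf w}_\ell}(f) = g$; both monomials $m_1, m_2$ have the same ${\bf w}_\ell$-weight $d$, the minimum over all terms of $f$. Substituting $P_I \mapsto 0$ for $I \in S_w$ in $f$ produces an element $\bar f \in I(X(w))$. The key observation is that the terms of $\bar f$ are exactly the terms of $f$ that survive the substitution, with unchanged weights; hence the minimal-weight part of $\bar f$ with respect to ${\bf w}_\ell$ is precisely the part of $g$ surviving the substitution, namely $\hat g$ — provided $\hat g \neq 0$. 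Therefore $\hat g = \mathrm{in}_{{\bf w}_\ell}(\bar f) \in \mathrm{in}_{{\bf w}_\ell}(I(X(w))) = J_2$, and if $\hat g = 0$ there is nothing to prove. Since the $\hat g$ generate $J_1$, we conclude $J_1 \subseteq J_2$.

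The one subtlety — and the main point requiring care — is the claim that deleting the vanishing terms of $g$ gives exactly the minimal-weight part of $\bar f$. This needs that no term of $f$ of weight strictly greater than $d$ which happens to survive the substitution could lower the initial form: but that is automatic, since those terms have weight $> d$ and at least one surviving term of weight $d$ exists whenever $\hat g \neq 0$. One should also confirm that $g \in G$ really is an initial form of some $f \in I_n$ (rather than merely lying in the initial ideal as a combination); this is fine because the generating set $G$ of $F_{n,\ell} = \mathrm{in}_{{\bf w}_\ell}(I_n)$ consists of actual initial forms of the quadratic Plücker relations, as guaranteed by Corollary~\ref{cor:block_diag_degen_flag} together with the standard fact that the initial ideal of a quadratically generated ideal under a weight order is generated by initial forms of the original quadrics. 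I expect the bookkeeping around which terms vanish to be entirely routine; no genuine obstacle arises.
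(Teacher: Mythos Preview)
Your proof is correct and follows essentially the same route as the paper's: take a quadratic binomial generating set $G$ of $F_{n,\ell}=\inwb(I_n)$, use Lemma~\ref{lem:elim_ideal_gen_set} to see that $J_1$ is generated by $G_S$, lift each $g\in G$ to some $f\in I_n$ with $\inwb(f)=g$, and observe that after setting the variables $P_I$ with $I\in S_w$ to zero the initial form of the resulting element of $I(X(w))$ is exactly $\hat g$. The paper's proof is terser but makes the same moves; your version is more explicit about why the surviving terms of $g$ really are the minimal-weight terms of $\bar f$, and about why $G$ may be chosen to consist of genuine initial forms (the paper simply asserts ``there exists $g\in I_n$ with $f=\inwb(g)$'').

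One small caution on your final sentence: the phrase ``the initial ideal of a quadratically generated ideal \dots\ is generated by initial forms of the original quadrics'' is not a general fact as stated. What you actually need (and what is true) is that since $F_{n,\ell}$ is quadratically generated (Corollary~\ref{cor:block_diag_degen_flag}) and the degree-$2$ graded piece of $\inwb(I_n)$ is spanned by $\{\inwb(f):f\in (I_n)_2\}$, one may choose $G$ to consist of such initial forms. This is a minor wording issue, not a gap in the argument.
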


\begin{proof}
Let $G$ be a quadratic binomial generating set for $F_{n,\ell}$ and $S = \{P_I : I \in S_w^v \}$. Let $\hat f \in G_S \subset J_1$ be any polynomial. By the definition of $G_S$, there exists $f \in G$ such that $\hat f$ is obtained from $f$ by setting some variables to zero. Recall $F_{n,\ell} = \textrm{in}_{{\bf w}_\ell}(F_n)$, so there exists a polynomial $g \in F_n$ such that $f = \textrm{in}_{{\bf w}_\ell}(g)$. Since the leading term of $g$ is not set to zero in $I(X(w))$, it follows that $\hat f \in \textrm{in}_{{\bf w}_\ell}(I(X(w)))$.
\end{proof}

\subsection{Monomial bases for Schubert varieties}\label{sec:mon_bases_schubert}

We begin by recalling a description of a collection of standard monomials for the Schubert variety $X(w)$.

\begin{definition}[Definition V.5. in \cite{kim2015richardson}]
Let $T$ be a semi-standard Young tableau with columns $I_1, \dots, I_t$. Let $\textbf{w} = (w_1, \dots, w_t)$ be a sequence of permutations and write $w_k = (w_{k,1}, \dots, w_{k,n}) \in S_n$ for each $k \in [t]$. We say that $\textbf{w}$ is a \emph{defining chain} for $T$ if the permutations are monotonically increasing $w_1 \le w_2 \le \dots \le w_t$ with respect to the Bruhat order and for each $k \in [t]$ we have $I_k = \{w_{k,1}, \dots, w_{k, |I_k|} \}$.
\end{definition}

Let $\textbf{w} = (w_1, \dots, w_t)$ and $\textbf{v} = (v_1, \dots, v_t)$ be two defining chains for a fixed semi-standard Young tableau. There is a natural partial order on the set of defining chains given by $\textbf{v} \le \textbf{w}$ if $v_k \le w_k$ for all $k \in [t]$. It turns out there exists a unique minimum defining chain. The standard monomials for Schubert varieties can be determined by the minimum defining chain. In the following theorem we summarise these results.

\begin{theorem}[Lemma V.9, Proposition V.13 and Theorem V.14 in \cite{kim2015richardson}]
Let $w$ be a permutation. The collection of monomials corresponding to tableau $T$ with $d$ columns such that $w^-_d \le w$ forms a monomial basis for $X(w)$, where $\textbf{w}_-(T) = (w^-_1, \dots, w^-_d)$ is the unique minimum defining chain for $T$.
\end{theorem}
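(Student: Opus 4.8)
The plan is to reconstruct the standard-monomial-theory argument packaged by Lemma V.9, Proposition V.13 and Theorem V.14 of \cite{kim2015richardson}: first nail down the structure of the minimum defining chain, then prove spanning by a straightening (Pl\"ucker) argument, and finally prove linear independence by a Hilbert-function count; the whole package is most naturally organized as an induction on the length $\ell(w)$.

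\textbf{Step 1: the minimum defining chain.} For a single column $I$ with $|I| = k$ let $\varepsilon_I \in S_n$ be the Grassmannian permutation listing $I$ increasingly in positions $1, \dots, k$ and the complement increasingly afterwards. One checks that $\varepsilon_I$ is the Bruhat-minimal permutation $v$ with $\{v_1, \dots, v_k\} = I$, and that $\varepsilon_I \le w$ if and only if $I \le \{w_1, \dots, w_k\}$, i.e.\ $I \notin S_w$. For a tableau $T$ with columns $I_1, \dots, I_d$, set $w^-_1 = \varepsilon_{I_1}$ and define $w^-_k$ inductively as the Bruhat-minimal permutation that is $\ge w^-_{k-1}$ and has $\{v_1, \dots, v_{|I_k|}\} = I_k$; existence of this minimum is the standard lifting property of Bruhat order (the up-set of $w^-_{k-1}$ inside a fixed Grassmannian coset has a least element). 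One then verifies that $(w^-_1, \dots, w^-_d)$ is $\le$ every defining chain, which gives both existence and uniqueness of the minimum, and that the chain is monotone, so $w^-_k \le w^-_d$ for all $k$. In particular $w^-_d(T) \le w$ forces $I_k \notin S_w$ for every column, so the corresponding monomial $P_{I_1}\cdots P_{I_d}$ is nonzero in $\mathbb K[X(w)] = \mathbb K[P_J]/I(X(w))$.

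\textbf{Step 2: spanning.} Recall that the monomials attached to all semi-standard Young tableaux form a $\mathbb K$-basis of $\mathbb K[\Flag_n]$, produced by the classical straightening law: any product of Pl\"ucker variables is rewritten, modulo the Pl\"ucker relations, as a $\mathbb K$-combination of tableau monomials. Given a degree-$d$ class in $\mathbb K[X(w)]$, lift it to a polynomial supported on the variables $P_J$ with $J \notin S_w$ (the others are zero in $\mathbb K[X(w)]$) and straighten it in $\mathbb K[\Flag_n]$. The combinatorial heart (Lemma V.9) is that each straightening step, applied to columns whose defining data is compatible with $w$, produces only tableaux whose terminal minimum-chain permutation is $\le$ the original one, or which contain a column lying in $S_w$. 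Hence every tableau monomial appearing with nonzero coefficient either vanishes on $X(w)$ and can be deleted, or has $w^-_d \le w$; this shows the asserted monomials span $\mathbb K[X(w)]$.

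\textbf{Step 3: linear independence, and the main obstacle.} Since the tableau monomials are already linearly independent in $\mathbb K[\Flag_n]$, independence in the quotient amounts to showing that no nontrivial combination of those with $w^-_d \le w$ lies in $I(X(w))$. I would do this by a dimension count: in each degree $d$, the number of tableaux $T$ with $w^-_d(T) \le w$ equals $\dim_{\mathbb K}\mathbb K[X(w)]_d$, the Hilbert function of the Schubert variety (equivalently, the dimension of the corresponding Demazure submodule, or a count of Lakshmibai--Seshadri paths). Combined with Step 2, matching dimensions forces the spanning set to be a basis. An alternative avoiding the count is to exhibit, for a suitable total order on the relevant tableaux, points $p_T$ in the cell $B\sigma_w B/B$ at which the monomial of $T$ is nonzero while all monomials of larger tableaux vanish, giving a triangular evaluation matrix. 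The genuinely technical point in either route is localized: the exchange behaviour of the minimum defining chain under straightening in Step 2, respectively the combinatorial identity equating the count of $w$-compatible tableaux with the Hilbert function of $X(w)$. Both are handled cleanly by induction on $\ell(w)$: one reduces a general $w$ to its covers $w' \lessdot w$, checks that passing from $w'$ to $w$ adds exactly the tableaux newly satisfying $w^-_d \le w$, and invokes the corresponding statement for $\Flag_n$ as the base case $w = w_0$.
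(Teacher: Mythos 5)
The first thing to say is that the paper does not prove this statement at all: it is imported verbatim as Lemma~V.9, Proposition~V.13 and Theorem~V.14 of the cited thesis of Kim, and is used as a black box in the proof of Theorem~A. So there is no in-paper argument to compare yours against; what can be assessed is whether your reconstruction would stand on its own as a proof of the cited result.

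Your skeleton is the right one --- it is the classical Hodge/Musili/Lakshmibai--Seshadri architecture: (1) structure and uniqueness of the minimum defining chain via the lifting property of Bruhat order on parabolic quotients, (2) spanning via straightening, (3) independence via a dimension count or a triangular evaluation matrix. Step~1 is essentially complete (the facts you invoke, Deodhar's lifting lemma and the tableau criterion $\varepsilon_I \le w \iff I \le \{w_1,\dots,w_{|I|}\}$, are standard and correctly applied). The gap is that the two load-bearing claims in Steps~2 and~3 are asserted rather than proved, and they are precisely the substance of the cited results. In Step~2, the claim that every straightening step applied to a product of non-vanishing Pl\"ucker variables yields only tableaux whose terminal chain permutation is dominated by $w$ (or which contain a vanishing column) is the entire content of the compatibility of SMT with Schubert conditions; it requires tracking how the minimum defining chain transforms under a single Pl\"ucker exchange, and nothing in your sketch establishes it. In Step~3, the identity between the number of $w$-dominated standard tableaux and $\dim_{\mathbb K}\mathbb K[X(w)]_d$ is not available for free --- it is usually obtained from Demazure character formulas or from an independent geometric degeneration, i.e.\ it is a theorem of comparable depth to the one being proved. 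Finally, your closing induction is internally inconsistent: you propose to reduce $w$ to its covers $w' \lessdot w$ (which is a downward step in Bruhat order) while taking $w = w_0$, the maximal element, as the base case; one of the two directions has to be reversed, and either way the inductive step (that passing from $w'$ to $w$ adds exactly the newly dominated tableaux, with no relations among them on the open cell of $X(w)$) is again the hard point and is not carried out.
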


If the tableau is not clear from the context, we write $w_i^-(T)$ for $w_i^-$, where $i \in \{1, \dots, d \}$. In this section we will show that the following.

\begin{theorem}\label{thm:w_312_ssyt}
If $w$ is $312$-free then the semi-standard Young tableaux whose columns $I$ satisfy $I \le w$ form a monomial basis for the Schubert variety $X(w)$.
\end{theorem}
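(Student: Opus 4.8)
The plan is to compare two monomial bases for $X(w)$: the general basis from \cite{kim2015richardson} phrased in terms of minimum defining chains, and the candidate basis consisting of semi-standard Young tableaux all of whose columns $I$ satisfy $I \le w$ (where $I \le w$ abbreviates $I \le \{w_1, \dots, w_{|I|}\}$, i.e. $I \notin S_w$). Since the first is already known to be a basis, it suffices to show that for a $312$-free permutation $w$, a semi-standard Young tableau $T$ with columns $I_1, \dots, I_d$ has $w^-_d(T) \le w$ in the Bruhat order \emph{if and only if} every column satisfies $I_k \le w$. The ``only if'' direction is essentially immediate: if $w^-_d \le w$ then, since the defining chain is increasing, $w^-_k \le w^-_d \le w$ for every $k$, and the initial segment condition $I_k = \{w^-_{k,1}, \dots, w^-_{k,|I_k|}\}$ together with the characterization of Bruhat order on initial segments gives $I_k \le \{w_1,\dots,w_{|I_k|}\}$. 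So the content is in the ``if'' direction: assuming every column of a semi-standard $T$ satisfies $I_k \le w$, one must produce a defining chain $\mathbf{v} = (v_1, \dots, v_d)$ for $T$ with $v_k \le w$ for all $k$ — because then the \emph{minimum} defining chain satisfies $w^-_d \le v_d \le w$.

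The key step, therefore, is a constructive lemma: \emph{if $w$ is $312$-free and $I \le \{w_1, \dots, w_{|I|}\}$, then there is a permutation $v \le w$ with $\{v_1, \dots, v_{|I|}\} = I$; and moreover this construction can be carried out compatibly along a nested/semi-standard family of columns so that the resulting permutations form an increasing chain.} First I would establish the single-column version. Given $I = \{i_1 < \dots < i_k\}$ with $i_j \le w_j$ for all $j$, one wants to rearrange $w$ so that its first $k$ values become exactly $i_1, \dots, i_k$ (in increasing order) while only moving downward in Bruhat order. The $312$-avoidance of $w$ is exactly what makes this possible without overshooting: it guarantees that the values of $w$ lying below position $k$ that are ``too large'' can be pushed past smaller values to the right using a sequence of covering transpositions, each of which strictly decreases length — here I would argue by induction on $k$, or equivalently on the number of inversions to be removed, using the standard fact that $w s_i < w$ iff $w_i > w_{i+1}$ together with a careful choice of which adjacent transposition to apply (always swapping a currently-too-large leading entry rightward). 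The $312$-pattern obstruction would manifest as the only case where such a downward move is blocked, so its absence closes the induction.

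The second, more delicate, step is to make these single-column constructions \emph{cohere} into a monotone defining chain for a whole semi-standard tableau. Given columns $I_1 \supseteq \dots$ (in the SSYT sense — weakly decreasing sizes and the row conditions), I would build $v_1 \le v_2 \le \dots \le v_d$ inductively: having constructed $v_{k}$ with leading segment $I_k$ and $v_k \le w$, I must find $v_{k+1} \ge v_k$ with leading segment $I_{k+1}$ and still $v_{k+1} \le w$. Here I expect the main obstacle: it is not enough to apply the single-column lemma afresh to $I_{k+1}$; one needs the resulting $v_{k+1}$ to dominate the already-chosen $v_k$. I would handle this by choosing, at each stage, the \emph{lexicographically largest} (or Bruhat-largest) permutation with the required leading segment and bounded above by $w$, and then showing — using the semi-standardness relations between $I_k$ and $I_{k+1}$ and again $312$-freeness of $w$ — that this canonical choice is automatically monotone, or alternatively invoking a known ``greedy'' characterization of the minimum defining chain to show it directly satisfies $w^-_{k+1} \le w$ once every column does. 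This monotonicity argument is where I anticipate the real work; everything else is either bookkeeping with Bruhat order on initial segments or a direct appeal to the cited results of \cite{kim2015richardson}. Finally, I would note that the number of such tableaux with $d$ columns matches the Hilbert function of $X(w)$ in degree $d$, which follows automatically from the bijection with the known basis, completing the proof that we indeed have a basis and not merely a spanning set.
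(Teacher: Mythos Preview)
Your outline is correct in spirit: you have identified the two directions and correctly isolated the hard one (all columns $I_k \le w$ implies $w^-_d(T) \le w$). But your route diverges from the paper's at a crucial point, and the divergence is exactly where you leave a gap.

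The paper does not attempt to build a defining chain for a $d$-column tableau at all. Instead it observes that the ideal $I(X(w))$ is generated in degree two, so standardness of a tableau is determined by adjacent pairs of columns; hence it suffices to treat tableaux with at most two columns. This sidesteps entirely the monotonicity problem you flagged (``making the single-column constructions cohere into a monotone chain''), which you correctly identify as the real work in your approach but then do not carry out. Your two suggested fixes --- choosing the Bruhat-largest $v$ bounded by $w$, or appealing to an unspecified greedy characterisation --- are plausible heuristics but neither is an argument, and it is precisely here that $312$-freeness must be used in a sharp way.

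For the two-column case the paper proceeds quite differently from your sketch. It first analyses explicitly the minimum defining chain $(v_1,v_2)$: one always has $v_1 = (I,[n]\backslash I)$ and $v_2 = (J,\tilde I,[n]\backslash(J\cup\tilde I))$ for some $\tilde I\subseteq I$ (Lemma~\ref{lem:min_defining_seq_structure}). The special case $|J|=1$ is handled directly (Lemma~\ref{lem:w_312_ssyt_2111_shape}), and the general case follows by induction on $|J|$: one removes the largest entry of $J$, applies the inductive hypothesis, and shows that reinserting it cannot push $v_2$ above $w$ without producing a $312$-pattern in $w$ (Lemma~\ref{lem:w_312_ssyt_2_cols}). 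This is a concrete contradiction argument on the sorted initial segments $\{w_1,\dots,w_t\}^\uparrow$, not an abstract descent/transposition argument.

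A minor point: your single-column lemma is overengineered. Given $I \le w$, the Grassmannian permutation $(I,[n]\backslash I)$ is already the smallest permutation with leading segment $I$, and $I \le w$ immediately gives $(I,[n]\backslash I) \le w$ --- no $312$-freeness is needed there. The hypothesis is used only in the two-column step.
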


\begin{proof}
We show that each semi-standard Young tableau $T$ is standard for the Schubert variety $X(w)$. Note that the ideal of the Schubert variety $X(w)$ is generated in degree two so it suffices to check all tableaux with at most two columns. If $T$ has a single column $I$ then the minimum defining sequence for $T$ has a single permutation $v$ which is the Grassmannian permutation defined by $I$. Hence $v \le w$ and $T$ is standard for $X(w)$. By Lemma~\ref{lem:w_312_ssyt_2_cols}, all semi-standard Young tableaux with two columns are standard for $X(w)$ and we are done.
\end{proof}

We prove Theorem~\ref{thm:w_312_ssyt} in two steps. We begin with tableaux that have exactly one column of size one. We then use this to show the general case.

\medskip

\noindent\textbf{Notation.} Let $\mathcal Q = (Q_1, Q_2, \dots, Q_k )$ be a partition of $[n]$ where each $Q_i \subseteq [n]$ is non-empty and disjoint. Write $Q_i = \{Q_{i,1} < q_{i,2} < \dots < q_{i, |Q_i|}\}$ for each $i \in [k]$. We define the permutation 
\[
(Q_1, Q_2, \dots, Q_k) = (q_{1,1}, q_{1,2}, \dots, q_{1, |Q_1|}, q_{2,1}, \dots, q_{2, |Q_2|}, q_{3,1}, \dots, q_{k, |Q_k|}).
\]
In particular if $I \subseteq [n]$ is a subset then $(I, [n] \backslash I)$ is the Grassmannian permutation defined by $I$.

\begin{lemma}\label{lem:v_2_structure}
Let $T$ be a semi-standard Young tableau with two columns $I = \{i_1 < \dots < i_t \}$ and $J = \{j_1\}$. The minimum defining sequence for $T$ is $(v_1, v_2)$ where
$v_1 = (I, [n] \backslash I)$, $v_2 = (j_1, I \backslash i_s, ([n] \backslash (I \cup j_1)) \cup i_s)$ and $s = \max\{k \in [t] : i_k \le j_1 \}$.
\end{lemma}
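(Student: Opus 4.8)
The plan is to verify the two defining-chain conditions and the minimality condition directly from the combinatorics of semi-standard Young tableaux. First I would recall that a defining chain for $T$ is a pair $(v_1, v_2)$ of permutations with $v_1 \le v_2$ in the Bruhat order such that $I = \{v_{1,1}, \dots, v_{1,t}\}$ and $J = \{v_{2,1}\}$. With $v_1 = (I, [n] \backslash I)$ and $v_2 = (j_1, I \backslash i_s, ([n] \backslash (I \cup j_1)) \cup i_s)$, the conditions $I = \{v_{1,1}, \dots, v_{1,t}\}$ and $\{j_1\} = \{v_{2,1}\}$ are immediate from the definitions; note that $j_1 \notin I \backslash i_s$ precisely because $i_s \le j_1$ and, by semi-standardness, $j_1 \ge i_1$, so $j_1$ is either an element of $I$ below $i_{s+1}$, in which case $i_s = j_1$ and $I \backslash i_s$ omits it, or $j_1 \notin I$ — in both cases $v_2$ is a well-defined permutation. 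So the first task is to check $v_1 \le v_2$ in Bruhat order.

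For the Bruhat comparison I would use the standard criterion: $v_1 \le v_2$ if and only if for every $r$, the sorted first-$r$ prefix of $v_1$ is entrywise $\le$ the sorted first-$r$ prefix of $v_2$. For $r \ge t$ this is automatic since $v_1$ is the Grassmannian permutation with descent at $t$, hence its prefixes are the lexicographically smallest possible with the given underlying sets, and the sets $\{v_{1,1}, \dots, v_{1,r}\}$ for $r \ge t$ are forced to be $I$ together with the smallest $r - t$ elements of $[n] \backslash I$. For $r < t$, the first $r$ entries of $v_1$ sorted are $\{i_1, \dots, i_r\}$, while the first $r$ entries of $v_2$ sorted are $\{j_1\} \cup (\text{first } r-1 \text{ elements of } I \backslash i_s)$; one checks entrywise domination using $i_s \le j_1$ and the fact that deleting $i_s$ from $I$ only shifts indices at positions $\ge s$. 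I expect this to be a short but slightly fiddly case analysis on whether $r \le s$ or $r > s$.

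The main obstacle, and the part deserving the most care, is showing \emph{minimality}: that $(v_1, v_2)$ is $\le$ every other defining chain $(v_1', v_2')$ for $T$. Since $v_1 = (I, [n]\backslash I)$ is the unique Grassmannian permutation with $\{v_{1,1},\dots,v_{1,t}\} = I$, and every defining chain must have its first permutation lie above this Grassmannian permutation (as the Grassmannian permutation is the Bruhat-minimum among all permutations whose length-$t$ prefix set is $I$), we get $v_1 \le v_1'$ for free. For the second coordinate I would argue that $v_2$ as defined is the Bruhat-minimum among permutations $u$ satisfying both $\{u_1\} = \{j_1\}$ and $v_1 \le u$: the condition $v_1 \le u$ forces, at each prefix length $r$, a lower bound on the sorted prefix of $u$, and $v_2$ is constructed to meet all these lower bounds simultaneously with equality wherever possible — placing $j_1$ first, then the smallest available elements of $I$ (namely $I \backslash i_s$, since $i_s$ is the element "displaced" by inserting $j_1$), then filling greedily. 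I would make this precise by showing that for any defining chain $(v_1', v_2')$, since $v_1 \le v_1' \le v_2'$ and $\{v_{2,1}'\} = \{j_1\}$, the prefix-domination inequalities defining $v_1 \le v_2'$ hold, and then checking these force $v_2 \le v_2'$ prefix-by-prefix. The delicate point is correctly identifying that $i_s$ — the largest element of $I$ not exceeding $j_1$ — is exactly the element that must be "pushed out" of the initial segment, which is where the hypothesis $s = \max\{k : i_k \le j_1\}$ enters essentially.
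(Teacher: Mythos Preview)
Your proposal is correct and follows essentially the same approach as the paper: identify $v_1$ as the Grassmannian permutation for $I$ (hence the Bruhat-minimum among permutations with length-$t$ prefix set $I$), and then argue that $v_2$ is the Bruhat-minimum among permutations $u$ with $u_1 = j_1$ and $u \ge v_1$. The paper's proof is a two-line sketch that asserts both points without the prefix-by-prefix verification; your version simply fills in the details the paper omits, including the check that $(v_1,v_2)$ is itself a defining chain and the case analysis on $r \le s$ versus $r > s$ for the Bruhat comparison.
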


\begin{proof}
Let $(w_1, w_2)$ be a defining sequence for $T$. By definition of defining sequence we have $w_1 \ge v_1 = (I, [n] \backslash I)$. Since $v_1 \le w_1 \le w_2$, therefore the smallest possible permutation for $w_2$ with respect to the Bruhat order is $v_2$.
\end{proof}

\begin{lemma}\label{lem:3_part_perms}
Let $Q_1, Q_2, Q_3$ be a partition of $[n]$. Let $w$ be a permutation such that $(Q_1, Q_2, Q_3) \nleq w$. If $Q_1 \le w$ then $Q_1 \cup Q_2 \nleq w$. 
\end{lemma}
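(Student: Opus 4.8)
The plan is to prove the contrapositive: assuming $Q_1 \le w$ and $Q_1 \cup Q_2 \le w$, I show $(Q_1, Q_2, Q_3) \le w$. Put $a = |Q_1|$, $b = |Q_2|$ and $v = (Q_1, Q_2, Q_3)$. Since each block of $v$ is listed in increasing order, $v$ is the minimal-length element of its coset modulo the Young subgroup determined by the block sizes, so the only positions at which $v$ may descend are $a$ and $a+b$. By the standard (Ehresmann) characterization of the Bruhat order, $v \le w$ is equivalent to: for every $k \in \{1,\dots,n-1\}$, the increasing rearrangement of $\{v_1,\dots,v_k\}$ is bounded entrywise by the increasing rearrangement of $\{w_1,\dots,w_k\}$. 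The hypotheses are exactly these inequalities for $k = a$ (since $\{v_1,\dots,v_a\} = Q_1$) and for $k = a+b$ (since $\{v_1,\dots,v_{a+b}\} = Q_1 \cup Q_2$), so it remains to propagate them to all other $k$; this is the familiar principle that Bruhat comparability of a minimal parabolic coset representative is detected on the Grassmannian projections at its block boundaries. One can invoke this directly, or argue as follows.

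Two elementary order-statistic facts will be used throughout: (O1) if $X \subseteq Y$ then the $i$th smallest element of $X$ is at least the $i$th smallest element of $Y$, for $i \le |X|$; and (O2) if in addition $|Y| = |X|+t$ then the $(i+t)$th smallest element of $Y$ is at least the $i$th smallest element of $X$. For $k \le a$, the set $\{v_1,\dots,v_k\}$ is the set of the $k$ smallest elements of $Q_1$, and the inequality at $k$ follows from $Q_1 \le w$ together with (O1) applied to $\{w_1,\dots,w_k\} \subseteq \{w_1,\dots,w_a\}$. For $k \ge a+b$, I would pass to complements: the inequality at $k$ is equivalent to $\{v_{k+1},\dots,v_n\}$ dominating $\{w_{k+1},\dots,w_n\}$ entrywise from above; here $\{v_{k+1},\dots,v_n\}$ is precisely the set of the $n-k$ largest elements of $Q_3$, while the complemented form of $Q_1 \cup Q_2 \le w$ states that $Q_3$ dominates $\{w_{a+b+1},\dots,w_n\}$ from above, and (O2) closes the gap.

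The one genuine obstacle is the middle range $a < k < a+b$, where $\{v_1,\dots,v_k\} = Q_1 \cup \{p_1 < \dots < p_{k-a}\}$ with $Q_2 = \{p_1 < \dots < p_b\}$; this set is neither an initial segment of $Q_1 \cup Q_2$ nor a truncation of it from the top, so neither hypothesis restricts to it directly. The key step is a dichotomy for the $j$th smallest element $x$ of $\{v_1,\dots,v_k\}$: either $x$ also occupies position $j$ within $Q_1 \cup Q_2$, in which case $Q_1 \cup Q_2 \le w$ bounds $x$ by the $j$th smallest element of $\{w_1,\dots,w_{a+b}\}$ and hence, by (O1), by the $j$th smallest element of $\{w_1,\dots,w_k\}$; or else $x \in Q_1$, every $p_i$ with $i \le k-a$ lies below $x$, and $j = g + (k-a)$ where $x$ is the $g$th smallest element of $Q_1$, in which case $Q_1 \le w$ bounds $x$ by the $g$th smallest element of $\{w_1,\dots,w_a\}$, which (O2) upgrades to the $j$th smallest element of $\{w_1,\dots,w_k\}$. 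Verifying that these two alternatives are exhaustive — by locating $x$ relative to $p_{k-a+1}$ — is the delicate point; the rest is bookkeeping. Once all ranges of $k$ are covered we conclude $v \le w$, which is the contrapositive of the lemma.
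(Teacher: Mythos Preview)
Your argument is correct and takes the same approach as the paper: both prove the contrapositive, showing that $Q_1 \le w$ and $Q_1 \cup Q_2 \le w$ together imply $(Q_1,Q_2,Q_3) \le w$. The paper's own proof is a single sentence (it simply asserts this implication), whereas you supply a full verification via the Ehresmann tableau criterion; your order-statistic dichotomy in the middle range $a < k < a+b$ is sound, and the complementation argument for $k \ge a+b$ goes through as written once (O2) is applied with $X=\{w_{k+1},\dots,w_n\}$ and $Y=\{w_{a+b+1},\dots,w_n\}$.
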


\begin{proof}
If $Q_1 \le w$ and $Q_1 \cup P_2 \le w$ then it follows that $(Q_1, Q_2, Q_3) \le w$.
\end{proof}

\begin{lemma}\label{lem:min_defining_seq_structure}
Let $T$ be a semi-standard Young tableau with two columns $I$ and $J$. Let $(v_1, v_2)$ be the minimum defining sequence for $T$. We have $v_1 = (I, [n] \backslash I)$ and $v_2 = (J, \tilde I, [n] \backslash (J \cup \tilde I) )$ for some subset $\tilde I \subseteq I$.
\end{lemma}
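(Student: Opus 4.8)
The plan is to determine $v_1$ directly from minimality, recast the search for $v_2$ as a combinatorial minimisation, and then pin down the shape of $v_2$.

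First I would handle $v_1$. Among all permutations $\sigma$ with $\{\sigma_1,\dots,\sigma_{|I|}\}=I$, the Grassmannian permutation $u:=(I,[n]\setminus I)$ is the Bruhat-minimum, since for every $k$ its sorted length-$k$ prefix is the componentwise-smallest possible. As $(v_1,v_2)$ is a defining chain we have $\{v_{1,1},\dots,v_{1,|I|}\}=I$, hence $v_1\ge u$; but then $(u,v_2)$ is again a defining chain for $T$ (the chain inequality $u\le v_1\le v_2$ survives and the column conditions are unchanged), so minimality of $(v_1,v_2)$ forces $v_1=u$. Writing $t=|I|$ and $r=|J|$, it then follows that $v_2$ is the Bruhat-minimum of $X:=\{\sigma\in S_n:\sigma\ge u,\ \{\sigma_1,\dots,\sigma_r\}=J\}$, a set that is nonempty because a defining chain exists and has a least element because the minimum defining chain is unique.

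Next I would reduce the condition $\sigma\ge u$ to a single statement. Because $u$ is Grassmannian with its only descent at position $t$, one has $\sigma\ge u$ if and only if the sorted $t$-set $\{\sigma_1,\dots,\sigma_t\}$ dominates the sorted $t$-set $I$ entrywise: the forward direction is immediate, and for the converse one checks the prefix inequalities for $k\le t$ by noting that $\{\sigma_1,\dots,\sigma_k\}$ sorted already dominates the $k$ smallest elements of $\{\sigma_1,\dots,\sigma_t\}$, and for $k>t$ by the elementary observation that if $A\ge B$ as sorted $t$-sets then $A\cup(\text{the }k-t\text{ smallest of }[n]\setminus A)\ge B\cup(\text{the }k-t\text{ smallest of }[n]\setminus B)$. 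Given this, minimising $\sigma$ over $X$ proceeds in three greedy steps: put $J$ in increasing order in positions $1,\dots,r$ (the semistandardness inequalities $i_k\le j_k$ for $k\le r$ guarantee that this stays $\ge u$, while any out-of-order pair among the first $r$ positions can be transposed to a strictly Bruhat-smaller element of $X$); choose the first-$t$ set $K$ with $J\subseteq K$, $|K|=t$, to be the entrywise-smallest one with $K\ge I$; and put $K\setminus J$ increasing in positions $r+1,\dots,t$ and $[n]\setminus K$ increasing afterwards. This already gives $v_2=(J,\tilde I,[n]\setminus(J\cup\tilde I))$ with $\tilde I=K\setminus J$, so the whole lemma reduces to proving $\tilde I\subseteq I$, equivalently $K\subseteq I\cup J$.

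The remaining step, and the main obstacle, is the inclusion $K\subseteq I\cup J$ for the entrywise-minimal feasible $t$-set $K$. I would argue by exchange: let $m$ be the largest element of $K\setminus(I\cup J)$ and $p$ its rank in sorted $K$; counting with $K\ge I$ shows strictly more elements of $I$ lie below $m$ than of $K$, so $I\setminus K$ meets $[1,m-1]$, and by following the identities $i_p=k_{p_1},\ i_{p_1}=k_{p_2},\dots$ one produces an element $x\in I\setminus K$ with $x<m$ for which $K':=(K\setminus\{m\})\cup\{x\}$ still satisfies $J\subseteq K'$ and (the delicate point) $K'\ge I$, contradicting minimality of $K$; hence $K\subseteq I\cup J$ and $\tilde I=K\setminus J\subseteq I$. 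The verification that the exchanged set $K'$ still dominates $I$ entrywise is an elementary but fiddly comparison of sorted sequences and is where the real work sits. An alternative that sidesteps it is to construct $K$ explicitly by a bumping procedure — processing the elements of $J\setminus I$ in increasing order and deleting, for each, the largest not-yet-deleted element of $I\setminus J$ lying below it, which is always available thanks to $i_k\le j_k$ — so that $K\subseteq I\cup J$ holds by construction and $K\ge I$ because each deletion is compensated by a strictly larger insertion; one then applies Lemma~\ref{lem:3_part_perms} with $Q_1=J$, $Q_2=\tilde I$, $Q_3=[n]\setminus K$ to see that $(J,\tilde I,[n]\setminus K)$ lies below every member of $X$, which by uniqueness of the minimum defining chain identifies it with $v_2$. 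The case $r=1$ of all this is exactly Lemma~\ref{lem:v_2_structure}, which can be used as a sanity check or as the base of an induction on $r$.
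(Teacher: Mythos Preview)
Your proposal is correct and follows the same outline as the paper: first identify $v_1$ as the Grassmannian permutation $(I,[n]\setminus I)$, then characterise $v_2$ as the Bruhat-minimum above $v_1$ whose first $|J|$ values equal $J$. The paper's own proof is extremely terse---it simply asserts that ``it follows easily'' that $v_2$ has the form $(J,\tilde I,[n]\setminus(J\cup\tilde I))$ with $\tilde I\subseteq I$---whereas you actually supply the argument: the reduction of $\sigma\ge u$ to the single dominance condition on the length-$t$ prefix, the greedy description of $v_2$, and (the real content) the exchange/bumping proof that the minimal feasible $K$ sits inside $I\cup J$. So you are not taking a different route; you are filling in exactly the gap the paper leaves.
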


\begin{proof}
It is clear that $(I, [n] \backslash I)$ is the smallest permutation $u = (u_1, \dots, u_n)$ such that $\{u_1, \dots, u_{|I|} \} = I$. And so $v_2$ is the smallest permutation $u = (u_1, \dots, u_n)$ such that $v_1 \le u$ and $\{ u_1, \dots, u_{|J|}\} = J$. It follows easily that the smallest such permutation has the form $(J, \tilde I, [n] \backslash (J \cup \tilde I))$ for some $\tilde I \subseteq I$.
\end{proof}

\begin{lemma}\label{lem:w_312_ssyt_2111_shape}
Suppose $T$ is a semi-standard Young tableau with columns $I$ and $J$ where $|J| = 1$. If $w$ is $312$-free and $I, J \le w$ then $T$ is standard for $X(w)$.
\end{lemma}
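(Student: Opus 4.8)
The plan is to verify, via the standard-monomial criterion recalled just above, that the unique minimum defining chain $(v_1,v_2)=\mathbf{w}_-(T)$ of the two-column tableau $T$ satisfies $v_2\le w$; since $d=2$ this is exactly what "$T$ is standard for $X(w)$" requires. By Lemma~\ref{lem:v_2_structure}, $v_1=(I,[n]\setminus I)$ is the Grassmannian permutation of $I$ and
\[
v_2=\bigl(\,\{j_1\},\ I\setminus i_s,\ ([n]\setminus(I\cup j_1))\cup i_s\,\bigr),\qquad s=\max\{k\in[t]:i_k\le j_1\},
\]
with $s\ge 1$ because $i_1\le j_1$ (the row condition for $T$). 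First I would dispose of degenerate cases: if $t=1$ then $v_2$ is simply the Grassmannian permutation of $\{j_1\}$, so $v_2\le w\iff j_1\le w_1$, which is the hypothesis $J\le w$; and if $j_1\in I$ then necessarily $j_1=i_s$, so $\{j_1\}\cup(I\setminus i_s)=I$. Otherwise $j_1\notin I$, and $Q_1:=\{j_1\}$, $Q_2:=I\setminus i_s$, $Q_3:=([n]\setminus(I\cup j_1))\cup i_s$ form a partition of $[n]$ with $(Q_1,Q_2,Q_3)=v_2$; by the contrapositive of Lemma~\ref{lem:3_part_perms} it then suffices to prove $Q_1\le w$ and $Q_1\cup Q_2\le w$, the first of which is the hypothesis $J\le w$.

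Next I would reduce $Q_1\cup Q_2\le w$ to a single numerical inequality. Since $i_{s-1}<i_s<j_1<i_{s+1}$, the set $I':=Q_1\cup Q_2=\{i_1<\dots<i_{s-1}<j_1<i_{s+1}<\dots<i_t\}$ agrees with $I$ in every sorted position except the $s$-th, where the larger element $j_1$ replaces $i_s$. Writing $m_1<\dots<m_t$ for the sorted sequence of $\{w_1,\dots,w_t\}$, the condition $K\le w$ for a $t$-element set $K$ (equivalently $K\notin S_w$) reads $k_r\le m_r$ for all $r$; hence $I\le w$ gives $i_r\le m_r$ for every $r$, and $I'\le w$ becomes equivalent to the single inequality $j_1\le m_s$. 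If $s=t$ this is immediate, as $m_t=\max\{w_1,\dots,w_t\}\ge w_1\ge j_1$, so the proof comes down to the case $1\le s<t$.

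For $1\le s<t$ I would invoke the $312$-avoidance of $w$. Suppose for contradiction that $j_1>m_s$. From $I\le w$ we have $i_{s+1}\le m_{s+1}$, and $i_{s+1}>j_1>m_s$, so $m_s<j_1<m_{s+1}$; since no element of $\{w_1,\dots,w_t\}$ lies strictly between the consecutive order statistics $m_s$ and $m_{s+1}$, this forces $j_1\notin\{w_1,\dots,w_t\}$, so $j_1=w_{c_0}$ for a unique index $c_0>t$. Also $w_1\ge j_1>m_s$, and $w_1\notin(m_s,m_{s+1})$, hence $w_1\ge m_{s+1}>j_1$. As $w_1>m_s$, position $1$ is not among the $s$ positions carrying the values $m_1,\dots,m_s$, so we may pick $b\in\{2,\dots,t\}$ with $w_b\le m_s$. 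Then $1<b\le t<c_0$ and $w_b\le m_s<j_1=w_{c_0}<w_1$, so the entries at positions $1<b<c_0$ form a $312$-pattern, contradicting that $w$ is $312$-free. Therefore $j_1\le m_s$, completing the proof. The main obstacle is precisely this last step: recognising that the hypothetical "over-large" value $j_1$ is forced to be absent from the first $t$ entries of $w$ and therefore sits, at a late position $c_0>t$, strictly between a small value $w_b$ (with $b\le t$) and $w_1$ — exactly the configuration that $312$-avoidance rules out.
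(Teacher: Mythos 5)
Your proof is correct and follows essentially the same route as the paper: identify $v_2$ via Lemma~\ref{lem:v_2_structure}, reduce via Lemma~\ref{lem:3_part_perms} to showing $(I\setminus i_s)\cup\{j_1\}\le w$, observe this amounts to the single inequality $j_1\le \widehat w_s$, and extract a $312$-pattern $(w_1,w_b,w_{c_0})$ when it fails. The only differences are cosmetic (explicit treatment of the degenerate cases and choosing any position $b$ with $w_b\le \widehat w_s$ rather than the position of $\widehat w_s$ itself).
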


\begin{proof}
Write $I = \{i_1 < \dots < i_t \}$ and $J = \{ j_1 \}$ for the columns of $T$ and write $w = (w_1, \dots, w_n)$ for the permutation. We define $(\widehat w_1, \dots, \widehat w_t) = \{w_1, \dots, w_t \}^{\uparrow}$. By assumption we have $i_k \le \widehat w_k$ for each $k \in [t]$ and $j_1 \le w_1$. Let $w^- = (v_1, v_2)$ be the minimum defining sequence for $T$. By Lemma~\ref{lem:v_2_structure} we have that $v_2 = (j_1, I \backslash i_s, ([n] \backslash I) \cup i_s)$ where $s = \max\{k \in [t] : i_s \le j_1 \}$. If $s = t$ then $v_2 = ((I \backslash i_t) \cup j_1, [n] \backslash ((I \backslash i_t) \cup j_1)) \le w$ and we are done. Suppose that $s < t$ and, in this case, we assume by contradiction that $v_2 \nleq w$. By Lemma~\ref{lem:3_part_perms}, we have that $(I \backslash i_s) \cup j_1 = \{i_1 < \dots i_{s-1} < j_1 < i_{s+1} < \dots < i_t \} \nleq w$. Since $i_k \le \widehat w_k$ for each $k \in [t]$, it follows that $j_1 > \widehat w_s$. Since $s < t$, therefore $\widehat w_s < j_1 < i_{s+1} \le \widehat w_{s+1}$. And so there exist unique values $p \in \{2, \dots, t \}$ and $q \in \{t+1, \dots, n \}$ such that $w_p = \widehat w_s$ and $w_q = j_1$. And so $w_1, w_p, w_q$ has type $312$ a contradiction.
\end{proof}

\begin{lemma}\label{lem:w_312_ssyt_2_cols}
Suppose $T$ is a semi-standard Young tableau with two columns $I$ and $J$. If $w$ is $312$-free and $I, J \le w$ then $T$ is standard for $X(w)$.
\end{lemma}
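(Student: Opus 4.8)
\textbf{Proof proposal for Lemma~\ref{lem:w_312_ssyt_2_cols}.}
The plan is to reduce the general two-column case to the case already handled in Lemma~\ref{lem:w_312_ssyt_2111_shape}, where one column has size one, by an induction on $|J|$ (the size of the shorter column). Write $I = \{i_1 < \dots < i_t\}$ and $J = \{j_1 < \dots < j_r\}$ for the columns of $T$, with $r \le t$, and write $w = (w_1, \dots, w_n)$ for the permutation, which we assume is $312$-free. Let $w^- = (v_1, v_2)$ be the minimum defining sequence for $T$; by Lemma~\ref{lem:min_defining_seq_structure} we have $v_1 = (I, [n]\backslash I)$ and $v_2 = (J, \tilde I, [n]\backslash(J \cup \tilde I))$ for some subset $\tilde I \subseteq I$ with $|\tilde I| = t - r$. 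Since $v_1 = (I,[n]\backslash I) \le w$ already follows from the hypothesis $I \le w$, the whole content of the lemma is to show $v_2 \le w$, i.e. that the single Grassmannian-type comparison $J \cup \tilde I \le w$ holds after we identify $\tilde I$ explicitly — noting that $\tilde I$ consists of those $t-r$ smallest-possible entries of $I$ compatible with $v_1 \le v_2$, so that $\tilde I = \{i_{k} : k \notin s(J)\}$ for the "matching indices" $s(J) = \{s_1 < \dots < s_r\}$ where $s_m = \max\{k : i_k \le j_m\}$ (made into a genuine increasing sequence, taking the next available index when collisions occur).

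First I would set up the comparison concretely: write $(\widehat w_1, \dots, \widehat w_t) = \{w_1, \dots, w_t\}^{\uparrow}$, so that $I \le w$ means $i_k \le \widehat w_k$ for all $k$, and we must show that the sorted sequence of $J \cup \tilde I$ is entrywise $\le (\widehat w_1, \dots, \widehat w_t)$ (this is equivalent to $v_2 \le w$ since both $J$ and $\tilde I \subseteq I$ live in the first $t$ positions; here I would invoke the structure of Lemma~\ref{lem:min_defining_seq_structure} together with the fact that $J \le w$ controls the first $r$ entries). Assume by contradiction that this fails at some position. The key point is that $J \cup \tilde I$ and $I$ are both $t$-element subsets built from the same underlying pool, differing only by replacing the entries $\{i_{s_1}, \dots, i_{s_r}\}$ of $I$ by $\{j_1, \dots, j_r\}$; since $j_m \le i_{s_m}$ by definition of $s_m$, passing from $I$ to $J\cup\tilde I$ only \emph{decreases} entries in a controlled way. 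So the only way the entrywise inequality against $(\widehat w_k)$ can fail is if some $j_m$ lands, after re-sorting, in a position $k$ with $j_m > \widehat w_k$, and this can only happen when $j_m$ "overtakes" some element $i_{k'}$ of $I\backslash\{i_{s_1},\dots,i_{s_r}\}$ with $i_{k'} < j_m$ but $k' > s_m$ — exactly the situation isolated in Lemma~\ref{lem:w_312_ssyt_2111_shape}. I would then extract from this a $312$-pattern in $w$ precisely as in that lemma: locating a position carrying a value $> j_m$ that occurs before a position carrying $\widehat w_{k'}$ (the "$3$"), the position of $\widehat w_{k'}$ (the "$1$", small), and the position of $j_m$ (the "$2$") far to the right among positions $> t$, using $j_m \le w_{\text{something}}$ and the pigeonhole bookkeeping on where $j_m$ and $\widehat w_{k'}$ actually sit in $w$.

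The cleanest route, and the one I would actually write, is the induction: peel off $j_1$, the smallest element of $J$. Let $J' = J \backslash \{j_1\}$ and consider the tableau $T'$ with columns $I$ and $J'$; by the induction hypothesis $T'$ is standard for $X(w)$, i.e. $J' \cup \tilde I' \le w$ where $\tilde I'$ is the corresponding $(t-r+1)$-subset of $I$. One checks that $\tilde I$ is obtained from $\tilde I'$ by removing exactly one element $i_{s_1}$ of $I$ and that $J \cup \tilde I$ is obtained from $J' \cup \tilde I'$ by the single replacement $i_{s_1} \rightsquigarrow j_1$ with $j_1 \le i_{s_1}$; hence away from the affected slot the entrywise comparison against $(\widehat w_k)$ is inherited from $T'$, and only the slot where $j_1$ now sits can cause a violation. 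If it does, I run the $312$-extraction argument of Lemma~\ref{lem:w_312_ssyt_2111_shape} verbatim with $j_1$ in the role of its "$j_1$" — the hypothesis that $j_1 \le w_1$ (from $J \le w$) is exactly what supplies the leading "$3$" of the pattern. This contradicts $312$-freeness, so $v_2 \le w$ and $T$ is standard.

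The main obstacle I anticipate is purely bookkeeping: making the map $I \rightsquigarrow J \cup \tilde I$ and the "matching indices" $s_m$ precise in the presence of ties (several $j_m$ could have the same $\max\{k : i_k \le j_m\}$), and verifying cleanly that the re-sorting of $J \cup \tilde I$ interleaves with $I$ in the way the argument needs so that a single failed inequality really does force the configuration feeding into Lemma~\ref{lem:w_312_ssyt_2111_shape}. None of this is deep, but it requires care to phrase so that the reduction to the size-one case is genuinely an instance of the already-proved lemma rather than a reproof of it; I would handle the ties by always choosing $s_m$ greedily left-to-right so that $s_1 < s_2 < \dots < s_r$ is forced to be strictly increasing, which is exactly the choice that makes $\tilde I$ the minimum-possible completion in Lemma~\ref{lem:min_defining_seq_structure}.
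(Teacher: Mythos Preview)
Your inductive route has a genuine gap at the step where you peel off $j_1$, the smallest element of $J$, and invoke the inductive hypothesis on the tableau $T'$ with columns $I$ and $J' = J \backslash \{j_1\}$. That hypothesis requires $J' \le w$, and this can fail: take $n = 4$, $w = (1,2,4,3)$ (which is $312$-free), $I = \{1,2,4\}$, $J = \{1,2\}$. Then $[IJ]$ is a semi-standard Young tableau with $I, J \le w$, but $J' = \{2\} \not\le \{w_1\} = \{1\}$. In general, deleting the smallest element of $J$ forces each remaining $j_m$ to be compared against a \emph{shorter} initial segment of $w$, and nothing in the hypotheses rules out $j_2 > w_1$.

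The paper inducts the other way, removing the \emph{largest} element $j_s$. Then $J' = \{j_1, \dots, j_{s-1}\}$ automatically satisfies $J' \le w$, since for each $m \le s-1$ the $m$-th smallest of $\{w_1, \dots, w_{s-1}\}$ is at least the $m$-th smallest of $\{w_1, \dots, w_s\}$. The cost is that the final $312$-extraction cannot be lifted verbatim from Lemma~\ref{lem:w_312_ssyt_2111_shape}: one must locate the pattern using $j_s$, tracking where it sits relative to the sorted values $\widehat w_q$ and producing indices $a \in \{s+1, \dots, t\}$ and $b \in \{t+1, \dots, n\}$ with $w_s, w_a, w_b$ of type $312$. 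So your plan to reduce ``verbatim'' to the size-one lemma does not survive the necessary fix; the inductive step genuinely needs its own pattern-finding argument.
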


\begin{proof}
Write $I = \{i_1 < \dots < i_t \}$ and $J = \{j_1 < \dots < j_s \}$. We assume that $I$ is the leftmost column of $T$ and so $s \le t$. Let $w^- = (v_1, v_2)$ be the minimum defining sequence for $T$. If $s = t$ then we have that $v_1 = (I, [n] \backslash I)$ and $v_2 = (J, [n] \backslash J)$. In particular we have $v_2 \le w$ and so $T$ is standard for $X(w)$. So from now on, we assume that $s < t$. 

We proceed by induction on $s$. Note that if $s = 1$ then we are done by Lemma~\ref{lem:w_312_ssyt_2111_shape}. So assume that $s > 1$. Without loss of generality we may assume that $w_1 < \dots < w_s$. Let us assume by contradiction that $v_2 \nleq w$. We define $T'$ to be the tableau with columns $I = \{i_1 < \dots < i_t \}$ and $J' = \{j_1 < \dots < j_{s-1}\}$. We have $J' \le w$ and so by induction $T'$ is a standard tableau for $X(w)$. Let $(v_1', v_2')$ be the minimum defining sequence for $T'$. By Lemma~\ref{lem:min_defining_seq_structure} we have $v_2' = (J', \tilde I', [n] \backslash (J' \cup \tilde I'))$ for some subset $\tilde I' \subseteq I$. Write $\tilde I' = \{r_1 < r_2 < \dots < r_{t-s+1} \}$. Let $p = \max\{k \in [t-s+1] : r_k \le j_s \}$. By Lemma~\ref{lem:min_defining_seq_structure} we have $v_2 = (J, \tilde I, [n] \backslash (J \cup \tilde I))$ for some $\tilde I \subseteq I$. It is easy to show that $\tilde I = \tilde I' \backslash r_p$. By assumption $J \le w$ and $v_2 \nleq w$ and so by Lemma~\ref{lem:3_part_perms} we have $J \cup \tilde I \nleq w$. Since $v' \le w$, we have $J' \cup \tilde I' \le w$. Write
\begin{align*}
    J' \cup \tilde I' &= 
        \{u_1 < \dots < u_{q-1} < u_q = r_p < u_{q+1} < \dots < u_t \}, \\
    J \cup \tilde I &=
        \{u_1 < \dots < u_{q-1} < j_s < u_{q+1} < \dots < u_t \}, \\
    \{w_1, \dots, w_s \}^{\uparrow} &=
        \{\widetilde w_1 < \dots < \widetilde w_s \} \textrm{ and } 
    \{w_1, \dots, w_t \}^{\uparrow} =
        \{\widehat w_1 < \dots < \widehat w_t\}.
\end{align*}
Since $J \le w$ we have that $j_s \le \widetilde w_s$. Since $J' \cup \tilde I' \le w$ we have that $u_k \le \widehat w_k$ for all $k \in [t]$. Since $J \cup \tilde I \nleq w$ we must have that $\widehat w_q < j_s$. Since $j_s \le \widetilde w_s$ therefore $q < t$ and so $\widehat w_q < j_s < u_{q+1} \le \widehat w_{q+1}$. If $w_k \ge j_s$ for all $k \in \{s+1, \dots, t\}$ then $|\{k \in [t] : w_k < j_s \}| = |\{k \in [s] : w_k < j_s  \}| < s$ because $w_s \ge j_s$. However we have $j_1 < \dots < j_s$ and so $q \ge s$. Since $\widehat w_1 < \dots < \widehat w_q < j_s$ so $|\{k \in [t] : w_k < j_s \}| \ge s$, a contradiction. Therefore there exists $a \in \{s+1, \dots, t\}$ such that $w_a < j_s$. Since $\widehat w_q < j_s < u_{q+1} \le \widehat w_{q+1}$, there exists $b \in \{t+1, \dots, n\}$ such that $w_b = j_s$. Note that $j_s < \widehat w_{q+1} \le w_s$. And so $w_s, w_a, w_b$ is a subsequence of type $312$ in $w$, a contradiction.

\end{proof}

\subsection{Monomial bases for matching field ideals}\label{sec:mon_bases_main}

In this section we prove Theorem~A by considering a collection of standard monomials for the ideals $F_{n,\ell,w}$ and $\inwb(I(X(w)))$. 
We begin by stating the proof that relies on Lemma~\ref{lem:SSYT_flag_biject_Q_perms}, which we show following a proof of Theorem~\ref{thm:P_ell=T_n+Z_n}. 

\begin{proof}[Proof of Theorem~A]
Suppose that $J_1$ is monomial free. We will show that $J_1 = J_2 = J_3$ and in particular we have that $J_2 = J_3$, hence the initial ideal of the Schubert variety is toric.
By Lemmas~\ref{lem:J_1=J_3_binomial} and \ref{lem:J_1_subset_J_2} we have $J_1 = J_3 \subseteq J_2$. So for all $d \ge 1$, any collection of standard monomials for $J_2$ of degree $d$ is linearly independent in $R / J_3$. Since $J_2 = \textrm{in}_{{\bf w}_\ell}(I(X(w)))$ is an initial ideal of a homogeneous ideal, the number of standard monomials of degree $d$ coincides with the number of standard monomials of degree $d$ of $I(X(w))$. By Theorem~\ref{thm:w_312_ssyt}, the semi-standard Young tableaux with $d$-columns, such that each column $I$ satisfies $I \le w$, are in canonical bijection with a collection of standard monomials of $I(X(w))$ of degree $d$.

Suppose $\ell = 0$. We have that two monomials are equal in $R/J_3$ if and only if their corresponding tableaux are row-wise equal. Therefore, the semi-standard Young tableaux are in bijection with standard monomials for $J_3$. And so we have $J_1 = J_2 = J_3$.

Suppose that $J_2$ is generated in degree two. By Lemma~\ref{lem:SSYT_flag_biject_Q_perms} we have that $J_1$ and $J_2$ have the same number of standard monomials in degree two. This, together with the fact that $J_1$ and $J_2$ are both generated in degree two and $J_1 \subseteq J_2$, implies that $J_1 = J_2$.
\end{proof}

\medskip

We now give an alternative description of the permutations $w \in T_{n,\ell} \cup Z_n$ such that $J_1$ is monomial-free. We will write this set $P_\ell$ and prove that $P_\ell = T_{n,\ell} \cup Z_n$. In the proofs to follow, we write $w \in P_\ell$ to indicate that $w$ satisfies Definition~\ref{def:perms_P_ell}.

\begin{remark}
Recall the definition of the collection of permutations $P_\ell$. Most permutations in this set are $312$-free. If $w \in S_n$ is $312$-free then $w$ has the descending property. Moreover all restrictions of $w$ also have the descending property. It is easy to show that a permutation $w$ is $312$-free if and only if all restrictions of $w$ have the descending property. 
\end{remark}

\begin{remark}
The set $P_n$ is defined to be the collection of $312$-free permutations. It is straightforward to show that $w \in S_n$ is $312$-free if and only if it satisfies the two bullet pointed conditions in Definition~\ref{def:perms_P_ell} where $\ell = n$. \end{remark}

\begin{proof}[Proof of Theorem~\ref{thm:P_ell=T_n+Z_n}]
Throughout this proof we use Theorem~C which shows the following.
\begin{itemize}
    \item $T_{n,n} = A_1 \cup A_2$,
    \item $T_{n,n-1} = A_1 \cup \tilde A_1 \cup \tilde A_2$,
    \item $T_{n,\ell} = A_1 \cup A_2' \cup A_3 \cup \{(n, \ell, n-1, \dots, \ell+1, \ell-1, \dots, 1)\}$.
\end{itemize}
We show that $P_\ell = T_{n,\ell} \cup Z_n$. We will write $P_\ell^n \subseteq S_n$ for the set $P_\ell$ to distinguish $n$. Given a permutation $w \in S_n$ we write $w = (w_1, \dots, w_n)$ and write $\underline w = (\underline w_1, \dots, \underline w_{n-1})$ for the restriction of $w$ to $[n-1]$.

We show $P_\ell^n = T_{n,\ell} \cup Z_n$ by induction on $n$. For the base case we consider $n = 3$. The permutations in $P_\ell^3$, $T_{3,\ell}$ and $Z_3$ are shown below.
\begin{center}
    \begin{tabular}{|c|c|c|c|}
    \hline
       $\ell$ & \multicolumn{1}{c|}{$P_\ell^3$}  &  \multicolumn{1}{c|}{$T_{3,\ell}$}   &  \multicolumn{1}{c|}{$Z_{3}$} \\
    \hline
        $0$ & $123, 132, 213, 231, 321$ 
        & $231, 321$ & $123, 132, 213$ \\
        $1$ & $123, 132, 213, 312, 321$
        & $312, 321$ & $123, 132, 213$ \\
        $2$ & $123, 132, 213, 321$
        & $321$ & $123, 132, 213$ \\
    \hline
    \end{tabular}
\end{center}
So we have the base case $P_\ell^3 = T_{3,\ell} \cup Z_3$.

\smallskip

Let $n \ge 4$ and assume that $P_\ell^{n-1} = T_{n-1, \ell} \cup Z_{n-1}$. We take cases on values of $\ell$.

\smallskip

\textbf{Case 1.} Assume $\ell = n$. Let $w \in P_n^{n}$. It follows that $\underline w$ is $312$-free and so $\underline w \in P_{n-1}^{n-1} = T_{n-1, n-1} \cup Z_{n-1}$. Note that $w$ is $312$-free and so has the descending property. 
\begin{itemize}
    \item If $\underline w \in Z_{n-1}$ then $w \in \underline Z_n$. By definition of $Z_n$ we have that either $\underline w = (\dots, n-1)$ or $\underline w = (\dots, n-1, n-2)$. Note that $w$ has the descending property. If $\underline w = (\dots, n-1)$ then $w = (\dots, n-1, n)$ or $w = (\dots, n, n-1)$ and so $w \in Z_n$. If $\underline w = (\dots, n-1, n-2)$ then either $w = (\dots, n-1, n-2, n)$ in which case $w \in Z_n$, or $w = (\dots, n-1, n, n-2)$ or $(\dots, n, n-1, n-2)$ in which case $w \in A_1$ and so $w \in T_{n,n}$.
    
    \item If $\underline w \in T_{n-1,n-1}$ then $w \in \underline T_{n,n}$. Since $w$ is $312$-free, $w$ has the descending property and so $w \in A_2$ hence $w \in T_{n,n}$.
\end{itemize}

\smallskip

Conversely take $w \in T_{n,n} \cup Z_n$. If $w \in Z_n$ then it is easy to check that $w$ is $312$-free and so $w \in P_n^n$. Suppose $w \in T_{n,n} = A_1 \cup A_2$. 

\begin{itemize}
    \item If $w \in A_1$ then we have $w \in \underline Z_n$ and so $\underline w$ is $312$-free since $\underline w \in Z_{n-1}$. So it suffices to show that $w$ contains no $312$-type subsets of the form $w_i, w_j, w_k$ where $i < j < k$ and $w_i = n$. However by construction, if $w \in A_1$ then either $w = (\dots, n, n-1, n-2)$ or $w = (\dots, n-1, n, n-2)$. And so $w$ is $312$-free.
    
    \item If $w \in A_2$ then we have that $w \in \underline T_{n,n}$ and so by induction $\underline w \in T_{n-1, n-1} \subseteq P_{n-1}^{n-1}$ is $312$-free. Let $t, s \in \{1, \dots, n \}$ such that $w_s = n-1$ and $w_t = n$. Since $\underline w$ is $312$-free, it suffices to show that $w$ contains no $312$-type subsets of the form $w_t, w_j, w_k$ where $t < j < k$. Suppose there exists such a subset. Since $w \in A_2$ we have $t \ge s-1$ and so $w_s, w_j, w_k$ is also of type $312$ but lies in $\underline w$, a contradiction. Therefore so $w$ is $312$-free.
\end{itemize}

And so we have shown that $w \in P_n^n$.

\smallskip

\textbf{Case 2.} Assume $1 \le \ell < n-1$. Let $w \in P_\ell^n$. Suppose that $w$ is $312$-free. Then $\underline w$ is $312$-free and if $\underline w|_m = (m-1,m,m-2, \dots, 1)$ for some $3 \le m \le n$ then we have $\underline w|_m = w|_m$ so $w_1 < w_2 \le \ell < n$ so $\underline w_1 = w_1 < w_2 = \underline w_2 \le \ell$. Hence $\underline w \in P_\ell^{n-1} = T_{n-1, \ell} \cup Z_n$.
\begin{itemize}
    \item If $\underline w \in Z_{n-1}$ then, similarly to the diagonal case, we have $w \in Z_n \cup A_1 \subseteq T_{n,\ell} \cup Z_n$.
    \item If $\underline w \in T_{n-1, \ell}$ then $w \in \underline T_{n,\ell}$. Since $w$ is $312$-free, $w$ has the descending property and so $w \in A_2$. By definition we have $(n-1, n, n-2, \dots, 1) \notin P_\ell^n$ since $\ell < n$. So $w \in A_2' \subseteq T_{n,\ell}$.
\end{itemize}

Suppose $w$ is not $312$-free. By definition we have $w_1 > w_2 = \ell$. 

\begin{itemize}
    \item If $w_1 = n$ then by Lemma~\ref{lem:Q_perm_restrict_le_m} we have $w = (n, \ell, n-1, \dots, \ell+1, \ell-1, \dots, 1)$ and so $w \in T_{n,\ell}$.
    \item If $w_1 = n-1$ then we similarly we have that $\underline w = (n-1, \ell, n-2, \dots, \ell+1, \ell-1, \dots, 1) \in T_{n-1, \ell}$. Let $t \in \{1, \dots, n \}$ such that $w_t = n$. Since $w\backslash\ell$ is $312$-free it follows that $t \ge 3$. Hence $w \in A_3 \subseteq T_{n,\ell}$. 
    \item If $w_1 < n-1$ then since $w\backslash \ell$ is $312$-free it follows that $\underline w$ has the descending property. Let $s, t \in \{ 1, \dots, n\}$ such that $w_s = n-1$ and $w_t = n$. If $t < s-1$ then let $i \in \{t+1, \dots, s-1 \}$. We have $w_t, w_i, w_s$ is of type $312$ that lies in $w \backslash \ell$, a contradiction. Therefore $t \ge s-1$ and so $w \in A_2$ hence $w \in T_{n, \ell}$. 
\end{itemize} 
And so we have shown that $w \in T_{n,\ell} \cup Z_n$.

\smallskip

Conversely let $w \in T_{n,\ell} \cup Z_n$. If $w \in Z_n \cup A_1$ then it is straightforward to check that $w$ is $312$-free and for all $3 \le m \le n$ we have $w|_m \neq (m-1, m, m-2, \dots, 1)$. Also note that $(n, \ell, n-1, \dots, \ell+1, \ell-1, \dots, 1) \in P_\ell^n$. So let $w \in T_{n,\ell} = A_2' \cup A_3$.

\begin{itemize}
    \item If $w \in A_2'$ then we have $\underline w \in T_{n-1,\ell} \subseteq P_\ell^{n-1}$. Suppose $\underline w$ is $312$-free. Since $w$ has the descending property we have $w$ is also $312$-free. If, for some $ 3 \le m \le n$, we have $w|_m = (m-1, m, m-2, \dots, 1)$ then $(n-1, n, n-2, \dots, 1) \notin A_2'$ hence $m < n$. Therefore $w|_m = \underline w|_m$. Since $\underline w \in P_\ell^{n-1}$, we have $\underline w_1 = w_1 < \underline w_2 = w_2 \le \ell$. Since $w_2 \le \ell < n-1$ we have $\underline w|_{\underline w_2} = w|_{w_2}$. And so $w \in P_\ell^n$.
    
    Suppose $\underline w$ is not $312$-free then by definition of $P_\ell^{n-1}$ we have $\underline w_1 > \underline w_2 = \ell$. Since $w \in A_2'$ we have that $\underline w$ has the descending property. If $\underline w_1 = n-1$ then by the descending property it follows that $\underline w = (n-1, n-2, \dots, 1)$, which is $312$-free, a contradiction. So $\underline w_1 < n-1$. Let $s, t \in \{1, \dots, n \}$ such that $w_s = n-1$ and $w_t = n$. Since $\underline w_1, \underline w_2 < n-1$ we have $s \ge 3$. Since $P_\ell^{n-1}$, we have $\underline w\backslash\ell$ is $312$-free. By definition of $A_2'$ we have $t \ge s-1$ and so $w \backslash \ell$ is $312$-free, otherwise if $w_t, w_i, w_j$ is of type $312$ then so is $w_s w_i w_j$. Hence $w \in P_\ell^n$.
    
    \item If $w \in A_3$ then we have $\underline w \in T_{n-1, \ell} \subseteq P_\ell^n$. Also $\underline w$ does not have the descending property and so $\underline w$ is not $312$-free. By definition of $P_\ell^{n-1}$ we have $\underline w_1 > \underline w_2 = \ell$ and $\underline w \backslash \ell$ is $312$-free. If $\underline w_1 \neq n-1$ then, since $\underline w \backslash \ell$ is $312$-free and $\underline w_2 = \ell$, it follows that $\underline w$ has the descending property, a contradiction. Therefore $\underline w_1 = n-1$. By Lemma~\ref{lem:Q_perm_restrict_le_m} we have $\underline w = (n-1, \ell, n-2, \dots, \ell+1, \ell-1, \dots, 1)$. Let $w_t = n$ for some $t$. By definition of $A_3$ we have $t \ge 3$ hence $w_1 = \underline w_1 $ and $w_2 = \underline w_2 = \ell$. And so have shown that $w$ is not $312$-free and $w_1 > w_2 = \ell$. We have $\underline w \backslash \ell = (n-1, n-2, \dots, 1)$ and so for any $t \ge 3$, it follows that $w \backslash \ell$ is $312$-free. Hence $w \in P_\ell^n$. 
\end{itemize}
And so we have shown that $w \in P_\ell^n$.

\smallskip

\textbf{Case 3.} Assume $\ell = n-1$. Let $w \in P_{n-1}^n$. Suppose that $w$ is not $312$-free. Then we have $w_1 > w_2 = \ell = n-1$. Therefore $w_1 = n$. Note that $w \backslash \ell$ is $312$-free and so $w \backslash \ell$ has the descending property. Therefore $w\backslash\ell = (n, n-2, \dots, 1)$ and so $w = (n, n-1, \dots, 1)$. However $w$ is $312$-free a contradiction. Therefore $w$ is $312$-free. It follows that $\underline w$ is $312$-free and so $\underline w \in P_{n-1}^{n-1} = T_{n-1,n-1} \cup Z_{n-1}$.

\begin{itemize}
    \item If $\underline w \in Z_{n-1}$ then, similarly to the diagonal case, we have $w \in Z_n \cup A_1 \subseteq T_{n, n-1}\cup Z_n$
    \item If $\underline w \in T_{n-1, n-1}$ then $w \in \underline T_{n,n-1}$. Since $w$ is $312$-free, $\underline w$ has the descending property. Also $w$ has the descending property so if $w_s = n-1$ and $w_t = n$ then  $t \ge s-1$, otherwise there exists $i \in \{t+1,\dots, s-1 \}$ and so $w_t, w_i, w_s$ has type $312$. Hence $w \in A_2$. By definition it follows that $(n-1, n, n-2, \dots, 1) \notin P_{n-1}^n$ and so $w \in A_2'$.
    
    Suppose that $w \in A_2' \backslash \underline T_{n, n-2}$. We will show that $w \in \tilde A_2$ as follows. By assumption we have $\underline w \in (T_{n-1, n-1} \backslash T_{n-1, n-2})$. By induction we have $P_{n-1}^{n-1} = T_{n-1,n-1} \cup Z_{n-1}$ and $P_{n-2}^{n-1} = T_{n-1, n-2} \cup Z_{n-1}$. And so we have $\underline w \in P_{n-1}^{n-1} \backslash P_{n-2}^{n-1}$. By assumption $\underline w$ is $312$-free, so suppose $\underline w|_m = (m-1, m, m-2, \dots, 1)$ for some $3 \le m \le n-1$. Since $\underline w \notin P_{n-2}^{n-1}$ it follows that $m = n-1$ and $\underline w = (n-2, n-1, n-3,\dots, 1)$. Let $w_t = n$ for some $t$. Since $w$ is $312$-free it follows that $t \ge 2$, otherwise if $t = 1$ we have $w_1, w_2, w_3$ has type $312$. If $t = 2$ the we have $w = (n-2, n, n-1, n-3, \dots, 1) \notin P_{n-1}^n$. So we must have $t \ge 3$. Hence $w \in \tilde A_2$.
\end{itemize}

And so we have shown that $w \in Z_n \cup A_1 \cup \tilde A_1 \cup \tilde A_2 = T_{n,n-1} \cup Z_n$.

\smallskip

Conversely let $w \in T_{n, n-1} \cup Z_n$. If $w \in Z_n \cup A_1$ then it is straightforward to check that $w$ is $312$-free and for all $3 \le m \le n$ we have $w|_m \neq (m-1, m, m-2, \dots, 1)$. Let $w \in \tilde A_1 \cup \tilde A_2$.

\begin{itemize}
    \item If $w \in \tilde A_1$ then $\underline w \in T_{n-1,n-1} \subseteq P_{n-1}^{n-1}$ so $\underline w$ is $312$-free. Since $w \in A_2$ we have that $w$ has the descending property and so $w$ is also $312$-free. Suppose that $w|_m = (m-1, m, m-2, \dots, 1)$ for some $3 \le m \le n$. Since $w \in A_2'$ we have $w \neq (n-1, n, n-2, \dots, 1)$ so $m \le n-1$. Therefore $w|_m = \underline w|_m$. Since $\underline w \in P_{n-2}^{n-1}$ it follows that $\underline w_1 < \underline w_2 \le n-2$ and $\underline w|_{\underline w_2} = (\underline w_1, \underline w_2, \underline w_1 - 1, \dots, \underline w_2 +1, \underline w_2 - 1, \dots, 1)$. Let $w_t = n$ for some $t$. Since $\underline w_2 \le n-2$ we must have $t \ge 3$. And so $w_1 = \underline w_1$, $w_2 = \underline w_2$ and $w|_{w_2} = \underline w|_{\underline w_2}$. And so we have shown that $w \in P_{n-1}^n$.
    
    \item If $w \in \tilde A_2$ then $\underline w \in T_{n-1, n-1} \backslash T_{n-1, n-2} = P_{n-1}^{n-1} \backslash P_{n-2}^{n-1}$ so $\underline w$ is $312$-free. Suppose $\underline w|_m = (m-1, m, m-2, \dots, 1)$ for some $3 \le m \le n-1$. Since $\underline w \notin P_{n-2}^{n-1}$ it follows that $m = n-1$ and $\underline w = (n-2, n-1, n-3,\dots, 1)$. Let $w_t = n$ for some $t$. Since $w \in \tilde A_2$, we have that $t \ge 2$. And so $w_1 = \underline w_1 = n-2$ and $w_2 = \underline w_2 = n-1$. And so we have $w \in P_{n-1}^n$.
\end{itemize}
\end{proof}

Using the particularly nice description of  
$P_\ell$ we can now prove the following for $w\in P_\ell$.

\begin{lemma}\label{lem:SSYT_flag_biject_Q_perms}
The number of standard monomials for $J_1$ in degree two is $|SSYT_2(w)|$.
\end{lemma}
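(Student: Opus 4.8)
The plan is to exhibit a bijection between the standard monomials of $J_1 = F_{n,\ell,w}$ of degree two and the degree-two semi-standard Young tableaux whose columns $I$ all satisfy $I \le w$; the latter set is precisely $SSYT_2(w)$ in the notation of the statement. Since $J_1$ is assumed monomial-free, Lemma~\ref{lem:J_1=J_3_binomial} gives $J_1 = J_3 = \ker(\phi_{\ell,w})$, so a standard monomial basis for $R/J_1$ in degree two is the same as a $\K$-basis for the degree-two part of the image of the restricted monomial map $\phi_{\ell,w}$. Thus it suffices to count the distinct monomials $\phi_{\ell,w}(P_I P_J) = \phi_\ell(P_I)\phi_\ell(P_J)$ as $(I,J)$ ranges over unordered pairs of subsets with $|I|+|J|$ columns (with $I, J \notin S_w$, i.e. $I \le w$ and $J \le w$), and to show this count equals $|SSYT_2(w)|$.

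First I would recall the standard fact (going back to the flag-variety case in \cite{OllieFatemeh2}, and implicit in Corollary~\ref{cor:block_diag_degen_flag}) that for the full flag variety the monomials $\phi_\ell(P_I P_J)$, over all pairs of columns, are in bijection with \emph{all} degree-two semi-standard Young tableaux: given a matching field tableau with columns $I$ and $J$, one can sort the rows to produce a unique semi-standard tableau with the same image under $\phi_\ell$, and distinct semi-standard tableaux give distinct monomials. This is the matching-field analogue of the classical straightening law. So the degree-two standard monomials of $J_3$ (equivalently of $F_{n,\ell} = \inwb(I_n)$) biject with $SSYT_2$, the set of all two-column semi-standard tableaux on $[n]$. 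Now I would intersect this with the vanishing condition: a monomial $\phi_{\ell,w}(P_I P_J)$ survives in $R/J_1$ exactly when neither $P_I$ nor $P_J$ vanishes, i.e. $I \le w$ and $J \le w$. The key point is that the sorting/straightening operation used to pass from a matching field tableau to a semi-standard tableau is compatible with the order $\le w$: if $(I,J)$ is a pair of columns with $I \le w$ and $J \le w$, then the semi-standard tableau it straightens to has both of its columns $\le w$ as well, and conversely any $T \in SSYT_2(w)$ arises this way. I expect this compatibility to reduce to the following combinatorial claim about the order $\le$: if $I' , J'$ are obtained from $I, J$ by the row-sorting step (which replaces $I, J$ by $I \vee J$ and $I \wedge J$ entrywise in the sorted picture, up to the matching-field reordering of the first two entries), then $I' \le w$ and $J' \le w$ whenever $I \le w$ and $J \le w$. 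This is exactly the kind of statement already established in the paper in the course of proving Theorems~B and~C — compare the arguments in Lemma~\ref{lem:toric_thm1_a2}, Case~3, where one checks that $\{i_1', \dots, i_t'\} \le \{w_1, \dots, w_t\}$ follows from $\{i_1, \dots, i_t\}, \{j_1, \dots, j_t\} \le \{w_1, \dots, w_t\}$ using that $B_\ell$ only permutes the first two entries and $\{i_e', j_e'\} = \{i_e, j_e\}$ for $e \ge 3$.

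So the concrete steps are: (1) invoke $J_1 = J_3$ from Lemma~\ref{lem:J_1=J_3_binomial} to identify degree-two standard monomials of $J_1$ with degree-two monomials in the image of $\phi_{\ell,w}$; (2) recall the bijection between degree-two monomials in the image of $\phi_\ell$ and $SSYT_2$ on $[n]$, via row-sorting of matching-field tableaux; (3) show this bijection restricts to one between the monomials involving only non-vanishing Plücker variables and $SSYT_2(w)$, by proving the sorting step preserves the condition ``every column is $\le w$'' (forward direction) and observing the semi-standard representative of any pair lies below the pair in each column, combined with the fact that $w\in P_\ell$ guarantees $\le w$ behaves well under the matching-field reordering; (4) conclude by counting. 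The main obstacle is step~(3): I need to be careful that when $B_\ell$ is non-diagonal the reordering of the top two entries of a column can change whether that column is $\le w$, and this is precisely where the hypothesis $w \in P_\ell$ (Definition~\ref{def:perms_P_ell}), with its control over $312$-patterns and over restrictions of the shape $(m-1,m,m-2,\dots,1)$, must be used — likely by a case analysis mirroring the one in \S\ref{sec:proofs_other} and invoking Corollary~\ref{lem:Swc_1} to reduce $\le w$ to a condition on the first $t$ entries. Once that compatibility is in hand, the count $|SSYT_2(w)|$ is immediate.
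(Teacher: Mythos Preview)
Your high–level plan matches the paper's: identify degree-two standard monomials of $J_1=J_3$ with distinct monomials in the image of $\phi_{\ell,w}$, and biject these with $SSYT_2(w)$. Where you diverge is in the description of the bijection itself. For $\ell\neq 0$ the map is \emph{not} row-sorting, nor is it ``$I\vee J,\ I\wedge J$ up to reordering the first two entries''. The paper instead defines an explicit map $\Gamma_\ell$ in the opposite direction, $SSYT\to$ matching-field tableaux, by a case analysis on $|J|$ and on which of $i_1,i_2,j_1,j_2$ lie in $\{1,\dots,\ell\}$; in several cases $\Gamma_\ell$ genuinely moves entries between columns (e.g.\ $[I\,J]=[\{i_1<i_2<\dots\},\{j_1\}]$ can be sent to $[\{i_1,j_1,i_3,\dots\},\{i_2\}]$). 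Two matching-field tableaux have the same image under $\phi_\ell$ iff they are row-wise equal, so one must check that $\Gamma_\ell$ is injective up to row-wise equality and surjective onto row-wise equivalence classes, and then separately that both $\Gamma_\ell$ and its inverse preserve the condition ``each column is $\le w$''. The paper carries this out in a sequence of lemmas, the last two of which rely on several structural facts about permutations in $P_\ell$ (where $312$-patterns can sit, what $w|_{w_1}$ looks like, a gap lemma for $\widehat w_1,\widehat w_2$) that are proved along the way.

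Your pointer to Lemma~\ref{lem:toric_thm1_a2}, Case~3 is suggestive but does not do the work you want: that argument shows that the \emph{other} term of a quadratic generator of $F_{n,\ell}$ remains non-vanishing, which controls monomial-freeness of $J_1$; it does not show that the straightening map between tableaux preserves $\le w$. In particular, the cases where $\Gamma_\ell$ changes the column contents (e.g.\ when exactly one column has two rows) have no analogue among those ideal-generator arguments and need the $P_\ell$ structural lemmas. So your outline is sound, but step~(3) is substantially harder than a variant of the existing Case~3 computation; filling it in essentially requires constructing $\Gamma_\ell$ and reproving the compatibility lemmas.
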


To prove this, we construct a bijection between the semi-standard Young tableaux $SSYT_d(w)$ to some matching field tableaux whose image forms a monomial basis for $J_1$.

\begin{definition}
We define $\Gamma_\ell$ be the map taking the SSYT tableau $T$, with columns $I, J$, often written $T = [IJ]$, to the matching field tableau $T' = [I'J']$, for $B_\ell$, by the following cases. Write $I = \{i_1 < i_2 < \dots < i_t \}$ and $J = \{j_1 < j_2 < \dots < j_s \}$ where $t \ge s$.
\begin{itemize}
    \item If $2 \le s \le t$ then $T'$ is obtained by applying $\Gamma_\ell$ from the Grassmannian case to the rectangular part of $T$: $\{i_1, \dots, i_s; j_1, \dots, j_s \}$, and fix the other entries.
    \item If $s = 1$ and $t \ge 2$:
    \begin{itemize}
        \item If $i_1 \in \{1, \dots, \ell \}$, $i_2, i_3, \dots, i_t, j_1 \in \{\ell+1, \dots, n \}$ and $i_2 < j_1$. If $t \ge 3$ then we also require $j_1 < i_3$. Then we define $I' = \{i_1, j_1, i_3, \dots, i_t\}$ and $J' = \{ i_2\}$.
        \item If $i_1 \in \{1, \dots, \ell \}$, $i_2, i_3, \dots, i_t, j_1 \in \{\ell+1, \dots, n \}$ and $j_1 < i_2$ then we define $I' = \{j_1, i_2, i_3, \dots, i_t \}$ and $J' = \{i_1\}$.
        \item Otherwise we define $I' = I$ and $J' = J$.
    \end{itemize}
    \item If $s = t = 1$ then $T' = T$.
\end{itemize}
\end{definition}

\begin{example}
If a semi-standard Young tableau has at least two rows in each column then we use the definition of the $\Gamma_\ell$ from the Grassmannian case. For example
\[
\Gamma_1 \left( \,
\begin{tabular}{cc}
    \hline
    \multicolumn{1}{|c|}{$1$} & \multicolumn{1}{c|}{$2$} \\
    \hline
    \multicolumn{1}{|c|}{$3$} & \multicolumn{1}{c|}{$4$} \\
    \hline
    \multicolumn{1}{|c|}{$5$} & \\
    \cline{1-1}
\end{tabular}
\, \right)
=
\begin{tabular}{cc}
    \hline
    \multicolumn{1}{|c|}{$2$} & \multicolumn{1}{c|}{$4$} \\
    \hline
    \multicolumn{1}{|c|}{$3$} & \multicolumn{1}{c|}{$1$} \\
    \hline
    \multicolumn{1}{|c|}{$5$} & \\
    \cline{1-1}
\end{tabular} \, , \quad
\Gamma_2 \left( \,
\begin{tabular}{cc}
    \hline
    \multicolumn{1}{|c|}{$1$} & \multicolumn{1}{c|}{$2$} \\
    \hline
    \multicolumn{1}{|c|}{$3$} & \multicolumn{1}{c|}{$4$} \\
    \hline
    \multicolumn{1}{|c|}{$5$} & \\
    \cline{1-1}
\end{tabular}
\, \right)
=
\begin{tabular}{cc}
    \hline
    \multicolumn{1}{|c|}{$3$} & \multicolumn{1}{c|}{$4$} \\
    \hline
    \multicolumn{1}{|c|}{$1$} & \multicolumn{1}{c|}{$2$} \\
    \hline
    \multicolumn{1}{|c|}{$5$} & \\
    \cline{1-1}
\end{tabular} \, .
\]
If a semi-standard Young tableau has exactly one column which with a single row then we check the entries in the first two or three rows to determine the image of $\Gamma_\ell$. For example
\[
\Gamma_1 \left( \,
\begin{tabular}{cc}
    \hline
    \multicolumn{1}{|c|}{$1$} & \multicolumn{1}{c|}{$2$} \\
    \hline
    \multicolumn{1}{|c|}{$3$} & \\
    \cline{1-1}
    \multicolumn{1}{|c|}{$4$} & \\
    \cline{1-1}
\end{tabular}
\, \right)
=
\begin{tabular}{cc}
    \hline
    \multicolumn{1}{|c|}{$2$} & \multicolumn{1}{c|}{$1$} \\
    \hline
    \multicolumn{1}{|c|}{$3$} & \\
    \cline{1-1}
    \multicolumn{1}{|c|}{$4$} & \\
    \cline{1-1}
\end{tabular} \, , \quad
\Gamma_1 \left( \,
\begin{tabular}{cc}
    \hline
    \multicolumn{1}{|c|}{$1$} & \multicolumn{1}{c|}{$3$} \\
    \hline
    \multicolumn{1}{|c|}{$2$} & \\
    \cline{1-1}
    \multicolumn{1}{|c|}{$4$} & \\
    \cline{1-1}
\end{tabular}
\, \right)
=
\begin{tabular}{cc}
    \hline
    \multicolumn{1}{|c|}{$3$} & \multicolumn{1}{c|}{$2$} \\
    \hline
    \multicolumn{1}{|c|}{$1$} & \\
    \cline{1-1}
    \multicolumn{1}{|c|}{$4$} & \\
    \cline{1-1}
\end{tabular} \, , \quad
\Gamma_1 \left( \,
\begin{tabular}{cc}
    \hline
    \multicolumn{1}{|c|}{$1$} & \multicolumn{1}{c|}{$3$} \\
    \hline
    \multicolumn{1}{|c|}{$2$} & \\
    \cline{1-1}
    \multicolumn{1}{|c|}{$3$} & \\
    \cline{1-1}
\end{tabular}
\, \right)
=
\begin{tabular}{cc}
    \hline
    \multicolumn{1}{|c|}{$2$} & \multicolumn{1}{c|}{$3$} \\
    \hline
    \multicolumn{1}{|c|}{$1$} & \\
    \cline{1-1}
    \multicolumn{1}{|c|}{$3$} & \\
    \cline{1-1}
\end{tabular} \, . 
\]
\end{example}

\begin{lemma}\label{lem:SSYT_flag_injective}
Let $T, T'$ be semi-standard Young tableaux. If $\Gamma_\ell(T)$ and $\Gamma_\ell(T')$ are row-wise equal then $T = T'$.
\end{lemma}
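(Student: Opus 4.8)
The plan is to show that $\Gamma_\ell$ is injective on semi-standard Young tableaux by recovering the original tableau $T$ from the row-wise data of $\Gamma_\ell(T)$. Since the statement concerns tableaux of arbitrary shape but $\Gamma_\ell$ acts column-pair by column-pair (and, for a lone size-one column, on the top two or three entries of the preceding column together with that singleton), I would first reduce to analyzing $\Gamma_\ell$ on a pair of adjacent columns $[IJ]$ with $|I| \ge |J|$, exactly as in the three cases of the definition. The key observation is that in every case $\Gamma_\ell$ preserves each \emph{row multiset}: in the rectangular case the Grassmannian map $\Gamma_\ell$ only permutes entries \emph{within rows} of the $s \times 2$ rectangle; in the $s=1$, $t\ge 2$ cases the only entries that move are $i_1$ (row $1$, from column $1$) and $i_2$ (row $2$, from column $1$) swapping into column $2$ and vice versa, so again the row multisets $\{i_1, j_1\}$ (row $1$) and $\{i_2\}$ (row $2$, together with the now-empty second column) are preserved; and in the $s=t=1$ case nothing changes. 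Hence $\Gamma_\ell(T)$ and $T$ are row-wise equal, so if $\Gamma_\ell(T)$ and $\Gamma_\ell(T')$ are row-wise equal then so are $T$ and $T'$; but a semi-standard Young tableau is determined by its row multisets together with the shape, since each column is the decreasing... (increasing) rearrangement within the shape. Wait — that last sentence is the subtle point and is \emph{false} in general for row multisets alone; I need the extra combinatorial input that for SSYT the entries are forced. Let me restate the real argument.

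The correct line is: two SSYT of the same shape that are row-wise equal are \emph{equal}, because a column of a SSYT, read top to bottom, is the \emph{unique} strictly increasing arrangement of its entries, and the entry sets of the columns are recovered from the row multisets by the rule ``the $j$-th entry of column $k$ is the $k$-th smallest element of row $j$'' — this uses that within a fixed row of a SSYT the entries are weakly increasing left to right, so row $j$ read left to right is already sorted and its $k$-th element is the $(j,k)$ entry. Therefore the plan is: (1) show $\Gamma_\ell$ preserves the shape and every row multiset of $T$, i.e.\ that $\Gamma_\ell(T)$ and $T$ are row-wise equal — this is the case-by-case check above and is where I must be careful to verify that in the $s=1$ branch the moved entries $i_1, i_2$ genuinely stay in their original rows (they do: $i_1$ stays in row $1$, $i_2$ in row $2$, only their columns change), and that in the rectangular branch $\Gamma_\ell$ from the Grassmannian case is row-preserving by construction; (2) conclude that if $\Gamma_\ell(T)$ is row-wise equal to $\Gamma_\ell(T')$, then by transitivity $T$ is row-wise equal to $T'$; (3) invoke the lemma just described that a SSYT is determined by its row multisets, giving $T = T'$.

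For step (3) I would either cite the standard fact or include the one-line argument: given the row-wise data, reconstruct column $k$ as $\{\,r_{1,k} < r_{2,k} < \cdots\,\}$ where $r_{j,k}$ is the $k$-th entry of the $j$-th row; the strict increase down columns and weak increase along rows of the \emph{original} SSYT guarantee this reconstruction returns $T$ exactly. Since $T$ and $T'$ share all rows, the reconstruction returns the same tableau, so $T = T'$.

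The main obstacle I anticipate is the bookkeeping in the $s=1$, $t\ge 2$ branch of $\Gamma_\ell$: I must confirm that the two sub-cases (whether $i_2 < j_1$ or $j_1 < i_2$) together with the diagonal-like ``Otherwise'' sub-case are exhaustive and mutually consistent with the claim that rows are preserved, and in particular that the auxiliary conditions like $j_1 < i_3$ do not cause an entry to jump rows. Once that verification is complete the rest is formal. I do not expect to need any result beyond the definitions of $\Gamma_\ell$ and of semi-standard Young tableaux (Definition~\ref{def:ssyt_matching_field_tableaux}).
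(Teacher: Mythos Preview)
Your step (1) is false: $\Gamma_\ell$ does \emph{not} preserve row multisets. Take $\ell=1$ and the SSYT $T$ with columns $I=\{1,2\}$ and $J=\{3\}$. The first sub-case of the $s=1$ branch gives $I'=\{1,3\}$, $J'=\{2\}$; written as a $B_1$-matching-field column, $I'$ reads $(3,1)$, so $\Gamma_1(T)$ has row $1=\{3,2\}$ and row $2=\{1\}$, whereas $T$ has row $1=\{1,3\}$ and row $2=\{2\}$. The same failure already occurs in the rectangular (Grassmannian) branch: the paper's own example $\Gamma_1\big([\,\{1,3,5\}\mid\{2,4\}\,]\big)$ has row $1=\{2,4\}$, which is not a row of the input. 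The point is that $\Gamma_\ell$ both moves entries between columns \emph{and} the matching-field ordering $(12)$ swaps the top two positions of a column, so entries genuinely migrate between rows $1$ and $2$. Your transitivity argument in step (2) therefore collapses.

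What does survive from your analysis is that $\Gamma_\ell$ preserves the shape, all rows from the third down, and the combined multiset $\{i_1,i_2,j_1,j_2\}$ of the top two rows. The paper's proof uses precisely this: after reducing to the top two rows (and to $\Gr(2,n)$ when $s\ge 2$, handled in the companion Grassmannian paper), it fixes two distinct SSYT $T\neq T'$ with the same top-two-row multiset and does a short case split on $r=|\{i_1,i_2,j_1\}\cap[\ell]|$. For $r\in\{0,3\}$ one has $\Gamma_\ell(T)=T$ and $\Gamma_\ell(T')=T'$ literally, and there your step (3) does finish the job. For $r\in\{1,2\}$ one computes $\Gamma_\ell(T)$ and $\Gamma_\ell(T')$ explicitly and checks that their first (or second) rows differ; there is no shortcut via row-preservation.
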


\begin{proof}
Let us write $T = [IJ]$ and $T' =[I'J']$ where $I = \{i_1, \dots, i_t \}$, $J = \{j_1, \dots, j_s \}$ and $s \le t$. We note that $\Gamma_\ell$ does not alter the shape of the tableau and keeps the row-wise contents of the third row and all rows below. Therefore we write $I' = \{i_1', i_2', i_3, \dots, i_t \}$ and $J' = \{j_1', j_2', j_3, \dots, j_s \}$. We also note that $\Gamma_\ell$, does not change the contents of the tableau as a multi-set so we have $\{i_1, i_2, j_1, j_2 \} = \{i_1', i_2', j_1', j_2' \}$. If $s = |J| \ge 2$ then we are done by the Grassmannian $\textrm{Gr}(2, n)$ case. So let us assume that $s = 1$. We proceed by taking cases on $r = |\{i_1, i_2, j_1 \} \cap \{1, \dots, \ell \}| \in [3]$.

\smallskip

\textbf{Case 1.} Assume $r = 0$ or $3$. Then we have that $\Gamma_\ell(T) = T$ and $\Gamma_\ell(T') = T'$. So $T$ and $T'$ are row-wise equal semi-standard Young tableau, hence $T = T'$.

\smallskip

\textbf{Case 2.} Assume $r = 1$. It follows that $i_1 = i_1' \in \{1, \dots, \ell \}$. Let us assume, by contradiction, that $T \neq T'$. Since $\{i_1, i_2, j_1 \} = \{i_1', i_2', j_1 \}$ and $i_1 = i_1'$, we may assume without loss of generality that $i_2 = j_1' < j_1 = i_2'$. By definition of $\Gamma_\ell$ we have
\[
\Gamma_\ell(T) = \Gamma_\ell \left( \,
\begin{tabular}{cc}
    \hline
    \multicolumn{1}{|c|}{$i_1$} & \multicolumn{1}{c|}{$j_1$} \\
    \hline
    \multicolumn{1}{|c|}{$i_2$} & \\
    \cline{1-1}
    \multicolumn{1}{|c|}{$\vdots$} & \\
    \cline{1-1}
\end{tabular}
\, \right)
=
\begin{tabular}{cc}
    \hline
    \multicolumn{1}{|c|}{$j_1$} & \multicolumn{1}{c|}{$i_2$} \\
    \hline
    \multicolumn{1}{|c|}{$i_1$} & \\
    \cline{1-1}
    \multicolumn{1}{|c|}{$\vdots$} & \\
    \cline{1-1}
\end{tabular} \, , \quad
\Gamma_\ell(T') = \Gamma_\ell \left( \,
\begin{tabular}{cc}
    \hline
    \multicolumn{1}{|c|}{$i_1$} & \multicolumn{1}{c|}{$i_2$} \\
    \hline
    \multicolumn{1}{|c|}{$j_1$} & \\
    \cline{1-1}
    \multicolumn{1}{|c|}{$\vdots$} & \\
    \cline{1-1}
\end{tabular}
\, \right)
=
\begin{tabular}{cc}
    \hline
    \multicolumn{1}{|c|}{$i_2$} & \multicolumn{1}{c|}{$i_1$} \\
    \hline
    \multicolumn{1}{|c|}{$j_1$} & \\
    \cline{1-1}
    \multicolumn{1}{|c|}{$\vdots$} & \\
    \cline{1-1}
\end{tabular} \, .
\]
However $\Gamma_\ell(T)$ and $\Gamma_\ell(T')$ are not row-wise equal, a contradiction.

\smallskip

\textbf{Case 3.} Assume $r = 2$. Since $T$ and $T'$ are semi-standard Young tableau and $\{i_1, i_2, j_1 \} = \{i_1', i_2', j_1' \}$, we have that $i_1 = i_1' \in \{1, \dots, \ell \}$. Assume, by contradiction, that $T \neq T'$. Without loss of generality we have that $i_2 = j_1' \in \{1, \dots, \ell \}$ and $j_1 = i_2' \in \{\ell+1, \dots, n \}$.
Therefore
\[
\Gamma_\ell(T) = \Gamma_\ell \left( \,
\begin{tabular}{cc}
    \hline
    \multicolumn{1}{|c|}{$i_1$} & \multicolumn{1}{c|}{$j_1$} \\
    \hline
    \multicolumn{1}{|c|}{$i_2$} & \\
    \cline{1-1}
    \multicolumn{1}{|c|}{$\vdots$} & \\
    \cline{1-1}
\end{tabular}
\, \right)
=
\begin{tabular}{cc}
    \hline
    \multicolumn{1}{|c|}{$i_1$} & \multicolumn{1}{c|}{$j_1$} \\
    \hline
    \multicolumn{1}{|c|}{$i_2$} & \\
    \cline{1-1}
    \multicolumn{1}{|c|}{$\vdots$} & \\
    \cline{1-1}
\end{tabular} \, , \quad
\Gamma_\ell(T') = \Gamma_\ell \left( \,
\begin{tabular}{cc}
    \hline
    \multicolumn{1}{|c|}{$i_1$} & \multicolumn{1}{c|}{$i_2$} \\
    \hline
    \multicolumn{1}{|c|}{$j_1$} & \\
    \cline{1-1}
    \multicolumn{1}{|c|}{$\vdots$} & \\
    \cline{1-1}
\end{tabular}
\, \right)
=
\begin{tabular}{cc}
    \hline
    \multicolumn{1}{|c|}{$j_1$} & \multicolumn{1}{c|}{$i_2$} \\
    \hline
    \multicolumn{1}{|c|}{$i_1$} & \\
    \cline{1-1}
    \multicolumn{1}{|c|}{$\vdots$} & \\
    \cline{1-1}
\end{tabular} \, .
\]
However $\Gamma_\ell(T)$ and $\Gamma_\ell(T')$ are not row-wise equal, a contradiction.
\end{proof}

\begin{lemma}\label{lem:SSYT_flag_surjective} Let $T$ be a matching field tableau for $B_\ell$ with two columns. Then there exists a semi-standard Young tableau $T'$ such that $T$ and $\Gamma_\ell(T')$ are row-wise equal.
\end{lemma}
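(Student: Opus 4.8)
The statement asserts surjectivity (up to row-equivalence) of the map $\Gamma_\ell$ on two-column tableaux: given any matching field tableau $T=[I J]$ for $B_\ell$ with $|I|\ge|J|$, we must produce a semi-standard Young tableau $T'$ with $\Gamma_\ell(T')$ row-wise equal to $T$. The natural approach is to run $\Gamma_\ell$ backwards, splitting into the same cases used in the definition of $\Gamma_\ell$ according to the column sizes $t=|I|$, $s=|J|$ and the position of entries relative to the block boundary $\ell$. Since $\Gamma_\ell$ only ever modifies entries in the first two (occasionally the first three) rows and preserves the shape, the reconstruction of $T'$ is entirely local to those top rows, and the third row onwards can simply be copied.

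\textbf{Key steps.} First I would dispose of the trivial cases: if $s=t=1$ then $T'=T$; and if $s\ge 2$ (so the tableau has a rectangular $2\times s$ part with at least two rows in the first two columns), then the claim reduces to the corresponding surjectivity statement for $\Gamma_\ell$ in the Grassmannian $\Gr(2,n)$ case, which I would cite from the relevant earlier reference (the Grassmannian $\Gamma_\ell$ being an involution-like straightening operation whose image is exactly the semi-standard tableaux). So the only real work is the case $s=1$, $t\ge 2$: here $I=\{a_1,\dots,a_t\}$ with the first entry ordered according to $B_\ell(I)$ and $J=\{b_1\}$, and I would sub-case on $r=|\{a_1,a_2,b_1\}\cap\{1,\dots,\ell\}|$, matching the three sub-bullets in the definition of $\Gamma_\ell$. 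In each sub-case I would exhibit the explicit preimage $T'$: essentially, sort the multiset $\{a_1,a_2,b_1\}$ into its semi-standard arrangement (smaller of the admissible values in the length-$t$ column's top cell, etc.) and verify directly that applying $\Gamma_\ell$ to the resulting $T'$ returns a tableau row-wise equal to $T$. One must also check that the reconstructed $T'$ genuinely satisfies the three defining conditions of a semi-standard Young tableau — weakly decreasing column sizes (automatic since only one column has size $1$), strictly increasing columns, and weakly increasing rows — which is where the ordering constraints from $B_\ell$ (e.g. $a_2<b_1$ or $b_1<a_2$, and the auxiliary $b_1<a_3$ when $t\ge 3$) are exactly what is needed.

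\textbf{Main obstacle.} The bookkeeping in the $s=1$, $t\ge 2$, $r=1$ case is the delicate part: when $a_1\in\{1,\dots,\ell\}$ and $a_2,\dots,a_t,b_1\in\{\ell+1,\dots,n\}$, the matching field reorders $a_1$ with the smallest of $a_2,b_1$, and when $t\ge 3$ there is the extra constraint relating $b_1$ to $a_3$; I would need to check that the reverse assignment lands in the correct sub-bullet of the $\Gamma_\ell$ definition and does not collide with the $r=0,3$ "identity" branch or with the $r=2$ branch. Concretely the subtlety is showing the preimage tableau is semi-standard precisely when $T$ is a legitimate $B_\ell$-matching-field tableau, i.e. that the constraints imposed by $B_\ell$ on $T$ are in bijective correspondence with the semi-standardness constraints on $T'$. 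Once this case analysis is complete the lemma follows, and together with Lemma~\ref{lem:SSYT_flag_injective} it yields that $\Gamma_\ell$ is a row-wise bijection, which feeds into the count $|SSYT_2(w)|$ in Lemma~\ref{lem:SSYT_flag_biject_Q_perms}.
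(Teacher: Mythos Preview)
Your proposal is correct and follows essentially the same approach as the paper's proof: reduce the case $s\ge 2$ to the previously established Grassmannian surjectivity (applied to the rectangular $s\times 2$ sub-tableau and then extended by copying rows $s{+}1,\dots,t$), and for $s=1$, $t\ge 2$ perform a case analysis on $r=|\{i_1,i_2,j_1\}\cap\{1,\dots,\ell\}|$ with explicit preimages in each sub-case. One minor imprecision: the Grassmannian result you cite is for equal-length columns of arbitrary size $s$ (i.e.\ $\Gr(s,n)$), not specifically $\Gr(2,n)$; the paper invokes exactly this equal-column-size statement on the truncated tableau $\underline T$, so your reduction should be phrased that way.
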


\begin{proof}
Write $T = [IJ]$ a matching field tableau for $B_\ell$ with columns $I = \{i_1, \dots, i_t \}$ and $J = \{j_1, \dots, j_s \}$ where $s \le t$. 

If $2 \le s \le t$ then consider the tableau $\underline T = [\underline I \underline J]$ where $\underline I = \{i_1, \dots, i_s\}$ and $\underline J = \{j_1, \dots, j_s \}$. By the Grassmannian case, there exists a semi-standard Young tableau $\underline T' = [\underline I' \underline J']$ such that $\Gamma_\ell(\underline T') = \underline T$. Let $T' = [I'J']$ be the tableau with $I' = \{i_1', \dots, i_s',i_{s+1}, \dots, i_t\}$ and $J' = \underline J'$. It follows that $T'$ is a semi-standard Young tableau and $\Gamma_\ell(S) = T$.

Let us now assume that $|J|= 1$. If $|I| = 1$ then the result holds easily so we assume $|I| \ge 2$. We proceed by taking cases on $r = |\{i_1, i_2, j_1 \} \cap \{1, \dots, \ell \}|$. We count elements with multiplicity in case $j_1 = i_1$ or $j_1 = i_2$.

\textbf{Case 1.} Assume that $r = 0$ or $3$. Then we can perform row-wise swaps on entries of $T$ to put the tableau in semi-standard form, call this tableau $T'$. In this case we have $\Gamma_\ell(T') = T'$.

\textbf{Case 2.} Assume that $r = 1$. If $i_2 \in \{1, \dots, \ell \}$ let $\alpha = \min\{i_1, j_1 \}$ and $\beta = \max\{i_1, j_1 \}$. Then we have
\[
\Gamma_\ell \left( \,
\begin{tabular}{cc}
    \hline
    \multicolumn{1}{|c|}{$i_2$} & \multicolumn{1}{c|}{$\alpha$} \\
    \hline
    \multicolumn{1}{|c|}{$\beta$} & \\
    \cline{1-1}
    \multicolumn{1}{|c|}{$\vdots$} & \\
    \cline{1-1}
\end{tabular} 
\, \right) =
\begin{tabular}{cc}
    \hline
    \multicolumn{1}{|c|}{$\beta$} & \multicolumn{1}{c|}{$\alpha$} \\
    \hline
    \multicolumn{1}{|c|}{$i_2$} & \\
    \cline{1-1}
    \multicolumn{1}{|c|}{$\vdots$} & \\
    \cline{1-1}
\end{tabular} 
\textrm{ or }
\begin{tabular}{cc}
    \hline
    \multicolumn{1}{|c|}{$\alpha$} & \multicolumn{1}{c|}{$\beta$} \\
    \hline
    \multicolumn{1}{|c|}{$i_2$} & \\
    \cline{1-1}
    \multicolumn{1}{|c|}{$\vdots$} & \\
    \cline{1-1}
\end{tabular} 
\]
If $j_1 \in \{1, \dots, \ell \}$ then we have
\[
\Gamma_\ell \left( \,
\begin{tabular}{cc}
    \hline
    \multicolumn{1}{|c|}{$j_1$} & \multicolumn{1}{c|}{$i_2$} \\
    \hline
    \multicolumn{1}{|c|}{$i_1$} & \\
    \cline{1-1}
    \multicolumn{1}{|c|}{$\vdots$} & \\
    \cline{1-1}
\end{tabular} 
\, \right) =
\begin{tabular}{cc}
    \hline
    \multicolumn{1}{|c|}{$i_1$} & \multicolumn{1}{c|}{$j_1$} \\
    \hline
    \multicolumn{1}{|c|}{$i_2$} & \\
    \cline{1-1}
    \multicolumn{1}{|c|}{$\vdots$} & \\
    \cline{1-1}
\end{tabular} \, .
\]

\textbf{Case 3.} Assume $r = 2$. If $i_1, i_2 \in \{1, \dots, \ell \}$ then $\Gamma_\ell(T) = T$. If $i_2, j_1 \in \{1, \dots, \ell \}$ then
\[
\Gamma_\ell \left( \,
\begin{tabular}{cc}
    \hline
    \multicolumn{1}{|c|}{$i_2$} & \multicolumn{1}{c|}{$j_1$} \\
    \hline
    \multicolumn{1}{|c|}{$i_1$} & \\
    \cline{1-1}
    \multicolumn{1}{|c|}{$\vdots$} & \\
    \cline{1-1}
\end{tabular} 
\, \right) =
\begin{tabular}{cc}
    \hline
    \multicolumn{1}{|c|}{$i_1$} & \multicolumn{1}{c|}{$j_1$} \\
    \hline
    \multicolumn{1}{|c|}{$i_2$} & \\
    \cline{1-1}
    \multicolumn{1}{|c|}{$\vdots$} & \\
    \cline{1-1}
\end{tabular} \, .
\]
\end{proof}

\begin{lemma}\label{lem:SSYT_flag_preimage_non_vanish}
Let $w \in S_n$ be any permutation and $T = [IJ]$ be a semi-standard Young tableau with two columns. Write $T' = [I'J'] = \Gamma_\ell(T)$. If $I', J' \le w$ then $I, J \le w$.
\end{lemma}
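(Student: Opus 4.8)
The statement asserts that $\Gamma_\ell$ is "monotone-reflecting" on pairs of columns: if the matching field tableau $\Gamma_\ell(T) = [I'J']$ has both columns $\le w$, then the original semi-standard columns $I, J$ also satisfy $I, J \le w$. The natural approach is to compare $T$ and $T' = \Gamma_\ell(T)$ row by row and use the fact that $\Gamma_\ell$ only permutes entries in the first two rows (and, in the single-row-second-column case, possibly shuffles one entry between the first and third rows of the long column). The key observation is that for a subset $K = \{k_1 < \dots < k_r\}$, the condition $K \le \{w_1, \dots, w_r\}$ depends only on $K$ as a set, not on the order its entries are written in. So I would set $I = \{i_1 < \dots < i_t\}$, $J = \{j_1 < \dots < j_s\}$ with $s \le t$, and $I' = \{i_1', i_2', i_3, \dots, i_t\}$, $J' = \{j_1', j_2', j_3, \dots, j_s\}$ where by the construction of $\Gamma_\ell$ we have $\{i_1, i_2, j_1, j_2\} = \{i_1', i_2', j_1', j_2'\}$ as multisets (in the cases with $s \ge 2$), or a corresponding small-index shuffle in the $s = 1$ cases.

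First I would dispose of the case $s \ge 2$: here $\Gamma_\ell$ acts as the Grassmannian $\Gr(2,n)$ map on the rectangular sub-tableau on rows $1,2$ and columns $I,J$, fixing all lower rows. So the sorted sets satisfy $I^{\uparrow} = (\{i_1,i_2\}\text{-sorted})\cup\{i_3<\dots<i_t\}$ where $\{i_1,i_2\}$ as a set equals the set of the two top entries of $I'$ in some order; since $\le w$ is a statement about sorted sets, $I \le w \iff I' \le w$ and likewise for $J$ versus $J'$ — here I would invoke (or re-derive in two lines) the corresponding Grassmannian fact that $\Gamma_\ell$ preserves the sorted columns, hence preserves $\le w$. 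Then the conclusion is immediate, indeed with equivalence rather than just implication.

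The substantive case is $s = 1$, $t \ge 2$, where $J = \{j_1\}$ is a single box. Here I would follow the trichotomy on $r = |\{i_1, i_2, j_1\} \cap \{1,\dots,\ell\}|$ exactly as in the proof of Lemma~\ref{lem:SSYT_flag_injective}. When $r \in \{0, 3\}$, $\Gamma_\ell$ is the identity and there is nothing to prove. When $r = 1$ or $r = 2$, the only possibly nontrivial change is a swap of $\{i_1, i_2\}$ with $j_1$ or between $i_1$ and $i_2$, and I would check in each subcase that the sorted column sets of $I, J$ are componentwise dominated by $\{w_1, \dots, w_{|I|}\}^{\uparrow}$ and $\{w_1\}$ once the sorted sets of $I', J'$ are. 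The cleanest formulation: in every case $I$ and $I'$ have the same \emph{sorted} set unless $\Gamma_\ell$ moved an entry between columns; and when it moves an entry from the long column to the short column (the subcase $I' = \{i_1, j_1, i_3, \dots\}, J' = \{i_2\}$ with $i_2 < j_1$, or $I' = \{j_1, i_2, \dots\}, J' = \{i_1\}$), I would argue directly that $J = \{j_1\} \le w$ and $I = \{i_1 < i_2 < \dots\} \le w$ follow from $J' \le w$, $I' \le w$ using $i_2 < j_1$ or $i_1 < j_1$ respectively, since the smaller of the two candidate entries already sits in $I'$ while the larger sits in $J'$, and swapping them back only decreases the sorted column vectors.

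The main obstacle, and where I would spend the most care, is the single-row case $r = 2$ and the "swap into the other column" subcases: there one must be sure that after undoing $\Gamma_\ell$, the resulting tableau's sorted columns are still dominated — i.e., that replacing $J' = \{i_2\}$ by $J = \{j_1\}$ (where $j_1 > i_2$, so $J$ is "larger") does not break $J \le w$. This is exactly where the semi-standardness of $T$ and the defining inequalities of $\Gamma_\ell$ (such as $i_2 < j_1$, $j_1 < i_3$) must be combined with $I' \le w$: the point is that $j_1$ appears somewhere in $I'$ in these subcases, say as an entry $i_r' = j_1$ with $r \le $ some small index, so $j_1 \le \widehat w_r$; and since $J = \{j_1\}$ needs only $j_1 \le w_1$, I must extract $j_1 \le w_1$ from the structure — here the hypothesis that $i_1 \in \{1,\dots,\ell\}$ is small and that $j_1$ sits in the \emph{first} position of $I'$ after reordering by $B_\ell$ does the job. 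I expect the bookkeeping of which entry lands in which row of $I'$ and $J'$ under $B_\ell$'s reordering convention to be the fiddly part, but it is entirely parallel to the casework already carried out in Lemmas~\ref{lem:SSYT_flag_injective} and \ref{lem:SSYT_flag_surjective}, so I would lean on those for the combinatorial setup.
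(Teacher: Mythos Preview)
Your reduction of the case $2\le s<t$ to ``the Grassmannian fact that $\Gamma_\ell$ preserves the sorted columns'' is incorrect: the Grassmannian $\Gamma_\ell$ genuinely moves entries between the two columns. For instance (see the paper's own example), $\Gamma_1$ sends the SSYT with columns $\{1,3,5\}$ and $\{2,4\}$ to the tableau with columns $\{2,3,5\}$ and $\{1,4\}$. So $I$ and $I'$ differ as sets, and $I'\le w \Rightarrow I\le w$ requires an argument. The paper handles $2\le s<t$ by explicit casework, exploiting $\widehat w_1\le\widetilde w_1$; your outline skips this entirely.

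More seriously, your instinct that the $s=1$, $i_2<j_1$ subcase (where $I'=\{i_1,j_1,i_3,\dots,i_t\}$ and $J'=\{i_2\}$) is the delicate one is correct, and in fact the claim is \emph{false} for arbitrary $w$ there. Take $n=5$, $\ell=1$, $I=\{1,3,5\}$, $J=\{4\}$, $w=(3,5,4,2,1)$. Then $\Gamma_1(T)$ has $I'=\{1,4,5\}$ and $J'=\{3\}$; one checks $I',J'\le w$ (since $\{w_1,w_2,w_3\}^\uparrow=\{3,4,5\}$ and $w_1=3$), yet $J=\{4\}\not\le\{w_1\}=\{3\}$. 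From $I'\le w$ you can only extract $j_1\le\widehat w_2$, and there is no way to upgrade this to $j_1\le w_1$ for general $w$; your proposed fix (``$j_1$ sits in the first position of $I'$ under the $B_\ell$-ordering'') gives nothing, because the comparison $I'\le w$ is insensitive to that ordering. The paper's own proof has the same gap at this step (its Case~4 asserts ``$j_1\le\widehat w_1$'' without justification). In the paper the lemma is only ever applied with $w\in P_\ell$, and the counterexample above is excluded by that hypothesis (one has $w|_4=(3,4,2,1)$ but $w_2=5\not\le\ell$); so what you should aim to prove is the statement restricted to $w\in P_\ell$, for which you will need the structural lemmas about those permutations rather than a purely local tableau argument.
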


\begin{proof}
We write $I = \{i_1 < \dots < i_t \}$ and $J = \{j_1 < \dots < j_s \}$ for the columns of $T$. If $T$ and $T'$ have the same column-wise contents then the result holds trivially. So let us assume $T$ and $T'$ have different column-wise contents. If $|I| = |J|$ then we are done by the Grassmannian case. And so we assume $s < t$ and we take cases on the contents of the first two rows of $I, J$.

\textbf{Case 1.} Assume $2 \le s$, $j_1 < i_2$, $i_1 \in \{1, \dots, \ell \}$ and $i_2, j_1, j_2 \in \{ \ell+1, \dots, n\}$. So the tableaux are
\[
\Gamma_\ell \left( \,
\begin{tabular}{cc}
    \multicolumn{1}{c}{$I$} & \multicolumn{1}{c}{$J$} \\
    \hline
    \multicolumn{1}{|c|}{$i_1$} & \multicolumn{1}{c|}{$j_1$} \\
    \hline
    \multicolumn{1}{|c|}{$i_2$} & 
    \multicolumn{1}{|c|}{$j_2$} \\
    \hline
    \multicolumn{1}{|c|}{$\vdots$} &
    \multicolumn{1}{|c|}{$\vdots$} \\
    \hline
\end{tabular} 
\, \right) =
\begin{tabular}{cc}
    \multicolumn{1}{c}{$I'$} & \multicolumn{1}{c}{$J'$} \\
    \hline
    \multicolumn{1}{|c|}{$j_1$} & \multicolumn{1}{c|}{$j_2$} \\
    \hline
    \multicolumn{1}{|c|}{$i_2$} & 
    \multicolumn{1}{|c|}{$i_1$} \\
    \hline
    \multicolumn{1}{|c|}{$\vdots$} &
    \multicolumn{1}{|c|}{$\vdots$} \\
    \hline
\end{tabular} \, .
\]
Since $I' \le w$ we have $j_1 \le \widehat w_1$ and $i_2 \le \widehat w_2$. Since $J' \le w$ we have $i_1 \le \widetilde w_1$ and $j_2 \le \widetilde w_2$. So we have that $i_1 < j_1 \le \widehat w_1$ and $i_2 \le \widehat w_2$ hence $I \le w$. We also have that $j_1 \le \widehat w_1 \le \widetilde w_1$ and $j_2 \le \widetilde w_2$, hence $J \le w$. 

\smallskip

\textbf{Case 2.} Assume $2 \le s$, $j_1 < i_2$, $i_1, i_2, j_1 \in \{1, \dots, \ell \}$ and $j_2 \in \{ \ell+1, \dots, n\}$. So the tableaux are
\[
\Gamma_\ell \left( \,
\begin{tabular}{cc}
    \multicolumn{1}{c}{$I$} & \multicolumn{1}{c}{$J$} \\
    \hline
    \multicolumn{1}{|c|}{$i_1$} & \multicolumn{1}{c|}{$j_1$} \\
    \hline
    \multicolumn{1}{|c|}{$i_2$} & 
    \multicolumn{1}{|c|}{$j_2$} \\
    \hline
    \multicolumn{1}{|c|}{$\vdots$} &
    \multicolumn{1}{|c|}{$\vdots$} \\
    \hline
\end{tabular} 
\, \right) =
\begin{tabular}{cc}
    \multicolumn{1}{c}{$I'$} & \multicolumn{1}{c}{$J'$} \\
    \hline
    \multicolumn{1}{|c|}{$j_1$} & \multicolumn{1}{c|}{$j_2$} \\
    \hline
    \multicolumn{1}{|c|}{$i_2$} & 
    \multicolumn{1}{|c|}{$i_1$} \\
    \hline
    \multicolumn{1}{|c|}{$\vdots$} &
    \multicolumn{1}{|c|}{$\vdots$} \\
    \hline
\end{tabular} \, .
\]
The result follows by the same argument as Case~1.

\smallskip

\textbf{Case 3.} Assume $s = 1, t \ge 2$, $j_1 < i_2$, $i_1 \in \{1, \dots, \ell \}$ and $j_1, i_2 \in \{\ell+1, \dots, n \}$. The tableaux are
\[
\Gamma_\ell \left( \,
\begin{tabular}{cc}
    \multicolumn{1}{c}{$I$} & \multicolumn{1}{c}{$J$} \\
    \hline
    \multicolumn{1}{|c|}{$i_1$} & \multicolumn{1}{c|}{$j_1$} \\
    \hline
    \multicolumn{1}{|c|}{$i_2$} & \\
    \cline{1-1}
    \multicolumn{1}{|c|}{$\vdots$} & \\
    \cline{1-1}
\end{tabular} 
\, \right) =
\begin{tabular}{cc}
    \multicolumn{1}{c}{$I'$} & \multicolumn{1}{c}{$J'$} \\
    \hline
    \multicolumn{1}{|c|}{$j_1$} & \multicolumn{1}{c|}{$i_1$} \\
    \hline
    \multicolumn{1}{|c|}{$i_2$} & \\
    \cline{1-1}
    \multicolumn{1}{|c|}{$\vdots$} & \\
    \cline{1-1}
\end{tabular} \, .
\]
Since $J' \le w$, we have $i_1 \le w$. Since $I' \le w$, we have $j_1 \le \widehat w_1$ and $i_2 \le \widehat w_2$. And so we have $i_1 < j_1 \le \widehat w_1$ and $i_2 \le \widehat w_2$, hence $I \le w$. We also have $j_1 \le \widehat w_1 \le w_1$, hence $J \le w$.

\smallskip

\textbf{Case 4.} Assume $s = 1, t \ge 2$, $i_2 < j_1$, $i_1 \in \{1, \dots, \ell \}$ and $i_1, j_2 \in \{\ell+1, \dots, n \}$. If $t \ge 3$ then we assume that $j_1 < i_3$. The tableaux are
\[
\Gamma_\ell \left( \,
\begin{tabular}{cc}
    \multicolumn{1}{c}{$I$} & \multicolumn{1}{c}{$J$} \\
    \hline
    \multicolumn{1}{|c|}{$i_1$} & \multicolumn{1}{c|}{$j_1$} \\
    \hline
    \multicolumn{1}{|c|}{$i_2$} & \\
    \cline{1-1}
    \multicolumn{1}{|c|}{$\vdots$} & \\
    \cline{1-1}
\end{tabular} 
\, \right) =
\begin{tabular}{cc}
    \multicolumn{1}{c}{$I'$} & \multicolumn{1}{c}{$J'$} \\
    \hline
    \multicolumn{1}{|c|}{$j_1$} & \multicolumn{1}{c|}{$i_2$} \\
    \hline
    \multicolumn{1}{|c|}{$i_1$} & \\
    \cline{1-1}
    \multicolumn{1}{|c|}{$\vdots$} & \\
    \cline{1-1}
\end{tabular} \, .
\]
Since $I' \le w$ we have $i_1 \le \widehat w_1$ and $j_1 \le \widehat w_2$. Since $J' \le w$ we have $i_2 \le w_1$. So $i_1 \le \widehat w_1$ and $i_2 < j_1 \le \widehat w_2$ hence $I \le w$. We also have $j_1 \le \widehat w_1 \le w_1$ hence $J \le w$.
\end{proof}

We now give some important properties for permutations $w\in P_\ell$.

\begin{lemma}\label{lem:Q_perms_312}
Let $w \in P_\ell$. If $i<j<k$ and $w_i, w_j, w_k$ is of type $312$, then $w_i = w_1$ and $w_j = \ell$.
\end{lemma}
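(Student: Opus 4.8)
The statement to prove is: if $w \in P_\ell$ and $w_i, w_j, w_k$ is of type $312$ for some $i < j < k$, then $w_i = w_1$ and $w_j = \ell$. The plan is to exploit the two bullet-pointed conditions defining $P_\ell$ in Definition~\ref{def:perms_P_ell}. The first condition says that if $w$ is not $312$-free, then $w_1 > w_2 = \ell$ and $w \backslash \ell$ is $312$-free. So the first step is to observe that the hypothesis immediately tells us $w$ is not $312$-free (it has the subsequence $w_i, w_j, w_k$ of type $312$), hence the first bullet of Definition~\ref{def:perms_P_ell} applies: $w_1 > w_2 = \ell$ and the sequence obtained from $w$ by deleting the entry $\ell$ (which occurs in position $2$) is $312$-free.

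The core of the argument is then a short combinatorial deduction. Since $w \backslash \ell$ is $312$-free, any $312$-pattern in $w$ must use the entry $\ell$, i.e.\ one of $w_i, w_j, w_k$ equals $\ell$. I would rule out the two wrong positions: the entry $\ell = w_2$ cannot be the ``$1$'' of the pattern (the smallest, rightmost in value-rank among the three) because the ``$1$'' must sit in the third position $k \ge 3 > 2$ and have the smallest value, whereas $\ell$ sits in position $2$; more carefully, if $w_k = \ell$ then $w_i, w_j$ are entries to the left of position $k$ that are both larger than $\ell$ with $w_i > w_j$, but then in $w\backslash\ell$ these two entries together with \emph{any} entry occurring after position $k$ that is smaller than both (and such an entry must exist because $312$ is realized, or one argues directly) would reconstitute a $312$-pattern avoiding $\ell$ — I'd want to be careful here and instead argue that the ``$1$'' of a $312$ pattern is an entry that is a descent-bottom, and position $2$ holding the global-second position cannot play that role once $w_1 > w_2$. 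Cleanest: the entry $\ell$ occupies position $2$. In a $312$-pattern at positions $i<j<k$, the entry at position $k$ is the smallest of the three. If $\ell$ were at position $k$ then $k=2$, forcing $i=1$, $j$ between — impossible since $i<j<k=2$ has no room for $j$. So $\ell$ is at position $i$ or $j$.

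Next I would show $\ell$ is not at position $i$: if $w_i = \ell = w_2$, then $i = 2$, so $i \ge 2$; but a $312$-pattern needs $i < j < k$ with the \emph{largest} value at position $i$, and $w_i = w_2 < w_1$, so $w_1$ lies to the left of position $i$ and $w_1 > w_i > w_k$ while $w_i > w_k$ — then $w_1, w_j, w_k$ (with $1 < j < k$) is a $312$-pattern not involving position $2$, hence a $312$-pattern in $w \backslash \ell$, contradiction. Therefore $\ell$ must be at position $j$, i.e.\ $w_j = \ell$. Finally, with $w_j = \ell = w_2$ we get $j = 2$, hence $i < j = 2$ forces $i = 1$, so $w_i = w_1$, as required.

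I expect the main obstacle to be handling the positional bookkeeping cleanly — specifically making airtight the claim that deleting the entry $\ell$ from a $312$-pattern that does not ``use'' position $2$ still leaves a genuine subsequence of the same type in $w \backslash \ell$ (one must check the relative order of the remaining two entries is preserved and that no index collision occurs). This is routine but must be stated precisely. A secondary subtlety is that Definition~\ref{def:perms_P_ell} is only stated for $\ell \in \{1, \dots, n-1\}$; for $\ell = n$ the set $P_n$ consists of $312$-free permutations, so the hypothesis of the lemma is vacuous and there is nothing to prove — I would note this at the start so the statement is interpreted correctly. No deep machinery is needed beyond Definition~\ref{def:perms_P_ell}; the whole proof is a few lines of pattern-avoidance reasoning.
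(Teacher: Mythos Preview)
Your proposal is correct and follows essentially the same approach as the paper's proof: use the first bullet of Definition~\ref{def:perms_P_ell} to conclude $w_1 > w_2 = \ell$ and $w\backslash\ell$ is $312$-free, deduce that $\ell \in \{w_i,w_j,w_k\}$, rule out $w_k=\ell$ by the positional constraint $k\ge 3$, rule out $w_i=\ell$ because then $w_1,w_j,w_k$ would be a $312$-pattern in $w\backslash\ell$, and conclude $w_j=\ell=w_2$, hence $j=2$ and $i=1$. The paper's write-up is just a terser version of exactly this argument, and your observation that the case $\ell=n$ is vacuous is a helpful clarification the paper leaves implicit.
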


\begin{proof}
By definition $w \backslash \ell$ is $312$-free, so we have that $\ell \in \{w_i, w_j, w_k \}$. Also by definition, we have $w_1 > w_2 = \ell$. So $w_k \neq \ell$ because $k \ge 3$. We have $w_i \neq \ell$ otherwise $w_1, w_j, w_k$ would be of type $312$ lying in $w \backslash \ell$. So we have $w_j = \ell = w_2$. Since $i < j = 2$, we have $w_i = w_1$.
\end{proof}

\begin{lemma}\label{lem:Q_perm_restrict_le_m}
If $w \in P_\ell$ and $w$ is not $312$-free then $w|_{w_1} = (w_1, \ell, w_1 - 1, \dots, \ell+1, \ell-1, \dots, 1)$.
\end{lemma}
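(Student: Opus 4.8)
\textbf{Proof proposal for Lemma~\ref{lem:Q_perm_restrict_le_m}.}

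The plan is to use the defining conditions of $P_\ell$ directly, together with Lemma~\ref{lem:Q_perms_312}, to pin down the shape of $w|_{w_1}$. Since $w$ is assumed not to be $312$-free, the first bullet of Definition~\ref{def:perms_P_ell} applies, so $w_1 > w_2 = \ell$ and $w \backslash \ell$ is $312$-free. My first step would be to locate the position $m$ for which $w|_m = (m-1,m,m-2,\dots,1)$; the natural candidate is $m = w_1$, but I need to extract such an $m$ from the hypotheses in order to invoke the second bullet of Definition~\ref{def:perms_P_ell}. Because $w$ is not $312$-free, there is a $312$-pattern $w_i,w_j,w_k$ with $i<j<k$; by Lemma~\ref{lem:Q_perms_312}, we get $w_i = w_1$ (at position $1$) and $w_j = \ell$ (at position $2$). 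So I would argue that the values $1,2,\dots,w_1$ all occur among the first few entries: indeed $w_1$ is the largest value in $\{1,\dots,w_1\}$ and it sits at position $1$, and $\ell = w_2$ sits at position $2$. The key subclaim is that $w|_{w_1}$ has exactly the form $(w_1,\ell,w_1-1,w_1-2,\dots,\ell+1,\ell-1,\dots,1)$, i.e. after the first two entries $w_1,\ell$ the remaining entries $\{1,\dots,w_1\}\setminus\{w_1,\ell\}$ appear in strictly decreasing order.

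The main work is establishing that decreasing order. Here I would use the $312$-freeness of $w\backslash\ell$. Suppose, for contradiction, that among the positions carrying values in $\{1,\dots,w_1-1\}\setminus\{\ell\}$ there were an ascent, say values $a < b$ with $a$ appearing before $b$, and both appearing after position $2$. Then the subsequence $w_1, a, b$ has type $312$ (since $w_1$ is larger than both $a$ and $b$, and $a<b$ come in increasing order), and this subsequence lies entirely in $w\backslash\ell$ because $w_1 \neq \ell$, $a\neq\ell$, $b\neq\ell$. This contradicts that $w\backslash\ell$ is $312$-free. Hence the values in $\{1,\dots,w_1-1\}\setminus\{\ell\}$ that lie beyond position $2$ are in strictly decreasing order. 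To complete the argument I also need that all of these values actually occur within $w|_{w_1}$ and in particular before any value exceeding $w_1$; but this is automatic from the definition of the restriction $w|_{w_1}$, which by definition consists precisely of the entries of $w$ with values in $\{1,\dots,w_1\}$ read in their original order. So $w|_{w_1}$ is an arrangement of $\{1,\dots,w_1\}$ whose first two entries are $w_1$ then $\ell$ (these are forced since $w_1,w_2$ are the global first two entries and both have value $\le w_1$), and whose remaining entries are $\{1,\dots,w_1\}\setminus\{w_1,\ell\}$ in decreasing order, namely $w_1-1, w_1-2,\dots,\ell+1,\ell-1,\dots,1$.

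The one subtlety I would be careful about is the interaction with the second bullet of Definition~\ref{def:perms_P_ell}: that bullet is phrased as a conditional ("if $w|_m = (m-1,m,m-2,\dots,1)$ for some $3\le m\le n$ then\dots"), so I should check whether I even need to invoke it, or whether the first bullet plus $312$-freeness of $w\backslash\ell$ already suffice. I believe the argument above only uses the first bullet and the decreasing-order consequence of $312$-freeness, so the second bullet is not needed here; but I would double-check the edge case $\ell = w_1 - 1$ (where the list $w_1-1,\dots,\ell+1$ is empty) and the case where $w_1$ is small, to make sure the claimed normal form degenerates correctly. The main obstacle, such as it is, is bookkeeping: making sure that "the values in $\{1,\dots,w_1\}$ read in order" genuinely place $w_1$ first, $\ell$ second, and everything else afterward, which rests on $w_1$ and $\ell$ being the values at the two globally-first positions $1$ and $2$. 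Once that is clearly stated, the decreasing-order claim via the forbidden $312$-pattern finishes the proof.
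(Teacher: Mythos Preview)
Your proposal is correct and follows essentially the same approach as the paper: use the first bullet of Definition~\ref{def:perms_P_ell} to get $w_1 > w_2 = \ell$ and $w\backslash\ell$ is $312$-free, then observe that any ascent among values $<w_1$ beyond position $2$ would create a $312$-pattern $(w_1,a,b)$ lying in $w\backslash\ell$. The paper's proof is just a terser version of your argument; your detour through Lemma~\ref{lem:Q_perms_312} and your discussion of the second bullet of Definition~\ref{def:perms_P_ell} are unnecessary (as you yourself suspect), but harmless.
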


\begin{proof}
Let $3 \le i < j \le n$. If $w_i, w_j < w_1$ then we have $w_i > w_j$ otherwise $w_1, w_i, w_j$ is a $312$ in $w$ that also appears in $w \backslash \ell$. So $w|_{w_1} = (w_1, \ell, w_1 - 1, \dots, \ell+1, \ell-1, \dots, 1)$.
\end{proof}

\begin{lemma}\label{lem:Q_perm_w1=m-1}
If $w \in P_\ell$ and $w|_m = (m-1, m, m-2, \dots, 1)$ for some $3 \le m \le n$ then $w_1 = m-1$.
\end{lemma}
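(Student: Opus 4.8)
\textbf{Proof proposal for Lemma~\ref{lem:Q_perm_w1=m-1}.}

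The plan is to assume $w \in P_\ell$ and $w|_m = (m-1, m, m-2, \dots, 1)$ for some $3 \le m \le n$, and derive that $w_1 = m-1$. First I would observe that since $w|_m$ has first entry $m-1$ and second entry $m$, the value $m-1$ appears before $m$ in $w$, and both appear before all of $1, \dots, m-2$. In particular, reading $w$ and restricting attention to the values $\{1, \dots, m\}$, the first such value encountered is $m-1$. So it suffices to show that $w_1 \le m$, i.e. that none of the values $m+1, \dots, n$ precede all of $1, \dots, m$ in $w$; equivalently $w_1 \in \{1, \dots, m\}$, and then the restriction condition forces $w_1 = m-1$ exactly because $m-1$ is the first of the values $\{1,\dots,m\}$ to appear.

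The heart of the argument is therefore to rule out $w_1 > m$. Suppose for contradiction $w_1 > m$. Since $m \ge 3$, the restriction $w|_m = (m-1, m, m-2, \dots, 1)$ shows that the subsequence of $w$ on values $m-1, m$ together with any later small value, say $1$, realizes the pattern: $m-1$ appears, then $m$, then $1$, which has type $231$ — not immediately $312$. To produce a $312$ I would instead use $w_1$ itself: since $w_1 > m$, and $m-1$ appears somewhere after position $1$ but before $m$, and $1$ appears after $m$, the triple $(w_1, m-1, 1)$ (occurring in this order in $w$) has type $312$. By Lemma~\ref{lem:Q_perms_312}, any type-$312$ subsequence $w_i, w_j, w_k$ of $w$ (with $i<j<k$) must have $w_i = w_1$ and $w_j = \ell$. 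Applying this to the triple just found: $w_i = w_1$ is automatic, but we also need $w_j = \ell$, i.e. $m - 1 = \ell$. If $m - 1 \neq \ell$ this is an immediate contradiction and we are done. If $m-1 = \ell$, then I would instead pick a different middle value: take the triple $(w_1, m, 1)$, which also occurs in this order (since $m$ precedes $1$ in $w$, and $w_1$ is first) and has type $312$; Lemma~\ref{lem:Q_perms_312} then forces $m = \ell$, contradicting $m - 1 = \ell$. Hence $w_1 \le m$ in all cases.

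Finally, with $w_1 \le m$ established, the restriction condition closes the argument: $w|_m$ lists the values $\{1, \dots, m\}$ in the relative order in which they appear in $w$, and this order begins $m-1, m, \dots$; since $w_1 \in \{1,\dots,m\}$, it must equal the first entry of $w|_m$, namely $m - 1$. Therefore $w_1 = m-1$. The main obstacle I anticipate is the bookkeeping in the case $m - 1 = \ell$ (and potentially $m = \ell$, which cannot happen once $m-1 = \ell$, but one should double-check the edge cases $m = 3$ and small $\ell$): one must be careful that the chosen triples genuinely occur as subsequences in the required left-to-right order, which follows cleanly from the explicit form of $w|_m$ but deserves an explicit sentence. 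Everything else is a short application of Lemma~\ref{lem:Q_perms_312} and the definition of $P_\ell$.
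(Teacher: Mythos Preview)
Your argument has a pattern-type error that breaks it. You claim that the triple $(w_1, m-1, 1)$ has type $312$, but with $w_1 > m-1 > 1$ this triple is of type $321$, not $312$: a $312$ pattern requires the middle entry (by position) to be the \emph{smallest} of the three. The same mistake recurs with your backup triple $(w_1, m, 1)$, which is again of type $321$. Consequently Lemma~\ref{lem:Q_perms_312} never applies as you invoke it, and the case split on whether $m-1=\ell$ is moot.

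The fix is easy: use the triple $(w_1, m-1, m)$ instead. Under the assumption $w_1>m$ this genuinely has type $312$ (largest, smallest, middle), and the positions are correct since $w|_m=(m-1,m,\dots)$ forces $m-1$ to precede $m$ in $w$. From here you do not even need the $m-1=\ell$ case split: the existence of a $312$ pattern means $w$ is not $312$-free, so the first bullet of Definition~\ref{def:perms_P_ell} gives $w_1>w_2=\ell$, while the hypothesis $w|_m=(m-1,m,m-2,\dots,1)$ triggers the second bullet, giving $w_1<w_2\le\ell$ --- an immediate contradiction. Then, as you correctly note, $w_1\le m$ forces $(w|_m)_1=w_1=m-1$.

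For comparison, the paper's proof bypasses any pattern argument entirely: it reads off $w_1<w_2\le\ell$ and the explicit form of $w|_{w_2}$ straight from the second bullet of Definition~\ref{def:perms_P_ell}, and then observes that if $m<w_1$ the further restriction $(w|_{w_2})|_m$ would be the decreasing permutation $(m,m-1,\dots,1)$, contradicting the hypothesis. Your (corrected) route via a $312$ pattern is a legitimate alternative and arguably a bit slicker, but as written it needs the triple fixed.
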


\begin{proof}
By definition $w_1 < w_2 \le \ell$ and $w|_{w_2} = (w_1, w_2, w_2 - 1, \dots, w_1+1, w_1-1, \dots, 1)$. If $m < w_1$ then $w|_m = (w|_{w_2})|_m = (m, m-1, \dots, 1) \neq (m-1, m, m-2, \dots, 1)$, a contradiction. So $m \ge w_1$. Therefore $w|_m = (w_1, \dots) = (m-1, \dots)$ hence $w_1 = m-1$.
\end{proof}

\begin{lemma}\label{lem:Q_perm_w1_w2_consecutive}
Let $w \in P_\ell$. If there exists $3 \le m \le n$ such that $w|_m = (m-1, m, \dots )$ then either $m - 1 = w_1 < w_2 \le \ell$ or $w_1 > w_2 = \ell = m-1$.
\end{lemma}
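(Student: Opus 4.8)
\textbf{Proof plan for Lemma~\ref{lem:Q_perm_w1_w2_consecutive}.}
The plan is to analyse the hypothesis $w|_m = (m-1, m, \dots)$ by splitting on whether $w$ is $312$-free, using the structural description of $P_\ell$ from Definition~\ref{def:perms_P_ell} together with the auxiliary lemmas already established. First I would treat the case where $w$ is $312$-free: then all restrictions of $w$ have the descending property, so in particular $w|_m$ has the descending property. But $w|_m = (m-1, m, \dots)$ has $w|_m$ beginning with an ascent $m-1 < m$, which forces $w_1 < w_2$. Combined with the second bullet of Definition~\ref{def:perms_P_ell} (applied to this very $m$), we get $w_1 < w_2 \le \ell$; and by Lemma~\ref{lem:Q_perm_w1=m-1} we get $w_1 = m-1$, so $m-1 = w_1 < w_2 \le \ell$, which is the first alternative.

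Next I would handle the case where $w$ is not $312$-free. By definition of $P_\ell$, this forces $w_1 > w_2 = \ell$. By Lemma~\ref{lem:Q_perm_restrict_le_m} we have $w|_{w_1} = (w_1, \ell, w_1 - 1, \dots, \ell+1, \ell-1, \dots, 1)$. Now I examine where $m$ sits relative to $w_1$. If $m \le w_1$, then $w|_m$ is obtained from $w|_{w_1}$ by deleting the values $m+1, \dots, w_1$; since the first entry of $w|_{w_1}$ is $w_1 \ge m$ — actually I should be careful: if $m < w_1$ then the value $w_1$ is deleted, and then the leading entry of $w|_m$ is $\ell$ (if $\ell < m$) or the deletion reshapes things. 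The cleaner route: since $w|_m = (m-1, m, \dots)$, its first entry is $m-1$, so the value $m-1$ occupies position $1$ in $w$ among the values $\{1, \dots, m\}$; its second entry is $m$, i.e.\ the value $m$ occupies the position of the second-smallest index carrying a value in $\{1,\dots,m\}$. Using $w|_{w_1} = (w_1, \ell, w_1-1, \dots)$, I would deduce that $m-1 = w_1$ is impossible unless $w_2 = \ell$ appears as the value $m$, i.e.\ $\ell = m$; and $m - 1$ being the overall-first surviving value together with $w_1 > \ell = w_2$ forces $m - 1 \ge w_1$, hence $m-1 = w_1$ (since $w_1$ is the largest value $\le$ anything in position $1$ after deletions) and $\ell = m - 1 + 1 - 1$... — here I should just directly match: $w|_{w_1}$ starts $(w_1, \ell, \dots)$, and for this to have the form $(m-1, m, \dots)$ after restricting to $[m]$, noting restriction only removes large values and $\ell < w_1$, the only possibility is that no removal affects the first two entries, i.e.\ $m \ge w_1$, giving $w_1 = m-1$ and $\ell = m$; but $\ell = m > m - 1 = w_1$ contradicts $w_1 > \ell$ unless instead the second entry $m$ of $w|_m$ is the value $w_1$ itself and $m-1 = \ell$, which happens when $w_1 \le m$ is deleted... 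I would resolve this by observing that since $w_1 > w_2 = \ell$ and $w|_m$ begins $m-1, m$ which is an \emph{ascent}, the first two surviving values in positions $1, 2$ are $w_1, \ell$ with $w_1 > \ell$, a \emph{descent} — so positions $1$ and $2$ of $w$ cannot both survive the restriction to $[m]$ while producing an ascent, unless $w_1 > m$ so that position $1$'s value is deleted. If $w_1 > m$, then among $\{1,\dots,m\}$ the value $\ell = w_2$ is the first to appear (as $w_1 \notin [m]$), so $w|_m$ begins with $\ell$, forcing $\ell = m-1$; and then $w_1 > w_2 = \ell = m-1$, the second alternative.

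The main obstacle I expect is the bookkeeping in the non-$312$-free case: correctly tracking which entries survive the restriction $w \mapsto w|_m$ when $m$ may be larger or smaller than $w_1$, and ruling out the spurious possibilities where the ascent $m-1 < m$ at the front of $w|_m$ is inconsistent with the descent $w_1 > \ell$ forced at the front of $w$. Once the correct case division ($m \ge w_1$ versus $m < w_1$ versus $\ell < m \le w_1$) is set up, each branch is a short deduction using $w|_{w_1} = (w_1, \ell, w_1-1, \dots, \ell+1, \ell-1, \dots, 1)$ from Lemma~\ref{lem:Q_perm_restrict_le_m}. The $312$-free branch is routine given Lemma~\ref{lem:Q_perm_w1=m-1} and the definition of $P_\ell$.
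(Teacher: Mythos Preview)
Your approach is correct and matches the paper's: split on whether $w$ is $312$-free, in the first branch use the descending property of $w|_m$ to pin down $w|_m = (m-1,m,m-2,\dots,1)$, then apply the second bullet of Definition~\ref{def:perms_P_ell} and Lemma~\ref{lem:Q_perm_w1=m-1}; in the second branch use $w_1 > w_2 = \ell$ together with Lemma~\ref{lem:Q_perm_restrict_le_m} and a comparison of $m$ with $w_1$. The paper's non-$312$-free case is tidier than your sketch: it just observes that $m \ge w_1$ makes $w|_m$ start with the descent $(w_1,\ell,\dots)$, a contradiction, while $m < w_1$ makes $w|_m$ (via Lemma~\ref{lem:Q_perm_restrict_le_m}) equal either $(m,m-1,\dots)$, again a contradiction, or $(\ell,m,m-1,\dots,\ell+1,\ell-1,\dots,1)$, forcing $\ell = m-1$ --- no need for the finer bookkeeping you anticipate.
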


\begin{proof}
If $w$ is $312$-free then write $w|_m = (w_1, w_2, \dots, w_m) = (m-1, m, \dots)$. For any $3 \le i < j \le n$ we have $w_i > w_j$, otherwise $w_2, w_i, w_j$ is of type $312$. Hence $w|_m = (m-1, m, m-2, \dots, 1)$ so by definition of $P_\ell$ we have $w_1 < w_2 \le \ell$. And by Lemma~\ref{lem:Q_perm_w1=m-1} we have $w_1 = m-1$.

If $w$ is not $312$-free then write $w = (w_1, \dots, w_n)$. By definition of $P_\ell$, we have $w_1 > w_2 = \ell$. If $m \ge w_1$ then we have $w|_m = (w_1, \ell, \dots) \neq (m-1, m, \dots)$, a contradiction. If $m < w_1$ then by Lemma~\ref{lem:Q_perm_restrict_le_m} we either have $w|_m = (m, m-1,\dots) \neq (m-1, m, \dots)$ a contradiction or $w|_m = (\ell, m, m-1, \dots, \ell+1, \ell-1, \dots, 1)$ and so we have $\ell = m-1$.
\end{proof}

\begin{lemma}\label{lem:Q_perm_w1_w2_gap} Let $w \in P_\ell$ and $t \ge 2$. Write $\widehat w_1 = \min\{w_1, \dots, w_t \}$ and $\widehat w_2 = \min(\{w_1, \dots, w_t \} \backslash \widehat w_1)$. If $\widehat w_1 \ge 2$ and $\widehat w_2 > \widehat w_1 + 1$ then either
$\widehat w_1 = w_1 <\widehat w_2 \le w_2 \le \ell$ or $w_1 > w_2 = \ell = \widehat w_1$.
\end{lemma}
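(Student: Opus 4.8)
The plan is to split according to whether $w$ is $312$-free, the two cases matching the two alternatives of the conclusion. Set $a := \widehat w_1$, so by hypothesis $a \ge 2$ and, by integrality, $\widehat w_2 \ge a+2$; note also that $w_2$ occurs among $w_1,\dots,w_t$ since $t \ge 2$. In the case \emph{$w$ not $312$-free} I would aim for $w_1 > w_2 = \ell = \widehat w_1$. Here Definition~\ref{def:perms_P_ell} gives $w_1 > w_2 = \ell$ with $w\setminus\ell$ being $312$-free, and Lemma~\ref{lem:Q_perm_restrict_le_m} gives $w|_{w_1} = (w_1,\ell,w_1-1,\dots,\ell+1,\ell-1,\dots,1)$. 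Since $\ell = w_2$ sits in position $2\le t$ we already have $\widehat w_1 \le \ell$, so the only thing to rule out is $\widehat w_1 < \ell$: if $a < \ell$ then $a$ lies strictly inside the decreasing tail $(\ell-1,\dots,1)$ of $w|_{w_1}$, so the value $a+1 \le \ell$ occurs before $a$ in $w$ (it is the second entry of $w|_{w_1}$ if $a+1=\ell$, and earlier than $a$ within the tail if $a+1<\ell$); hence $a+1\in\{w_1,\dots,w_t\}$ and $\widehat w_2 \le a+1$, contradicting $\widehat w_2 \ge a+2$. So $\widehat w_1 = \ell = w_2 < w_1$.

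In the case \emph{$w$ is $312$-free} I would aim for $\widehat w_1 = w_1 < \widehat w_2 \le w_2 \le \ell$, and the heart of the argument is to prove the normal form $w|_{a+1} = (a, a+1, a-1, a-2, \dots, 1)$. Two uses of $312$-avoidance do this. Since $\widehat w_2 \ge a+2 > a+1$, the value $a+1$ is not in the prefix, so it occupies a position $>t$; if $w_1 \ne a$ then $w_1 > a$ and (as $w_1 \ne a+1$) $w_1 \ge a+2$, and then the positions $1 < \mathrm{pos}(a) < \mathrm{pos}(a+1)$, the middle one being $\le t$, carry the values $w_1, a, a+1$ — a $312$ pattern, impossible; hence $w_1 = a = \widehat w_1$. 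Next, the values $1,\dots,a-1$ all lie at positions $>1$ and, comparing each pair with $w_1 = a$, occur in strictly decreasing order; moreover the one occurring earliest, namely $a-1$, occurs after $a+1$, since otherwise the positions $2 < \mathrm{pos}(a-1) < \mathrm{pos}(a+1)$ carry $w_2\,(\ge a+2), a-1, a+1$ — again a $312$. Putting these together, the values $\le a+1$ appear in $w$ in the order $a, a+1, a-1, \dots, 1$, i.e. $w|_{a+1} = (a, a+1, a-1, \dots, 1)$, and $3 \le a+1 \le n-1$ since $a\ge 2$ and $a+2 \le \widehat w_2 \le n$. Then the second bullet of Definition~\ref{def:perms_P_ell} applied with $m = a+1$ gives $w_1 < w_2 \le \ell$; since $w_2 \ne w_1 = \widehat w_1$ lies in the prefix we also get $w_2 \ge \widehat w_2$, and combining yields $\widehat w_1 = w_1 < \widehat w_2 \le w_2 \le \ell$. (If $\ell = n$, where $w \in P_n$ merely means $w$ is $312$-free, the bound $w_2 \le \ell$ is automatic and the rest is unchanged.)

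The step I expect to be the main obstacle is the identity $w|_{a+1} = (a, a+1, a-1, \dots, 1)$ in the $312$-free case: this normal form of the first $a+1$ values is exactly what triggers the second bullet of Definition~\ref{def:perms_P_ell} and hence produces the bound $w_2 \le \ell$. Making the two pattern-avoidance arguments precise — pinning $w_1 = \widehat w_1$, and then forcing $a+1$ ahead of all of $1,\dots,a-1$ — needs careful tracking of which positions lie in $\{1,\dots,t\}$ and which lie beyond $t$; the rest of the proof is routine unwinding of the definitions of $\widehat w_1$, $\widehat w_2$, restriction, and $312$-avoidance.
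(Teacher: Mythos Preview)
Your argument is correct. Both cases go through as written: in the non-$312$-free branch the structure lemma~\ref{lem:Q_perm_restrict_le_m} pins down $w|_{w_1}$ and the decreasing tail forces $a+1$ into the prefix; in the $312$-free branch your two pattern-avoidance steps correctly establish the full normal form $w|_{a+1}=(a,a+1,a-1,\dots,1)$, which triggers the second bullet of Definition~\ref{def:perms_P_ell}.

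The paper's proof is organized differently and is shorter because it leverages two of the lemmas you are essentially rederiving. Rather than split on $312$-freeness, the paper locates $w_d=\widehat w_1+1$ at a position $d>t$, shows that every entry in positions $t+1,\dots,d-1$ exceeds $w_d$ (a single $312$-pattern argument together with Lemma~\ref{lem:Q_perms_312} handles both the $312$-free and non-$312$-free cases at once), and concludes only the weaker statement $w|_{w_d}=(w_d-1,w_d,\dots)$. It then invokes Lemma~\ref{lem:Q_perm_w1_w2_consecutive} to obtain the dichotomy. So the paper's route is: one uniform pattern argument, then a black-box appeal to the previous lemma; your route is: split cases up front, use Lemma~\ref{lem:Q_perm_restrict_le_m} in one branch, and in the other branch prove a stronger normal form and feed it directly into Definition~\ref{def:perms_P_ell}. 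Your approach is more self-contained and makes the mechanism in the $312$-free case very explicit; the paper's approach is more modular and avoids redoing work already packaged in Lemmas~\ref{lem:Q_perms_312} and~\ref{lem:Q_perm_w1_w2_consecutive}.
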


\begin{proof}
Let $w_d = \widehat w_1 + 1$. Since $\widehat w_1 < w_d < \widehat w_2$, we have $d \in \{t+1, \dots, n \}$. If there exists $i \in \{t+1, \dots, d-1 \}$ such that $w_i < w_d - 1$ then $\widehat w_2, w_i, w_d$ is of type $312$. So by Lemma~\ref{lem:Q_perms_312} we have $w_i = \ell$ however $w_i < \min\{ w_1, \dots, w_t\} \le w_2 = \ell$, a contradiction. So for all $i \in \{ t+1, \dots, d-1\}$ we have $w_i > w_d$. And so $w|_{w_d} = (w_d-1, w_d, \dots)$. By Lemma~\ref{lem:Q_perm_w1_w2_consecutive} we have either $w_d - 1 = w_1 < w_2 \le \ell$ or $w_1 > w_2 = \ell = w_d - 1$. In the former case note that $\widehat w_2 \le w_2 \le \ell$.
\end{proof}

\begin{lemma} \label{lem:SSYT_flag_image_non_vanish_Q_ell}
Fix $w \in P_\ell$. Let $[IJ]$ be a semi-standard Young tableau and write $[I'J'] = \Gamma_\ell([IJ])$. If $I, J \le w$ then $I', J' \le w$.
\end{lemma}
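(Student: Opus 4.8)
\noindent The plan is to run a case analysis guided by the definition of $\Gamma_\ell$, feeding in the combinatorics of $P_\ell$-permutations (Lemmas~\ref{lem:Q_perms_312}--\ref{lem:Q_perm_w1_w2_gap}) only at the one or two places where it is actually needed. Write $I = \{i_1 < \cdots < i_t\}$ and $J = \{j_1 < \cdots < j_s\}$ for the columns of $[IJ]$ with $s \le t$, so $i_k \le j_k$ for $k \le s$ by semi-standardness, and for a subset $X = \{x_1 < \cdots < x_r\}$ write $\widehat w^{(r)}_1 < \cdots < \widehat w^{(r)}_r$ for the increasing rearrangement of $\{w_1, \dots, w_r\}$; recall $X \le w$ means $x_k \le \widehat w^{(r)}_k$ for every $k$, and note the monotonicity $\widehat w^{(s)}_k \ge \widehat w^{(t)}_k$ for $k \le s \le t$. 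The key first observation is that $\Gamma_\ell$ alters only the entries sitting in the first two rows of the tableau (it may inspect the third row in the size-one case but never changes it), so $I'$ and $I$ coincide outside the first two rows, and likewise $J'$ and $J$; hence $I' \le w$ (resp.\ $J' \le w$) reduces to bounding the at most two new entries of $I'$ (resp.\ $J'$) that lie in rows one and two against $\widehat w^{(t)}_1,\widehat w^{(t)}_2$ (resp.\ $\widehat w^{(s)}_1,\widehat w^{(s)}_2$, or $w_1$ when $s=1$), all remaining inequalities being inherited from $I,J\le w$. Recall also that every $w\in P_\ell$ is $312$-free except, when $1\le\ell\le n-2$, the permutation $(n,\ell,n-1,\dots,\ell+1,\ell-1,\dots,1)$, which I would isolate at the end and settle by a direct inspection of $S_w$ in the spirit of Lemma~\ref{lem:toric_thm1_exceptional}; so we may assume $w$ is $312$-free.

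\noindent In the branches of $\Gamma_\ell$ that fix the columns as sets — the ``otherwise'' branch when $s=1$ and the branch $s=t=1$, together with the Grassmannian $\Gamma_\ell$ acting through the identity — there is nothing to prove. When $2\le s\le t$, the map acts only on the top $s\times 2$ rectangle through the Grassmannian $\Gamma_\ell$ and fixes $i_{s+1},\dots,i_t$; here the new row-one and row-two entries of $I'$ and $J'$ all lie in $\{i_1,i_2,j_1,j_2\}$, and, using $i_1\le\widehat w^{(t)}_1$, $i_2\le\widehat w^{(t)}_2$, $j_1\le\widehat w^{(s)}_1$, $j_2\le\widehat w^{(s)}_2$, the monotonicity above, and the explicit form of the Grassmannian swap (the transferred entry is always the smaller of the two candidates), one checks the required bounds. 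This is the Grassmannian analogue of the present lemma, which I would either quote from \cite{OllieFatemeh} or reproduce in this two-row situation.

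\noindent The substance lies in the two swap branches with $s=1$ and $t\ge 2$, where $i_1\le\ell<i_2$, the remaining $i_k$ and $j_1$ exceed $\ell$, and $\Gamma_\ell$ transfers one entry. In the branch $i_2<j_1$ (and $j_1<i_3$ if $t\ge 3$) we get $I'=\{i_1<j_1<i_3<\cdots<i_t\}$ and $J'=\{i_2\}$: then $J'\le w$ is immediate from $i_2<j_1\le w_1$, the entry $i_1$ keeps its bound, and the sole nontrivial inequality is $j_1\le\widehat w^{(t)}_2$. In the branch $j_1<i_2$ we get $I'=\{j_1<i_2<\cdots<i_t\}$ and $J'=\{i_1\}$, and the only thing left to prove is $j_1\le\widehat w^{(t)}_1=\min\{w_1,\dots,w_t\}$. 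Both are trivial when $t=2$, since then $\widehat w^{(2)}_2=\max(w_1,w_2)\ge w_1\ge j_1$, while $\widehat w^{(2)}_1<j_1$ would force $w_1>w_2$ and hence $w$ non-$312$-free. For $t\ge 3$ assume the relevant inequality fails; then at most two of $w_1,\dots,w_t$ lie below $j_1$ and, since $j_1\le w_1$, none of them is $w_1$, so among $w_1$ and the positions of these small values one finds a subsequence of type $312$, contradicting Lemma~\ref{lem:Q_perms_312}; if instead the two small values are consecutive one applies Lemma~\ref{lem:Q_perm_w1_w2_gap}, whose conclusions force $\widehat w^{(t)}_2\le\ell$ and contradict $\widehat w^{(t)}_2\ge i_2>\ell$. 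Combining these with the semi-standardness constraint $j_1<i_3\le\widehat w^{(t)}_3$ and Lemmas~\ref{lem:Q_perm_restrict_le_m} and~\ref{lem:Q_perm_w1_w2_consecutive}, which pin down $w|_{w_1}$ and $w|_{w_2}$ as staircases, dispatches the remaining configurations.

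\noindent The main obstacle is precisely this last step: apart from the single inequality $j_1\le\widehat w^{(t)}_2$ (or $j_1\le\widehat w^{(t)}_1$) in the $s=1$, $t\ge 3$ swap branches, everything is formal, but that inequality is exactly where the rigidity of $P_\ell$ must be used — $312$-avoidance away from the prescribed slot, the staircase shape of $w|_{w_1}$ and $w|_{w_2}$, and the interplay with the shape of the semi-standard tableau — and one must also isolate and settle by hand the exceptional permutation, for which the hypothesis ``columns $\le w$'' is subtly weaker than standardness for $X(w)$.
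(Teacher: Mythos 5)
Your architecture is the right one — the same case split along the branches of $\Gamma_\ell$, the same identification of the one or two genuinely new inequalities per branch, and the same toolbox of Lemmas~\ref{lem:Q_perms_312}--\ref{lem:Q_perm_w1_w2_gap} — but the reduction you perform at the outset is false, and it is load-bearing. You claim that every $w \in P_\ell$ is $312$-free except the single permutation $(n,\ell,n-1,\dots,\ell+1,\ell-1,\dots,1)$, and on that basis you treat the discovery of any $312$ pattern as an immediate contradiction. This is not so: by Theorem~\ref{thm:P_ell=T_n+Z_n} and Theorem~C, the set $T_{n,\ell}$ contains the whole family $A_3$ (and further non-$312$-free members of $A_2'$), obtained by inserting $n$ into $(n-1,\ell,n-2,\dots,\ell+1,\ell-1,\dots,1)$ at a late position. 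For instance $w=(4,2,5,3,1)\in A_3\subseteq T_{5,2}\subseteq P_2$ contains the subsequence $(w_1,w_2,w_4)=(4,2,3)$ of type $312$ and is not the exceptional permutation. Lemma~\ref{lem:Q_perms_312} does not assert $312$-freeness; it asserts that any $312$ occurring in $w\in P_\ell$ must use $w_1$ and $w_2=\ell$ as its first two entries. The paper's proof uses exactly this weaker rigidity: in each contradiction step it first locates a $312$, then invokes Lemma~\ref{lem:Q_perms_312} to force the middle value of the pattern to equal $\ell$ (or, via Lemmas~\ref{lem:Q_perm_w1_w2_consecutive} and \ref{lem:Q_perm_w1_w2_gap}, forces $w_2\le\ell$ in a position-$1$/$2$ slot), and only then derives a numerical contradiction from the block structure, e.g.\ $j_1\le\ell$ against $j_1\in\{\ell+1,\dots,n\}$. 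Your argument is missing precisely this second step everywhere you write ``contradicting Lemma~\ref{lem:Q_perms_312},'' and the permutations it fails on are exactly the non-exceptional, non-$312$-free members of $P_\ell$ that your reduction discards.

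A secondary weakness: in the case $2\le s\le t$ you propose to quote the Grassmannian version of the lemma. That does not directly apply here, because the two columns are measured against different prefixes of $w$ — the condition $I\le w$ bounds the entries of $I$ by the sorted set $\{w_1,\dots,w_t\}$ while $J\le w$ uses $\{w_1,\dots,w_s\}$ with $s<t$ — so the Grassmannian statement (both columns against the same prefix) is not the statement you need. The paper's Cases~1 and~2 redo this argument with the two distinct prefixes $\widehat w$ and $\widetilde w$, and it is again there (not only in the $s=1$ branches) that the $P_\ell$-rigidity must be invoked, including explicitly for non-$312$-free $w$. Both defects are repairable by replacing ``$w$ is $312$-free'' with the correct localization of $312$ patterns and finishing each branch with the block-membership contradiction, but as written the proof does not cover a positive fraction of the permutations the lemma is about.
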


\begin{proof}
We write $I = \{i_1 < \dots < i_t \}$ and $J = \{ j_1 < \dots < j_s\}$ where $s \le t$. We use the following notation to denote ordered initial segments of $w$. We let $\{w_1, \dots w_t\} = \{\widehat w_1 < \dots < \widehat w_t \}$ and $\{w_1, \dots, w_s \} = \{\widetilde w_1 < \dots < \widetilde w_s \}$. We have that $i_3 \le \widehat w_3, \dots, i_t \le \widehat w_3$ and $j_3 \le \widetilde w_3, \dots, j_s \le \widetilde w_s$. Since $\Gamma_\ell$ fixes all entries in the third row and below it remains to check the entries in the first two rows of $\Gamma_\ell(T)$. Note that $I \le w$ implies $i_1 \le \widehat w_1$ and $i_2 \le \widehat w_2$. Also $J \le w$ implies $j_1 \le \widetilde w_1$ and $j_2 \le \widetilde w_2$.

If $s = t$, then the result holds by the Grassmannian case so we assume that $s < t$. If $\Gamma_\ell$ fixes the entries in each column then the result trivially holds. So we take cases on the tableau $T$ whose columns are not fixed by $\Gamma_\ell$.

\smallskip

\textbf{Case 1.} Assume $2 \le s$, $j_1 < i_2$, $i_1 \in \{1, \dots, \ell \}$ and $i_2, j_1, j_2 \in \{ \ell+1, \dots, n\}$. So the tableaux are
\[
\Gamma_\ell \left( \,
\begin{tabular}{cc}
    \multicolumn{1}{c}{$I$} & \multicolumn{1}{c}{$J$} \\
    \hline
    \multicolumn{1}{|c|}{$i_1$} & \multicolumn{1}{c|}{$j_1$} \\
    \hline
    \multicolumn{1}{|c|}{$i_2$} & 
    \multicolumn{1}{|c|}{$j_2$} \\
    \hline
    \multicolumn{1}{|c|}{$\vdots$} &
    \multicolumn{1}{|c|}{$\vdots$} \\
    \hline
\end{tabular} 
\, \right) =
\begin{tabular}{cc}
    \multicolumn{1}{c}{$I'$} & \multicolumn{1}{c}{$J'$} \\
    \hline
    \multicolumn{1}{|c|}{$j_1$} & \multicolumn{1}{c|}{$j_2$} \\
    \hline
    \multicolumn{1}{|c|}{$i_2$} & 
    \multicolumn{1}{|c|}{$i_1$} \\
    \hline
    \multicolumn{1}{|c|}{$\vdots$} &
    \multicolumn{1}{|c|}{$\vdots$} \\
    \hline
\end{tabular} \, .
\]
We have that $i_1 \le j_1 \le \widetilde w_1$ and $j_2 \le \widetilde w_2$ and so $J' \le w$.

Assume by contradiction that $I' \not\le w$. Since $i_2 \le \widehat w_2$, we must have $j_1 > \widehat w_1$. Therefore $\widehat w_1 < j_1 < i_2 \le \widehat w_2 \le \widetilde w_1$ and so $\widehat w_1 + 1 \in \{w_{t+1}, \dots, w_n\}$. Since $j_1 \le \widetilde w_1$ we must have that $\widehat w_1 \in \{w_{s+1}, \dots, w_t\}$. And so $\widetilde w_1, \widehat w_1, \widehat w_1 + 1$ is of type $312$. By definition of $P_\ell$ if $w$ is not $312$-free then $w_2 = \ell$. However $j_1 \le \widetilde w_1 = \min\{w_1, w_2 \dots, w_s \} \le w_2 = \ell$, a contradiction.

\smallskip

\textbf{Case 2.} Assume $2 \le s$, $j_1 < i_2$, $i_1, i_2, j_1 \in \{1, \dots, \ell \}$ and $j_2 \in \{ \ell+1, \dots, n\}$. So the tableaux are
\[
\Gamma_\ell \left( \,
\begin{tabular}{cc}
    \multicolumn{1}{c}{$I$} & \multicolumn{1}{c}{$J$} \\
    \hline
    \multicolumn{1}{|c|}{$i_1$} & \multicolumn{1}{c|}{$j_1$} \\
    \hline
    \multicolumn{1}{|c|}{$i_2$} & 
    \multicolumn{1}{|c|}{$j_2$} \\
    \hline
    \multicolumn{1}{|c|}{$\vdots$} &
    \multicolumn{1}{|c|}{$\vdots$} \\
    \hline
\end{tabular} 
\, \right) =
\begin{tabular}{cc}
    \multicolumn{1}{c}{$I'$} & \multicolumn{1}{c}{$J'$} \\
    \hline
    \multicolumn{1}{|c|}{$j_1$} & \multicolumn{1}{c|}{$j_2$} \\
    \hline
    \multicolumn{1}{|c|}{$i_2$} & 
    \multicolumn{1}{|c|}{$i_1$} \\
    \hline
    \multicolumn{1}{|c|}{$\vdots$} &
    \multicolumn{1}{|c|}{$\vdots$} \\
    \hline
\end{tabular} \, .
\]
We have that $i_1 \le j_1 \le \widetilde w_1$ and $j_2 \le \widetilde w_2$ and so $J' \le w$.

Assume by contradiction that $I' \not\le w$. Since $i_2 \le \widehat w_2$, we must have $j_1 > \widehat w_1$. Therefore $\widehat w_1 < j_1 < i_2 \le \widehat w_2 \le \widetilde w_1$ and so $\widehat w_1 + 1 \in \{w_{t+1}, \dots, w_n\}$. Since $j_1 \le \widetilde w_1$ we must have that $\widehat w_1 \in \{w_{s+1}, \dots, w_t\}$. And so $\widetilde w_1, \widehat w_1, \widehat w_1 + 1$ is of type $312$. So by Lemma~\ref{lem:Q_perms_312}, $\widehat w_1 = \ell$. However $\widehat w_1 < j_1$ and $j_1 \in \{1, \dots, \ell \}$, a contradiction.

\smallskip

\textbf{Case 3.} Assume $s = 1, t \ge 2$, $j_1 < i_2$, $i_1 \in \{1, \dots, \ell \}$ and $j_1, i_2 \in \{\ell+1, \dots, n \}$. The tableaux are
\[
\Gamma_\ell \left( \,
\begin{tabular}{cc}
    \multicolumn{1}{c}{$I$} & \multicolumn{1}{c}{$J$} \\
    \hline
    \multicolumn{1}{|c|}{$i_1$} & \multicolumn{1}{c|}{$j_1$} \\
    \hline
    \multicolumn{1}{|c|}{$i_2$} & \\
    \cline{1-1}
    \multicolumn{1}{|c|}{$\vdots$} & \\
    \cline{1-1}
\end{tabular} 
\, \right) =
\begin{tabular}{cc}
    \multicolumn{1}{c}{$I'$} & \multicolumn{1}{c}{$J'$} \\
    \hline
    \multicolumn{1}{|c|}{$j_1$} & \multicolumn{1}{c|}{$i_1$} \\
    \hline
    \multicolumn{1}{|c|}{$i_2$} & \\
    \cline{1-1}
    \multicolumn{1}{|c|}{$\vdots$} & \\
    \cline{1-1}
\end{tabular} \, .
\]
We have $i_1 < j_1 \le w_1 = \widetilde w_1$ hence $J' \le w$.

Assume by contradiction that $I' \not\le w$. Since $i_2 \le \widehat w_2$, we must have $j_1 > \widehat w_1$. Therefore we have $\widehat w_1 < j_1 < i_2 \le \widehat w_2$. And so we have $\widehat w_1 + 1 \in \{w_{t+1}, \dots, w_n \}$. Since $j_1 \le w_1$ therefore $\widehat w_1 \in \{w_2, \dots, w_t \}$. So $w_1, \widehat w_1, \widehat w_1 + 1$ is of type $312$. So by Lemma~\ref{lem:Q_perms_312} we have $\widehat w_2 = \ell$. And so $j_1 < i_2 \le \ell$, a contradiction.

\smallskip

\textbf{Case 4.} Assume $s = 1, t \ge 2$, $i_2 < j_1$, $i_1 \in \{1, \dots, \ell \}$ and $i_2, j_1 \in \{\ell+1, \dots, n \}$. If $t \ge 3$ then we assume that $j_1 < i_3$. The tableaux are
\[
\Gamma_\ell \left( \,
\begin{tabular}{cc}
    \multicolumn{1}{c}{$I$} & \multicolumn{1}{c}{$J$} \\
    \hline
    \multicolumn{1}{|c|}{$i_1$} & \multicolumn{1}{c|}{$j_1$} \\
    \hline
    \multicolumn{1}{|c|}{$i_2$} & \\
    \cline{1-1}
    \multicolumn{1}{|c|}{$\vdots$} & \\
    \cline{1-1}
\end{tabular} 
\, \right) =
\begin{tabular}{cc}
    \multicolumn{1}{c}{$I'$} & \multicolumn{1}{c}{$J'$} \\
    \hline
    \multicolumn{1}{|c|}{$j_1$} & \multicolumn{1}{c|}{$i_2$} \\
    \hline
    \multicolumn{1}{|c|}{$i_1$} & \\
    \cline{1-1}
    \multicolumn{1}{|c|}{$\vdots$} & \\
    \cline{1-1}
\end{tabular} \, .
\]
We have $i_2 < j_1 \le w_1$ and so $J' \le w$.

Assume by contradiction that $I' \not\le w$. We have $i_1 \le \widehat w_1$ and so we must have $j_1 > \widehat w_2$. Therefore $\widehat w_1 < \widehat w_2 < j_1 \le w_1$. If $t = 2$ then we have a contradiction since $w_1 \in \{\widehat w_1, \widehat w_2 \}$. So we have $t \ge 3$. Therefore $\widehat w_2 < j_1 < i_3 \le \widehat w_3$, and so $\widehat w_2 + 1 \in \{w_{t+1}, \dots, w_n \}$. Since $w_1 > \widehat w_2$, we have $w_1 \ge \widehat w_3$. And so $w_1, \widehat w_2, \widehat w_2 + 1$ is of type $312$. By Lemma~\ref{lem:Q_perms_312}, we have $\widehat w_2 = \ell$. And so $i_2 \le \widehat w_2 = \ell$ but by assumption $i_2 \in \{\ell+1, \dots, n \}$, a contradiction.
\end{proof}

\begin{lemma} \label{lem:SSYT_flag_surject_w_Q_ell}
Fix $w \in P_\ell$. If $[IJ]$ is a matching field tableau for $B_\ell$ such that $I, J \le w$ then there exists a semi-standard Young tableau $[\tilde I \tilde J]$ such that $\tilde I, \tilde J \le w$ and $\Gamma_\ell([\tilde I \tilde J])$ is row-wise equal to $[IJ]$.
\end{lemma}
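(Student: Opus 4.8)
The statement to prove is the converse-type surjectivity result: given $w\in P_\ell$ and a matching field tableau $[IJ]$ for $B_\ell$ with $I,J\le w$, we must produce a semi-standard Young tableau $[\tilde I\tilde J]$ with $\tilde I,\tilde J\le w$ and $\Gamma_\ell([\tilde I\tilde J])$ row-wise equal to $[IJ]$. The natural strategy is to combine Lemma~\ref{lem:SSYT_flag_surjective}, which gives the existence of \emph{some} semi-standard Young tableau $[\tilde I\tilde J]$ with $\Gamma_\ell([\tilde I\tilde J])$ row-wise equal to $[IJ]$ (ignoring the $\le w$ condition), with a verification that this particular preimage inherits the $\le w$ property. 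In other words, the first step is to invoke Lemma~\ref{lem:SSYT_flag_surjective} to fix the candidate preimage $[\tilde I\tilde J]$, and then the entire content of the argument is to show $\tilde I,\tilde J\le w$.

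For that verification I would run the same case analysis used in Lemma~\ref{lem:SSYT_flag_preimage_non_vanish} and Lemma~\ref{lem:SSYT_flag_image_non_vanish_Q_ell}, but in the ``reverse'' direction: starting from $I,J\le w$ (the columns of the matching field tableau, already ordered by $B_\ell$, but whose underlying sets satisfy $\le w$) we want to conclude $\tilde I,\tilde J\le w$ for the sorted preimage. Since $\Gamma_\ell$ and its inverse only permute the entries in the first two rows of the rectangular part (and fix the third row and below), and since $\{i_1,i_2,j_1,j_2\}=\{\tilde i_1,\tilde i_2,\tilde j_1,\tilde j_2\}$ as multisets, the only nontrivial cases are those where $\Gamma_\ell$ genuinely swaps entries between the two columns. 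I would organize the cases exactly as in Definition of $\Gamma_\ell$: the Grassmannian rectangular case $2\le s\le t$ (where one cites the corresponding Grassmannian statement), and the single-row cases $s=1$, $t\ge 2$, subdivided by $r=|\{i_1,i_2,j_1\}\cap\{1,\dots,\ell\}|\in\{0,1,2,3\}$. In each subcase, write down explicitly which entries move between columns and use the inequalities $i_1\le \widehat w_1$, $i_2\le\widehat w_2$, $j_1\le\widetilde w_1$, $j_2\le\widetilde w_2$ to deduce the corresponding inequalities for $\tilde I,\tilde J$; when a direct inequality chase fails, derive a $312$-pattern in $w$ and invoke Lemma~\ref{lem:Q_perms_312} (together with Lemmas~\ref{lem:Q_perm_w1_w2_consecutive}, \ref{lem:Q_perm_w1_w2_gap}) to force $\widehat w_1$ or $\widehat w_2$ to equal $\ell$ or $w_1$, contradicting the position of the entry that was supposed to be ``too large''. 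This is structurally the mirror image of the argument in Lemma~\ref{lem:SSYT_flag_image_non_vanish_Q_ell}, so the same four displayed tableaux patterns recur.

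The main obstacle I anticipate is the bookkeeping in the $s=1$ cases where $\Gamma_\ell$ moves an entry from the \emph{first} column into the second, i.e. the subcase with $i_1\in\{1,\dots,\ell\}$ and $i_2,j_1\in\{\ell+1,\dots,n\}$ with $j_1<i_2$ (and, when $t\ge 3$, the extra constraint $j_1<i_3$). Here the preimage tableau has $\tilde I$ beginning with $i_1$ then $i_2$ (after re-sorting), while $[IJ]$ has columns with $\{i_1,j_1\}$ type contents; one must carefully check that re-sorting the preimage's first column does not violate $\tilde i_2\le\widehat w_2$, which is exactly where a hidden $312$ in $w$ must be extracted. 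The delicate point is that the matching-field ordering constraint (e.g. $i_2<j_1$ or $j_1<i_2$ deciding which of the two single-row $\Gamma_\ell$ rules applied) must be used to pin down which preimage Lemma~\ref{lem:SSYT_flag_surjective} actually returned, so that the inequality chase is performed on the correct tableau. I would handle this by explicitly recording, in each branch of the proof of Lemma~\ref{lem:SSYT_flag_surjective}, the shape of the preimage it constructs, and then feeding that shape into the $\le w$ verification; once the preimage is unambiguous, each inequality is a one-line deduction or a one-line $312$-pattern contradiction via Lemma~\ref{lem:Q_perms_312}. Everything else (the cases $r\in\{0,3\}$ where $\Gamma_\ell$ is just sorting within columns, and the Grassmannian rectangular case) is routine and follows either trivially or by citation.
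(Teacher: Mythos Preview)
Your overall strategy matches the paper's: invoke Lemma~\ref{lem:SSYT_flag_surjective} to produce a candidate semi-standard preimage $\tilde T$, then verify the $\le w$ condition by a case analysis on the first two rows, using the $312$-pattern lemmas (Lemmas~\ref{lem:Q_perms_312}, \ref{lem:Q_perm_w1_w2_consecutive}, \ref{lem:Q_perm_w1_w2_gap}) to derive contradictions when a direct inequality chase fails. One minor structural difference worth noting: the paper does not verify $\tilde I, \tilde J \le w$ directly but instead shows $I', J' \le w$ for the specific image $T' = \Gamma_\ell(\tilde T)$ (which is row-wise equal to $[IJ]$ but may have different columns than $[IJ]$), and then appeals to Lemma~\ref{lem:SSYT_flag_preimage_non_vanish} to pull the inequality back to $\tilde T$. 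This is equivalent to what you propose.

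There is, however, one genuine gap in your case organization. You propose to dispose of the entire range $2 \le s \le t$ by citing ``the corresponding Grassmannian statement,'' but that citation only covers $s = t$, where both columns are compared against the same initial segment $\{w_1,\dots,w_s\}$. When $2 \le s < t$, the column $I$ of size $t$ is tested against $\{w_1,\dots,w_t\}$ while $J$ is tested against $\{w_1,\dots,w_s\}$; because $\widehat w_k \le \widetilde w_k$, knowing that the rectangular top part satisfies $\le \{w_1,\dots,w_s\}$ does not yield $\tilde i_1 \le \widehat w_1$ and $\tilde i_2 \le \widehat w_2$. The paper therefore runs a full case analysis for $s \ge 2$ with $s < t$ (its Case~2, split first by whether $\Gamma_\ell$ alters the first or the second row, and then by $r = |\{i_1,i_2,j_1,j_2\}\cap[\ell]|$), using exactly the inequality-chasing and $312$-contradiction techniques you outline for $s = 1$. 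The fix is mechanical but the case count roughly triples relative to your outline.
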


\begin{proof}
By Lemma~\ref{lem:SSYT_flag_surjective} we have for any $T = [IJ]$ a matching field tableau for $B_\ell$ there exists a semi-standard Young tableau $\tilde T$ such that $\Gamma_\ell(\tilde T) = T' = [I'J']$ is row-wise equal to $T$. We proceed to show that if $I, J \le w$ then $I', J' \le w$. Once we show this we have $\tilde T \in SSYT_2(w)$ by Lemma~\ref{lem:SSYT_flag_preimage_non_vanish}.

We write $I = \{i_1, i_2, \dots, i_t \}$ and $J = \{j_1, j_2, \dots, j_s \}$ where $s < t$. We use the following notation to denote ordered initial segments of $w$. Let $\{w_1, \dots w_t\} = \{\widehat w_1 < \dots < \widehat w_t \}$ and $\{w_1, \dots, w_s \} = \{\widetilde w_1 < \dots < \widetilde w_s \}$. We proceed by taking cases on $s$ and $t$. Note that if $s = t$ then the result holds by the Grassmannian case. Also note that if $T = T'$ then there is nothing to prove so we will assume $T \neq T'$. Without loss of generality, we assume that the third row and below of the tableau $T$ is in semi-standard form, i.e. $i_3 \le j_3, \dots, i_s \le j_s$ so we will focus on the first two rows of $I'$ and $J'$.

\smallskip

\textbf{Case 1.} Assume $s = 1$ and $t \ge 2$. Since $T$ and $T'$ are different we must have the tableaux
\[
T =
\begin{tabular}{cc}
    \multicolumn{1}{c}{$I$} & \multicolumn{1}{c}{$J$} \\
    \hline
    \multicolumn{1}{|c|}{$i_1$} & \multicolumn{1}{c|}{$j_1$} \\
    \hline
    \multicolumn{1}{|c|}{$i_2$} & \\
    \cline{1-1}
    \multicolumn{1}{|c|}{$\vdots$} & \\
    \cline{1-1}
\end{tabular}\, , \quad
T' =
\begin{tabular}{cc}
    \multicolumn{1}{c}{$I'$} & \multicolumn{1}{c}{$J'$} \\
    \hline
    \multicolumn{1}{|c|}{$j_1$} & \multicolumn{1}{c|}{$i_1$} \\
    \hline
    \multicolumn{1}{|c|}{$i_2$} & \\
    \cline{1-1}
    \multicolumn{1}{|c|}{$\vdots$} & \\
    \cline{1-1}
\end{tabular}\, .
\]
Since $T$ and $T'$ are different we have $i_1 \neq j_1$. We proceed by taking cases on $r = |\{i_1, i_2, j_1 \} \cap \{1, \dots, \ell \}|$.

\textbf{Case 1.1} Assume $r = 0$ or $3$. Therefore the entries of $T'$ are fixed by $\Gamma_\ell$. So $T'$ must be in semi-standard form, hence $j_1 < i_1$. Since $I \le w$ we have $i_1 \le \widehat w_1$ and $i_2 \le \widehat i_2$. Since $J \le w$ we have $j_1 \le w_1$.

We have $j_1 < i_1 \le \widehat w_1$ and $i_2 \le \widehat w_2$ and $I' \le w$. We also have $i_1 \le \widehat w_1 \le w_1$ and so $J' \le w$.

\textbf{Case 1.2} Assume $r = 1$. If either $i_1$ or $j_1$ lie in $\{1, \dots, \ell \}$ then we have that $I$ or $I'$ respectively is not a valid column with respect to the matching field $B_\ell$. So we must have $i_2 \in \{1, \dots, \ell \}$. Since $T'$ lies in the image of $\Gamma_\ell$, we have that two cases for the values of $i_1$ and $j_1$.

\textbf{Case 1.2.1} Assume $i_1 < j_1$. So we have
\[
\Gamma_\ell \left( \,
\begin{tabular}{cc}
    \multicolumn{1}{c}{$\tilde I$} & \multicolumn{1}{c}{$\tilde J$} \\
    \hline
    \multicolumn{1}{|c|}{$i_2$} & \multicolumn{1}{c|}{$j_1$} \\
    \hline
    \multicolumn{1}{|c|}{$i_1$} & \\
    \cline{1-1}
    \multicolumn{1}{|c|}{$\vdots$} & \\
    \cline{1-1}
\end{tabular}
\, \right)
=
\begin{tabular}{cc}
    \multicolumn{1}{c}{$I'$} & \multicolumn{1}{c}{$J'$} \\
    \hline
    \multicolumn{1}{|c|}{$j_1$} & \multicolumn{1}{c|}{$i_1$} \\
    \hline
    \multicolumn{1}{|c|}{$i_2$} & \\
    \cline{1-1}
    \multicolumn{1}{|c|}{$\vdots$} & \\
    \cline{1-1}
\end{tabular}\, .
\]
Since the contents of $\tilde T = [\tilde I \tilde J]$ is column-wise equal to $T$, therefore $\tilde I, \tilde J \le w$.

\textbf{Case 1.2.2} Assume $i_1 > j_1$. We note that $\Gamma_\ell$ acts by permuting the entries of the first two rows of a tableau. By permuting the entries of the first two rows of $T'$, there are two ways to form a semi-standard Young tableau. However we have
\[
\Gamma_\ell \left( \,
\begin{tabular}{cc}
    \hline
    \multicolumn{1}{|c|}{$i_2$} & \multicolumn{1}{c|}{$i_1$} \\
    \hline
    \multicolumn{1}{|c|}{$j_1$} & \\
    \cline{1-1}
    \multicolumn{1}{|c|}{$\vdots$} & \\
    \cline{1-1}
\end{tabular}
\, \right)
=
\begin{tabular}{cc}
    \hline
    \multicolumn{1}{|c|}{$i_1$} & \multicolumn{1}{c|}{$j_1$} \\
    \hline
    \multicolumn{1}{|c|}{$i_2$} & \\
    \cline{1-1}
    \multicolumn{1}{|c|}{$\vdots$} & \\
    \cline{1-1}
\end{tabular}\, , \quad
\Gamma_\ell \left( \,
\begin{tabular}{cc}
    \hline
    \multicolumn{1}{|c|}{$i_2$} & \multicolumn{1}{c|}{$j_1$} \\
    \hline
    \multicolumn{1}{|c|}{$i_1$} & \\
    \cline{1-1}
    \multicolumn{1}{|c|}{$\vdots$} & \\
    \cline{1-1}
\end{tabular}
\, \right)
=
\begin{tabular}{cc}
    \hline
    \multicolumn{1}{|c|}{$j_1$} & \multicolumn{1}{c|}{$i_2$} \\
    \hline
    \multicolumn{1}{|c|}{$i_1$} & \\
    \cline{1-1}
    \multicolumn{1}{|c|}{$\vdots$} & \\
    \cline{1-1}
\end{tabular}\, .
\]
And so each such semi-standard Young tableau does not map to $T'$.

\textbf{Case 1.3} Assume $r = 2$. If $i_1, j_1 \in \{1, \dots, \ell \}$ then $T$ is not a valid tableau for $B_\ell$. So there are two remaining cases, either $i_1, i_2 \in \{1, \dots, \ell \}$ or $j_1, i_2 \in \{1, \dots, \ell \}$.

\textbf{Case 1.3.1} Assume $i_1, i_2 \in \{ 1, \dots, \ell\}$. We have that exactly two entries in the first two rows of $T'$ lie in $\{1, \dots, \ell \}$. The map $\Gamma_\ell$ acts on any tableau with this property by fixing the entries of the tableau column-wise. Hence
\[
\Gamma_\ell \left( \,
\begin{tabular}{cc}
    \multicolumn{1}{c}{$\tilde I$} & \multicolumn{1}{c}{$\tilde J$} \\
    \hline
    \multicolumn{1}{|c|}{$i_1$} & \multicolumn{1}{c|}{$i_2$} \\
    \hline
    \multicolumn{1}{|c|}{$j_1$} & \\
    \cline{1-1}
    \multicolumn{1}{|c|}{$\vdots$} & \\
    \cline{1-1}
\end{tabular}
\, \right)
=
\begin{tabular}{cc}
    \multicolumn{1}{c}{$I'$} & \multicolumn{1}{c}{$J'$} \\
    \hline
    \multicolumn{1}{|c|}{$j_1$} & \multicolumn{1}{c|}{$i_2$} \\
    \hline
    \multicolumn{1}{|c|}{$i_1$} & \\
    \cline{1-1}
    \multicolumn{1}{|c|}{$\vdots$} & \\
    \cline{1-1}
\end{tabular}\, .
\]
Since $I \le w$ we have $i_1 \le \widehat w_1$ and $i_2 \le \widehat w_2$. Since $J \le w$ we have $j_1 \le w_1$. Since $j_1 \in \{\ell+1, \dots, n \}$ and $i_2 \in \{ 1, \dots, \ell\}$ we have $i_2 < j_1 \le w_1$. Hence $J' \le w$.

Let us assume by contradiction that $I' \not\le w$. Since $i_1 \le \widehat w_1$, we must have $\widehat w_2 < j_1$. So we have $\widehat w_1 < \widehat w_2 < j_1 \le w_1$. If $t = 2$ then we have a contradiction since $w_1 \in \{\widehat w_1, \widehat w_2 \}$. So we have $t \ge 3$. Since $\widehat w_2 < w_1$, we have $\widehat w_3 \le w_1$. Therefore $\widehat w_2 < j_1 < i_3 \le \widehat w_3 \le w_1$. And so we have $\widehat w_2 + 1 \in \{w_{t+1}, \dots, w_n \}$. However $w_1, \widehat w_2, \widehat w_2 + 1$ is of type $312$. So by Lemma~\ref{lem:Q_perms_312} we have $\widehat w_2 = \ell$. So $\ell < j_1$ however by assumption $j_1 \in \{1, \dots, \ell \}$, a contradiction.

\textbf{Case 1.3.2} Assume that $j_1, i_2 \in \{1, \dots, \ell \}$. We have that $T'$ is a semi-standard Young tableau and $T' = \Gamma_\ell(T')$. Since $I \le w$ we have $i_2 \le \widehat w_1$ and $i_1 \le \widehat w_2$. Since $J \le w$ we have $j_1 \le w_1$. We have $j_1 < i_2 \le \widehat w_1$ and $i_2 < i_1 \le \widehat w_1$ so $I' \le w$.

Assume by contradiction that $J' \not\le w$. So we have $w_1 < i_1 \le \widehat w_2$. 
Since $\widehat w_1 \le w_1 < \widehat w_2$, so $\widehat w_1 = w_1$. 
Since $j_1 < i_2 \le \widehat w_1$ we have $\widehat w_1 \ge 2$. 
If $\widehat w_2 = \widehat w_1 + 1$ then we have $w|_{w_1 + 1} = (w_1, w_1+1, \dots)$. By Lemma~\ref{lem:Q_perm_w1_w2_consecutive} we have either
$w_1 < w_2 \le \ell$ or $w_1 > w_2 = \ell$. If $w_1 < w_2 \le \ell$ then we have $i_1 \le \widehat w_2 \le w_2 \le \ell$ but $i_1 \in \{\ell+1, \dots, n \}$, a contradiction. If $w_1 > w_2 = \ell$ then we have an immediate contradiction because $\widehat w_1 = w_1 < w_2$.

So $\widehat w_1 + 1 < \widehat w_2$. By Lemma~\ref{lem:Q_perm_w1_w2_gap} we have $w_1 < w_2 \le \ell$ or $w_1 > w_2 = \ell$. If $w_1 < w_2 \le \ell$ then we have $i_1 \le \widehat w_2 \le w_2 \le \ell$ but $i_1 \in \{\ell+1, \dots, n \}$, a contradiction. If $w_1 > w_2 = \ell$ then we have an immediate contradiction because $\widehat w_1 = w_1 < w_2$.

\smallskip

\textbf{Case 2.} Assume $s \ge 2$. Since $T$ and $T'$ are different, we either have that the entries of the first row are changed or the second row are changed. We will treat these as two different cases.

\textbf{Case 2.1} Assume that the entries in the first row of $T$ and $T'$ are different. So we have
\[
T = 
\begin{tabular}{cc}
    \multicolumn{1}{c}{$I$} & \multicolumn{1}{c}{$J$} \\
    \hline
    \multicolumn{1}{|c|}{$i_1$} & \multicolumn{1}{c|}{$j_1$} \\
    \hline
    \multicolumn{1}{|c|}{$i_2$} & 
    \multicolumn{1}{|c|}{$j_2$} \\
    \hline
    \multicolumn{1}{|c|}{$\vdots$} &
    \multicolumn{1}{|c|}{$\vdots$} \\
    \hline
\end{tabular} \, \quad
T' = 
\begin{tabular}{cc}
    \multicolumn{1}{c}{$I'$} & \multicolumn{1}{c}{$J'$} \\
    \hline
    \multicolumn{1}{|c|}{$j_1$} & \multicolumn{1}{c|}{$i_1$} \\
    \hline
    \multicolumn{1}{|c|}{$i_2$} & 
    \multicolumn{1}{|c|}{$j_2$} \\
    \hline
    \multicolumn{1}{|c|}{$\vdots$} &
    \multicolumn{1}{|c|}{$\vdots$} \\
    \hline
\end{tabular} \, .
\]
We proceed by taking cases on $r = |\{i_1, i_2, j_1, j_2\} \cap \{1, \dots, \ell \}|$.

\textbf{Case 2.1.1} Assume $r = 0$ or $4$. We have that $T'$ is a semi-standard Young tableau and $T' = \Gamma_\ell(T')$. Since $I \le w$ we have $i_1 \le \widehat w_1$ and $i_2 \le \widehat w_2$. Since $J \le w$ we also have $j_1 \le \widetilde w_1$ and $j_2 \le \widetilde w_2$. And so we have $i_1 \le \widehat w_1 \le \widetilde w_1 $ and $j_2 \le \widetilde w_2$ hence $J' \le w$. We also have $j_1 \le i_1 \le \widehat w_1$ and $i_2 \le \widehat w_2$ hence $I' \le w$.

\textbf{Case 2.1.2} Assume $r = 1$. So we have that either $i_2 \in \{1, \dots, \ell \}$ or $j_2 \in \{1, \dots, \ell \}$. 

\textbf{Case 2.1.2.1}
Assume $i_2 \in \{1, \dots, \ell \}$. So, by definition of $\Gamma_\ell$, it follows that $i_1 \ge j_1$ and
\[
\Gamma_\ell \left( \,
\begin{tabular}{cc}
    \multicolumn{1}{c}{$\tilde I$} & \multicolumn{1}{c}{$\tilde J$} \\
    \hline
    \multicolumn{1}{|c|}{$i_2$} & \multicolumn{1}{c|}{$i_1$} \\
    \hline
    \multicolumn{1}{|c|}{$j_1$} & 
    \multicolumn{1}{|c|}{$j_2$} \\
    \hline
    \multicolumn{1}{|c|}{$\vdots$} &
    \multicolumn{1}{|c|}{$\vdots$} \\
    \hline
\end{tabular}
\, \right)
=
\begin{tabular}{cc}
    \multicolumn{1}{c}{$I'$} & \multicolumn{1}{c}{$J'$} \\
    \hline
    \multicolumn{1}{|c|}{$j_1$} & \multicolumn{1}{c|}{$i_1$} \\
    \hline
    \multicolumn{1}{|c|}{$i_2$} & 
    \multicolumn{1}{|c|}{$j_2$} \\
    \hline
    \multicolumn{1}{|c|}{$\vdots$} &
    \multicolumn{1}{|c|}{$\vdots$} \\
    \hline
\end{tabular}\, .
\]
Since $T$ and $T'$ are different we have that $i_1 \neq j_1$ hence $i_1 > j_1$. Since $I \le w$ we have $i_2 \le \widehat w_1$ and $i_1 \le \widehat w_2$. Since $J \le w$ we also have $j_1 \le \widetilde w_1$ and $j_2 \le \widetilde w_2$.
We have $i_2 \le \widehat w_1$ and $j_1 < i_1 \le \widehat w_2$ and so $I' \le w$. 

Assume by contradiction that $J' \not\le w$. Since $j_2 \le \widetilde w_2$, we must have $i_1 > \widetilde w_1$. 
So we have $\widetilde w_1 < i_1 < j_2 \le \widetilde w_2$, in particular $\widetilde w_2 > \widetilde w_1 + 1$. Since $\ell \ge 1$, we have $2 \le j_1 \le \widetilde w_1$. By Lemma~\ref{lem:Q_perm_w1_w2_gap} we either have $\widetilde w_1 = w_1 < w_2 \le \ell$ or $w_1 > w_2 = \ell = \widetilde w_1$. If $\widetilde w_1 = w_1 < w_2 \le \ell$ then $i_1 \le \widehat w_2 \le w_2 \le \ell$ and $i_1 \in \{\ell+1, \dots, n \}$, a contradiction. If $w_1 > w_2 = \ell = \widetilde w_1$ then $\ell < j_1 \le \widetilde w_1 \le \ell$, a contradiction.

\textbf{Case 2.1.2.2}
Assume $j_2 \in \{1, \dots, \ell \}$. Then we have
\[
\Gamma_\ell \left( \,
\begin{tabular}{cc}
    \multicolumn{1}{c}{$\tilde I$} & \multicolumn{1}{c}{$\tilde J$} \\
    \hline
    \multicolumn{1}{|c|}{$j_2$} & \multicolumn{1}{c|}{$j_1$} \\
    \hline
    \multicolumn{1}{|c|}{$i_2$} & 
    \multicolumn{1}{|c|}{$i_1$} \\
    \hline
    \multicolumn{1}{|c|}{$\vdots$} &
    \multicolumn{1}{|c|}{$\vdots$} \\
    \hline
\end{tabular}
\, \right)
=
\begin{tabular}{cc}
    \multicolumn{1}{c}{$I'$} & \multicolumn{1}{c}{$J'$} \\
    \hline
    \multicolumn{1}{|c|}{$j_1$} & \multicolumn{1}{c|}{$i_1$} \\
    \hline
    \multicolumn{1}{|c|}{$i_2$} & 
    \multicolumn{1}{|c|}{$j_2$} \\
    \hline
    \multicolumn{1}{|c|}{$\vdots$} &
    \multicolumn{1}{|c|}{$\vdots$} \\
    \hline
\end{tabular}\, .
\]
Since $\tilde T = [\tilde I \tilde J]$ is a semi-standard Young tableau we have $i_2 \le i_1$. However in $T$ we have $i_1 < i_2$, a contradiction.

\smallskip

\textbf{Case 2.1.3} Assume $r = 2$. We must have that $i_2, j_2 \in \{1, \dots, \ell \}$ otherwise at least one of $T$ or $T'$ is not a valid tableau for $B_\ell$. 
We have 
\[
\Gamma_\ell \left( \,
\begin{tabular}{cc}
    \multicolumn{1}{c}{$\tilde I$} & \multicolumn{1}{c}{$\tilde J$} \\
    \hline
    \multicolumn{1}{|c|}{$i_2$} & \multicolumn{1}{c|}{$j_2$} \\
    \hline
    \multicolumn{1}{|c|}{$j_1$} & 
    \multicolumn{1}{|c|}{$i_1$} \\
    \hline
    \multicolumn{1}{|c|}{$\vdots$} &
    \multicolumn{1}{|c|}{$\vdots$} \\
    \hline
\end{tabular}
\, \right)
=
\begin{tabular}{cc}
    \multicolumn{1}{c}{$I'$} & \multicolumn{1}{c}{$J'$} \\
    \hline
    \multicolumn{1}{|c|}{$j_1$} & \multicolumn{1}{c|}{$i_1$} \\
    \hline
    \multicolumn{1}{|c|}{$i_2$} & 
    \multicolumn{1}{|c|}{$j_2$} \\
    \hline
    \multicolumn{1}{|c|}{$\vdots$} &
    \multicolumn{1}{|c|}{$\vdots$} \\
    \hline
\end{tabular}\, .
\]
Since $\tilde T$ is a semi-standard Young tableau we have $i_2 \le j_2$ and $j_1 \le i_1$. Since $I \le w$ we have $i_2 \le \widehat w_1 $ and $i_1 \le \widehat w_2$. Since $J \le w$ we have $j_2 \le \widetilde w_1$ and $j_1 \le \widetilde w_2$. And so we have $i_2 \le \widehat w_2$ and $j_1 \le i_1 \le \widehat w_2$ therefore $I' \le w$. We also have $j_2 \le \widetilde w_1$ and $i_1 \le \widehat w_2 \le \widetilde w_2$ and so $J' \le w$.

\textbf{Case 2.1.4} Assume $r = 3$. We have that either $i_1 \in \{\ell+1, \dots, n \}$ or $j_1 \in \{\ell+1, \dots, n \}$.

\textbf{Case 2.1.4.1} Assume $i_1 \in \{\ell+1, \dots, n \}$. Then we have
\[
\Gamma_\ell \left( \,
\begin{tabular}{cc}
    \multicolumn{1}{c}{$\tilde I$} & \multicolumn{1}{c}{$\tilde J$} \\
    \hline
    \multicolumn{1}{|c|}{$j_1$} & \multicolumn{1}{c|}{$j_2$} \\
    \hline
    \multicolumn{1}{|c|}{$i_2$} & 
    \multicolumn{1}{|c|}{$i_1$} \\
    \hline
    \multicolumn{1}{|c|}{$\vdots$} &
    \multicolumn{1}{|c|}{$\vdots$} \\
    \hline
\end{tabular}
\, \right)
=
\begin{tabular}{cc}
    \multicolumn{1}{c}{$I'$} & \multicolumn{1}{c}{$J'$} \\
    \hline
    \multicolumn{1}{|c|}{$j_1$} & \multicolumn{1}{c|}{$i_1$} \\
    \hline
    \multicolumn{1}{|c|}{$i_2$} & 
    \multicolumn{1}{|c|}{$j_2$} \\
    \hline
    \multicolumn{1}{|c|}{$\vdots$} &
    \multicolumn{1}{|c|}{$\vdots$} \\
    \hline
\end{tabular}\, .
\]
Since $I \le w$ we have $i_2 \le \widehat w_1$ and $i_1 \le \widehat w_2$. Since $J \le w$ we have $j_1 \le \widetilde w_1$ and $j_2 \le \widetilde w_2$. And so $j_1 < i_2 \le \widehat w_1$ and $i_2 \le \widehat w_1 < \widehat w_2$ hence $I' \le w$.

Assume by contradiction that $J' \not\le w$. Since $i_1 \le \widehat w_2 \le \widetilde w_2$ we must have $j_2 > \widetilde w_1$. And so $\widetilde w_1 < j_2 < i_1 \le \widehat w_2$ hence $\widetilde w_1 = \widehat w_1$ and $\widehat w_2 > \widehat w_1 + 1$. Since $j_1 < i_2 \le \widehat w_1$, we have $\widehat w_1 \ge 2$. By Lemma~\ref{lem:Q_perm_w1_w2_gap} we have we either have $\widehat w_1 = w_1 < w_2 \le \ell$ or $w_1 > w_2 = \ell = \widehat w_1$. If $\widehat w_1 = w_1 < w_2 \le \ell$ then $\ell < i_1 \le \widehat w_2 \le \ell$, a contradiction. If $w_1 > w_2 = \ell = \widehat w_1$ then $\ell = \widehat w_1 < j_1 \le \ell $, a contradiction.

\textbf{Case 2.1.4.2} Assume $j_1 \in \{\ell+1, \dots, n \}$. There are no possible semi-standard Young tableau $\tilde T$ such that $\Gamma_\ell(\tilde T) = T'$. Suppose by contradiction that $\tilde T = [\tilde I \tilde J]$ is such a semi-standard Young tableau where $\tilde I = \{i_1' < i'_2 < \dots \}$ and $J = \{j_1' < j_2' < \dots \}$. Then we must have $j_2' = j_1 \in \{\ell+1, \dots, n \}$ as it is the maximum value appearing in the first two rows of $T'$. For any possible values $\{i_1', i_2', j_1' \} \subseteq \{1, \dots, \ell \}$ we have that $j_2'$ lies in the second column of $\Gamma_\ell(\tilde T)$. However $j_2' = j_1$ is contained in the first column of $T'$.

\smallskip

\textbf{Case 2.2} Assume that the entries of the second row of $T$ and $T'$ are different. So we have 
\[
T = 
\begin{tabular}{cc}
    \multicolumn{1}{c}{$I$} & \multicolumn{1}{c}{$J$} \\
    \hline
    \multicolumn{1}{|c|}{$i_1$} & \multicolumn{1}{c|}{$j_1$} \\
    \hline
    \multicolumn{1}{|c|}{$i_2$} & 
    \multicolumn{1}{|c|}{$j_2$} \\
    \hline
    \multicolumn{1}{|c|}{$\vdots$} &
    \multicolumn{1}{|c|}{$\vdots$} \\
    \hline
\end{tabular} \, \quad
T' = 
\begin{tabular}{cc}
    \multicolumn{1}{c}{$I'$} & \multicolumn{1}{c}{$J'$} \\
    \hline
    \multicolumn{1}{|c|}{$i_1$} & \multicolumn{1}{c|}{$j_1$} \\
    \hline
    \multicolumn{1}{|c|}{$j_2$} & 
    \multicolumn{1}{|c|}{$i_2$} \\
    \hline
    \multicolumn{1}{|c|}{$\vdots$} &
    \multicolumn{1}{|c|}{$\vdots$} \\
    \hline
\end{tabular} \, .
\]
We proceed by taking cases on $r = |\{i_1, i_2, j_1, j_2 \} \cap \{1, \dots, \ell \}|$.

\textbf{Case 2.2.1} Assume $r = 0$ or $4$. In this case we have $T'$ is a semi-standard Young tableau and $\Gamma_\ell(T') = T'$. So we have $i_1 \le j_1$ and $j_2 \le i_2$. Since $I \le w$ we have $i_1 \le \widehat w_1$ and $i_2 \le \widehat i_2$. Since $J \le w$ we also have $j_1 \le \widetilde w_1$ and $j_2 \le \widetilde w_2$. So $i_1 \le \widehat w_1$ and $j_2 \le i_2 \le \widehat w_2$ hence $I' \le w$. Also $j_1 \le \widetilde w_1$ and $i_2 \le \widehat w_2 \le \widetilde w_2$ hence $J' \le w$.

\textbf{Case 2.2.2} Assume $r = 1$.
We have that either $j_2 \in \{1, \dots, \ell \}$ or $i_2 \in \{1, \dots, \ell \}$.

\textbf{Case 2.2.2.1} Assume $j_2 \in \{ 1, \dots, \ell\}$. We have
\[
\Gamma_\ell \left( \,
\begin{tabular}{cc}
    \multicolumn{1}{c}{$\tilde I$} & \multicolumn{1}{c}{$\tilde J$} \\
    \hline
    \multicolumn{1}{|c|}{$j_2$} & \multicolumn{1}{c|}{$j_1$} \\
    \hline
    \multicolumn{1}{|c|}{$i_1$} & 
    \multicolumn{1}{|c|}{$i_2$} \\
    \hline
    \multicolumn{1}{|c|}{$\vdots$} &
    \multicolumn{1}{|c|}{$\vdots$} \\
    \hline
\end{tabular}
\, \right)
=
\begin{tabular}{cc}
    \multicolumn{1}{c}{$I'$} & \multicolumn{1}{c}{$J'$} \\
    \hline
    \multicolumn{1}{|c|}{$i_1$} & \multicolumn{1}{c|}{$j_1$} \\
    \hline
    \multicolumn{1}{|c|}{$j_2$} & 
    \multicolumn{1}{|c|}{$i_2$} \\
    \hline
    \multicolumn{1}{|c|}{$\vdots$} &
    \multicolumn{1}{|c|}{$\vdots$} \\
    \hline
\end{tabular}\, .
\]
Since $I \le w$ we have $i_1 \le \widehat w_1$ and $i_2 \le \widehat w_2$. Since $J \le w$ we have $j_1 \le \widetilde w_1$ and $j_1 \le \widetilde w_2$. So $j_2 < i_1 \le \widehat w_1$ and $i_1 \le \widehat w_1 < \widehat w_2$ so $I' \le w$.

Assume by contradiction that $J' \not\le w$. Since $i_2 \le \widehat w_2 \le \widetilde w_2$ therefore we must have $j_1 > \widetilde w_1$. And so $\widetilde w_1 < j_1 < i_2 \le \widehat w_2 \le \widetilde w_2$. Since $\widetilde w_1 < \widehat w_2$ therefore $\widehat w_1 = \widetilde w_1$. Note that $j_2 < i_1 \le \widehat w_1$ so $\widehat w_1 \ge 2$. Let $w_d = \widetilde w_1 + 1 \ge 3$. Let $w_d = \widetilde w_1 + 1$, note that we have $d \in \{t+1, \dots, n \}$. And so we have the restriction $w|_{w_d} = (w_d - 1, w_d, \dots)$. And so by Lemma~\ref{lem:Q_perm_w1_w2_consecutive} we either have $w_1 < w_2 \le \ell$ or $w_1 > w_2 = \ell$. In each case $\ell < i_1 \le \widehat w_1 \le \ell$, a contradiction.

\textbf{Case 2.2.2.2} Assume $i_2 \in \{ 1, \dots, \ell\}$. We have
\[
\Gamma_\ell \left( \,
\begin{tabular}{cc}
    \multicolumn{1}{c}{$\tilde I$} & \multicolumn{1}{c}{$\tilde J$} \\
    \hline
    \multicolumn{1}{|c|}{$i_2$} & \multicolumn{1}{c|}{$i_1$} \\
    \hline
    \multicolumn{1}{|c|}{$j_2$} & 
    \multicolumn{1}{|c|}{$j_1$} \\
    \hline
    \multicolumn{1}{|c|}{$\vdots$} &
    \multicolumn{1}{|c|}{$\vdots$} \\
    \hline
\end{tabular}
\, \right)
=
\begin{tabular}{cc}
    \multicolumn{1}{c}{$I'$} & \multicolumn{1}{c}{$J'$} \\
    \hline
    \multicolumn{1}{|c|}{$i_1$} & \multicolumn{1}{c|}{$j_1$} \\
    \hline
    \multicolumn{1}{|c|}{$j_2$} & 
    \multicolumn{1}{|c|}{$i_2$} \\
    \hline
    \multicolumn{1}{|c|}{$\vdots$} &
    \multicolumn{1}{|c|}{$\vdots$} \\
    \hline
\end{tabular}\, .
\]
Since $\tilde T = [\tilde I \tilde J]$ is a semi-standard Young tableau we have $j_2 \le j_1$. However in $T$ we have $j_1 < j_2$, a contradiction.

\textbf{Case 2.2.3} Assume $r = 2$. We must have that $i_2, j_2 \in \{1, \dots, \ell \}$ otherwise at least one of $T$ or $T'$ is not a valid tableau for $B_\ell$. 
We have 
\[
\Gamma_\ell \left( \,
\begin{tabular}{cc}
    \multicolumn{1}{c}{$\tilde I$} & \multicolumn{1}{c}{$\tilde J$} \\
    \hline
    \multicolumn{1}{|c|}{$j_2$} & \multicolumn{1}{c|}{$i_2$} \\
    \hline
    \multicolumn{1}{|c|}{$i_1$} & 
    \multicolumn{1}{|c|}{$j_1$} \\
    \hline
    \multicolumn{1}{|c|}{$\vdots$} &
    \multicolumn{1}{|c|}{$\vdots$} \\
    \hline
\end{tabular}
\, \right)
=
\begin{tabular}{cc}
    \multicolumn{1}{c}{$I'$} & \multicolumn{1}{c}{$J'$} \\
    \hline
    \multicolumn{1}{|c|}{$i_1$} & \multicolumn{1}{c|}{$j_1$} \\
    \hline
    \multicolumn{1}{|c|}{$j_2$} & 
    \multicolumn{1}{|c|}{$i_2$} \\
    \hline
    \multicolumn{1}{|c|}{$\vdots$} &
    \multicolumn{1}{|c|}{$\vdots$} \\
    \hline
\end{tabular}\, .
\]
Since $\tilde T$ is a semi-standard Young tableau we have $j_2 \le i_2$ and $i_1 \le j_1$. Since $I \le w$ we have $i_2 \le \widehat w_1 $ and $i_1 \le \widehat w_2$. Since $J \le w$ we have $j_2 \le \widetilde w_1$ and $j_1 \le \widetilde w_2$. And so we have $j_2 \le i_2 \le \widehat w_1$ and $i_1 \le \widehat w_2$ therefore $I' \le w$. Also we have $j_1 \le \widetilde w_2$ and $i_2 \le \widehat w_1 \le \widetilde w_1$ and so $J' \le w$.

\textbf{Case 2.2.4} Assume $r = 3$. We have that either $i_1 \in \{\ell+1, \dots, n \}$ or $j_1 \in \{\ell+1, \dots, n \}$.

\textbf{Case 2.2.4.1} Assume $i_1 \in \{\ell+1, \dots, n \}$. Then there are no possible semi-standard Young tableau $\tilde T$ such that $\Gamma_\ell(\tilde T) = T'$, see Case 2.1.4.2.

\textbf{Case 2.2.4.2} Assume $j_1 \in \{ \ell+1, \dots, n\}$. We have $i_1 < i_2$ and $i_1 < j_2$, so 
\[
\Gamma_\ell \left( \,
\begin{tabular}{cc}
    \multicolumn{1}{c}{$\tilde I$} & \multicolumn{1}{c}{$\tilde J$} \\
    \hline
    \multicolumn{1}{|c|}{$i_1$} & \multicolumn{1}{c|}{$i_2$} \\
    \hline
    \multicolumn{1}{|c|}{$j_2$} & 
    \multicolumn{1}{|c|}{$j_1$} \\
    \hline
    \multicolumn{1}{|c|}{$\vdots$} &
    \multicolumn{1}{|c|}{$\vdots$} \\
    \hline
\end{tabular}
\, \right)
=
\begin{tabular}{cc}
    \multicolumn{1}{c}{$I'$} & \multicolumn{1}{c}{$J'$} \\
    \hline
    \multicolumn{1}{|c|}{$i_1$} & \multicolumn{1}{c|}{$j_1$} \\
    \hline
    \multicolumn{1}{|c|}{$j_2$} & 
    \multicolumn{1}{|c|}{$i_2$} \\
    \hline
    \multicolumn{1}{|c|}{$\vdots$} &
    \multicolumn{1}{|c|}{$\vdots$} \\
    \hline
\end{tabular}\, .
\]
Since $I \le w$ we have $i_1 \le \widehat w_1$ and $i_2 \le \widehat w_2$. Also since $J \le w$ we have $j_2 \le \widetilde w_1$ and $j_1 \le \widetilde w_2$. So $i_1 \le \widehat w_1$ and $j_2 \le i_2 \le \widehat w_2$ hence $I' \le w$.

Assume by contradiction that $J' \not\le w$. Since $j_1 \le \widetilde w_2$ we must have $i_2 > \widetilde w_1$. We have $\widetilde w_1 < i_2 < j_1 \le \widetilde w_2$, in particular $\widetilde w_2 > \widetilde w_1 + 1$. Also we have $i_1 < j_2 \le \widetilde w_1$ hence $\widetilde w_1 \ge 2$. So by Lemma~\ref{lem:Q_perm_w1_w2_gap} we either have $\widetilde w_1 = w_1 < w_2 \le \ell$ or $w_1 > w_2 = \ell = \widetilde w_1$. If $\widetilde w_1 = w_1 < w_2 \le \ell$ then $\ell < j_1 \le \widetilde w_2 \le w_2 \le \ell$, a contradiction. If $w_1 > w_2 = \ell = \widetilde w_1$ then $\ell = \widetilde w_1 < i_2$ and $i_2 \in \{1, \dots, \ell \}$, a contradiction.

\medskip

We have shown that any tableau $T = [IJ]$, for the matching field $B_\ell$ with columns $I, J \le w$, is row-wise equal to a tableau $T' = [I'J']$ which lies in the image $\Gamma_\ell$ and $I', J' \le w$.
\end{proof}

We have shown that the map $\Gamma_\ell$ has the desired properties and so we can show that the number of standard monomials for $J_1$ in degree two is $|SSYT_2(w)|$.  
\begin{proof}[Proof of Lemma~\ref{lem:SSYT_flag_biject_Q_perms}]
Let $w \in P_\ell$ be a permutation. For each semi-standard Young tableau $T = [IJ]$ let $T' = [I'J'] = \Gamma_\ell(T)$. By Lemmas~\ref{lem:SSYT_flag_image_non_vanish_Q_ell} and \ref{lem:SSYT_flag_preimage_non_vanish} we have that $I, J \le w$ if and only if $I', J' \le w$. We deduce that $\Gamma_\ell(SSYT_2(w))$ corresponds to a collection of standard monomials for $J_1$ in degree two. By Lemma~\ref{lem:SSYT_flag_injective} we have that the monomials corresponding to $\Gamma_\ell(SSYT_2(w))$ are linearly independent. By Lemma~\ref{lem:SSYT_flag_surject_w_Q_ell} we have for any $T = [IJ]$ a matching field tableau for $B_\ell$ with $I, J \le w$ there exists a semi-standard Young tableau $\tilde T$ such that $\Gamma_\ell(\tilde T) = T' = [I'J']$ is row-wise equal to $T$ and $I', J' \le w$. Therefore, by Lemma~\ref{lem:SSYT_flag_preimage_non_vanish}, $\tilde I, \tilde J \le w$ and so the monomials corresponding to $\Gamma_\ell(SSYT_2(w))$ are a spanning set.
\end{proof}

\bibliographystyle{alpha} 
\bibliography{MEGA-Trop1.bib}

\medskip
\noindent
\footnotesize {\bf Authors' addresses:}

\medskip

\noindent University of Bristol, School of Mathematics,
BS8 1TW, Bristol, UK
\\
\noindent  E-mail addresses: {\tt oliver.clarke@bristol.ac.uk}

\medskip

\noindent
Department of Mathematics: Algebra and Geometry, Ghent University, 9000 Gent, Belgium \\
Department of Mathematics and Statistics,
UiT – The Arctic University of Norway, 9037 Troms\o, Norway
\\ E-mail address: {\tt fatemeh.mohammadi@ugent.be}
\end{document}